\subjclass[2010]{22A22, 37B05.}
\newcommand{\pt}{\mathrm{pt}}
\newcommand{\Gr}{\operatorname{Gr}}
\newcommand{\sign}{\operatorname{sign}}
\DeclareMathAlphabet{\mathpzc}{OT1}{pzc}{m}{it}
\newcommand{\pzg}{\mathpzc{g}}
\newcommand{\pzh}{\mathpzc{h}}
\newcommand{\pzu}{\mathpzc{u}}
\newcommand{\pzv}{\mathpzc{v}}
\DeclarePairedDelimiter\floor{\lfloor}{\rfloor}
\newcommand{\euE}{\EuScript{E}}
\newcommand{\euF}{\EuScript{F}}
\newcommand{\euG}{\EuScript{G}}
\newcommand{\euH}{\EuScript{H}}
\newcommand{\euS}{\EuScript{S}}
\newcommand{\euT}{\EuScript{T}}
\title[Uniform enveloping semigroupoids for groupoid actions]{Uniform enveloping semigroupoids for groupoid actions}
\begin{document}

\begin{abstract}
  We establish new characterizations for (pseudo)isometric extensions
  of topological dynamical systems. For such extensions, we also extend results about 
  relatively invariant measures and Fourier analysis that were previously 
  only known in the minimal case to a significantly larger 
  class, including all transitive systems. To bypass the reliance on
  minimality of the 
  classical approaches to isometric extensions via the 
  Ellis semigroup, we show that extensions of 
  topological dynamical systems can be described as \emph{groupoid actions} and then 
  adapt the concept of enveloping semigroups to construct a 
  \emph{uniform enveloping semigroupoid} for groupoid actions. This approach 
  allows to deal with the more complex orbit structures of nonminimal systems.
  
  We study uniform enveloping semigroupoids of general groupoid
  actions and translate the results back to the special case of extensions
  of dynamical systems. In particular, we show that, under appropriate 
  assumptions, a groupoid action is (pseudo)isometric if and only if 
  the uniform enveloping semigroupoid is actually a compact groupoid. 
  We also provide an operator theoretic characterization based on 
  an abstract Peter--Weyl-type theorem for representations of compact, transitive 
  groupoids on Banach bundles which is of independent interest.
\end{abstract}

\maketitle

Given a topological dynamical system $(K, \phi)$ consisting of a compact 
space $K$ and a homeomorphism $\phi\colon K\to K$, its \emph{enveloping 
Ellis semigroup} $\uE(K, \phi)$ introduced by R.\ Ellis in \cite{Ellis1960} is the 
pointwise closure
\begin{align*}
  \uE(K, \phi) \defeq \overline{\left\{ \phi^n \mmid n\in\Z \right\}} \subset K^K.
\end{align*}
It is an important tool in topological dynamics capturing the 
long-term behavior of a dynamical system. Moreover, it allows to study 
the system $(K, \phi)$ via algebraic properties of $\uE(K, \phi)$. In particular
the elegant theory of compact, right-topological semigroups has been used 
to describe and study properties of topological dynamical systems. 
We refer to \cite[Chapters 3 and 6]{Ausl1988} and \cite{Glasner2007}
for the general theory of the Ellis semigroup and to 
\cite[Chapter 5]{AperiodicOrder2015}, \cite{Staynova2019}, or 
\cite[Section 4]{GlasnerGutmanYe2018} for some recent applications.

In the special case of an equicontinuous system, $\mathrm{E}(K,\varphi)$ is actually a compact topological group which agrees with the \emph{uniform enveloping semigroup}
\begin{align*}
  \uE_\uu(K, \phi) \defeq \overline{\left\{ \phi^n \mmid n\in\Z \right\}} \subset \mathrm{C}(K,K)
\end{align*}
where the closure is taken with respect to the the compact-open topology on $\mathrm{C}(K,K)$, i.e., the topology of uniform convergence. In this case 
one can use the Peter--Weyl theorem to understand the representation 
of the compact group $\uE(K, \phi)$ on $\uC(K)$.
In particular, one can prove the following characterizations of equicontinuous 
systems $(K,\varphi)$ involving the \emph{Koopman operator} 
$T_\phi\colon \uC(K)\to\uC(K), \, f \mapsto f \circ \phi$.

\begin{theorem*}\label{thm:charequi}
  For a topological dynamical system $(K,\varphi)$ the following assertions are equiavalent.
    \begin{enumerate}[(a)]
      \item $(K,\varphi)$ is equicontinuous.
      \item $\uE_\uu(K, \phi)$ is a compact group.
      \item The Koopman operator $T_\varphi$ has 
      \emph{discrete spectrum}, i.e., the union of its 
      eigenspaces is total in $\mathrm{C}(K)$.
    \end{enumerate}
\end{theorem*}


The main goals of this article is to develop the techniques to prove 
an anlogous statement for \enquote{structured} extensions 
\begin{align*}
  q\colon (K, \phi) \to (L, \psi)
\end{align*}
of dynamical systems. The importance of these extensions
is in particular due to the famous structure theorem for distal minimal flows proved by H.\ Furstenberg. It states that any distal minimal flow can be constructed via a 
\emph{Furstenberg tower} consisting of equicontinuous 
(equivalently: pseudoisometric) extensions 
(see, e.g., \cite[Section V.3]{deVr1993}). Beyond this result, such 
extensions (especially the case of compact group extensions) 
have continued to play an important role in the structure theory of 
dynamical systems (see \cite{HoKr2018}, \cite{Ziegler2007}), the construction
of new dynamical systems (see, e.g., \cite{Dolgopyat2002} or \cite[Section 6]{GHWS2020}), and applications
to number theory (see \cite{FKLM2016}).

First steps towards a characterization of pseudoisometric extensions were made by 
A.\ W.\ Knapp in \cite{Knapp1967}, though his results use the Ellis semigroup
and only cover minimal distal systems, making essential use of minimality.
We suggest to work around this built-in dependence on recurrence and propose a 
new, more general approach to structured extensions 
$q \colon (K,\varphi) \to (L,\psi)$. Instead of looking at an extension as 
a morphism 
between two \emph{group actions}, we show that an extension can 
be equivalently regarded as a single system defined by a \emph{groupoid action}. 
In analogy to enveloping semigroups, we introduce and study an 
\emph{enveloping semigroupoid} $\euE_\uu(q)$ to describe the structuredness of $q$.
This leads in particular to the following characterization of pseudoisometric
extensions.

\begin{theorem*}
  For an open extension $q \colon (K,\varphi) \to (L,\psi)$ of 
  topological dynamical systems such that $\dim\fix(T_\psi) = 1$, the 
  following assertions are equivalent.
    \begin{enumerate}[(a)]
      \item $q$ is pseudoisometric.
      \item The uniform enveloping semigroupoid $\euE_\uu(q)$ is a compact groupoid.
      \item The union of all finitely generated, projective, closed $T_\varphi$-invariant 
      $\uC(L)$-submodules of $\mathrm{C}(K)$ is dense in $\uC(K)$.
    \end{enumerate}
\end{theorem*}

Note that previously merely the implication 
(a) $\implies$ (c) was somewhat known and only
in the minimal distal case (see \cite[Theorem 1.2]{Knapp1967}). The condition that $T_\psi$ has no 
nonconstant fixed functions covers a significantly larger family than minimal
systems, including all transitive systems but also many dynamical systems
with more complex orbit structures.

Groupoids are generalizations of groups that allow to capture local symmetry. They play an 
important role in noncommutative geometry by
providing a framework for studying operator algebras, index theory, and foliations (see 
\cite{Connes1995} or \cite{MooreSchochet2006}).
In ergodic theory, 
G.\ W.\ Mackey used groupoids for his theory of virtual groups in order \enquote{to bring to light and 
exploit certain apparently far reaching analogies between group theory and ergodic theory} 
(\cite[p.\ 187 and Section 11]{Mackey1966}). And in topological 
dynamics, they have long been used as a bridge between dynamics and $\uC^*$-algebras
in order to study questions around orbit equivalence, see 
\cite{Tomiyama1987}, \cite{GiordanoPutnamSkau1995}, 
and more recently \cite{MatsumotoMatui2014}.

It is the goal of this article to demonstrate that groupoids also provide
a natural approach to extensions of topological dynamical systems and 
that the systematic analysis of the occurring groupoid structures
allows to simplify and generalize existing results on (pseudo)isometric and 
equicontinuous extensions. In the process, we investigate the representation
theory of compact, transitive groupoids and prove theorems in 
\cref{sec:rep} that 
are of independent interest, including a general Peter-Weyl type theorem in 
\cref{pwtransitive}.
Beyond the above-mentioned characterization, we also apply our abstract 
results to prove the existence of relatively invariant measures for certain
pseudoisometric extensions (see \cref{ext3}) and to show that, much as in 
the case of equicontinuous systems, pseudoisometric extensions admit 
Fourier-analytic decompositions, see \cref{ext4}.

\textbf{Organization of the article.} Since all results on extensions of dynamical systems proved in this article depend 
only on groupoid structure, most results are formulated in the more general framework of groupoids 
and their actions. For the reader's convenience, however, the main results (the
applications to extensions of topological dynamical systems) are gathered 
in \cref{sec:applications}.

In \cref{sec:groupoids}, we recall the concepts of (semi)groupoids 
and their actions and show in \cref{ex:grpaction} and \cref{ex:extensiongrpdaction} how an extension of dynamical systems can be 
described as a groupoid action. We then begin generalizing concepts for extensions
to the context of general groupoid actions (see \cref{def:struct_ext}).
\cref{sec:topology} is devoted to a generalization of the 
compact-open topology in \cref{def:comopen} which we will need to define the 
uniform enveloping semigroupoid. In particular, we prove a 
charaterization of convergence for nets of mappings defined on distinct 
fibers of a bundle (see \cref{charconv}). This plays a key role 
throughout the article. After these preparations, \cref{sec:enveloping_semigroupoids}
then introduces the 
\emph{uniform enveloping semigroupoid} of a groupoid action as a 
generalization of the uniform enveloping semigroup for group actions, 
see \cref{def:uniformenvsgrpd}. 
We use the generalized Arzel\`a-Ascoli theorem \cref{arzelaascoli} to show 
in \cref{thm:pseudoisochar}
that---under an assumption of topological ergodicity---a groupoid action is 
pseudoisometric if and only if its uniform enveloping 
semigroupoid is a compact groupoid. 

To explain what this compactness means on the operator-theoretic level,
\cref{sec:rep} collects results about representations of compact groupoids 
on Banach bundles. We first prove a Peter--Weyl-type theorem for 
representations of compact transitive groupoids on Banach bundles in \cref{pwtransitive}. 
This is then applied to the uniform enveloping 
(semi)groupoid of pseudoisometric groupoid actions to derive the desired 
operator-theoretic 
characterizations of structuredness, one of our main results \cref{mainthm}.

In preparation of \cref{sec:applications}, \cref{sec:RIM} investigates the existence and 
uniqueness of relatively invariant measures for certain (pseudo)isometric groupoid actions, see \cref{RIM}.
In \cref{sec:applications}, we then prove Fourier analytic results for transitive 
actions of compact groupoids, generalizing the Fourier analysis of compact groups 
and their actions (see \cref{thm:fouriermain}). Via the uniform 
enveloping (semi)groupoids this can be used to obtain a better understanding of 
pseudosisometric groupoid actions. Finally, \cref{sec:applications} restates
all our main results in the case of extensions of 
topological dynamical systems.


\textbf{Terminology and Notation.} All compact spaces are assumed to be Hausdorff though we may 
occasionally specify the Hausdorff property for emphasis. The neighborhood filter of a point
$x\in X$ in a topological space $X$ is denoted by $\mathcal{U}_X(x)$ or simply $\mathcal{U}(x)$ when 
there is no room for ambiguity. If $X$ is a uniform space, we write $\mathcal{U}_X$ for 
the uniform structure of $X$. 

At several points in the paper we consider \emph{bundles}, i.e., 
continuous surjections 
$p \colon E \to L$ for some topological \emph{total space} $E$ (usually with some 
additional structure) to a topological (usually compact) \emph{base space} $L$. For $l\in L$, 
we write 
$E_l \defeq p^{-1}(l)$ for the \emph{fiber over $l$} of such a bundle. Moreover, if 
$p_1\colon E_1 \to L$ and $p_2 \colon E_2 \to L$ are two bundles over the same 
base space $L$, we define
\begin{align*}
  E_1 \times_{p_1,p_2} E_2 \defeq \{(x,y) \in E_1\times E_2 \mid p_1(x) = p_2(y)\} \subset E_1 \times E_2
\end{align*}
and equip this set with the subspace topology induced by the product topology on $E_1 \times E_2$. 
We also write $E_1 \times_L E_2 \defeq E_1 \times_{p_1,p_2} E_2$ if the mappings $p_1$ and $p_2$ are clear.

We use the letters $S$ and $G$ for semigroups and 
groups and the letters $\euS$ and $\euG$ for semigroupoids and groupoids, respectively.
By a \emph{topological dynamical system} we mean a triple $(K, G, \phi)$ consisting of a non-empty compact
space $K$, a Hausdorff topological group $G$, and a continuous action $\phi\colon G\times K\to K$ of $G$ on $K$.
For $g \in G$, we denote the map $\phi(g,\cdot)\colon K\to K$ by $\phi_g$.
We omit $\phi$ from the notation if there is no room for confusion and if $G = \Z$, we
abbreviate $(K,G,\phi)$ by $(K,\phi)$ and identify $\phi$ with the map $\phi(1, \cdot)\colon K\to K$ 
that completely determines the action.

As usual, a \emph{morphism} $q \colon (K,G,\phi) \to (L,G,\psi)$ between dynamical systems 
$(K,G,\phi)$ and $(L,G,\psi)$ is a continuous mapping $q\colon K \to L$ such that the diagram
\begin{align*}
    \xymatrix{
      K   \ar[r]^{\phi_g} \ar[d]_{q}   & K \ar[d]^{q} \\
      L   \ar[r]_{\psi_g}                 & L
    }
\end{align*}
commutes for all $g \in G$. A surjective morphism $q \colon (K,G,\phi) \to (L,G,\psi)$ is an 
\emph{extension (of topological dynamical systems)}.

Finally, if $K$ is a compact space, we write $\uC(K)$ for the Banach space of all continuous 
complex-valued functions on $K$. We identify its dual space $\uC(K)$ with the space of all 
complex regular Borel measures on $K$ and write $\uP(K) \subset \uC(K)'$ for the 
space of all probability measures in $\uC(K)'$. The Dirac measure defined by a point $x \in K$ is denoted by $\delta_x$. If $\theta \colon K \to L$ is a 
continuous mapping between compact spaces $K$ and $L$, we write $\theta_*\mu$ for the pushforward 
of a measure $\mu \in \uC(K)'$, i.e.,
\begin{align*}
  \int_L f  \,\mathrm{d}\theta_*\mu = \int_K f \circ \theta \dmu   \quad   \text{for } f \in \uC(L).
\end{align*}
Moreover, we define the \emph{Koopman operator} $T_\theta \in \mathscr{L}(\uC(L),\uC(K))$ of 
$\theta$ by $T_\theta f \defeq f\circ \theta$ for $f \in \uC(L)$. For a topological dynamical 
system $(K,G,\phi)$, the mapping
\begin{align*}
  T_\phi \colon G \to \mathscr{L}(\uC(K)), \quad 
  g \mapsto T_{\phi_{g^{-1}}}
\end{align*}
is the \emph{Koopman representation} of $(K,G,\phi)$.

\textbf{Acknowledgements.}
The authors express their gratitude towards Markus Haase, 
Rainer Nagel, and Jean Renault for ideas and inspiring discussions.
The first author thanks the MPIM and both authors thank the MFO 
for providing a stimulating atmosphere for working on this project.
The second author was supported by a scholarship of the Friedrich-Ebert-Stiftung 
while working on this article. Moreover, a preliminary version of this 
article is included in the second author's PhD thesis \cite{Krei2019}.

\section{Groupoids and groupoid actions}
\label{sec:groupoids}


Following \cite[Definitions 2.1, 2.2, and 2.17]{2013MitreaMonniaux}, we recall 
the definition of groupoids and semigroupoids which generalize the concepts of 
groups and semigroups, respectively, in the sense that the multiplication is 
only partially defined. The reader is also referred to \cite{IbortRodriguez2019}
or \cite{Higgins1971} as other general introductions to groupoids.

\begin{definition}\label{def:semigroupoid}
  A \emph{semigroupoid} consists of a set $\euS$, a set $\euS^{(2)} \subset \euS\times\euS$ of 
  composable pairs, and a product map $\pardot\colon \euS^{(2)} \to \euS$
  that is associative in the sense that
  \begin{enumerate}[(i)]
    \item if $(\pzg_1,\pzg_2),(\pzg_2,\pzg_3) \in \euS^{(2)}$, then 
    $(\pzg_1\cdot \pzg_2,\pzg_3),(\pzg_1,\pzg_2 \cdot \pzg_3) \in \euS^{(2)}$ and 
    $(\pzg_1\cdot \pzg_2)\cdot \pzg_3 = \pzg_1 \cdot (\pzg_2 \cdot \pzg_3)$.
  \end{enumerate}
  We usually abbreviate $\pzg\cdot\pzh$ by $\pzg\pzh$ if there is no room for confusion. 
  We call a semigroupoid $\euG$ a \emph{groupoid} if there is an \emph{inverse map} 
  ${}^{-1} \colon \euG \to \euG$
  such that, additionally, for each $\pzg\in \euG$
  \begin{enumerate}[resume*]
    \item $(\pzg^{-1}, \pzg) \in \euG^{(2)}$ and if $(\pzg, \pzh) \in \euG^{(2)}$, then $\pzg^{-1}(\pzg\pzh) = \pzh$,
    \item $(\pzg, \pzg^{-1}) \in \euG^{(2)}$ and if $(\pzh, \pzg) \in \euG^{(2)}$, then $(\pzh\pzg)\pzg^{-1} = \pzh$.
  \end{enumerate}
  If $\euG$ is a groupoid, 
  \begin{align*}
    \euG^{(0)} \defeq \left\{\pzg^{-1}\pzg \mmid \pzg\in \euG \right\}
  \end{align*}
  is called the \emph{unit space} of $\euG$ and the maps
  \begin{align*}
    s\colon \euG \to \euG^{(0)}, \quad  &   \pzg \mapsto \pzg^{-1}\pzg, \\
    r\colon \euG \to \euG^{(0)}, \quad  &   \pzg \mapsto \pzg\pzg^{-1}
  \end{align*}
  are called the \emph{source} and \emph{range maps} of $\euG$. For $\pzu, \pzv\in \euG^{(0)}$, we write 
  $\euG_\pzu \defeq s^{-1}(\pzu)$, $\euG^\pzv \defeq r^{-1}(\pzv)$, and $\euG_\pzu^\pzv \defeq \euG_\pzu\cap\euG^\pzv$. A 
  groupoid is \emph{transitive} if $\euG_\pzu^\pzv \neq \emptyset$ for all $\pzu,\pzv \in \euG^{(0)}$ and 
  \emph{a group bundle} if $\euG_\pzu^\pzv = \emptyset$ for all $\pzu,\pzv \in \euG^{(0)}$ with $\pzu \neq \pzv$. 
  If $\euG$ is a group bundle, we write $p \defeq r = s$. 
  \emph{Subsemigroupoids} and \emph{subgroupoids} of a given semigroupoid or 
  groupoid are defined in a straightforward way.
  
   A \emph{topological semigroupoid} 
  is a semigroupoid $(\euS, \euS^{(2)}, \cdot)$ 
  with a Hausdorff topology on $\euS$ such that the product map is continuous. We define \emph{topological 
  groupoids} analogously by demanding that the inverse map be continuous, too.
\end{definition}

Below, we collect examples of semigroupoids and groupoids which play an important role throughout the 
article (see also \cite[Examples 1.2]{Rena1980} for some of these and other examples).

\begin{example}\label{ex:trivialgrpd}
	Let $K$ be a compact space. Then $K$ is a compact groupoid with
		\begin{align*}
			K^{(2)} = \{(x,x) \mid x \in K\}
		\end{align*}
	and multiplication and inversion trivially defined. We call such a groupoid a \emph{trivial groupoid}.
\end{example}

\begin{example}
  Given a groupoid $\euG$, the subgroupoid
  \begin{align*}
    \Iso(\euG) \defeq \{ \pzg\in \euG \mid s(\pzg) = r(\pzg) \}
  \end{align*}
  of $\euG$ is a group bundle called the \emph{isotropy bundle} of $\euG$.
\end{example}

\begin{example}\label{ex:pairgroupoid}
  Let $K$ be a set. Then $\euG_K \defeq K\times K$ is a groupoid with the set of
  composable pairs
  \begin{align*}
    \euG_K^{(2)} \defeq \left\{ \left((x, y), (y, z)\right) \mmid x, y, z\in K \right\},
  \end{align*}
  a product map defined by $(x, y)\cdot(y, z) \defeq (x, z)$, and the inverse map 
  $(x, y) \mapsto (y, x)$. The groupoid $\euG_K$ is called the \emph{pair groupoid} 
  of $K$. It the property that the equivalence relations on $K$ can be identified 
  with full subgroupoids of $\euG_K$. Here a subgroupoid is called \emph{full} if 
  it has the same unit space as its ambient groupoid. Note that a subgroupoid of 
  $K\times K$ is transitive if and only if it equals $K\times K$.
\end{example}

\begin{example}
  For a topological space $X$, consider the space 
  \begin{align*}
    \uP(X) \defeq \uC([0,1], X)
  \end{align*}
  of paths in $X$, and define $\pi_1(X)$ to be the quotient of $\uP(X)$ modulo homotopy with 
  fixed end points. Then $\pi_1(X)$ is a groupoid with respect to the product map given by 
  concatenation of paths, called the \emph{fundamental groupoid} of $X$ (cf.\ \cite[Chapter 6]{Brown2006}). 
  The source and range maps send an equivalence class $[\gamma]$ to the starting and end points 
  $\gamma(0)$ and $\gamma(1)$, respectively. Moreover, the units in $\pi_1(X)$ 
  are the equivalence classes of constant paths and so the unit space $\pi_1(X)^{(0)}$ may be
  identified with $X$. The isotropy groups $\pi_1(X)_x^x$ for $x\in X$ are precisely the usual
  fundamental groups $\pi_1(X, x)$.
  
  A fundamental groupoid $\pi_1(X)$ is transitive if and only if $X$ is path-connected and 
  such fundamental groupoids are archetypal examples for transitive groupoids. If $\pi_1(X)$ 
  is transitive, all isotropy groups $\pi_1(X)_x^x$, $\pi_1(X)_y^y$ are isomorphic via conjugation 
  by a path $\eta$ from $x$ to $y$,
  \begin{align*}
    c_\eta\colon\pi_1(X)_y^y \to \pi_1(X)_x^x, \quad [\gamma] \mapsto [\eta]^{-1}[\gamma][\eta].
  \end{align*}
  In the same way one sees that, in general, all isotropy groups of a transitive groupoid 
  are isomorphic. This explains the heuristic that transitive groupoids behave similarly
  to groups, which we will make repeated use of. However, this does not mean that the study of a 
  transitive groupoid can always be replaced with the study of a single isotropy group, as it 
  is frequently done for the fundamental groupoid. Isotropy groups contain only part of 
  the picture and, as we will see, the groupoid perspective emerges as the natural conceptual
  generalization of existing approaches for groups.
\end{example} 

The following standard construction allows to completely encode the dynamics of a group action 
within a groupoid and motivates part of the terminology around groupoids.

\begin{example}\label{ex:actiongrpd}
	Let $(K,G)$ be a topological dynamical system. Then the \emph{action groupoid} or 
	\emph{transformation groupoid} $G \ltimes K$ is the set $G \times K$ with
		\begin{align*}
			(G \ltimes K)^{(2)} \defeq \{((g_2,g_1x),(g_1,x)) \mid g_1,g_2 \in G, x\in K\} 
		\end{align*}
	and 
	\begin{alignat*}{4}
		\cdot \, &\colon (G \ltimes K)^{(2)} \to G \ltimes K, \quad  & ((g_2,g_1x),(g_1,x)) \mapsto & \, (g_2g_1,x),\\
		^{-1}\, &\colon G \ltimes K \to G \ltimes K, \quad & (g,x)  \mapsto & \, (g^{-1},gx).
	\end{alignat*}
	We identify its unit space
		\begin{align*}
			(G \ltimes K)^{(0)} = \{(1,x)\mid x \in K\}
		\end{align*}
	with $K$.
\end{example}

The \enquote{structure-preserving maps} between semigroupoids are the following.

\begin{definition}
  A \emph{homomorphism} $\Phi \colon \euS \to \euT$ of semigroupoids (or groupoids) 
  is a mapping $\Phi \colon \euS \to \euT$ satisfying $(\Phi(\pzg_1),\Phi(\pzg_2)) \in \euT^{(2)}$ 
  and $\Phi(\pzg_1\pzg_2) = \Phi(\pzg_1)\Phi(\pzg_2)$ for all $(\pzg_1,\pzg_2) \in \euS^{(2)}$. We 
  write $\Phi^{(0)}$ for the induced map $\euS^{(0)} \to \euT^{(0)}$. Moreover, we 
  call $\Phi$ a \emph{factor map} and $\euT$ a \emph{factor} of $\euS$ if $\Phi$ is surjective. 
  Homomorphisms and factors of topological semigroupoids are defined by additionally 
  requiring $\Phi$ to be continuous.
\end{definition}

\begin{example}\label{ex:orbitrelationmorphism}
  Let $\euG$ be a groupoid.
  \begin{enumerate}
    \item The inclusions of the unit space $\euG^{(0)}$ and the isotropy 
    bundle $\Iso(\euG)$ are groupoid homomorphisms.
    \item The map 
    \begin{align*}
      (r, s)\colon \euG \to \euG^{(0)}\times \euG^{(0)}, \quad \pzg \mapsto \left( r(\pzg), s(\pzg) \right)
    \end{align*}
    is a groupoid morphism between $\euG$ and the pair groupoid $\euG^{(0)}\times\euG^{(0)}$. 
    In particular, its image $R_\euG$ is an equivalence relation, called the \emph{orbit relation} on 
    $\euG^{(0)}$.
  \end{enumerate}
\end{example}

We now recall the definition of groupoid actions (or $\euG$-spaces, see, e.g., \cite[Section 2.1]{DeRe2000}).

\begin{definition}\label{definition:groupoidaction}
  Let $\euG$ be a topological groupoid. A \emph{groupoid action} of $\euG$ on a compact space $K$ is 
  a tuple $(K,q,\euG,\phi)$ with a continuous, open surjection $q \colon K \to \euG^{(0)}$
  and a continuous mapping
  \begin{align*}
    \varphi \colon \euG \times_{s,q} K \to K, \quad (\pzg,x) \mapsto \varphi_\pzg(x) \eqdef \pzg x
  \end{align*}
	such that 
  \begin{enumerate}[(i)]
    \item $q(\pzg x) = r(\pzg)$ for all $(\pzg,x) \in \euG \times_{s,q} K$,
    \item $(\pzg_1\pzg_2 )x= \pzg_1(\pzg_2 x)$ for all $(\pzg_1,\pzg_2) \in \euG^{(2)}$ and 
    $x \in K_{s(\pzg_2)}$,
    \item $\pzu x = x$ for all $\pzu \in \euG^{(0)}$ and $x \in K_{\pzu}$.
  \end{enumerate}
  If $x\in K$, the \emph{orbit} of $x$ under $\euG$ is defined as 
  $\euG x \defeq \{\pzg x \mid \pzg \in \euG_{q(x)}\}$.
	A groupoid action is called
  \begin{itemize}
    \item \emph{transitive} if $\euG x = K$ for some/every $x \in K$.
    \item \emph{fiberwise transitive} if the fiber groups $\euG_{\pzu}^{\pzu}$ act transitively 
    on $K_{\pzu}$ for every $\pzu \in \euG^{(0)}$.
  \end{itemize}
	A \emph{morphism} $(p, \Phi) \colon (K_1,q_1,\euG_1) \to  (K_2,q_2,\euG_2)$ of groupoid actions consists
	of a groupoid morphism $\Phi\colon \euG_1 \to \euG_2$ and an 
	open continuous map $p \colon K_1 \to K_2$ such that
  \begin{enumerate}[(i)]
    \item $q_2 \circ p = \Phi^{(0)}\circ q_1$,
    \item $p(\pzg x) = \Phi(\pzg) p(x)$ for all $(\pzg,x) \in \euG \times_{s,q_1} K_1$.
  \end{enumerate}
  If $\euG_1 = \euG_2$ and $\Phi$ is the identity, we abbreviate $(p, \Phi)$ as $p$.
	A morphism $(p, \Phi)$ is called a \emph{factor map} or an \emph{extension} if $p$ and $\Phi$ are surjective.
	In this case,
	$(K_1, q_1, \euG_1)$ is called an \emph{extension} of $(K_2, q_2, \euG_2)$ and 
	$(K_2, q_2, \euG_2)$ a \emph{factor} of $(K_1, q_1, \euG_1)$.
\end{definition}

\begin{remark}\label{rem:transitive}
  As in the case of group actions, we usually omit $\phi$ and just write $(K,q,\euG)$ 
  for a groupoid action $(K, q, \euG, \phi)$.
  We emphasize that for a groupoid action $(K,q,\euG)$ we always require $K$ to be compact and 
  $q$ to be open. This implies, in particular, that the unit space $\euG^{(0)}$ is compact, too. 
  Note also that a groupoid action $(K, q, \euG)$ is transitive if and only if 
  $(K, q, \euG)$ is fiberwise transitive and $\euG$ is transitive.
\end{remark}

\begin{example}\label{ex:grpaction}
  Let $(K,G,\varphi)$ be a topological dynamical system. Then $(K,q,G,\phi)$ is a groupoid action 
  where $q \colon K \to \{1\} = G^{(0)}, \, x \mapsto 1$.
\end{example}

Next, we consider one of this article's key examples which motivates our systematic study 
of groupoid actions.

\begin{example}\label{ex:extensiongrpdaction}
  Let $q \colon (K, G, \phi) \to (L, G, \psi)$ be an open extension of topological dynamical systems. 
  Then the action groupoid $G \ltimes L$ defines a groupoid action $(K, q, G \ltimes L, \eta_\phi)$ via
  \begin{align*}
      \eta_\phi\colon(G \ltimes L)\times_{s,q} K \to K, \quad ((g, l), x) \mapsto \phi_g(x).
  \end{align*}
  Conversely, let $(L, G, \psi)$ be a topological dynamical system and $(K, q, G \ltimes L, \eta)$ 
  be an action of $G \ltimes L$ on $K$. Then it is not hard to check that 
  \begin{align*}
    \phi_\eta\colon G\times K \to K, \quad (g, x) \mapsto \eta_{(g, q(x))}(x)
  \end{align*}
  defines a continuous action $(K, G, \phi_\eta)$ of $G$ on $K$ such that 
  $q\colon (K, G, \phi_\eta) \to (L, G, \psi)$ is an extension of topological dynamical systems.
  Since these constructions are mutually inverse, an extension $q\colon (K, G, \phi) \to (L, G, \psi)$ 
  can be equivalently regarded as a groupoid action $(K, q, G \ltimes L, \eta)$ of the 
  action groupoid $G\ltimes L$ on $K$. In what follows, the reader should always have this example in mind when thinking
  about groupoid actions.
\end{example}

\begin{remark}
  There is also another way to obtain a groupoid action $(K, q, \euG)$ 
  from an extension $q\colon (K, G,\phi) \to (L, G,\psi)$ of dynamical systems:
  Let 
  \begin{align*}
   \euS(q) \defeq \left\{\phi_g|_{K_l}\colon K_l \rightarrow K_{gl} \mmid g\in G, l\in L\right\}.
  \end{align*}
  Then one can also study the groupoid action $(K, q, \euS(q))$. 
  Note, however, that there is a loss
  of information because the action of $G$ on $K$ can no longer be reconstructed 
  from $\euS(q)$ as opposed to the action of $G\ltimes L$ which, in addition,
  keeps track of which element of $G$ acts. This difference is often 
  immaterial, so there is no harm in thinking directly 
  about the \emph{transition groupoid} $\euS(q)$ instead of the action groupoid $G\ltimes L$.
\end{remark}

\begin{example}\label{ex:unitspaceaction}
	Every topological groupoid (with compact unit space) acts canonically on its unit space via 
	conjugation.
	Indeed, if $\euG$ is a groupoid with compact unit space, then $(\euG^{(0)},\mathrm{id}_{\euG^{(0)}}, \euG, \phi)$ with
		\begin{align*}
			\phi\colon\euG \times_{s,\mathrm{id}_{\euG^{(0)}}}\euG^{(0)} \to \euG^{(0)},\quad (\pzg,\pzu) \to \pzg \pzu \pzg^{-1}
		\end{align*}
	is a groupoid action mapping $s(\pzg)$ to $r(\pzg)$ for every $\pzg \in \euG$. It is the smallest 
	action of $\euG$ in the sense that if $(K,q,\euG)$ is 
	any groupoid action, then $q \colon K \to \euG^{(0)}$ defines an extension 
	$q \colon (\euG,q,K) \to (\euG,\mathrm{id}_{\euG^{(0)}}, \euG^{(0)})$ of groupoid actions. 
	Moreover, $(\euG^{(0)},\mathrm{id}_{\euG^{(0)}}, \euG)$ is always fiberwise transitive, 
	and it is transitive if and only if $\euG$ is a transitive groupoid.
\end{example}

As noted in \cref{ex:extensiongrpdaction}, an extension of topological 
dynamical systems can be equivalently regarded as a certain groupoid action,
suggesting that properties of extensions may be rephrased in terms 
of groupoid actions. The following definition carries out this straightforward 
translation to general groupoid actions for the standard notions of structuredness
of extensions (cf. \cite[Sections V.2 and V.5]{deVr1993} and \cite{Ausl2013}).

\begin{definition}\label{def:struct_ext}
  A groupoid action $(K,q,\euG)$ is called
  \begin{enumerate}[(i)]
    \item\label{def:struct_ext_item:weaklyequi} \emph{weakly equicontinuous} or \emph{stable} if 
    for each $\pzu \in \euG^{(0)}$ and each entourage $U\in \mathcal{U}_K$ there is an entourage 
    $V\in\mathcal{U}_K$ such that $(\pzg x, \pzg y)\in U$
    for all $\pzg \in \euG_\pzu$ and $x,y\in K_\pzu$ with $(x,y)\in V$.
    \item\label{def:struct_ext_item:equi} \emph{equicontinuous} if 
    for each entourage $U\in \mathcal{U}_K$ there is an entourage 
    $V\in\mathcal{U}_K$ such that for each $\pzu\in\euG^{(0)}$ one has $(\pzg x, \pzg y)\in U$
    for all $\pzg \in \euG_\pzu$ and $x,y\in K_\pzu$ with $(x,y)\in V$.
    \item\label{def:struct_ext_item:pseudoiso} \emph{pseudoisometric} if  there is a set $P$ of continuous mappings 
    $p\colon K \times_{q} K \to [0,\infty)$ such that
      \begin{itemize}
        \item $p_\pzu = p|_{K_\pzu \times K_\pzu}$ is a pseudometric on $K_\pzu$ for every $\pzu \in \euG^{(0)}$,
        \item the pseudometrics $p_\pzu$ for $p \in P$ generate the topology of $K_\pzu$ for every $\pzu \in \euG^{(0)}$,
        \item $p(\pzg x,\pzg y) = p(x,y)$ for all $x,y \in K_{s(\pzg)}$ and $\pzg \in \euG$.
      \end{itemize}
    \item\label{def:struct_ext_item:iso} \emph{isometric} if it is pseudoisometric and the set $P$ 
    can be chosen to consist of a single map which is (necessarily) a metric on each fiber.
  \end{enumerate}
\end{definition}

\begin{remark}\mbox{}
  \begin{enumerate}
    \item It is immediate from \cref{def:struct_ext} that if $q \colon (K,G) \to (L,G)$ is an 
    extension of topological dynamical systems, the extension is weakly equicontinuous, equicontinuous, \dots 
    if and only if the corresponding groupoid action $(K, q, G\ltimes L)$
    is. 
    \item We show in \cref{prop:pseudo_equi} below that a pseudoisometric groupoid action is 
    equicontinuous. Hence, 
    \ref{def:struct_ext_item:iso} $\implies$ \ref{def:struct_ext_item:pseudoiso}
    $\implies$
    \ref{def:struct_ext_item:equi} $\implies$ \ref{def:struct_ext_item:weaklyequi}. In general, none
    of the converse implications hold: For \ref{def:struct_ext_item:pseudoiso} and 
    \ref{def:struct_ext_item:iso}  this is obvious, for \ref{def:struct_ext_item:equi} and 
    \ref{def:struct_ext_item:pseudoiso} see \cref{ex:equinoniso} below, and for the relation between 
    and \ref{def:struct_ext_item:weaklyequi} and \ref{def:struct_ext_item:equi} we refer to \cite{Ausl2013}.
    \item Recall that if $(K,G)$ and $(L,G)$ are \emph{minimal} group actions, $q$ is 
    equicontinuous if and only if it is pseudoisometric if and only if it is weakly equicontinuous and 
    open, see \cite[Corollary 5.10]{deVr1993} and \cite[Theorem 3.13.17]{Bron1979}.
  \end{enumerate}
\end{remark}

\begin{proposition}\label{prop:pseudo_equi}
	Let $(K, q, \euG)$ be a pseudoisometric groupoid action. Then $(K, q, \euG)$ is equicontinuous.
\end{proposition}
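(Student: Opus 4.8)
The plan is to extract from the family $P$ of invariant fiberwise pseudometrics a single global entourage $V \in \mathcal{U}_K$ that works uniformly in $\pzu \in \euG^{(0)}$, exploiting compactness to pass from the intrinsic fiber data to the ambient uniform structure of $K$. Throughout, recall that $K$ is compact Hausdorff and hence carries a unique uniformity whose entourages are exactly the neighborhoods of the diagonal $\Delta_K \subset K \times K$; since the open entourages form a base, it suffices to produce $V$ for open $U$. Because $\euG^{(0)}$ is Hausdorff, the fibered product $K \times_q K = (q \times q)^{-1}(\Delta_{\euG^{(0)}})$ is closed in $K \times K$, hence compact, and each fiber $K_\pzu = q^{-1}(\pzu)$ is compact Hausdorff. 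For $p \in P$ and $\epsilon > 0$, set $W_{p,\epsilon} \defeq \{(x,y) \in K \times_q K \mid p(x,y) < \epsilon\}$, an open neighborhood of $\Delta_K$ in $K \times_q K$. Note that both a pair $(x,y)$ with $x,y \in K_\pzu$ and the moved pair $(\pzg x, \pzg y)$ lie in $K \times_q K$, so the whole argument can be carried out inside this compact subspace.

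First I would record that the pseudometrics separate points of each fiber: since the $p_\pzu$, $p \in P$, generate the Hausdorff topology of $K_\pzu$, any two distinct points $x,y \in K_\pzu$ satisfy $p(x,y) > 0$ for some $p \in P$. The key step is then the following compactness lemma: for every open entourage $U$ there are finitely many $p_1,\dots,p_n \in P$ and an $\epsilon > 0$ with $\bigcap_{i=1}^n W_{p_i,\epsilon} \subseteq U$. To see this, consider the compact set $C \defeq (K \times_q K) \setminus U$. Every $(x,y) \in C$ lies off the diagonal, so by the separation property there is $p \in P$ with $p(x,y) > 0$; thus the open sets $\{(x,y) \in K \times_q K \mid p(x,y) > \delta\}$, for $p \in P$ and $\delta > 0$, cover $C$. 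A finite subcover together with $\epsilon \defeq \min_i \delta_i > 0$ yields $C \subseteq \bigcup_i \{p_i > \epsilon\}$, equivalently $\bigcap_i W_{p_i,\epsilon} \cap C = \emptyset$, i.e.\ $\bigcap_i W_{p_i,\epsilon} \subseteq U$.

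Now I would promote this neighborhood of $\Delta_K$ in $K \times_q K$ to a genuine entourage of $K$: the set $C' \defeq (K \times_q K)\setminus \bigcap_{i} W_{p_i,\epsilon}$ is closed in the closed subspace $K \times_q K$, hence closed in $K \times K$ and disjoint from $\Delta_K$, so $V \defeq (K \times K)\setminus C'$ is an open neighborhood of the diagonal, i.e.\ an entourage, with $V \cap (K \times_q K) = \bigcap_i W_{p_i,\epsilon}$. Finally, given $\pzu \in \euG^{(0)}$, $\pzg \in \euG_\pzu$ and $x,y \in K_\pzu$ with $(x,y) \in V$, the pair $(x,y)$ lies in $V \cap (K \times_q K) = \bigcap_i W_{p_i,\epsilon}$, so $p_i(x,y) < \epsilon$ for all $i$; invariance gives $p_i(\pzg x, \pzg y) = p_i(x,y) < \epsilon$, and since $(\pzg x, \pzg y) \in K \times_q K$ we conclude $(\pzg x, \pzg y) \in \bigcap_i W_{p_i,\epsilon} \subseteq U$. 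This is precisely the uniform bound required for equicontinuity.

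I expect the main obstacle to be conceptual rather than computational: the invariance hypothesis only controls the fiberwise pseudometrics, whereas equicontinuity is a statement about the global uniformity of $K$, and these live on different spaces ($K \times_q K$ versus $K \times K$). The two places where this gap is bridged—reducing the possibly infinite family $P$ to finitely many pseudometrics with a common radius $\epsilon$, and extending a diagonal neighborhood from the closed subspace $K \times_q K$ to an entourage of all of $K$—both rely essentially on compactness of $K$ (equivalently of $K \times_q K$). Once these are in place, the invariance of the chosen $p_i$ makes the final verification immediate, and note that openness of $q$ is not needed here.
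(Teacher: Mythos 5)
Your proof is correct and follows essentially the same route as the paper: both arguments use compactness to replace the family $P$ by finitely many pseudometrics $p_1,\dots,p_n$ and a single $\epsilon>0$ whose sublevel set $\bigcap_i W_{p_i,\epsilon}$ sits inside the given entourage, and then conclude via the $\euG$-invariance of the $p_i$ (the paper phrases the compactness step dually, through the finite intersection property applied to the decreasing family $U_{F,\epsilon}$, rather than through a finite subcover of $(K\times_q K)\setminus U$). Your explicit promotion of the diagonal neighborhood in $K\times_q K$ to a genuine entourage $V$ of $K$, and your remark that the $p_\pzu$ separate points of each fiber, merely spell out details the paper leaves implicit.
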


\begin{proof}
	Pick a set $P$ as in \cref{def:struct_ext} \ref{def:struct_ext_item:pseudoiso}. For each finite subset $F \subset  P$ and $\epsilon > 0$, set 
  \begin{align*}
    U_{F,\epsilon} \defeq \left\{(x,y) \in K \times_q K \mmid \forall p\in F\colon p(x,y) < \epsilon \right\}
  \end{align*}
  and note that 
	\begin{align*}
    \bigcap_{\substack{F\subset P \text{ finite}\\ \epsilon > 0}} U_{F, \epsilon} = \Delta_K.
  \end{align*}
  We claim that for every $U\in \mathcal{U}_K$, there are a finite set 
  $F\subset P$ and an $\epsilon > 0$ such that $U_{F, \epsilon} \subset U$ which would yield 
  the claim since $U_{F, \epsilon}$ is $\euG$-invariant. In order to prove the claim, first recall 
  that $\mathcal{U}_K = \mathcal{U}_{K\times K}(\Delta_K)$ is just the neighborhood filter 
  of the diagonal. The claim then follows from the fact that if $(M_\alpha)_{\alpha\in A}$ 
  is a decreasing family of sets in a compact space $X$ and $U$ is an open neighborhood of 
  $\bigcap_{\alpha\in A} \overline{M_\alpha}$, then there is an $\alpha_0 \in A$ such that 
  $M_{\alpha_0} \subset U$ (use the finite intersection property).
\end{proof}

In analogy to group actions, we can associate an
action groupoid to a groupoid action (cf.\ \cite[Section 2.1]{DeRe2000}). We note this construction 
for later reference since it will allow to investigate the orbit structure of groupoid actions.

\begin{definition}\label{def:actiongrpd2}
  For $(K,q,\euG)$ a groupoid action we define the \emph{action groupoid $\euG \ltimes K$ of 
  $(K,q,\euG)$} as the fiber product $\euG \times_{s,q} K$ with
  \begin{align*}
    (\euG \ltimes K)^{(2)} \defeq 
    \left\{\left(\left(\pzh,\pzg x\right), \left( \pzg, x \right)\right) 
    \mmid x \in K, \pzg \in \euG_{q(x)}, \pzh \in \euG_{r(\pzg)} \right\}
  \end{align*}
  and the operations
  \begin{align*}
    \cdot &\colon (\euG \ltimes K)^{(2)} \to \euG \ltimes K, \quad 
    \left(\left(\pzh,\pzg x\right),\left(\pzg,x\right)\right) \mapsto \left(\pzh\pzg, x\right),\\
    ^{-1} &\colon \euG \ltimes K \to \euG \ltimes K, \quad 
    \left(\pzg,x\right) \mapsto \left(\pzg^{-1},\pzg x\right).
  \end{align*}
  We identify its unit space
  \begin{align*}
    (\euG \ltimes K)^{(0)} = \{(q(x),x) \mid x \in K\}
  \end{align*}
  with $K$.
\end{definition}

\section{The compact-open topology for fiber maps}
\label{sec:topology}

In order to define a uniform enveloping semigroupoid for groupoid actions, 
it is necessary to find an appropriate topological space in which to carry out the 
compactification. For the uniform enveloping semigroup of a topological dynamical system $(K,G)$, this is the space $\uC(K, K)$ 
endowed with the compact-open topology, as explained in the introduction. To generalize this, 
we extend the compact-open topology to \enquote{fibered mappings} in this section and then 
introduce the uniform enveloping semigroup in \cref{sec:enveloping_semigroupoids}.

\begin{definition}\label{def:contfiber}
  For topological spaces $X$ and $Y$ as well as continuous surjections $p \colon X \to L$ and $q\colon Y\to L'$ onto compact spaces $L$ and $L'$ we set
  \begin{align*}
      \uC_p^q(X,Y)_{l}^{l'} & \defeq \mathrm{C}\left(X_{l},Y_{l'}\right) \quad  \text{for } (l, l')\in L\times L'
  \end{align*}
  and define the set of \emph{continuous fiber maps} between $p\colon X\to L$ and $q\colon 
  Y\to L'$ as
  \begin{align*}
    \uC_{p}^{q}(X,Y) \defeq \bigcup_{l \in L, l'\in L'} \uC_{p}^{q}(K,X)_{l}^{l'}.
  \end{align*}
  
  We define \enquote{source} and \enquote{range} maps 
  \begin{align*}
    s\colon \uC_p^q(X,Y) \to L, \quad r\colon \uC_p^q(X,Y) \to L'
  \end{align*}
  by setting 
  \begin{align*}
    s(\theta) \defeq l  \quad 
    \text{and}          \quad 
    r(\theta) \defeq l' \quad  
    \text{for } \theta \in \uC_p^q(K,X)_{l}^{l'}.
  \end{align*}
  If $Y$ is a topological space and $q\colon Y\to \pt$ is the unique map onto a 
  one-point space $\pt$, we abbreviate $\uC_{p}(X, Y) \defeq \uC_{p}^{q}(X,Y)$.
  Moreover, we write $\uC_{p}(X) \defeq \uC_{p}(X,\C)$.
\end{definition}

\begin{remark}\label{rem:fibermapgroupod}
  If $L = L'$ and $p=q$ in the definition above, the set $\uC_q^q(K, K)$ with
  \begin{align*}
    \uC_q^q(K, K)^{(2)}
    \defeq \{(\theta, \eta) \in \uC_q^q(K, K)\times\uC_q^q(K, K) \mid r(\theta) = s(\theta)\}
  \end{align*}
  is a semigroupoid with composition of mappings as the product map.  We call this the \emph{semigroupoid of
 continuous fiber maps} of $q$. 
\end{remark}

The following generalization of the compact-open topology for spaces of fiber maps is 
taken from \cite[Section 1]{BoothBrown1978} where it is considered on the larger set of all 
partial maps.

\begin{definition}
  Let $p \colon X \to L$ and $q\colon Y\to L'$ be
  continuous surjections of topological spaces $X$ and $Y$ onto compact 
  spaces $L$ and $L'$. For a compact subset $C \subset X$ and an open subset $O\subset Y$,
  set 
  \begin{align*}
    \uW(C, O) \defeq \left\{ \theta\in \uC_p^q(X, Y) \mmid \theta(C) \subset O \right\} \subset \uC_p^q(X, Y)
  \end{align*}
  where $\theta(C) = \{\theta(x) \mid x\in C \textrm{ with } p(x) = s(\vartheta)\}$.
  We then define the
  \emph{compact-open topology} on $\uC_p^q(X,Y)$ to be the topology generated 
  by all the sets of the form $\uW(C, O)$.
\end{definition}

The classical characterization of convergence with respect to the compact-open topology 
for locally compact spaces readily extends to this more general context.

\begin{proposition}\label{charconv}
  Let $p\colon X \to L$, $q\colon Y\to L'$ be continuous surjections of topological spaces $X$ and $Y$ onto compact spaces $L$ and $L'$. Suppose that $X$ is locally 
  compact. Then for a net $(\theta_{\alpha})_{\alpha \in A}$ in $\uC_p^q(X,Y)$ and a
  $\theta \in \uC_p^q(X,Y)$ the following assertions are equivalent.
  \begin{enumerate}[(a)]
    \item\label{charconv_co} $\lim_\alpha \theta_\alpha = \theta$ with respect to 
          the compact-open topology.
    \item\label{charconv_nets} The following two conditions are satisfied.
      \begin{itemize}
        \item $\lim_{\alpha} s(\theta_\alpha) = s(\theta)$.
        \item If $(\theta_\beta)_{\beta \in B}$ is a subnet of $(\theta_\alpha)_{\alpha \in A}$, then
        \begin{align*}
          \lim_\beta \theta_\beta(x_\beta) = \theta(x)
        \end{align*}	
        for every net $(x_\beta)_{\beta \in B}$ in $X$ that converges to some $x \in X$ and 
        satisfies $q(x_\beta) = s(\theta_\beta)$ for every $\beta \in B$.   			
        \end{itemize}
  \end{enumerate}
  In particular, the compact-open topology is the coarsest topology on 
  $\uC_p^q(X,Y)$ such that the maps
  \begin{alignat*}{3}
  	s & \colon \uC_p^q(X,Y) \to L, \quad &\theta &\mapsto s(\theta)\\
    \operatorname{ev}&\colon \uC_p^q(X,Y)\times_{s,p} X \to Y, \quad &(\theta, x) &\mapsto \theta(x)
  \end{alignat*}
  are continuous.
\end{proposition}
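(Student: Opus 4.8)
The plan is to prove the equivalence (\ref{charconv_co})$\Leftrightarrow$(\ref{charconv_nets}) at the level of nets and then read off the \enquote{in particular} clause as a formal consequence. The statement is a fibered version of the classical fact that, over locally compact spaces, the compact-open topology coincides with the topology of continuous convergence; the genuinely new feature is that the maps $\theta_\alpha$ live on the varying fibers $X_{s(\theta_\alpha)}$, so one must separately keep track of the sources. Throughout I write $l \defeq s(\theta)$ and use freely that a subnet of a compact-open convergent net converges to the same limit, and that $\theta(C) = \theta(C\cap X_{s(\theta)})$ so that $\uW(C,O)$ is satisfied trivially whenever the fiber over $s(\theta)$ misses $C$.

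For (\ref{charconv_co})$\Rightarrow$(\ref{charconv_nets}) I would treat the two bullets of (\ref{charconv_nets}) separately. The second (continuous convergence) is the easy half: given a subnet $(\theta_\beta)$ and a net $(x_\beta)\to x$ with $p(x_\beta) = s(\theta_\beta)$, continuity of $p$ gives $p(x) = \lim_\beta p(x_\beta) = \lim_\beta s(\theta_\beta) = l$, so $\theta(x)$ is defined; then for open $O\ni\theta(x)$, continuity of $\theta$ on its fiber together with local compactness of $X$ yields a compact neighborhood $C$ of $x$ with $\theta(C)\subset O$, i.e. $\theta\in\uW(C,O)$. By (\ref{charconv_co}) the subnet is eventually in $\uW(C,O)$ and eventually $x_\beta\in C$, whence $\theta_\beta(x_\beta)\in O$ eventually. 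The first bullet, $\lim_\alpha s(\theta_\alpha) = l$, is the delicate point and the \textbf{main obstacle}: since $\uW(C,O)$ is satisfied trivially by every map whose fiber misses $C$, the only neighborhoods constraining the source are those of the form $\uW(C,\emptyset) = s^{-1}(L\setminus p(C))$. Arguing by contradiction, if some subnet had $s(\theta_\beta)$ in a compact set $D = L\setminus O_L$ with $l\in O_L$, I would need a compact $C\subset X$ with $D\subset p(C)$ and $l\notin p(C)$; then $\uW(C,\emptyset)$ is a neighborhood of $\theta$ contained in $s^{-1}(O_L)$ that the subnet eventually enters, a contradiction. Producing such a $C$ is exactly where local compactness of $X$ enters: one picks $x_d\in p^{-1}(d)\setminus p^{-1}(l)$ for $d\in D$ with compact neighborhoods inside $X\setminus p^{-1}(l)$, and (to turn these pointwise choices into a finite cover of $D$) uses openness of the bundle projection $p$, which is available for the groupoid-action bundle maps to which the proposition is applied.

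The converse (\ref{charconv_nets})$\Rightarrow$(\ref{charconv_co}) is cleaner and is where the subnet formulation pays off. Suppose $\theta_\alpha\not\to\theta$; then some basic neighborhood $\uW(C,O)\ni\theta$ is missed along a subnet $(\theta_\beta)$, so for each $\beta$ there is $x_\beta\in C\cap X_{s(\theta_\beta)}$ with $\theta_\beta(x_\beta)\notin O$. As $C$ is compact, a further subnet gives $x_\beta\to x\in C$ with $p(x_\beta) = s(\theta_\beta)$; the first bullet yields $p(x)=l$, and $x\in C\cap X_l$ together with $\theta\in\uW(C,O)$ forces $\theta(x)\in O$. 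Applying the second bullet of (\ref{charconv_nets}) to this subnet and net gives $\theta_\beta(x_\beta)\to\theta(x)\in O$, so $\theta_\beta(x_\beta)\in O$ eventually, contradicting the choice of the $x_\beta$.

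Finally, the \enquote{in particular} clause follows formally. That the compact-open topology makes $\operatorname{ev}$ continuous is immediate from (\ref{charconv_co})$\Rightarrow$(\ref{charconv_nets}) applied to nets $(\theta_\alpha,x_\alpha)\to(\theta,x)$ in the fiber product, and continuity of $s$ is precisely the first bullet. For minimality, let $\tau$ be any topology rendering $s$ and $\operatorname{ev}$ continuous and let $\theta_\alpha\to\theta$ in $\tau$; then both bullets of (\ref{charconv_nets}) hold (the first by $\tau$-continuity of $s$, the second by $\tau$-continuity of $\operatorname{ev}$ along the fiber-product net $(\theta_\beta,x_\beta)$), so (\ref{charconv_nets})$\Rightarrow$(\ref{charconv_co}) gives $\theta_\alpha\to\theta$ in the compact-open topology. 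Hence every $\tau$-convergent net converges to the same limit in the compact-open topology, which is therefore coarser than $\tau$.
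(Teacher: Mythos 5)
Your proof follows the paper's own skeleton for the equivalence: your argument for (b) $\Rightarrow$ (a) (a subnet escaping some $\uW(C,O)\ni\theta$, witnesses $x_\beta\in C\cap X_{s(\theta_\beta)}$ with $\theta_\beta(x_\beta)\notin O$, a convergent subnet by compactness of $C$, contradiction via continuous convergence) and your argument for the continuous-convergence bullet of (a) $\Rightarrow$ (b) (a compact neighborhood $C$ of $x$ with $C\cap X_{s(\theta)}\subset\theta^{-1}(O)$, so that $\uW(C,O)$ is a neighborhood of $\theta$) are exactly the paper's; like the paper, you must prove the source bullet first, since the continuous-convergence argument needs $p(x)=s(\theta)$. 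The genuine difference is the source-convergence bullet. The paper passes to a subnet whose sources converge to some $l'\neq s(\theta)$, extracts a convergent net of fiber points $x_\beta\in X_{s(\theta_\beta)}$, and uses a single compact neighborhood of its limit disjoint from $X_{s(\theta)}$; you instead manufacture one compact set $C$ with $L\setminus O_L\subset p(C)$ and $s(\theta)\notin p(C)$ by covering the compact set $L\setminus O_L$ with finitely many sets $p(C_d)$, which is where you need $p$ to be open (so that $p(\operatorname{int}C_d)$ is a neighborhood of $d$).

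You flag this openness assumption yourself, and you are right that it is not among the hypotheses; but this is a defect of the statement rather than of your argument. The paper's own extraction of a convergent net $(x_\beta)$ is equally unjustified under the stated hypotheses: it needs either $X$ compact (so arbitrary choices of fiber points admit a convergent subnet) or $p$ open (so the convergent net of sources can be lifted, as in \cref{rem:quotientmapopenchar}). In fact, (a) $\Rightarrow$ (b) is false as literally stated. Take $X\defeq [0,1]\times\N_0$ (a disjoint union of copies of $[0,1]$, hence locally compact), $L\defeq[0,1]$, $p(t,0)\defeq 0$ and $p(t,n)\defeq \tfrac{1}{n+1}+\tfrac{nt}{n+1}$ for $n\geq 1$, and let $Y=L'$ be a one-point space, so that $s\colon\uC_p^q(X,Y)\to L$ is a bijection and the only nontrivial subbasic open sets are $\uW(C,\emptyset)=s^{-1}(L\setminus p(C))$. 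Every compact $C\subset X$ meets only finitely many copies, so $p(C)\subset\{0\}\cup[\tfrac{1}{N+1},1]$ for some $N$; hence the sequence $(\theta_k)$ with $s(\theta_k)=\tfrac{1}{k+1}$ is eventually in every subbasic neighborhood of every element and therefore compact-open converges to \emph{every} $\theta\in\uC_p^q(X,Y)$, although its sources converge only to $0$. So the proposition must be read with \enquote{$p$ open} (which the paper's subsequent remark announces \enquote{will henceforth always be the case}) or with \enquote{$X$ compact} (which holds in all of the paper's applications, where $X=K$). Under the first repair your proof is complete; under the second, replace your covering argument by the single set $C\defeq p^{-1}(L\setminus O_L)$, which is then compact, and it is complete as well. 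Finally, your derivation of the \enquote{in particular} clause is correct and is a welcome addition, since the paper asserts it without proof.
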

\begin{proof}
  First, suppose that $(\theta_\alpha)_{\alpha\in A}$ and $\theta$ satisfy \ref{charconv_nets}
  and suppose that $(\theta_\alpha)_{\alpha \in A}$ does not converge to $\theta$. Then there 
  are a compact set $C\subset X$ and an open set $O\subset Y$ such that $\theta\in\uW(C, O)$ 
  and such that $(\theta_\alpha)_{\alpha\in A}$ does not eventually lie in $\uW(C, O)$. This means
  that there is a subnet $(\theta_\beta)_{\beta \in B}$ of $(\theta_\alpha)_{\alpha\in A}$ satisfying
  $\theta_\beta\not\in \uW(C, O)$ for each $\beta \in B$. After again passing to a subnet, we may thus 
  assume that there is a convergent net $(x_\beta)_{\beta\in B}$ in $C$ such that $x_\beta\in X_{s(\theta_\beta)}$
  and $\theta_\beta(x_\beta)\not\in O$ for each $\beta\in B$. However, by \ref{charconv_nets} 
  $\lim_\beta \theta_{\beta}(x_\beta) = \theta(x) \in O$, a contradiction. Hence, \ref{charconv_nets}
  $\implies$ \ref{charconv_co}.
  
  Now, suppose that $(\theta_\alpha)_{\alpha\in A}$ is a net in $\uC_p^q(X, Y)$ 
  converging to $\theta\in\uC_p^q(X, Y)$ in the compact-open topology. To see that 
  $\lim_\alpha s(\theta_\alpha) = s(\theta)$, suppose
  that $s(\theta_\alpha) \not\to s(\theta)$. We construct a compact set $C\subset X$ such that 
  $\theta\in \uW(C, \emptyset)$ and a subnet of $(\theta_\alpha)_{\alpha\in A}$ which avoids
  $\uW(C, \emptyset)$. To do this, first use the compactness of $L$ to find a 
  subnet $(\theta_\beta)_{\beta\in B}$
  of $(\theta_\alpha)_{\alpha\in A}$ such that $(s(\theta_\beta))_{\beta \in B}$ converges
  to another point than $s(\theta)$. Moreover, by again passing to a subnet, one may 
  assume that
  there is a net $(x_\beta)_{\beta\in B}$ in $X$ converging to some $x\in X$ such that 
  $x_\beta \in X_{s(\theta_\beta)}$ for each $\beta \in B$. Since by assumption, $p(x_\beta) = s(\theta_\beta)$ converges to a point 
  different from $s(\theta)$, $p(x) \neq s(\theta)$. Therefore, there is a compact neighborhood 
  $C\in\mathcal{U}(x)$ with $C\cap X_{s(\theta)} = \emptyset$. Now $\theta\in\uW(C, \emptyset)$
  whereas eventually $\theta_\beta\not\in \uw(C, \emptyset)$ since $x_\beta \to x$, a contradiction.
  Therefore, $\lim_\alpha s(\theta_\alpha) = s(\theta)$.
  
  To establish the second part of \ref{charconv_nets}, let $(\theta_\beta)_{\beta\in B}$
  and $(x_\beta)_{\beta\in B}$ be as in \ref{charconv_nets}. Let $O\in\mathcal{U}(\theta(x))$
  be a neighborhood of $\theta(x)$. Then $V \defeq \theta^{-1}(O)$ is a neighborhood of $x$
  with respect to the subspace topology on $K_{s(\theta)}$. Therefore, there is a compact neighborhood
  $C\in\mathcal{U}(x)$ such that $C\cap K_{s(\theta)} \subset V$. Because $x_\beta \to x$, 
  $(x_\beta)_{\beta\in B}$ eventually lies in $C$ and since $\theta_\beta \to \theta$ in the 
  compact-open topology, we conclude that $(\theta_\beta)_{\beta\in B}$ eventually lies in 
  $\uW(C, O)$. Since $O$ was an arbitrary neighborhood of $\theta(x)$, this shows that 
  $\lim_\beta \theta_\beta(x_\beta) = \theta(x)$.
\end{proof}

\begin{remark}\label{rem:quotientmapopenchar}
  In general, the compact-open topology on $\uC_p^q(X, Y)$ is not Hausdorff. In fact, it 
  is not difficult to infer from the characterization in \cref{charconv} that it is Hausdorff if 
  and only if $p$ is open, as it will henceforth always be the case. To show that the compact-open limit 
  $\theta$ of a net $(\theta_\alpha)_{\alpha\in A}$
  is unique, it suffices to show for every $x\in X_{s(\theta)}$ that $\theta(x)$ is uniquely determined.
  To see that openness implies this, one can use the observation that a continuous surjection 
  $p\colon X \to L$ between a locally compact space $X$ and 
  a compact space $L$ is open if and only if the following condition is fullfilled: For every convergent net 
  $(l_\alpha)_{\alpha \in A}$ in $L$ with limit $l\in L$ and every $x\in X_l$, there are a subnet 
  $(l_\beta)_{\beta \in B}$ of $(l_\alpha)_{\alpha \in A}$ and a net $(x_\beta)_{\beta \in B}$ 
  in $X$ that converges to $x$ and
  covers $(l_\beta)_{\beta \in B}$ in the sense that $p(x_\beta) = l_\beta$ for every $\beta \in B$.
  We will make use of this observation at several more occasions.
\end{remark}

In order to prove a generalization of the Arzel\`a-Ascoli theorem below in \cref{arzelaascoli},
we will need an equivalent description of the compact-open topology. To find another natural 
way to topologize $\uC_p^q(X, Y)$, observe that 
an element $\theta \in \uC_{p}^{q}(X, Y)$ may be identified with its graph 
$\Gr(\theta) \subset X\times Y$. Therefore, $\uC_{p}^{q}(X,Y)$ may be regarded as 
a subspace of the space $\mathscr{C}(X\times Y)$ of closed subsets of $X\times Y$, 
on which there exist many topologies, e.g., the Vietoris topology.

\begin{definition}
  Let $X$ be a topological space and $\mathscr{C}(X)$ the set of its nonempty closed subsets. 
  The \emph{Vietoris topology} on $\mathscr{C}(X)$ is the topology generated by the sets
    \begin{align*}
      &U^- \defeq \left\{A \in \mathscr{C}(X) \mid A \cap U \neq \emptyset\right\},\\
      &U^+ \defeq \left\{A \in \mathscr{C}(X) \mid A \subset U \right\}
    \end{align*}
  for open subsets $U \subset X$.
\end{definition}

\begin{remark}\label{compactvietoris}
  It is known that if $X$ is a Hausdorff space, then so is $\mathscr{C}(X)$, see \cite[Theorem 4.9]{Mich1951}. 
  If $X$ is compact, then $\mathscr{C}(X)$ is also compact, see \cite[Theorem 4.9]{Mich1951} or 
  \cite[Proposition 5.A.3]{ElEl2014}. If, additionally, 
  $X$ is a metric space, the Vietoris topology coincides with the topology induced by
  the Hausdorff metric, see \cite[Theorem 3.4 and Proposition 3.6]{Mich1951} or \cite[Exercise 5.4]{ElEl2014}.
\end{remark}

\begin{definition}\label{def:comopen}
  If $p \colon X \to L$ and $q\colon Y\to L'$ are
  continuous surjections of topological spaces $X$ and $Y$ onto compact spaces $L$ and $L'$, we define the
  \emph{Vietoris topology} on $\uC_p^q(X,Y)$ to be 
  the initial topology with respect to the map
  \begin{align*}
    \operatorname{Gr}\colon\uC_p^q(X,Y) \to \mathscr{C}(X\times Y), \quad 
    \theta \mapsto \operatorname{Gr}(\theta)
  \end{align*}
  where $\mathscr{C}(X\times Y)$ is equipped with the Vietoris topology.
\end{definition}

\begin{remark}
  In \cite{HolaZsilinszky2010}, a slightly different version of the compact-open
  topology on $\uC_p^q(X, Y)$ is considered which, additionally, uses nonempty open sets $U\subset X$ 
  and adds all the sets of the form
  \begin{align*}
    [U] \defeq \left\{ f\in \uC_p^q(X, Y) \mmid f^{-1}(Y)\cap U \neq \emptyset \right\}
  \end{align*}
  to generate the topology. If $X$ is compact and $p$ is open, however, $p^{-1}(p(U))^c \subset X$ is compact
  and $[U] = \uW(p^{-1}(p(U))^c, \emptyset)$. Therefore, if $X$ is compact, our compact-open
  topology coincides with the one considered in \cite{HolaZsilinszky2010}. 
  In particular, we can note the following theorem which is formulated more generally in 
  \cite[Proposition 2.2]{HolaZsilinszky2010} for later use.
\end{remark}

\begin{theorem}
  Let $p \colon X \to L$ and $q\colon Y\to L'$ be
  continuous surjections of topological spaces $X$ and $Y$ onto compact spaces $L$ and $L'$.
  If $X$ and $Y$ are compact and $p$ is open, then the Vietoris topology and the compact-open topology on 
  $\uC_p^q(X, Y)$ coincide.
\end{theorem}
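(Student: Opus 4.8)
The plan is to exploit the two \enquote{universal} descriptions already at hand: by the last part of \cref{charconv}, the compact-open topology $\tau_{co}$ is the coarsest topology on $\uC_p^q(X,Y)$ for which both $s$ and $\operatorname{ev}$ are continuous, while by \cref{def:comopen} the Vietoris topology $\tau_V$ is the coarsest topology for which $\Gr$ is continuous into $\mathscr{C}(X\times Y)$ (which is compact Hausdorff by \cref{compactvietoris}). Consequently it suffices to prove two inclusions: (A) the map $\Gr\colon(\uC_p^q(X,Y),\tau_{co})\to\mathscr{C}(X\times Y)$ is continuous, which forces $\tau_V\subseteq\tau_{co}$; and (B) the maps $s$ and $\operatorname{ev}$ are continuous for $\tau_V$, which forces $\tau_{co}\subseteq\tau_V$. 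I would verify each continuity assertion on the level of nets, using the convergence criterion of \cref{charconv} for $\tau_{co}$ and the standard description of Vietoris convergence via the subbasic sets $U^-$ and $U^+$ for $\tau_V$.

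For (A), I would take a net $\theta_\alpha\to\theta$ in $\tau_{co}$ and check both halves of Vietoris convergence of the graphs. The \emph{upper} half---for every open $W\supseteq\Gr(\theta)$ eventually $\Gr(\theta_\alpha)\subseteq W$---follows by contradiction: a cofinal failure produces points $x_\beta\in X_{s(\theta_\beta)}$ with $(x_\beta,\theta_\beta(x_\beta))\notin W$, and compactness of $X$ and $Y$ lets me pass to a subnet with $x_\beta\to x$ and $\theta_\beta(x_\beta)\to y$; then \cref{charconv} forces $y=\theta(x)$ and $p(x)=s(\theta)$, so $(x,\theta(x))\in\Gr(\theta)\subseteq W$, contradicting $(x_\beta,\theta_\beta(x_\beta))\notin W$. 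The \emph{lower} half---for every open $W$ with $\Gr(\theta)\cap W\neq\emptyset$ eventually $\Gr(\theta_\alpha)\cap W\neq\emptyset$---is where openness of $p$ enters: given $(x,\theta(x))\in W$, I use $s(\theta_\alpha)\to s(\theta)=p(x)$ together with the net-lifting characterization of open maps recalled in \cref{rem:quotientmapopenchar} to produce, along a subnet, points $x_\gamma\to x$ with $p(x_\gamma)=s(\theta_\gamma)$; then \cref{charconv} gives $\theta_\gamma(x_\gamma)\to\theta(x)$, so $(x_\gamma,\theta_\gamma(x_\gamma))\in\Gr(\theta_\gamma)\cap W$ eventually.

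For (B), I would take a net with $\Gr(\theta_\alpha)\to\Gr(\theta)$ in $\tau_V$ and verify the two conditions of \cref{charconv}. Continuity of $s$ uses only the upper half: for an open $O\ni s(\theta)$ the cylinder $p^{-1}(O)\times Y$ is open and contains $\Gr(\theta)$, so eventually $\Gr(\theta_\alpha)\subseteq p^{-1}(O)\times Y$; choosing any point of the (nonempty, by surjectivity of $p$) fiber $X_{s(\theta_\alpha)}$ then yields $s(\theta_\alpha)\in O$. For continuity of $\operatorname{ev}$, consider $(\theta_\alpha,x_\alpha)\to(\theta,x)$ in the fiber product, i.e.\ $\theta_\alpha\to\theta$ in $\tau_V$ and $x_\alpha\to x$ with $p(x_\alpha)=s(\theta_\alpha)$. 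Since $Y$ is compact it is enough to show $\theta(x)$ is the only cluster point of $(\theta_\alpha(x_\alpha))$; if $\theta_\beta(x_\beta)\to y$ along a subnet, then $(x_\beta,\theta_\beta(x_\beta))\to(x,y)$, and if $(x,y)\notin\Gr(\theta)$ I separate the point from the closed set $\Gr(\theta)$ by disjoint opens $V\ni(x,y)$ and $W\supseteq\Gr(\theta)$ (using that $X\times Y$ is compact Hausdorff, hence regular) and reach a contradiction with the upper Vietoris condition. Hence $y=\theta(x)$, and so $\operatorname{ev}(\theta_\alpha,x_\alpha)\to\theta(x)$.

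The main obstacle is the lower Vietoris step in (A): it is the only place where openness of $p$ is genuinely used, through the net lifting of \cref{rem:quotientmapopenchar}, and it is exactly this point that fails for non-open $p$---consistently with the fact noted in \cref{rem:quotientmapopenchar} that $\tau_{co}$ is Hausdorff if and only if $p$ is open, whereas $\tau_V$ is always Hausdorff here. Everything else is a routine, if careful, juggling of subnets enabled by the compactness of $X$ and $Y$; the whole argument is the graph-theoretic special case of \cite[Proposition 2.2]{HolaZsilinszky2010}.
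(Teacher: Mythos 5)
Your proof is correct, but it takes a genuinely different route from the paper: the paper does not prove this theorem at all, it imports it from the more general \cite[Proposition 2.2]{HolaZsilinszky2010}, after observing in the preceding remark that for $X$ compact and $p$ open the extra subbasic sets $[U]$ used there are redundant, since $[U] = \uW(p^{-1}(p(U))^c,\emptyset)$. Your argument is instead self-contained and built entirely from machinery already in the paper: you split the equality into $\tau_V\subseteq\tau_{co}$ and $\tau_{co}\subseteq\tau_V$ via the two \enquote{initial/coarsest topology} descriptions (\cref{def:comopen} for the Vietoris topology, the final assertion of \cref{charconv} for the compact-open topology), and then verify the four continuity statements by net arguments; these checks are sound, and you correctly isolate where the hypotheses enter --- openness of $p$ only in the lower Vietoris condition via the net-lifting characterization of \cref{rem:quotientmapopenchar}, compactness of $X$ and $Y$ in the subnet extractions and in \cref{compactvietoris}. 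What the paper's citation buys is brevity and a more general statement (partial maps, weaker hypotheses); what your proof buys is a self-contained argument and a transparent accounting of exactly why openness is needed, consistent with the Hausdorffness discussion in \cref{rem:quotientmapopenchar}. One point you should phrase more carefully: in the lower-Vietoris step, \cref{rem:quotientmapopenchar} only produces a lifting along a \emph{subnet}, so as written you only conclude that some subnet eventually meets $W$; to get \enquote{eventually} along the original net, argue by contradiction (if $\Gr(\theta_\alpha)\cap W=\emptyset$ frequently, pass to the subnet realizing this, lift along a further subnet, and derive a contradiction from $\theta_\gamma(x_\gamma)\to\theta(x)\in W$) --- the same standard wrapper you already use in the upper-Vietoris step. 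With that fixed, the proof is complete.
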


\section{Uniform enveloping semigroupoids}
\label{sec:enveloping_semigroupoids}

We are now ready to introduce the uniform enveloping semigroupoid
of a groupoid action. The main result of this section is \cref{thm:pseudoisochar}
which states that, under appropriate assumptions, $\euE_\uu(K,q,\euG)$ is compact 
if and only if the groupoid action $(K,q,\euG)$ is pseudoisometric.

To define uniform enveloping semigroupoids, we regard the semigroupoid $\uC_q^q(K, K)$ of fiber maps 
introduced in \cref{rem:fibermapgroupod} as a topological semigroupoid with respect to the 
compact-open topology. This allows to define the uniform enveloping semigroupoid 
of a set of fiber maps.

\begin{definition}
  Let $q\colon K \to L$ be an open, continuous surjection of compact spaces and
  let $\euF$ be a subset of the topological semigroupoid $\uC_q^q(K, K)$. Then 
  the \emph{uniform enveloping semigroupoid} $\euE_\uu(\euF)$ of 
  $\euF$ is defined to be the smallest closed subsemigroupoid of $\uC_q^q(K, K)$
  containing $\euF$.
\end{definition}

\begin{remark}
  Note that this definition makes sense since the intersection of a family of closed subsemigroupoids 
  of a topological semigroupoid is again a closed subsemigroupoid.
\end{remark}

\begin{definition}\label{def:uniformenvsgrpd}
	Let $(K, q, \euG, \phi)$ be a groupoid action and consider the \emph{transition semigroupoid}
	$\euS(K, q, \euG, \phi)$ given by
		\begin{align*}
			\euS(K, q, \euG, \phi) \defeq 
			\left\{\varphi_\pzg \colon K_{s(\pzg)}\rightarrow K_{r(\pzg)} \mmid \pzg \in \euG \right\} 
			\subset \mathrm{C}_q^q(K,K).
		\end{align*}  
	We call 
	\begin{align*}
			 \euE_\uu(K, q, \euG, \phi) \defeq \euE_\uu(\euS(K, q, \euG, \phi))
	\end{align*}
	the \emph{uniform enveloping semigroupoid of $(K, q, \euG, \phi)$}.
\end{definition}

\begin{example}
  Let $(K, G, \phi)$ be a topological dynamical system and interpret it as a groupoid action 
  $(K, q, G, \phi)$ of $G$  (see \cref{ex:grpaction}). Then 
  \begin{align*}
    \euS(K, q, G, \phi) = \left\{ \phi_g \mid g\in G\right\} \subset \uC(K, K)
  \end{align*}
  is the transition group of $(K, G)$ and the enveloping semigroupoid
  \begin{align*}
    \euE_\uu(K, q, G, \phi) = \euE_\uu(\euS(K, q, G, \phi)) \subset \uC(K, K)
  \end{align*}
  is precisely the uniform enveloping semigroup $\uE_\uu(K, G)$. Therefore, $\euE_\uu$ 
  generalizes the uniform enveloping semigroup to arbitrary groupoid actions.
\end{example}

\begin{example}
  Let $q\colon (K, G, \phi) \to (L, G, \psi)$ be an open extension of topological dynamical 
  systems. As noted in \cref{ex:extensiongrpdaction}, we can equivalently regard
  the extension as an action $(K, q, G \ltimes L, \eta_\phi)$ of the action groupoid $G\ltimes L$.
  For this groupoid action, the transition groupoid is
  \begin{align*}
    \euS(K, q, G \ltimes L, \eta_\phi) = \left\{ \phi_g|_{K_l} \mmid g\in G, l \in L \right\} \subset \uC_q^q(K, K)
  \end{align*}
  and the uniform enveloping semigroupoid is
  \begin{align*}
    \euE_\uu(K, q, G \ltimes L, \eta_\phi) 
    = \euE_\uu\left(\left\{ \phi_g|_{K_l} \mmid g\in G, l \in L \right\}\right) 
    \subset \uC_q^q(K, K).
  \end{align*}
  We will use the notations $\euS(q)$ and $\euE_\uu(q)$ to abbreviate these (semi)groupoids.
\end{example}

\begin{example}\label{ex:disc}
  Consider the rotation on the disc with varying speed of rotation, i.e., the system 
  $(K, \phi)$ given by $K \defeq \D = \{ z\in \C \mid |z| \leq 1\}$ and
  \begin{align*}
    \phi\colon K\to K, \quad \phi(z) = \ue^{\ui |z|}z.
  \end{align*}
  If we set $(L, \psi) \defeq ([0,1], \id_{[0,1]})$, then 
  \begin{align*}
    q\colon (K, \phi) \to (L, \psi), \quad z \mapsto |z|
  \end{align*}
  defines an isometric extension between the two systems. If, for $l\in [0,1]$ and 
  $\alpha\in\T$, one lets 
  \begin{align*}
    \theta_{\alpha, l} \colon K_l \to K_l, \quad z \mapsto \alpha z
  \end{align*}
  be the rotation by $\alpha$ on $K_l$, then it is instructive to verify that
  \begin{align*}
    \euE_\uu(q) = \left\{ \vartheta_{\alpha, l} \mid \alpha \in \T, l\in L \right\}.
  \end{align*}
  In particular, $\euE_\uu(q)$ is a compact groupoid. This should be 
  contrasted with the much larger Ellis semigroup $\uE(K, G)$ of the system 
  which contains a homeomorphic copy of $\beta\N$ since $(K, G)$ is not tame (see, e.g., \cite[Theorem 1.2]{Glas2006a}).
\end{example}

\begin{example}\label{ex:skewgroupoid}
  Let $\alpha\in\T$ and
  \begin{alignat*}{3}
    \psi_\alpha \colon& \T \to \T, & \quad \psi_\alpha(x) &\defeq \alpha x, \\
    \phi_\alpha \colon& \T^2 \to \T^2, & \quad \phi_\alpha(x, y) &\defeq (\alpha x, xy)
  \end{alignat*}
  be the rotation by $\alpha$ and the corresponding skew rotation.
  Then $q\colon (\T^2, \psi_\alpha) \to (\T, \psi_\alpha)$, 
  $q(x, y) = x$ defines an isometric extension between the two systems. If $\alpha$ is rational,
  then $\phi_\alpha$ is periodic with some period $N\in\N$ and so
  \begin{align*}
    \euE_\uu(q) 
    = \euS(q) 
    = \left\{ \phi_\alpha^n|_{\T^2_l} \mmid l\in\T, n=1, \dots, N \right\}.
  \end{align*}
  Clearly, $\euE_\uu(q)$ is a groupoid. Moreover, for fixed $n\in\N$, it follows 
  from the characterization \cref{charconv} $\euE_\uu(q)$ that is compact. In case $\alpha$ is 
  irrational, 
  $(\T^2, \phi_\alpha)$ is minimal. Therefore, if one defines for $(\beta, \gamma)\in\T^2$
  \begin{align*}
    \theta_{(\beta, \gamma)}\colon\T^2 \to \T^2, \quad (x, y) \mapsto (\beta x, \gamma y),
  \end{align*}
  then 
  \begin{align*}
    \euE_\uu(q) = \left\{ \theta_{(\beta,\gamma)} \mmid (\beta,\gamma)\in\T^2 \right\}.
  \end{align*}
  In particular $\euE_\uu(q)$ is a compact groupoid.
\end{example}

\begin{example}\label{ex:envelopingrelation}
  Let $K$ be a compact space, consider the pair groupoid $K \times K$ and let 
  $R \subset K\times K$ be a full subgroupoid, i.e., an equivalence relation on $L$ 
  (see \cref{ex:pairgroupoid}). Then $\euE_\uu(R)$ is the smallest closed equivalence 
  relation on $K$ that contains $R$. Many important equivalence relations in topological 
  dynamics such as the equicontinuous structure relation or the distal structure relation
  arise in this way.
  
  As a special case, let $\euG$ be a topological groupoid
  with compact unit space and $(\euG^{(0)}, \id_{\euG^{(0)}}, \euG)$ be its action on its 
  unit space (see \cref{ex:unitspaceaction}). Then $\euE_\uu(\euG^{(0)}, \id_{\euG^{(0)}}, \euG)$ 
  lies in the groupoid
  \begin{align*}
    \uC_{\id_{\euG^{(0)}}}^{\id_{\euG^{(0)}}}(\euG^{(0)},\euG^{(0)})
  \end{align*}
  which is canonically isomorphic to the pair groupoid $\euG^{(0)}\times\euG^{(0)}$. Therefore,
  we can identify the groupoid $\euE_\uu(\euG^{(0)}, \id_{\euG^{(0)}}, \euG)$ with a closed equivalence
  relation on $\euG^{(0)}\times\euG^{(0)}$. This equivalence relation is given by
  $\euE_\uu(R_\euG)$ where $R_\euG \subset \euG^{(0)}\times\euG^{(0)}$ is the orbit relation
  $R_\euG = (r, s)(\euG) \subset \euG^{(0)}\times\euG^{(0)}$ 
  of the action of $\euG$ on its unit space, see \cref{ex:orbitrelationmorphism}.
  Therefore, $\euE_\uu(\euG^{(0)}, \id_{\euG^{(0)}}, \euG)$ can be identified with
  the smallest closed equivalence relation on $\euG^{(0)}\times\euG^{(0)}$ that contains the 
  orbit relation $R_\euG$.
\end{example}

\begin{remark}\label{rem:closurenotsgrpd}
  Note that the definition of the uniform enveloping semigroupoid $\euE_\uu(K,q,\euG)$ of a groupoid action $(K,q,\euG)$
  is more intricate than that of the uniform enveloping semigroup $\uE(K,G)$ of a group action $(K,G)$: The uniform 
  enveloping semigroup is defined as the closure of a semigroup and it turns out that this closure is automatically
  again a semigroup. In contrast to this, the following example demonstrates that $\euE_\uu(K, q, \euG)$ is
  generally not merely the closure of $\euS(K, q, \euG)$.
\end{remark}

\begin{example}\label{ex:twoxsquared}
  Consider the dynamical systems $(L, \psi)$ defined by $L \defeq [-1, 1]$, $\psi(x) \defeq \sign(x)x^2$ for $x \in L$
  and $(K, \phi)$ given by $K \defeq [-1, 1]\times \Z_2$, $\phi(x, g) \defeq (\psi(x), g+1)$ for $(x,g) \in K$.
  Then the map
  \begin{align*}
    q\colon (K, \phi) \to (L, \psi), \quad (x, g) \mapsto x 
  \end{align*}
  defines an isometric extension. The uniform enveloping semigroupoid 
  of $q$ is given by 
  \begin{align*}
    \euE_\uu(q) = \left\{ \theta_{x, y, h} \mmid x, y\in L, h\in \Z_2\right\}
  \end{align*}
  where $\theta_{x, y, h}$ denotes the function
  \begin{align*}
    \theta_{x, y, h} \colon K_x \to K_y, \quad (x, g) \mapsto (y, g + h).
  \end{align*}
  In contrast to this,
  \begin{align*}
    \overline{\euS(q)} = \euS(q) 
    &\cup 
    \left\{\theta_{x, 0, h}, \theta_{0,x, h} \mmid x \in [-1, 1], h\in \Z_2\right\} \\  
    &\cup  \left\{\theta_{-1, y, h}, \theta_{y, -1, h} \mmid y \in [-1,0], h\in \Z_2\right\} \\
    &\cup  \left\{\theta_{1, y, h}, \theta_{y,1, h} \mmid y \in [0,1], h\in \Z_2\right\}.
  \end{align*}
  Thus, the inclusion $\overline{\euS(q)} \subset \euE_\uu(q)$ is generally strict.
\end{example}

\subsection{Characterizing compactness.} Usually, the uniform enveloping semigroupoid 
is neither compact, nor a groupoid. We therefore try to answer the 
question: When is the uniform enveloping semigroupoid 
actually a compact groupoid?
As a first step to address this problem, we observe that the groupoid property
follows automatically once we have ensured compactness.

\begin{proposition}\label{compactimpliesgroupoid}
 Let $(K,q,\euG)$ be a groupoid action. If $\euE_\uu(K,q,\euG)$ is compact, then it is a compact groupoid, i.e., every 
  $\theta \in \euE_\uu(K,q,\euG)$ has an inverse $\theta^{-1} \in \euE_\uu(K,q,\euG)$ and the mapping 
  $\,^{-1}\colon \euE_\uu(K,q,\euG) \to \euE_\uu(K,q,\euG)$ is a homeomorphism.
\end{proposition}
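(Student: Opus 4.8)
Write $\euE \defeq \euE_\uu(K,q,\euG)$, which by definition is the smallest closed subsemigroupoid of $\uC_q^q(K,K)$ containing the transition semigroupoid $\euS \defeq \euS(K,q,\euG)$. Since $\euE$ can be strictly larger than $\overline{\euS}$ (see \cref{ex:twoxsquared}), I will not try to produce inverses by approximating with transition maps; instead the plan is to show that the set of invertible elements
\begin{align*}
  \euE^{\mathrm{inv}} \defeq \bigl\{ \theta \in \euE \mmid \theta\colon K_{s(\theta)} \to K_{r(\theta)} \text{ is bijective and } \theta^{-1} \in \euE \bigr\}
\end{align*}
is itself a closed subsemigroupoid containing $\euS$; minimality of $\euE$ then forces $\euE^{\mathrm{inv}} = \euE$. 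The two topological inputs I would record first, both immediate from \cref{charconv}, are that composition is jointly continuous on composable pairs (this is precisely what makes $\uC_q^q(K,K)$ a topological semigroupoid) and that $s$ and $r$ are continuous. I would also note that $l_\alpha \to l$ in $L$ implies $\id_{K_{l_\alpha}} \to \id_{K_l}$, since $\id_{K_{l_\gamma}}(x_\gamma) = x_\gamma \to x$ for any net $x_\gamma \to x$ with $q(x_\gamma) = l_\gamma$. Throughout I use that the compact--open topology is Hausdorff because $q$ is open (\cref{rem:quotientmapopenchar}), so limits and inverses are unique.

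That $\euS \subseteq \euE^{\mathrm{inv}}$ is clear: each $\varphi_\pzg$ is a homeomorphism with inverse $\varphi_{\pzg^{-1}} \in \euS$ by the action axioms of \cref{definition:groupoidaction}. That $\euE^{\mathrm{inv}}$ is a subsemigroupoid is formal: for composable $\theta,\eta \in \euE^{\mathrm{inv}}$ (so $s(\theta) = r(\eta)$) the map $\theta\circ\eta$ is bijective with inverse $\eta^{-1}\circ\theta^{-1}$, which again lies in $\euE$ because $\theta^{-1},\eta^{-1} \in \euE$ are composable in this order. The crux is closedness. Given a net $(\theta_\alpha)$ in $\euE^{\mathrm{inv}}$ with $\theta_\alpha \to \theta \in \euE$, I would use compactness of $\euE$ to pass to a subnet along which $\theta_\alpha^{-1} \to \eta \in \euE$. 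Continuity of $s,r$ gives $s(\eta) = \lim r(\theta_\alpha) = r(\theta)$ and $r(\eta) = \lim s(\theta_\alpha) = s(\theta)$, so $\theta$ and $\eta$ are composable in both orders. Now $\theta_\alpha \circ \theta_\alpha^{-1} = \id_{K_{r(\theta_\alpha)}}$: the left-hand side converges to $\theta\circ\eta$ by joint continuity of composition, while the right-hand side converges to $\id_{K_{r(\theta)}}$ by continuity of the identities, whence $\theta\circ\eta = \id_{K_{r(\theta)}}$ and, symmetrically, $\eta\circ\theta = \id_{K_{s(\theta)}}$. Thus $\theta$ is a homeomorphism with $\theta^{-1} = \eta \in \euE$, i.e.\ $\theta \in \euE^{\mathrm{inv}}$, and $\euE^{\mathrm{inv}}$ is closed.

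By minimality we conclude $\euE^{\mathrm{inv}} = \euE$, so every $\theta \in \euE$ has an inverse $\theta^{-1} \in \euE$; note that the units $\id_{K_{r(\theta)}} = \theta\circ\theta^{-1}$ then also lie in $\euE$, confirming that $\euE$ is a groupoid. For continuity of inversion (which, being an involution, is all that remains), I would run the same computation: for $\theta_\alpha \to \theta$, any convergent subnet of $(\theta_\alpha^{-1})$ has a limit $\eta$ satisfying $\theta\circ\eta = \id_{K_{r(\theta)}}$ and $\eta\circ\theta = \id_{K_{s(\theta)}}$, hence $\eta = \theta^{-1}$; as $\euE$ is compact Hausdorff, every convergent subnet of $(\theta_\alpha^{-1})$ having limit $\theta^{-1}$ forces $\theta_\alpha^{-1} \to \theta^{-1}$.

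The genuinely non-formal point---and the main obstacle---is the interplay between compactness and the varying fibers. Because $\euE$ properly contains $\overline{\euS}$, invertibility cannot be read off pointwise; it must be propagated through composition and closure, which works only because composition is \emph{jointly} continuous and because the identities $\id_{K_{r(\theta_\alpha)}}$ over the moving fibers converge to the correct identity $\id_{K_{r(\theta)}}$ rather than to some degenerate limit. Verifying these two convergence facts via the net characterization \cref{charconv} is the technical heart of the argument; everything else is bookkeeping with sources and ranges.
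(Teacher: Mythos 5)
Your proof is correct and follows essentially the same route as the paper's: define the set of invertible elements, observe it is a subsemigroupoid containing $\euS(K,q,\euG)$, prove it is closed by passing (via compactness) to a subnet along which the inverses converge, conclude by minimality of $\euE_\uu(K,q,\euG)$, and obtain continuity of inversion from uniqueness of cluster points in a compact Hausdorff space. The only cosmetic difference is in how the subnet limit is identified with $\theta^{-1}$: you use joint continuity of composition together with convergence of the identity maps $\id_{K_{r(\theta_\alpha)}} \to \id_{K_{r(\theta)}}$, while the paper applies the net characterization of \cref{charconv} and the openness of $q$ directly; these amount to the same computation.
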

\begin{proof}
  Consider the set $M$ of all elements $\theta \in \euE_\uu(K,q,\euG)$ having an inverse $\theta^{-1}$ in $\euE_\uu(K,q,\euG)$. 
  Then $M$ is certainly closed under compositions and contains $\euS(K,q,\euG)$. To see 
  that $M= \euE_\uu(K,q,\euG)$ it therefore 
  suffices to show that $M$ is closed in $\euE_\uu(K,q,\euG)$. Pick a net 
  $(\theta_{\alpha})_{\alpha \in A}$ in $M$ converging to $\theta \in \euE_\uu(K,q,\euG)$. 
  Passing to a subnet, we may assume that $(\theta_\alpha^{-1})_{\alpha \in A}$ converges 
  to some element $\varrho \in \euE_\uu(K,q,\euG)$. Using the characterization from \cref{charconv} 
  and the openness of $q$, we conclude that $\varrho = \theta^{-1}$. This shows that 
  $M = \euE_\uu(K,q,\euG)$. Moreover, if 
  $(\theta_{\alpha})_{\alpha \in A}$ is a net in $\euE_\uu(K,q,\euG)$ converging to some 
  $\theta \in \euE_\uu(K,q,\euG)$, then a similar argument shows that $\theta^{-1}$ is the only 
  cluster point of the net $(\theta_{\alpha}^{-1})_{\alpha \in A}$.
\end{proof}

We now try to characterize the compactness of 
the uniform enveloping semigroupoid by investigating when a set is (pre)compact in the 
compact-open topology. To this end, recall that 
if $K$ is a compact space and $Y$ is a uniform space, the precompactness of a subset 
$\mathcal{F} \subset \uC(K, Y)$ in the compact-open topology is characterized by the 
classical Arzel\`a-Ascoli theorem: $\mathcal{F}$ is precompact if and only if $\mathcal{F}$ is 
equicontinuous and $\im(\mathcal{F}) = \bigcup_{f\in\mathcal{F}} \im(f)$ is precompact in $Y$. In 
what follows, we generalize the notion of equicontinuity and the Arzel\`a-Ascoli theorem to 
compact bundles.

\begin{definition}
  Let $p\colon X \to L$, $q\colon Y\to L'$ be continuous surjections onto compact spaces
  and $X$ and $Y$ be uniform spaces. A subset
  $\euF \subset \uC_p^q(X,Y)$ is called \emph{(uniformly) equicontinuous}
  if for each $U \in \mathcal{U}_Y$ there is a $V \in \mathcal{U}_X$ such that  
  $(\theta(x_1),\theta(x_2)) \in U$ for every $\theta \in \euF$ and every 
  $(x_1,x_2) \in V \cap X \times_L X$ with $s(\vartheta) = p(x_1) = p(x_2)$.
\end{definition}

\begin{theorem}\label{arzelaascoli}
  Let $p\colon K \to L$, $q\colon Y\to L'$ be continuous surjections onto compact spaces,
  $K$ be compact, and $Y$ be a Hausdorff uniform space. If $p$ is open, then a subset 
  $\euF \subset \uC_p^q(K,Y)$ is precompact if 
  and only if the following two conditions are fulfilled.
  \begin{enumerate}[(i)]
    \item\label{arzelaascoli_precompact} $\im(\euF) \subset Y$ is precompact.
    \item\label{arzelaascoli_equi} $\euF$ is equicontinuous.
  \end{enumerate}
\end{theorem}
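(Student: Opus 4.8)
The plan is to prove a fibered Arzelà--Ascoli theorem by reducing the two implications to the classical compact-open statement via the net characterization of \cref{charconv}, since $K$ is compact, $p$ is open, and the Vietoris/compact-open topologies agree. I would first establish the easy direction: if $\euF$ is precompact, then both \ref{arzelaascoli_precompact} and \ref{arzelaascoli_equi} hold. For \ref{arzelaascoli_precompact}, the evaluation map $\operatorname{ev}\colon \uC_p^q(K,Y)\times_{s,p} K \to Y$ is continuous by \cref{charconv}, and $\overline{\euF}\times_{s,p} K$ is compact (a closed subspace of the compact set $\overline{\euF}\times K$, using compactness of $K$), so $\im(\overline{\euF})=\operatorname{ev}(\overline{\euF}\times_{s,p}K)$ is compact, hence $\im(\euF)$ is precompact. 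For \ref{arzelaascoli_equi}, I would argue by contradiction: if $\euF$ were not equicontinuous, there would be an entourage $U\in\mathcal{U}_Y$ and nets $(\theta_\alpha)$ in $\euF$, $(x_\alpha),(y_\alpha)$ in $K$ with $p(x_\alpha)=p(y_\alpha)=s(\theta_\alpha)$, $(x_\alpha,y_\alpha)$ eventually in every entourage $V$, yet $(\theta_\alpha(x_\alpha),\theta_\alpha(y_\alpha))\notin U$. Passing to subnets using compactness of $\overline{\euF}$ and $K$, I get $\theta_\alpha\to\theta$, $x_\alpha\to x$, $y_\alpha\to y$ with $x=y$ (the uniformity forces a common limit), and $s(\theta)=p(x)$; then \cref{charconv} gives $\theta_\alpha(x_\alpha)\to\theta(x)$ and $\theta_\alpha(y_\alpha)\to\theta(x)$, contradicting that these pairs stay outside $U$ (using that $Y$ is Hausdorff, so the diagonal is the intersection of all entourages).

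For the converse, assume \ref{arzelaascoli_precompact} and \ref{arzelaascoli_equi} and show $\euF$ is precompact, i.e., every net in $\euF$ has a convergent subnet in $\uC_p^q(K,Y)$. Let $(\theta_\alpha)$ be a net in $\euF$. Since $L$ is compact I may pass to a subnet with $s(\theta_\alpha)\to l_0\in L$. The aim is to construct a limit $\theta\in\uC_p^q(K,Y)^{l'}_{l_0}$ defined on the fiber $K_{l_0}$ and verify convergence through \ref{charconv_nets} of \cref{charconv}. Fix $x\in K_{l_0}$. By the openness criterion recorded in \cref{rem:quotientmapopenchar}, there is a subnet $(l_\beta)$ of $(s(\theta_\alpha))$ and a net $(x_\beta)\to x$ with $p(x_\beta)=l_\beta=s(\theta_\beta)$; precompactness of $\im(\euF)$ lets me pass to a further subnet so that $\theta_\beta(x_\beta)$ converges in $Y$, and I set $\theta(x)$ to be this limit.

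The main obstacle, and the technical heart of the argument, is showing that $\theta(x)$ is \emph{well defined} (independent of the choice of approximating nets and subnets) and that the resulting $\theta$ is \emph{continuous} on $K_{l_0}$, using equicontinuity. Equicontinuity is exactly the uniform control that makes the pointwise limits cohere: if $(x_\beta)\to x$ and $(x'_\beta)\to x$ are two covering nets with $p(x_\beta)=p(x'_\beta)=s(\theta_\beta)$, then $(x_\beta,x'_\beta)$ eventually lies in any $V\in\mathcal{U}_K$, so by \ref{arzelaascoli_equi} the pairs $(\theta_\beta(x_\beta),\theta_\beta(x'_\beta))$ eventually lie in any $U\in\mathcal{U}_Y$, forcing the two limits to coincide since $Y$ is Hausdorff. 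A standard diagonal/iterated-subnet argument (or invoking compactness of $\im(\euF)^{\,-}$ to extract simultaneous convergence) upgrades this to a single subnet along which $\theta_\beta(x_\beta)\to\theta(x)$ for \emph{all} $x\in K_{l_0}$ compatibly, and the same equicontinuity estimate shows $\theta$ is uniformly continuous on $K_{l_0}$, hence $\theta\in\uC_p^q(K,Y)$ with $s(\theta)=l_0$. Finally, checking that this $\theta$ satisfies the net condition \ref{charconv_nets}---namely $\lim_\beta \theta_\beta(z_\beta)=\theta(z)$ for every convergent covering net $z_\beta\to z\in K_{l_0}$---follows by combining the defining convergence at points of $K_{l_0}$ with equicontinuity to absorb the discrepancy between $z_\beta$ and a net covering $z$; by \cref{charconv} this yields $\theta_\beta\to\theta$ in the compact-open topology, completing the proof.
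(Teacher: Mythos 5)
Your first direction (precompact $\Rightarrow$ (i) and (ii)) is correct and is essentially the paper's own argument: continuity of the evaluation and source maps from \cref{charconv} gives precompactness of $\im(\euF)$, and the compactness-contradiction argument gives equicontinuity. The converse direction, however, has a genuine gap, and it sits exactly at the step you yourself call the technical heart. Your definition of $\theta(x)$ requires passing to a subnet that \emph{depends on} $x$: by \cref{rem:quotientmapopenchar}, openness of $p$ produces a covering net only along a subnet, and precompactness of $\im(\euF)$ forces a further $x$-dependent refinement. There is no \enquote{standard diagonal/iterated-subnet argument} that merges uncountably many such $x$-dependent subnets of a net into a single one; that device works for sequences and countably many constraints, not for nets over all points of $K_{l_0}$. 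The well-definedness step has the same flaw: equicontinuity lets you compare two covering nets indexed by the \emph{same} directed set, but your candidate values of $\theta(x)$ arise along \emph{different} subnets whose index sets need not even meet cofinally, so the pairing $(x_\beta,x_\beta')$ you invoke is not available; and before a good subnet has been fixed, the value is genuinely non-unique (a sequence alternating between two constant maps on a trivial bundle already has two covering-net limits at every point). In the classical Arzel\`a--Ascoli theorem this simultaneity problem is solved by Tychonoff compactness of $Y^K$; that route is closed here because the maps $\theta_\alpha$ have varying domains $K_{s(\theta_\alpha)}$ and therefore do not live in any common product space.

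What fills the gap---and is the actual content of the paper's proof---is the graph compactification. By (i), every $\Gr(\theta_\alpha)$ is a closed subset of the compact space $K\times\overline{\im(\euF)}$, so by \cref{compactvietoris} the graphs lie in a compact hyperspace with the Vietoris topology; hence one may pass to a \emph{single} subnet, or directly take any point $C$ of the Vietoris closure $\overline{\Gr(\euF)}$, and analyze $C$. The paper then shows $C=\Gr(\theta)$ for some $\theta\in\uC_p^q(K,Y)$: first $C\subset K_l\times Y_{l'}$ for suitable $l,l'$; then openness of $p$ shows every $x\in K_l$ admits some $y$ with $(x,y)\in C$; then equicontinuity shows this $y$ is unique---and here Vietoris convergence is precisely what realizes any two points $(x,y),(x,y')\in C$ as limits of nets of graph points over a \emph{common} index set, which is the pairing your pointwise construction lacks; finally the closed graph theorem gives continuity of $\theta$. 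Together with the fact that the Vietoris and compact-open topologies agree on $\uC_p^q(K,Y)$ (the theorem stated just before \cref{arzelaascoli}, using that $K$ is compact and $p$ is open), this yields precompactness of $\euF$. So your outline names the right ingredients---openness for totality, precompact image for existence of limits, equicontinuity for uniqueness and continuity---but without this (or an equivalent) compactification of graphs the limit map cannot be assembled, and the proposal as written does not constitute a proof.
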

\begin{proof}
  Suppose that \ref{arzelaascoli_precompact} and \ref{arzelaascoli_equi} hold. In view of 
  \cref{compactvietoris}, it suffices to show that the 
  closure $\overline{\Gr(\euF)}$ in $\mathscr{C}(K \times Y)$ is in fact contained in 
  $\Gr(\uC_p^q(K,Y))$. So we pick $C\in \overline{\Gr(\euF)}$ and show that 
  $C = \Gr(\theta)$ for some $\theta\in\uC_p^q(K,Y)$.
  
  Let $(\theta_\alpha)_{\alpha\in A}$ be a net in $\euF$ such that $\Gr(\theta_\alpha) \to C$ 
  with respect to the Vietoris toplogy.
  First, let $(x, y) \in C$ and set $l \defeq p(x)$, $l'\defeq q(x)$. We claim that 
  $C \subset K_l\times Y_{l'}$: If $U\in\mathcal{U}_L(l)$ and $V\in\mathcal{U}_{L'}(l')$ 
  are open neighborhoods of $l$ and $l'$, then 
  \begin{align*}
    C \cap p^{-1}(U)\times q^{-1}(V) \neq \emptyset.
  \end{align*}
  Thus, there is an $\alpha_0 \in A$ such that for all $\alpha \geq \alpha_0$
  \begin{align*}
    \Gr(\theta_\alpha) \cap p^{-1}(U)\times q^{-1}(V) \neq \emptyset.
  \end{align*}
  Since $\theta_\alpha\in\uC_p^q(K, Y)$,
  it follows that $\Gr(\theta_\alpha) \subset  p^{-1}(U)\times q^{-1}(V)$ for $\alpha \geq \alpha_0$
  and hence that 
  	\begin{align*}
  		C \subset \overline{p^{-1}(U)}\times \overline{q^{-1}(V)}.
  	\end{align*}
  Since $U$ and $V$ were arbitrary, $C \subset K_l \times Y_{l'}$.
  
  Since $p$ is open, it follows that for every $x\in K_l$ there is a 
  $y\in Y_{l'}$ such that $(x, y) \in C$: Use \cref{rem:quotientmapopenchar} and 
  the compactness of $\im(\mathcal{F})$ to find a 
  subnet $(\Gr(\theta_\beta))_{\beta\in B}$ and a net $(x_\beta)_{\beta \in B}$ such 
  that $(x_\beta)_{\beta \in B}$ converges to $x$, $p(x_\beta) = s(\theta_\beta)$ for 
  every $\beta \in B$, and $(\theta_\beta(x_\beta))_{\beta \in B}$ converges to some 
  $y\in Y$. Since $(\Gr(\theta_\beta))_{\beta\in B}$ converges to $C$ with respect to the 
  Vietoris topology, this then shows that $(x, y) \in C$.
  In order to see that $C$ is, in fact, the graph of a function $\theta\colon K_l\to Y_{l'}$, 
  assume that 
  $(x, y), (x, y') \in C$. Then there are nets $(x_\alpha, \theta_\alpha(x_\alpha))_{\alpha\in A}$,
  $(x_\alpha', \theta_\alpha(x_\alpha'))_{\alpha\in A}$ converging to $(x, y)$ and $(x, y')$. It
  then follows from the equicontinuity of $\euF$ that the nets $(\theta_\alpha(x_\alpha))_{\alpha \in A}$
  and $(\theta_\alpha(x_\alpha'))_{\alpha \in A}$ have the same limits. This shows that $y = y'$, i.e., 
  there is a function $\theta\colon K_l \to Y_{l'}$ with $C = \Gr(\theta)$. Since $K_l$ is compact
  and $Y_{l'}$ is Hausdorff, the closed graph theorem shows that $\theta$ is continuous, i.e.,
  $\theta\in \uC_p^q(K,Y)$. Hence, $\euF$ is precompact.
  
  For the converse implication, we may assume $\euF$ to be compact. Using the characterization of
  convergent nets in the compact-open topology from \cref{charconv}, it 
  is then easy to see that $\im(\euF)$ is compact. If $\euF$ were not 
  equicontinuous, we would find a net $((\theta_\alpha,x_\alpha,x_\alpha'))_{\alpha \in A}$ in 
  $\euF \times_L K \times_L K$ and a $U \in \mathcal{U}_Y$ such that 
  $\lim_\alpha x_\alpha = \lim_\alpha x_\alpha'$ and 
  $(\theta_\alpha(x_\alpha),\theta_\alpha(x_\alpha')) \notin U$ 
  for every $\alpha\in A$ which clearly contradicts the compactness of $\euF$.
  Thus, $\euF$ is equicontinuous.
\end{proof}

\begin{corollary}\label{compactnessvsequicontinuity}
  For a groupoid action $(K,q,\euG)$ the following assertions are equivalent.
	\begin{enumerate}[(a)]
		\item\label{item:comp1} $(K,q,\euG)$ is equicontinuous.
		\item\label{item:comp2} $\euS(K,q,\euG) \subset \uC_q^q(K, K)$ is precompact.
		    \item\label{item:comp3} $\{f|_{K_{r(\pzg)}} \circ \varphi_\pzg \mid \pzg \in \euG\} \subset \uC_q(K)$ is 
    equicontinuous for all $f\in\uC(K)$ from one/every subset $M$ of $\uC(K)$
    that generates $\uC(K)$ as a $\uC^*$-algebra.
    \item\label{item:comp4} $\{f|_{K_{r(\pzg)}} \circ \varphi_\pzg \mid \pzg \in \euG\} \subset \uC_q(K)$ is 
    relatively compact for all $f\in\uC(K)$ from one/every subset $M$ of $\uC(K)$
    that generates $\uC(K)$ as a $\uC^*$-algebra.
	\end{enumerate}	  
\end{corollary}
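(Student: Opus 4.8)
The plan is to derive every equivalence from the generalized Arzel\`a--Ascoli theorem \cref{arzelaascoli} together with the observation that the relevant image sets are automatically precompact, so that for the families in question precompactness and equicontinuity become interchangeable. Throughout, I abbreviate $\euF_f \defeq \{f|_{K_{r(\pzg)}} \circ \varphi_\pzg \mmid \pzg \in \euG\} \subset \uC_q(K)$.

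First I would observe that \ref{item:comp1} is nothing but the equicontinuity of the family $\euS(K,q,\euG) \subset \uC_q^q(K,K)$ in the sense of the definition of equicontinuity for families of fiber maps preceding \cref{arzelaascoli}: writing $\theta = \varphi_\pzg$ one has $s(\theta) = s(\pzg)$, and since the quantifier \enquote{for each $\pzu \in \euG^{(0)}$ and $\pzg \in \euG_\pzu$} ranges over all of $\euG$, the condition in \cref{def:struct_ext} \ref{def:struct_ext_item:equi} is verbatim the equicontinuity of $\euS(K,q,\euG)$. As $\im(\euS(K,q,\euG)) \subset K$ is precompact by compactness of $K$, \cref{arzelaascoli} yields \ref{item:comp1} $\iff$ \ref{item:comp2}. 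The same reasoning applied one function at a time gives \ref{item:comp3} $\iff$ \ref{item:comp4}: for fixed $f$ the image $\im(\euF_f)$ is contained in the compact set $f(K) \subset \C$, hence precompact, so $\euF_f$ is relatively compact if and only if it is equicontinuous.

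The core of the argument is then the claim that the action is equicontinuous if and only if $\euF_f$ is equicontinuous for every $f \in \uC(K)$. For the forward direction I would use that $f$ is uniformly continuous on the compact space $K$, so a modulus-of-continuity entourage for $f$ can be pulled back through the equicontinuity of $\euS(K,q,\euG)$ to bound $|f(\pzg x_1) - f(\pzg x_2)|$. For the converse I would use that the unique uniformity of the compact Hausdorff space $K$ has a base of entourages of the form $\{(a,b) \mmid |f_i(a) - f_i(b)| < \epsilon \text{ for all } i\}$ with $f_i \in \uC(K)$; intersecting the finitely many entourages supplied by equicontinuity of the $\euF_{f_i}$ recombines into a single entourage witnessing equicontinuity of $\euS(K,q,\euG)$.

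Finally, to account for the \enquote{one/every} clause I would show that $\mathcal{A} \defeq \{f \in \uC(K) \mmid \euF_f \text{ is equicontinuous}\}$ is a closed $*$-subalgebra of $\uC(K)$: closedness under sums and conjugation is immediate from the triangle inequality and $|\overline{f}(a) - \overline{f}(b)| = |f(a) - f(b)|$, closedness under products follows from the estimate $|fg(a) - fg(b)| \le \|f\|_\infty |g(a) - g(b)| + \|g\|_\infty |f(a) - f(b)|$ using boundedness of $f, g$ on $K$, and closedness under uniform limits is a standard $\epsilon/3$ argument. Consequently $\mathcal{A}$ contains the $\uC^*$-algebra generated by any $M \subset \mathcal{A}$, so if \ref{item:comp3} holds for one generating $M$ then $\mathcal{A} = \uC(K)$, whence the previous paragraph gives \ref{item:comp1}; conversely \ref{item:comp1} forces $\mathcal{A} = \uC(K)$ and hence \ref{item:comp3} for every generating $M$. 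I expect the main obstacle to be this last reduction, i.e.\ the implication \ref{item:comp3} $\implies$ \ref{item:comp1}, where one must recover the equicontinuity of the full action from that of the generators, relying both on the uniformity of $K$ being determined by $\uC(K)$ and on the closed-subalgebra argument.
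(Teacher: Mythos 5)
Your proof is correct and follows essentially the same route as the paper's: \cref{arzelaascoli} disposes of (a)$\iff$(b) and (c)$\iff$(d), and the \enquote{one/every} clause is handled by showing that the set of $f \in \uC(K)$ for which $\{f|_{K_{r(\pzg)}}\circ\varphi_\pzg \mid \pzg\in\euG\}$ is equicontinuous is a $*$-subalgebra closed under uniform limits, combined with the fact that the functions in $\uC(K)$ generate the uniformity of $K$. If anything, you are more precise than the paper at one step: the paper's proof of (c)$\implies$(a) claims that a \emph{single} $f\in M$ and $\epsilon>0$ satisfy $U_{f,\epsilon}\subseteq V$, which is false in general (for $K=[0,1]^3$, dimension theory yields a $\delta>0$ such that every continuous $f\colon K\to\C$ identifies two points at distance at least $\delta$, so no single $f$ controls entourages finer than $\delta$), whereas your formulation using finitely many $f_i$ and intersecting the resulting entourages is the correct one.
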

\begin{proof}
  Given \cref{arzelaascoli}, the equivalence of \ref{item:comp1} and 
  \ref{item:comp2} is hard not to prove. Similarly, \ref{item:comp3}
  and \ref{item:comp4} are equivalent. So suppose $M \subset \uC(K)$ 
  is as in \ref{item:comp3}. Straightforward arguments show that the 
  property 
  \begin{align*}
   \{f|_{K_{r(\pzg)}} \circ \varphi_\pzg \mid \pzg \in \euG\} \subset \uC_q(K) \text{ is equicontinuous}
  \end{align*}
  is preserved under taking finite linear combinations,
  products, and conjugates of functions in $\uC(K)$. Thus, we may 
  assume that $M$ is dense in $\uC(K)$. Now, to verify the equicontinuity 
  of $(K, q, \euG)$, let $V\in\mathcal{U}_K$ be a given entourage. Since 
  the functions in $\uC(K)$ generate the uniformity on $K$ and $M$ is 
  dense, we can find an $\epsilon > 0$ and an $f\in M$ such that 
  $U_{f, \epsilon} \subseteq V$ where 
  \begin{align*}
    U_{f, \epsilon} = \{ (x, y) \in K\times K \mid |f(x) - f(y)| < \epsilon \}.
  \end{align*}
  By assumption,
  \begin{align*}
    \{f|_{K_{r(\pzg)}} \circ \varphi_\pzg \mid \pzg \in \euG\}
  \end{align*}
  is equicontinuous and so we may find an entourage 
  $U\in\mathcal{U}_K$ such that for all $\pzg\in \euG$ and all 
  $(x, y)\in U$ with $q(x) = q(y) = s(\pzg)$ one has 
  \begin{align*}
    |f(\pzg x) - f(\pzg y)| < \epsilon. 
  \end{align*}
  In other words, $\euG$ maps $K\times_q K \cap U$ into $U_{f, \epsilon} \subset V$,
  so $\euG$ is equicontinuous and \ref{item:comp3} implies \ref{item:comp1}. 
  The converse implication is again easy to verify.
\end{proof}

In particular, if $\euE_\uu(K,q,\euG)$ is compact, $(K,q,\euG)$ is necessarily equicontinuous. The following
example shows that the converse is generally not true because the 
inclusion $\overline{\euS(
K,q,\euG)} \subset \euE_\uu(K,q,\euG)$ is generally strict, as noted in \cref{rem:closurenotsgrpd} and 
\cref{ex:twoxsquared}.

\begin{example}\label{ex:equinoniso}
  Let $L_0 \defeq [0, \infty)$ and 
  \begin{align*}
    \psi_0\colon L_0\to L_0, \quad 
    \psi_0(x) \defeq \floor*{x} + \left(x-\floor*{x}\right)^2
  \end{align*}
  as well as $K_0 \defeq L_0\times \Z_2$ and
  \begin{align*}
    \phi_0\colon K_0 \to K_0, \quad \phi_0(x, g) \defeq (\psi_0(x), g + 1).
  \end{align*}
  Then $q_0\colon K_0 \to L_0$, $(x, h) \mapsto x$ is continuous and 
  intertwines $\phi_0$ and $\psi_0$. Since $\psi_0$, $\phi_0$, and $q$ are proper, 
  they extend canonically to the one-point compactifications $K \defeq K_0 \cup \{\infty_{K_0}\}$ and 
  $L \defeq L_0 \cup \{\infty_{L_0}\}$ of $K_0$ and 
  $L_0$ and thereby yield an extension $q\colon (K, \phi) \to (L, \psi)$ of topological 
  dynamical systems. It is easy to see that $\overline{\euS(q)}$ is compact since
  \begin{align*}
    \overline{\euS(q)} \subset \left\{\theta_\infty\right\} \cup 
      \bigcup_{n\in\N_0}\left\{\theta_{x, y, g} \mmid x,y \in [n, n+1], g\in \Z_2\right\}
  \end{align*}
  where for $x, y\in L$ and $g\in \Z_2$, we define $\theta_{x, y, g}$ and $\theta_x$ as
  \begin{alignat*}{2}
    \theta_{x, y, g}&\colon K_x \to K_y, \quad &(x, h) &\mapsto (y, g+h), \\
    \theta_{x}&\colon K_x\to \left\{\infty_{K_0}\right\}, \quad &(x, h) &\mapsto \infty_{K_0}.
  \end{alignat*}
  However, 
  \begin{align*}
    \euE_\uu(q) = \left\{\theta_{x, y, g} \mmid x, y\in L_0, g\in \Z_2\right\}
    \cup \{\theta_{x} \mid x\in L\}
  \end{align*}
  and since $\theta_x$ is not invertible for $x \neq \infty_{L_0}$, $\euE_\uu(q)$ is neither 
  a groupoid nor compact (use \cref{compactimpliesgroupoid}).
\end{example}

Thus, in contrast to the case of group actions, in order to 
characterize the compactness of $\euE_\uu(K,q,\euG)$, a more restrictive property than 
equicontinuity is needed. The following proposition shows that pseudoisometry is a sufficient 
condition for the enveloping semigroupoid to be a compact groupoid.

\begin{proposition}\label{prop:pseudo_impl_groupoid}
  Let $(K,q,\euG)$ be a pseudoisometric groupoid action.
  Then $\euE_\uu(K,q,\euG)$ is a compact groupoid.
\end{proposition}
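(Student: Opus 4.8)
The plan is to sandwich the enveloping semigroupoid between $\euS(K,q,\euG)$ and a compact subsemigroupoid of $\uC_q^q(K,K)$. Since $\euE_\uu(K,q,\euG)$ is by definition the \emph{smallest closed} subsemigroupoid containing $\euS(K,q,\euG)$, any compact subsemigroupoid $\mathcal{M}$ with $\euS(K,q,\euG)\subseteq\mathcal{M}$ forces $\euE_\uu(K,q,\euG)\subseteq\mathcal{M}$; being a closed subset of a compact set, $\euE_\uu(K,q,\euG)$ is then compact, and \cref{compactimpliesgroupoid} immediately upgrades this to a compact groupoid. The whole difficulty is thus to produce such an $\mathcal{M}$. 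Note that a naive appeal to the Arzel\`a--Ascoli theorem \cref{arzelaascoli} only shows that $\overline{\euS(K,q,\euG)}$ is compact, which is not enough: by \cref{ex:equinoniso} and \cref{ex:twoxsquared} this closure is in general a proper subset of $\euE_\uu(K,q,\euG)$ and fails to be a subsemigroupoid.

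Fix a family $P$ of pseudometrics as in \cref{def:struct_ext}\,\ref{def:struct_ext_item:pseudoiso} and set
\[
  \mathcal{M} \defeq \left\{\theta \in \uC_q^q(K,K) \mmid p(\theta x, \theta y) = p(x,y) \text{ for all } x,y \in K_{s(\theta)} \text{ and } p \in P \right\},
\]
the set of fiber maps that are isometries for every $p\in P$. The defining relation $p(\pzg x,\pzg y)=p(x,y)$ says exactly that $\euS(K,q,\euG)\subseteq\mathcal{M}$, and a one-line computation---$p(\theta\eta x,\theta\eta y)=p(\eta x,\eta y)=p(x,y)$ for composable $\theta,\eta\in\mathcal{M}$---shows that $\mathcal{M}$ is a subsemigroupoid. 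Its image is trivially precompact, being contained in the compact space $K$.

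Next I would verify that $\mathcal{M}$ is equicontinuous and closed, so that \cref{arzelaascoli} applies. For equicontinuity, given $U\in\mathcal{U}_K$, the compactness argument from the proof of \cref{prop:pseudo_equi} yields a finite $F\subseteq P$ and an $\epsilon>0$ with $U_{F,\epsilon}\subseteq U$, where $U_{F,\epsilon}=\{(x,y)\in K\times_q K \mmid p(x,y)<\epsilon \text{ for all } p\in F\}$; since $\mathcal{U}_K$ is the neighbourhood filter of $\Delta_K$, there is a $V\in\mathcal{U}_K$ whose trace on $K\times_q K$ lies inside $U_{F,\epsilon}$, and the isometry property of $\theta\in\mathcal{M}$ then gives $(\theta x_1,\theta x_2)\in U_{F,\epsilon}\subseteq U$ whenever $(x_1,x_2)\in V$ with $q(x_1)=q(x_2)=s(\theta)$. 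For closedness, let $\theta_\alpha\to\theta$ with $\theta_\alpha\in\mathcal{M}$ and fix $p\in P$ and $x,y\in K_{s(\theta)}$. By \cref{charconv} we have $s(\theta_\alpha)\to s(\theta)$; using the openness of $q$ in the net form recalled in \cref{rem:quotientmapopenchar}, I pass to a subnet and lift $x$ and then $y$ to nets $x_\alpha\to x$, $y_\alpha\to y$ with $q(x_\alpha)=q(y_\alpha)=s(\theta_\alpha)$. Then $\theta_\alpha x_\alpha\to\theta x$ and $\theta_\alpha y_\alpha\to\theta y$ by \cref{charconv}, so continuity of $p$ gives $p(\theta x,\theta y)=\lim_\alpha p(\theta_\alpha x_\alpha,\theta_\alpha y_\alpha)=\lim_\alpha p(x_\alpha,y_\alpha)=p(x,y)$, i.e.\ $\theta\in\mathcal{M}$.

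With $\mathcal{M}$ equicontinuous, closed, and of precompact image, \cref{arzelaascoli} shows $\mathcal{M}$ is precompact, hence compact, and the sandwiching argument of the first paragraph finishes the proof. The main obstacle is conceptual rather than computational: one must recognise that equicontinuity alone is too fragile to survive the interleaving of composition and topological closure that builds $\euE_\uu(K,q,\euG)$, and replace it with the exact isometry condition, which is manifestly stable under both operations. The two technical points to get right are that the fibrewise pseudometrics genuinely control the uniformity $\mathcal{U}_K$ (this is where openness of $q$ and compactness of $K$ enter, via \cref{prop:pseudo_equi}) and that passing to the limit inside $\mathcal{M}$ requires lifting \emph{pairs} of points along $q$ simultaneously, which is precisely what the convergence characterization \cref{charconv} together with the openness of $q$ provides.
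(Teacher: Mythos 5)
Your proof is correct and follows essentially the same route as the paper: the paper likewise sandwiches $\euE_\uu(K,q,\euG)$ inside the set $\mathrm{I}(P)$ of fiber maps preserving every pseudometric in $P$ (with an additional bijectivity requirement), obtains compactness of that set from \cref{arzelaascoli}, and concludes via \cref{compactimpliesgroupoid}. Your only deviation---dropping bijectivity from the definition of $\mathcal{M}$---is harmless, since the sandwiching argument never needs it, and it even spares you checking that invertibility passes to limits.
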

\begin{proof}
  Pick a set $P$ as in \cref{def:struct_ext} \ref{def:struct_ext_item:pseudoiso} and consider the set
  \begin{align*}
    \mathrm{I}(P) \defeq \left\{\theta \in \uC_q^q(K, K) \mmid 
      \begin{matrix}
        \theta\colon K_{s(\theta)} \to K_{r(\theta)} \text{ is bijective and for all } p \in P, \\
        x,y \in K_{s(\theta)} \text{ one has } p(\theta(x),\theta(y)) = p(x,y) 
      \end{matrix}
    \right\}.
  \end{align*}
  By \cref{arzelaascoli}, $\mathrm{I}(P)$ is a compact (semi)groupoid containing 
  $\euS(K,q,\euG)$ and therefore 
  $\euE_\uu(K,q,\euG) \subset \mathrm{I}(P)$ is itself a compact semigroupoid. It follows 
  from \cref{compactimpliesgroupoid} above that it is in fact a groupoid.
\end{proof}

The following proposition shows that if $\euE_\uu(K, q, \euG)$ is transitive, then we can actually characterize pseudoisometric exensions via the compactness of the uniform enveloping semigroupoid.

\begin{proposition}\label{prop:transcompacttopseudo}
  Let $(K, q, \euG)$ be a groupoid action such that $\euE_\uu(K, q, \euG)$ is a compact transitive 
  groupoid. Then $(K, q, \euG)$ is pseudoisometric.
\end{proposition}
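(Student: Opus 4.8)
The plan is to construct explicitly a family $P$ of invariant, fiberwise pseudometrics witnessing pseudoisometry, by averaging pseudometrics over the compact transitive groupoid $\euE \defeq \euE_\uu(K,q,\euG)$. Since every transition map $\varphi_\pzg$ (for $\pzg\in\euG$) lies in $\euS(K,q,\euG)\subset\euE$, it suffices to produce pseudometrics that are invariant under the whole evaluation action of $\euE$ on the bundle $q\colon K\to L$ (with $L=\euG^{(0)}=\euE^{(0)}$); these are then automatically $\euG$-invariant. Fix a base point $\pzu_0\in L$; the isotropy group $H\defeq\euE_{\pzu_0}^{\pzu_0}$, being a closed subgroupoid of the compact groupoid $\euE$ with a single unit, is a compact group, and it acts continuously on the fiber $K_{\pzu_0}$ by evaluation.

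First I would set up the averaging formula. For a continuous pseudometric $\rho$ on $K_{\pzu_0}$ define, for $(a,b)\in K\times_q K$,
\[
  d_\rho(a,b)\defeq\sup_{\delta\in\euE_{q(a)}^{\pzu_0}}\rho(\delta a,\delta b),
\]
where $\delta$ runs over all arrows of $\euE$ from $q(a)$ to $\pzu_0$, so that $\delta a,\delta b\in K_{\pzu_0}$. One checks directly that $d_\rho$ restricts to a pseudometric on each fiber (a supremum of pseudometrics) and is $\euE$-invariant: for $\eta\in\euE$ with $s(\eta)=q(a)$ the substitution $\delta\mapsto\delta\eta$ is a bijection $\euE_{r(\eta)}^{\pzu_0}\to\euE_{q(a)}^{\pzu_0}$, whence $d_\rho(\eta a,\eta b)=d_\rho(a,b)$. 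Moreover, writing the arrows from $\pzu$ to $\pzu_0$ as $H\delta_0$ for a fixed $\delta_0\in\euE_\pzu^{\pzu_0}$, one sees that $d_\rho$ restricted to $K_\pzu$ equals $\rho^H(\delta_0\,\cdot\,,\delta_0\,\cdot\,)$, where $\rho^H(x,y)=\sup_{h\in H}\rho(hx,hy)$ is the $H$-average on $K_{\pzu_0}$ and $\delta_0\colon K_\pzu\to K_{\pzu_0}$ is a homeomorphism. Since $\rho\le\rho^H$ and $\rho^H$ is continuous, letting $\rho$ range over a family generating the topology of $K_{\pzu_0}$ makes $P\defeq\{d_\rho\}$ generate the fiber topology of every $K_\pzu$.

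The main obstacle is the continuity of each $d_\rho$ on $K\times_q K$, a parametric supremum over the \emph{varying} compact sets $\euE_{q(a)}^{\pzu_0}$. Upper semicontinuity is the easy half: along a net $(a_i,b_i)\to(a,b)$ one chooses maximizers $\delta_i$, passes to a convergent subnet $\delta_i\to\delta\in\euE^{\pzu_0}$ by compactness, notes $s(\delta)=q(a)$ by continuity of $s$ and $\delta_i a_i\to\delta a$, $\delta_i b_i\to\delta b$ by continuity of evaluation (\cref{charconv}), and concludes $\limsup d_\rho(a_i,b_i)\le d_\rho(a,b)$. Lower semicontinuity is the delicate half: given a maximizer $\delta$ at $(a,b)$, I must lift the convergence $q(a_i)\to s(\delta)$ to a net $\delta_i\to\delta$ with $s(\delta_i)=q(a_i)$. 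This amounts exactly to the openness of the map $\sigma\colon\euE^{\pzu_0}\to L$, $\delta\mapsto s(\delta)$.

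To obtain this openness I would argue that $\sigma$ is essentially the quotient map of a compact group action. The compact group $H$ acts on the compact space $P\defeq\euE_{\pzu_0}=s^{-1}(\pzu_0)$ by right multiplication, and two arrows of $P$ lie in the same $H$-orbit precisely when they have the same range; hence $r|_P\colon P\to L$ induces a continuous bijection $P/H\to L$, which is a homeomorphism since $P$ is compact and $L$ is Hausdorff. As the orbit map of a continuous group action is always open, $r|_P$ is open, and therefore so is $\sigma$, being the composition of $r|_P$ with the inversion homeomorphism $\euE^{\pzu_0}\to P$. With $\sigma$ open, the lifting criterion of \cref{rem:quotientmapopenchar} (applicable since $\euE^{\pzu_0}$ is compact, hence locally compact) produces the required net $\delta_i\to\delta$, yielding lower semicontinuity and hence continuity of $d_\rho$. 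Thus $P$ satisfies all three conditions of \cref{def:struct_ext}\,\ref{def:struct_ext_item:pseudoiso}, and $(K,q,\euG)$ is pseudoisometric.
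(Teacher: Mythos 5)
Your proof is correct, and at its core it follows the same strategy as the paper's: produce the invariant pseudometrics by taking suprema over the compact groupoid $\euE_\uu(K,q,\euG)$, get invariance from the substitution $\delta\mapsto\delta\eta$, and reduce continuity of the fiberwise suprema to an openness property of the source map. The differences are in execution. The paper starts from pseudometrics generating the topology of the whole compact space $K$ and maximizes $p(\theta x,\theta y)$ over \emph{all} $\theta\in\euE_\uu(K,q,\euG)$ with $s(\theta)=q(x)$; no base point and no transport of metrics between fibers is needed, and transitivity enters only through \cref{lem:transitive_open} (openness of $s$ and $r$ for compact transitive groupoids). You instead fix a reference fiber $K_{\pzu_0}$, maximize only over arrows into $\pzu_0$, and identify the restriction to each fiber with an $H$-average pulled back along a homeomorphism $\delta_0$; this costs some bookkeeping but makes the reduction to the single compact isotropy group $H$ explicit. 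The genuinely different ingredient is your openness argument: instead of invoking (or reproving) \cref{lem:transitive_open}, you factor $r|_{\euE_{\pzu_0}}$ as the orbit map of the right $H$-action on $\euE_{\pzu_0}$ (open, as every quotient map by a group acting by homeomorphisms is) followed by a continuous bijection from the compact orbit space onto the Hausdorff space $\euG^{(0)}$ (hence a homeomorphism), and then compose with inversion. This is a more elementary, self-contained route to exactly the special case of \cref{lem:transitive_open} that your construction needs; the paper's lemma is stronger (openness of $(s,r)$ on all of the groupoid, and of the isotropy bundle projection) and is reused elsewhere, for instance for Haar systems in \cref{sec:RIM}. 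One cosmetic point: you use the letter $P$ both for your final family of pseudometrics and for the space $\euE_{\pzu_0}$; rename one of them.
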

\begin{proof}
  Let 
  $P$ be a family of pseudometrics 
  generating the topology of $K$. Then for $p\in P$, 
  define 
  \begin{align*}
    p' \colon K \times_{\euG^{(0)}} K \to [0,\infty), \quad 
   (x, y) \mapsto \max_{\substack{\theta \in \euE_\uu(K, q, \euG)\\ s(\theta) = q(x)}}
    p(\theta(x),\theta(y)).
  \end{align*}
  Then the family $P' \defeq \{p' \mid p\in P\}$ generates the topology of $K_\pzu$ for each $\pzu\in \euG^{(0)}$
  since $\euE_\uu(K,q,\euG)_{\pzu}$ is compact.
  Moreover, since the range and source map of a compact transitive groupoid are open 
  by \cref{lem:transitive_open} below, each $p'$ is continuous and one readily verifies
  the invariance of the $p'$.
\end{proof}

\begin{proposition}\label{lem:transitive_open}
  Let $\euG$ be a compact transitive groupoid. Then $(s,r)$, $s$, and $r$ are open and so is the 
  restriction $p$ of $s$ and $r$ to $\Iso(\euG)$. 
\end{proposition}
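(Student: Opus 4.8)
The plan is to reduce everything to a single structural observation: for each unit $\pzu \in \euG^{(0)}$, the restricted range map $r|_{\euG_\pzu} \colon \euG_\pzu \to \euG^{(0)}$, and symmetrically $s|_{\euG^\pzu}$, is open. Once this is available, openness of $(s,r)$, of $s$ and $r$, and of $p$ all follow by lifting nets as in \cref{rem:quotientmapopenchar}.

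First I would establish the building blocks. Since $\euG$ is compact Hausdorff, $\euG^{(0)}$ is compact Hausdorff, the source fibers $\euG_\pzu = s^{-1}(\pzu)$ are compact, and each isotropy group $H_\pzu \defeq \euG_\pzu^\pzu$ is a compact topological group. The group $H_\pzu$ acts continuously on $\euG_\pzu$ by right translation $\pzg \mapsto \pzg\pzh$, and the orbits of this action are exactly the fibers of $r|_{\euG_\pzu}$: if $r(\pzg_1) = r(\pzg_2)$ with $\pzg_1, \pzg_2 \in \euG_\pzu$, then $\pzg_1^{-1}\pzg_2 \in H_\pzu$ and $\pzg_2 = \pzg_1(\pzg_1^{-1}\pzg_2)$. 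Hence $r|_{\euG_\pzu}$ factors through a continuous bijection $\euG_\pzu / H_\pzu \to \euG^{(0)}$, which is surjective by transitivity. The orbit map $\euG_\pzu \to \euG_\pzu/H_\pzu$ is open, because for open $U$ its saturation $\bigcup_{\pzh \in H_\pzu} U\pzh$ is a union of translates, each open since right translation by $\pzh$ is a homeomorphism. Moreover the induced bijection is a continuous map from the compact space $\euG_\pzu/H_\pzu$ to the Hausdorff space $\euG^{(0)}$, hence a homeomorphism. Therefore $r|_{\euG_\pzu}$, being the composition of an open map with a homeomorphism, is open; the same argument using left translation gives openness of $s|_{\euG^\pzu}$.

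Next I would deduce openness of $(s,r) \colon \euG \to \euG^{(0)} \times \euG^{(0)}$ by verifying the net-lifting criterion of \cref{rem:quotientmapopenchar}. Given $(\pzw_\alpha, \pzv_\alpha) \to (\pzw, \pzv)$ and $\pzg$ with $s(\pzg) = \pzw$, $r(\pzg) = \pzv$, I would correct the two coordinates in turn. Since $\pzg \in \euG^\pzv$ and $s|_{\euG^\pzv}$ is open, I pass to a subnet and lift $\pzw_\beta \to \pzw$ to a net $\pzg_\beta \to \pzg$ in $\euG^\pzv$ with $s(\pzg_\beta) = \pzw_\beta$; this fixes the source while leaving $r(\pzg_\beta) = \pzv$. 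Then, using openness of $r|_{\euG_\pzv}$ at the unit $\pzv$, I lift $\pzv_\beta \to \pzv$ to arrows $\pza_\gamma \to \pzv$ with $r(\pza_\gamma) = \pzv_\gamma$ and $s(\pza_\gamma) = \pzv$, and set $\pzg'_\gamma \defeq \pza_\gamma \pzg_\gamma$. Continuity of multiplication together with $\pza_\gamma \to \pzv = r(\pzg)$ gives $\pzg'_\gamma \to \pzg$, while $(s,r)(\pzg'_\gamma) = (\pzw_\gamma, \pzv_\gamma)$. This is exactly the required lift, so $(s,r)$ is open; composing with the open coordinate projections then shows $s$ and $r$ are open.

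Finally, for $p = s|_{\Iso(\euG)} = r|_{\Iso(\euG)}$ I would again use the net criterion, this time correcting by conjugation. Given $\pzu_\alpha \to \pzu$ and $\pzh \in \euG_\pzu^\pzu$, openness of $s|_{\euG^\pzu}$ at the unit $\pzu$ yields, after passing to a subnet, arrows $\pza_\beta \to \pzu$ with $r(\pza_\beta) = \pzu$ and $s(\pza_\beta) = \pzu_\beta$. Setting $\pzh_\beta \defeq \pza_\beta^{-1}\pzh\pza_\beta$ gives $\pzh_\beta \in \euG_{\pzu_\beta}^{\pzu_\beta} \subset \Iso(\euG)$ with $p(\pzh_\beta) = \pzu_\beta$, and $\pzh_\beta \to \pzu^{-1}\pzh\pzu = \pzh$ by continuity of multiplication and inversion. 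The hard part throughout is that range and source maps of general topological groupoids need not be open: the genuine content is the orbit-map observation above, and in the net arguments the crucial subtlety is that the connecting arrows must be chosen to converge to a unit rather than to some nontrivial isotropy element, which is precisely what openness of the fiber-restricted maps at the unit provides.
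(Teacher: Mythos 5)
Your proof is correct, but it rests on a different key lemma and runs in the opposite order to the paper's argument. The paper first proves openness of $p$ on $\Iso(\euG)$ directly: given $\pzu_\alpha \to \pzu$ and $\pzg \in \Iso(\euG)$ with $p(\pzg)=\pzu$, transitivity provides connecting arrows $\pzh_\alpha \in \euG_\pzu^{\pzu_\alpha}$, compactness of $\euG$ yields a subnet with $\pzh_\alpha \to \pzh \in \euG_\pzu^\pzu$, and the defect that the limit $\pzh$ need not be a unit is absorbed algebraically by the conjugation trick $\pzg = \lim_\alpha \pzh_\alpha\left(\pzh^{-1}\pzg\pzh\right)\pzh_\alpha^{-1}$. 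Openness of $(s,r)$ is then deduced from this isotropy statement: connecting arrows $\pzh_\alpha \in \euG_{\pzu_\alpha}^{\pzv_\alpha}$ converge along a subnet to some $\pzh$ with the same source and range as $\pzg$, the correction element $\pzg\pzh^{-1} \in \Iso(\euG)$ is lifted along $p$, and the products recover $\pzg$. You instead isolate the openness of the fiber restrictions $r|_{\euG_\pzu}$ and $s|_{\euG^\pzv}$, proved structurally rather than by nets: $r|_{\euG_\pzu}$ is the orbit map of the right action of the compact group $\euG_\pzu^\pzu$ composed with a continuous bijection from a compact space onto the Hausdorff space $\euG^{(0)}$, hence open. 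From this single lemma you obtain both $(s,r)$ and $p|_{\Iso(\euG)}$ by net lifting, arranging all auxiliary arrows to converge to \emph{units}. The crux is the same in both arguments---limits of connecting arrows are a priori only isotropy elements, not units---but it is handled in complementary ways: the paper corrects the defect after passing to the limit, whereas you prevent it from occurring by lifting at units. Your route is more modular and makes the homogeneous-space structure of the fibers under the compact isotropy groups explicit; the paper's is shorter and purely net-theoretic, using nothing beyond compactness of $\euG$, transitivity, and the criterion of \cref{rem:quotientmapopenchar}.
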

\begin{proof}
  We start with the restrictions to $\Iso(\euG)$:
  Pick $\pzg \in \mathrm{Iso}(\euG)$ and set $\pzu\defeq p(\pzg) \in \euG^{(0)}$. Moreover, let
  $(\pzu_\alpha)_{\alpha \in A}$ be a net in $\euG^{(0)}$ converging to $\pzu$. For each $\alpha \in A$ we 
  there is an $\pzh_\alpha \in \euG_{\pzu}^{\pzu_\alpha}$ and by passing to a subnet, we may assume that 
  $\lim_\alpha \pzh_\alpha = \pzh \in \euG_u^u$. But then $\pzg = \lim_\alpha \pzh_\alpha (\pzh^{-1}\pzg\pzh) \pzh_\alpha^{-1}$
  and so we have found a net $(\pzg_\alpha)_{\alpha\in A}$ in $\Iso(\euG)$ that converges to 
  $\pzg$ and satisfies $r(\pzg_\alpha) = \pzu_\alpha$ for every $\alpha \in A$. Thus, $r$ is open.
  
  To show that $(s, r)$, $s$, and $r$ are open, it suffices to show that $(s, r)$ is open, so let $\pzg\in \euG$
  and $(\pzu_\alpha, \pzv_\alpha)_{\alpha\in A}$ be a net in $\euG^{(0)}\times\euG^{(0)}$ converging to 
  $(u,v) = (s(\pzg), r(\pzg))$. Since $\euG$ is transitive, there is a net $(\pzh_\alpha)_{\alpha\in A}$ in 
  $\euG$ with $s(\pzh_\alpha) = \pzu_\alpha$ and $r(\pzh_\alpha) = \pzv_\alpha$ for each $\alpha \in A$.
  By compactness of $\euG$, we may assume that $(\pzh_\alpha)_{\alpha\in A}$ converges to 
  some element $\pzh \in \euG$ in with $s(\pzh) = s(\pzg)$ and $r(\pzh) = s(\pzg)$. Set $\gamma \defeq \pzg\pzh^{-1} \in \Iso(\euG)_{r(g)}$ 
  and, using the openness result for the isotropy bundle, find, after possibly passing to a subnet, a 
  net $(\gamma_\alpha)_{\alpha\in A}$ in $\Iso(\euG)$
  with $p(\gamma_\alpha) = v_\alpha$ for each $\alpha \in A$. Then the net 
  $(\gamma_\alpha \pzh_\alpha)_{\alpha \in A}$ converges to $\pzg$ and satisfies 
  	\begin{align*}
  		(s(\gamma_\alpha\pzh_\alpha), r(\gamma_\alpha\pzh_\alpha))  = (s(\pzh_\alpha), r(\pzh_\alpha)) = (\pzu_\alpha, \pzv_\alpha)
  	\end{align*}
  for each $\alpha \in A$. Hence, $(s, r)$ is open.
\end{proof}

\subsection{Characterizing transitivity.} \cref{prop:transcompacttopseudo} is unsatisfying in 
that it is not yet clear when $\euE_\uu(K, q, \euG)$ is a transitive groupoid. Therefore, we 
show in this subsection that the transitivity of $\euE_\uu(K, q, \euG)$ can be characterized 
purely in terms of $\euG$. To this end, recall from \cref{ex:unitspaceaction} that a groupoid 
is transitive if and only if the action on its unit space is transitive. This allows to reduce
the question when $\euE_\uu(K, q, \euG)$ is transitive to a question purely about $\euG$
and its action $(\euG^{(0)}, \id_{\euG^{(0)}}, \euG)$ on its unit space. The following lemma 
and \cref{cor:transviaunitspace} show that we thus only need to consider the question when 
$\euE_\uu(\euG^{(0)}, \id_{\euG^{(0)}}, \euG)$ is transitive.

\begin{lemma}\label{lem:factorgroupoid}
  Let $p \colon (K_1,q_1,\euG) \to (K_2,q_2,\euG)$ be an extension of groupoid actions. 
  If $\euE_\uu(K_1,q_1,\euG)$ is compact, then
  \begin{align*}
    \Phi_p \colon \euE_\uu(K_1,q_1,\euG) \to \euE_\uu(K_2,q_2,\euG), \quad \vartheta \to \Phi_p(\vartheta)
  \end{align*}     
  is a factor map of topological groupoids where
  \begin{align*}
    \Phi_p(\vartheta)\colon (K_2)_{s(\vartheta)} \to (K_2)_{r(\vartheta)}, \quad p(x) \mapsto p(\vartheta(x))   
  \end{align*}
  for $\vartheta \in  \EuScript{E}_{\mathrm{u}}(q_1)$. Moreover, 
  \begin{align*}
    \Phi_p^{(0)} \colon \euE_\uu(K_1,q_1,\euG)^{(0)} \to \euE_\uu(K_2,q_2,\euG)^{(0)}
  \end{align*}
  is bijective.
\end{lemma}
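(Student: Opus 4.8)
The plan is to verify the three assertions in turn---well-definedness together with membership in $\euE_\uu(K_2,q_2,\euG)$, the factor map property, and bijectivity on units---each time exploiting that $\euE_\uu(K_1,q_1,\euG)$ is by definition the \emph{smallest} closed subsemigroupoid of $\uC_{q_1}^{q_1}(K_1,K_1)$ containing the transition semigroupoid $\euS(K_1,q_1,\euG)$, combined with the convergence characterization of \cref{charconv} and the openness of $p$ and $q_1$.

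First I would show that $\Phi_p(\vartheta)$ is well defined, i.e.\ that every $\vartheta\in\euE_\uu(K_1,q_1,\euG)$ maps $p$-fibers into $p$-fibers. Let $M$ be the set of all $\vartheta\in\euE_\uu(K_1,q_1,\euG)$ such that $p(\vartheta(x))=p(\vartheta(x'))$ whenever $x,x'\in (K_1)_{s(\vartheta)}$ satisfy $p(x)=p(x')$. The intertwining relation $p(\pzg x)=\pzg p(x)$ shows $\euS(K_1,q_1,\euG)\subset M$, and $M$ is plainly closed under composition, so it suffices to prove that $M$ is closed. This is the technical heart of the argument. Given a net $\vartheta_\alpha\to\vartheta$ in $M$ and $x,x'\in(K_1)_{s(\vartheta)}$ with $p(x)=p(x')$, I would first use the openness of $q_1$ (in the net form recalled in \cref{rem:quotientmapopenchar}) to lift $x$ along a subnet to a net $x_\beta\to x$ with $q_1(x_\beta)=s(\vartheta_\beta)$; then $p(x_\beta)\to p(x)=p(x')$, so the openness of $p$ yields, after passing to a further subnet, a net $x'_\gamma\to x'$ with $p(x'_\gamma)=p(x_\gamma)$, whence $q_1(x'_\gamma)=q_2(p(x'_\gamma))=q_2(p(x_\gamma))=s(\vartheta_\gamma)$. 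Since $\vartheta_\gamma\in M$ and $x_\gamma,x'_\gamma$ lie in a common $p$-fiber over $(K_1)_{s(\vartheta_\gamma)}$, we get $p(\vartheta_\gamma(x_\gamma))=p(\vartheta_\gamma(x'_\gamma))$; passing to the limit via \cref{charconv} and the continuity of $p$ gives $p(\vartheta(x))=p(\vartheta(x'))$, so $\vartheta\in M$ and therefore $M=\euE_\uu(K_1,q_1,\euG)$. Continuity of the resulting $\Phi_p(\vartheta)\colon (K_2)_{s(\vartheta)}\to(K_2)_{r(\vartheta)}$ is then immediate, since $p|_{(K_1)_{s(\vartheta)}}$ is a continuous surjection from a compact space onto a Hausdorff one, hence a quotient map, and $\Phi_p(\vartheta)\circ p=p\circ\vartheta$ is continuous.

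Next I would check that $\Phi_p$ is a continuous homomorphism into $\euE_\uu(K_2,q_2,\euG)$ and is surjective. The identities $s(\Phi_p(\vartheta))=s(\vartheta)$, $r(\Phi_p(\vartheta))=r(\vartheta)$ and $\Phi_p(\vartheta\eta)=\Phi_p(\vartheta)\Phi_p(\eta)$ are direct computations on points, and the intertwining relation shows that $\Phi_p$ carries $\euS(K_1,q_1,\euG)$ onto $\euS(K_2,q_2,\euG)$. Continuity of $\Phi_p$ follows again from \cref{charconv}: for a subnet and a net $z_\gamma\to z$ in $K_2$ with $q_2(z_\gamma)=s(\vartheta_\gamma)$, I fix $x\in(K_1)_{s(\vartheta)}$ with $p(x)=z$ and lift the $z_\gamma$ through the open map $p$ to $x_\gamma\to x$ with $p(x_\gamma)=z_\gamma$, so that $\Phi_p(\vartheta_\gamma)(z_\gamma)=p(\vartheta_\gamma(x_\gamma))\to p(\vartheta(x))=\Phi_p(\vartheta)(z)$; a routine subnet argument in the compact space $K_2$ upgrades this to convergence of the full net. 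Using this continuity, the set $N\defeq\{\vartheta\mid \Phi_p(\vartheta)\in\euE_\uu(K_2,q_2,\euG)\}$ is a closed subsemigroupoid containing $\euS(K_1,q_1,\euG)$, hence all of $\euE_\uu(K_1,q_1,\euG)$, so $\Phi_p$ indeed maps into $\euE_\uu(K_2,q_2,\euG)$. For surjectivity I would invoke the compactness of $\euE_\uu(K_1,q_1,\euG)$: its continuous image under $\Phi_p$ is a compact---hence closed---subsemigroupoid of $\uC_{q_2}^{q_2}(K_2,K_2)$ containing $\euS(K_2,q_2,\euG)$, so by the minimality in the definition of $\euE_\uu(K_2,q_2,\euG)$ it contains, and thus equals, $\euE_\uu(K_2,q_2,\euG)$. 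In particular $\euE_\uu(K_2,q_2,\euG)$ is itself a compact groupoid by \cref{compactimpliesgroupoid}, and $\Phi_p$ is a factor map of topological groupoids.

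Finally, bijectivity of $\Phi_p^{(0)}$ is the easiest point. The units of both $\euE_\uu(K_i,q_i,\euG)$ are exactly the fiber identities $\mathrm{id}_{(K_i)_\pzu}$ for $\pzu\in\euG^{(0)}$ (these lie in $\euS(K_i,q_i,\euG)$ as the images of the units of $\euG$), and $\pzu\mapsto\mathrm{id}_{(K_i)_\pzu}$ is a bijection onto the unit space because distinct fibers have distinct graphs. Since $\Phi_p(\mathrm{id}_{(K_1)_\pzu})(p(x))=p(x)$, i.e.\ $\Phi_p(\mathrm{id}_{(K_1)_\pzu})=\mathrm{id}_{(K_2)_\pzu}$, the map $\Phi_p^{(0)}$ corresponds to $\mathrm{id}_{\euG^{(0)}}$ under these identifications and is therefore bijective. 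The main obstacle throughout is the closedness of $M$ in the first step: it is precisely here that one must combine the convergence characterization of \cref{charconv} with the openness of \emph{both} $p$ and $q_1$ to lift a pair of points lying in a common $p$-fiber simultaneously, whereas the remaining assertions reduce to the minimality in the definition of $\euE_\uu(K_2,q_2,\euG)$ and elementary compactness arguments.
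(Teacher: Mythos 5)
Your proof is correct and follows essentially the same route as the paper's: you show well-definedness of $\Phi_p$ by proving that the set of fiber-respecting elements is a closed subsemigroupoid containing $\euS(K_1,q_1,\euG)$ (via \cref{charconv} and openness-based lifting of nets, just as the paper does, though you lift along $q_1$ and $p$ where the paper lifts along $q_2$ and $p$), then obtain surjectivity from the same two-sided argument (the compact image contains $\euS(K_2,q_2,\euG)$ and hence $\euE_\uu(K_2,q_2,\euG)$ by minimality, while $\Phi_p^{-1}(\euE_\uu(K_2,q_2,\euG))$ is a closed subsemigroupoid containing $\euS(K_1,q_1,\euG)$), and finally deduce bijectivity on units from $q_1=q_2\circ p$. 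Your write-up is somewhat more detailed than the paper's at the points it dismisses as routine (continuity of $\Phi_p$ and of each $\Phi_p(\vartheta)$), but the underlying argument is identical.
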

\begin{proof}
  We first check that $\Phi_p$ is well-defined. Let $\euS$ be the set of all elements 
  $\vartheta \in \EuScript{E}_{\mathrm{u}}(K_1,q_1,\euG)$ with the following property: If 
  $x,y \in (K_1)_{s(\vartheta)}$ with $p(x) = p(y)$, then $p(\vartheta(x)) = p(\vartheta(y))$. 
  Then $\euS$ is a semigroupoid containing $\euS(K_1,q_1,\euG)$ and we show that is is closed in 
  $\EuScript{E}_{\mathrm{u}}(K_1,q_1,\euG)$. Let $(\vartheta_\alpha)_{\alpha \in A}$ be a net in 
  $\euS$ converging to $\vartheta \in \EuScript{E}_{\mathrm{u}}(K_1,q_1,\euG)$ and 
  $x,y \in (K_1)_{s(\vartheta)}$ with $p(x) = p(y)$. Since $p$ and $q_2$ are open, we 
  find, by passing to a subnet, a $((x_\alpha,y_\alpha))_{\alpha \in A}$ in $K_1 \times K_1$ such 
  that $x = \lim_\alpha x_\alpha$, $y = \lim_\alpha y_\alpha$ and $p(x_\alpha) = p(y_\alpha)$ as well 
  as $q_1(x_\alpha) = q_1(y_\alpha) = s(\vartheta_\alpha)$ for every $\alpha \in A$. But then 
  $p(\vartheta_\alpha(x_\alpha)) = p(\vartheta_\alpha(y_\alpha))$ for every $\alpha \in A$ and therefore
  \begin{align*}
    p(\vartheta(x)) = \lim_\alpha p(\vartheta_\alpha(x_\alpha)) = \lim_\alpha  p(\vartheta_\alpha(y_\alpha)) = p(\vartheta(y)).
  \end{align*}
  Thus, $\euS$ is closed and therefore $\euS = \EuScript{E}_{\mathrm{u}}(K_1,q_1,\euG)$. It is now clear, that
    \begin{align*}
      \Phi_p\colon \EuScript{E}_{\mathrm{u}}(K_1,q_1,\euG) \to \mathrm{C}_{q_2}^{q_2}(K_2,K_2), \quad \vartheta \mapsto \Phi_p(\vartheta)
    \end{align*}
  is a well-defined morphism of semigroupoids and a moment's thought revals that it is continuous. 
  Since $\EuScript{E}_{\mathrm{u}}(K_1,q_1,\euG)$ is compact, we obtain that its image is a closed 
  subsemigroupoid of $\mathrm{C}_{q_2}^{q_2}(K_2,K_2)$ containing $\euS(q_2)$ and therefore containing 
  $\EuScript{E}_{\mathrm{u}}(K_2,q_2,\euG)$. On the other hand, 
  $\Phi_p^{-1}(\EuScript{E}_{\mathrm{u}}(K_2,q_2,\euG))$ is a closed subsemigroupoid of 
  $\EuScript{E}_{\mathrm{u}}(K_1,q_1,\euG)$ containing $\euS(K_1,q_1,\euG)$ showing that the image 
  of $\Phi_p$ is precisely $\EuScript{E}_{\mathrm{u}}(K_2,q_2,\euG)$.
  Moreover, $\Phi_p^{(0)}$ is easily shown to be bijective since $p$ is an 
  extension of groupoid actions, i.e., $q_1 = q_2\circ p$.
\end{proof}

\begin{corollary}\label{cor:transviaunitspace}
  Let $(K, q, \euG)$ be a groupoid action such that its enveloping semigroupoid $\euE_\uu(K, q, \euG)$ 
  is compact.
  Then $\euE_\uu(K, q, \euG)$ is transitive if and only if $\euE_\uu(\euG^{(0)}, \id_{\euG^{(0)}}, \euG)$ is 
  transitive.
\end{corollary}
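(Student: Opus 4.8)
The plan is to exhibit $\euE_\uu(\euG^{(0)}, \id_{\euG^{(0)}}, \euG)$ as a canonical factor of $\euE_\uu(K, q, \euG)$ and then to transport transitivity across the resulting factor map in both directions. First I would recall from \cref{ex:unitspaceaction} that the anchor map $q$ is itself an extension $q\colon (K, q, \euG) \to (\euG^{(0)}, \id_{\euG^{(0)}}, \euG)$ of groupoid actions onto the action of $\euG$ on its unit space. Since $\euE_\uu(K, q, \euG)$ is compact by hypothesis, \cref{lem:factorgroupoid} applies to this extension and produces a factor map
\begin{align*}
  \Phi_q\colon \euE_\uu(K, q, \euG) \to \euE_\uu(\euG^{(0)}, \id_{\euG^{(0)}}, \euG)
\end{align*}
of topological groupoids such that the induced map $\Phi_q^{(0)}$ on unit spaces is a bijection.

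The corollary then reduces to the purely algebraic statement that a surjective groupoid homomorphism $\Phi$ which is bijective on unit spaces both preserves and reflects transitivity. To prove this I would use that every groupoid homomorphism intertwines source and range with $\Phi^{(0)}$, i.e.\ $s\circ\Phi = \Phi^{(0)}\circ s$ and $r\circ\Phi = \Phi^{(0)}\circ r$. If the domain is transitive, then given units $\pzu', \pzv'$ of the codomain I would pull them back along the bijection $\Phi^{(0)}$ to units $\pzu, \pzv$ of the domain, connect $\pzu$ and $\pzv$ by an arrow $\theta$ using transitivity, and push it forward so that $\Phi(\theta)$ is an arrow from $\pzu'$ to $\pzv'$. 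Conversely, if the codomain is transitive, then given units $\pzu, \pzv$ of the domain I would connect $\Phi^{(0)}(\pzu)$ and $\Phi^{(0)}(\pzv)$ by some arrow $\theta'$, lift it to a $\theta$ with $\Phi(\theta) = \theta'$ using surjectivity of $\Phi$, and read off $s(\theta) = \pzu$ and $r(\theta) = \pzv$ from the relations above together with the injectivity of $\Phi^{(0)}$.

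I expect no serious obstacle here, since essentially all the content is packaged into \cref{lem:factorgroupoid}: this is exactly where the compactness of $\euE_\uu(K, q, \euG)$ is consumed, and it alone guarantees both the surjectivity of $\Phi_q$ and the bijectivity of $\Phi_q^{(0)}$. The one point deserving care is the reflecting (converse) direction of the algebraic statement, where the \emph{injectivity} of $\Phi^{(0)}$ --- not merely its surjectivity --- is what lets me pull a lifted arrow back to the prescribed source and range; this is precisely the extra information that \cref{lem:factorgroupoid} supplies beyond surjectivity.
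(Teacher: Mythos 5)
Your proposal is correct and follows essentially the same route as the paper: both apply \cref{lem:factorgroupoid} to the extension $q\colon (K, q, \euG) \to (\euG^{(0)}, \id_{\euG^{(0)}}, \euG)$ from \cref{ex:unitspaceaction} and then transfer transitivity across the resulting factor map, which is bijective on unit spaces. The only difference is that you spell out the elementary algebraic fact (a surjective groupoid morphism that is bijective on units preserves and reflects transitivity) that the paper's proof leaves implicit in its final ``Thus''.
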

\begin{proof}
  Consider the extension $q\colon (K, q, \euG) \to (\euG^{(0)}, \id_{\euG^{(0)}}, \euG)$ of 
  groupoid actions. Then \cref{lem:factorgroupoid} shows that there is a surjective groupoid 
  morphism from $\euE_\uu(K, q, \euG)$ to $\euE_\uu(\euG^{(0)}, \id_{\euG^{(0)}}, \euG)$ which
  is bijective on the level of unit spaces. Thus, $\euE_\uu(K, q, \euG)$ is transitive if 
  and only if $\euE_\uu(\euG^{(0)}, \id_{\euG^{(0)}}, \euG)$ is.
\end{proof}

The question that now remains is: When is $\euE_\uu(\euG^{(0)}, \id_{\euG^{(0)}}, \euG)$ transitive?
To understand this, first recall from \cref{ex:envelopingrelation} that the groupoid
$\euE_\uu(\euG^{(0)}, \id_{\euG^{(0)}}, \euG)$ is isomorphic to $\euE_\uu(R_\euG)$ where 
$R_\euG$ is the orbit relation on $\euG^{(0)}$. Therefore, we need to understand when the 
equivalence relation $\euE_\uu(R_\euG)$ is transitive which, as noted in \cref{ex:pairgroupoid}, amounts to 
understanding when $\euE_\uu(R_{\euG}) = L\times L$. We now consider the following illustrating examples.

\begin{example}\label{ex:transitivity} Let $(L, G)$ be a topological dynamical system and let 
  $G \ltimes L$ be the action groupoid of $(L, G)$. In this case, 
  $R_{G\ltimes L} = \{ (x, y) \mid \exists g\in G\colon gx = y\}$ is the regular orbit relation 
  on $L$.
  \begin{enumerate}
    \item If every orbit in $(L, G)$ equals $L$, then $R_{G\times L} = L\times L$, $G \ltimes L$ 
    is transitive, and so is $\euE_\uu(R_{G\times L})$. However, this case almost never occurs in 
    topological dynamics.
    \item If $(L, G)$ has a dense orbit, then $R_{G\ltimes L}$ is dense in $L\times L$ 
    and so $\euE_\uu(R_{G\ltimes L}) = L\times L$, so $\euE_\uu(R_{G\ltimes L})$ is transitive.
    \item Even if $(L, G)$ does not have any transitive point, $\euE_\uu(L, \id_L, G \ltimes L)$ 
    may still be transitive. To see this, revisit the system $(L, \psi)$ considered in \cref{ex:twoxsquared}:
    It follows either by direct computation or by observing the transitivity of 
    $\euE_\uu(K, q, \Z \ltimes L)$ that $\euE_\uu(R_{G\ltimes L})$ is transitive. However,
    $(L, \Z)$ itself is not transitive.
    \item Consider the system $(L, \psi)$ on $L \defeq [0,1]\times\Z_2$ given by the map
    \begin{align*}
      \psi\colon [0,1]\times\Z_2 \to [0,1]\times\Z_2, \quad (x, g) \mapsto (x^2, g).
    \end{align*}
    Then 
    \begin{align*}
      \euE_\uu(R_{G\ltimes L}) 
      = \left\{ ((x, g), (y, h)) \in [0,1]\times\Z_2 \mmid g = h \right\} \subsetneq L\times L.
    \end{align*}
    In particular, $\euE_\uu(R_{G\ltimes L})$ is not transitive.
  \end{enumerate}
\end{example}

\begin{remark}\label{rem:topologicallyergodic}
  In light of \cref{ex:transitivity}, it is apparent that the orbit structure of the action of 
  a groupoid $\euG$ on its unit space $\euG^{(0)}$ plays an essential role for the transitivity
  of the enveloping groupoid $\euE_\uu(\euG^{(0)}, \id_{\euG^{(0)}}, \euG)$ and the 
  equivalence relation $\euE_\uu(R_\euG)$. For topological dynamical systems $(L, \psi)$, 
  it has been characterized when $\euE_\uu(R_{G\ltimes L})$, the smallest closed equivalence 
  relation containing the orbit relation, is all of $L\times L$: It is equivalent 
  each of the following assertions.
  \begin{enumerate}[(i)]
    \item The fixed space $\fix(T_\psi)$ of the Koopman operator $T_\psi\colon\uC(L) \to \uC(L)$
    is one-dimensional.
    \item The maximal trivial factor of $(L, \psi)$ is a point.
  \end{enumerate}
  See \cite{Kuester2019} and \cite[Section 1]{Edeko2020} for more information. In analogy with 
  ergodic measure-preserving systems, we call such systems $(L, G)$ \emph{topologically ergodic}. 
  We now extend this characterization to groupoid actions.
\end{remark}


\begin{definition}
  A factor $(M, t, \euH)$ of $(K, q, \euG)$ with factor map $(p, \Phi)$ is called a \emph{trivial factor} if the acting groupoid $\euH$ is trivial in the sense of \cref{ex:trivialgrpd}. It is a
  \emph{maximal trivial factor}, if for any factor map $(\tilde{p}, \tilde{\Phi})\colon (K, q, \euG) \to (\tilde{M}, \tilde{t}, \tilde{\euH})$ onto 
  another trivial factor there is a unique factor map $(m,\Theta)$ such that the following diagram commutes. 
  \begin{align*}
    \xymatrix{
      (K, q, \euG) \ar[rr]^-{(p,\Phi)} \ar[rd]_-{(\tilde{p},\tilde{\Phi})} && (M, t, H) \ar@{-->}[ld]^-{\exists!\, (m,\Theta)} \\
      & (\tilde{M}, \tilde{t}, \tilde{\euH})  & 
    }
  \end{align*}
  We call $(K, q, \euG)$ \emph{topologically ergodic} 
  if every trivial factor is a point and say that a groupoid $\euG$ is \emph{topologically ergodic} if its 
  action $(\euG^{(0)}, \id_{\euG^{(0)}}, \euG)$ on its unit space is topologically ergodic.
\end{definition}

\begin{lemma}\label{orbitsellisgroupoid1}
  Let $(K, q, \euG, \phi)$ be a groupoid action. Then the folowing assertions hold. 
    \begin{enumerate}[(i)]
      \item Maximal trivial factors are unique up to isomorphy.
      \item If 
  \begin{align*}
     R_\phi = \left\{ (x,y)\in K\times K \mid y \in \euG x\right\}
  \end{align*}
  denotes the orbit relation, then the space $K/\euE_\uu(R_\phi)$ defines a maximal trivial factor of $(K, q, \euG)$.
    \end{enumerate} 
\end{lemma}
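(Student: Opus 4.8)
I would handle both parts through the universal property of the quotient by $\euE_\uu(R_\phi)$. By \cref{ex:envelopingrelation} (applied to the orbit relation $R_\phi$, viewed as a full subgroupoid of the pair groupoid $K\times K$), $\euE_\uu(R_\phi)$ is the smallest closed equivalence relation on $K$ containing $R_\phi$; being closed in the compact space $K\times K$, it is compact, so $M\defeq K/\euE_\uu(R_\phi)$ is compact Hausdorff and the quotient map $p\colon K\to M$ is a continuous surjection. Granting part (ii), part (i) is then purely formal: if $(M_1,t_1,\euH_1)$ and $(M_2,t_2,\euH_2)$ are maximal trivial factors via $(p_1,\Phi_1)$ and $(p_2,\Phi_2)$, their universal properties yield factor maps $(m_{12},\Theta_{12})$ and $(m_{21},\Theta_{21})$ between them over $(K,q,\euG)$; since both $(m_{21},\Theta_{21})\circ(m_{12},\Theta_{12})$ and the identity are factor maps of $(M_1,t_1,\euH_1)$ compatible with $(p_1,\Phi_1)$, the uniqueness clause forces them to agree, and symmetrically, so the two factors are isomorphic.

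To build the trivial factor structure on $M$, I would put $N\defeq\euG^{(0)}/\euE_\uu(R_\euG)$, where $R_\euG$ is the orbit relation on $\euG^{(0)}$ from \cref{ex:orbitrelationmorphism}, view $N$ as a trivial groupoid $\euH$ in the sense of \cref{ex:trivialgrpd}, and write $\pi\colon\euG^{(0)}\to N$ for the quotient map. The key compatibility step is that $q\times q\colon K\times K\to\euG^{(0)}\times\euG^{(0)}$ maps $R_\phi$ into $R_\euG$, so $(q\times q)^{-1}(\euE_\uu(R_\euG))$ is a closed equivalence relation containing $R_\phi$, hence containing $\euE_\uu(R_\phi)$; this produces a well-defined continuous map $t\colon M\to N$ with $t\circ p=\pi\circ q$. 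Setting $\Phi\defeq\pi\circ s=\pi\circ r$ (these agree because $\pi$ identifies the source and range of every $\pzg\in\euG$) gives a homomorphism into the trivial groupoid with $\Phi^{(0)}=\pi$, and the two morphism axioms for $(p,\Phi)$ reduce to $t\circ p=\Phi^{(0)}\circ q$ and to $p(\pzg x)=p(x)$, which holds since $(x,\pzg x)\in R_\phi\subseteq\euE_\uu(R_\phi)$ and the action on $M$ is trivial.

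For maximality, let $(\tilde M,\tilde t,\tilde\euH)$ be any trivial factor with factor map $(\tilde p,\tilde\Phi)$. Because $\tilde\euH$ is trivial and $\pzg=r(\pzg)\pzg=\pzg\,s(\pzg)$, composability in $\tilde\euH$ forces $\tilde\Phi(\pzg)=\tilde\Phi(r(\pzg))=\tilde\Phi(s(\pzg))$, so $\tilde\Phi^{(0)}$ is constant on $R_\euG$-classes and, $\tilde\euH$ being Hausdorff, on $\euE_\uu(R_\euG)$-classes; hence it factors uniquely as $\tilde\Phi^{(0)}=\Theta\circ\pi$ for a continuous $\Theta\colon\euH\to\tilde\euH$, which is automatically a morphism of the trivial groupoids. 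Similarly $\tilde p$ is $\euG$-invariant and $\tilde M$ is Hausdorff, so its fiber relation is a closed equivalence relation containing $R_\phi$, hence $\euE_\uu(R_\phi)$, and $\tilde p$ factors uniquely as $\tilde p=m\circ p$. A short diagram chase using the surjectivity of $p$ and $\pi$ shows that $(m,\Theta)$ is the unique factor map with $(m,\Theta)\circ(p,\Phi)=(\tilde p,\tilde\Phi)$, which is exactly the maximality property.

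The main obstacle I expect is not the algebra above but the openness conditions hidden in the definitions of a morphism and of a groupoid action: one must check that $p$, $t$, and each comparison map $m$ are open. I would derive these from the openness of $q$ together with the lifting criterion of \cref{rem:quotientmapopenchar}. For $p$ this amounts to showing that saturations of open sets under the compact relation $\euE_\uu(R_\phi)\subseteq K\times K$ are open; writing $\pi_1,\pi_2$ for its two projections, the saturation of an open set $U$ is $\pi_2(\pi_1^{-1}(U))$, so the issue is the openness of these projections, which is where the openness of $q$ (and, in the motivating case of an action groupoid $G\ltimes L$, the automatic openness of source and range) enters. This is the delicate technical step, the remainder being comparatively routine.
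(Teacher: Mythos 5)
Your core argument follows the same route as the paper's (very terse) proof: trivial factors of $(K,q,\euG)$ correspond to closed equivalence relations on $K$ containing the orbit relation $R_\phi$, the quotient by the smallest such relation $\euE_\uu(R_\phi)$ is therefore the maximal one, and uniqueness in (i) is formal. What you add is precisely what the paper suppresses: the explicit base data of the trivial factor (the trivial groupoid $\euH=\euG^{(0)}/\euE_\uu(R_\euG)$, the map $t$ obtained from $(q\times q)^{-1}(\euE_\uu(R_\euG))\supseteq\euE_\uu(R_\phi)$, and the morphism $\Phi=\pi\circ s=\pi\circ r$), together with the categorical derivation of (i) from the universal property. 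All of this algebra is correct.

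The gap is exactly the step you flagged, and it cannot be closed along the lines you sketch. Openness of $q$ does not yield openness of $p\colon K\to K/\euE_\uu(R_\phi)$; indeed $p$ need not be open at all. Take $K=[0,1]$ with $q\colon K\to\pt$ (so $q$ is trivially open) and the $\Z$-action generated by the homeomorphism $\phi$ with $\phi(x)=2x^2$ on $[0,1/2]$ and $\phi(x)=x$ on $[1/2,1]$, viewed as a groupoid action as in \cref{ex:grpaction}. Forward orbits of points in $(0,1/2)$ tend to $0$ and backward orbits tend to $1/2$, so $\euE_\uu(R_\phi)=\Delta_K\cup[0,1/2]^2$. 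The quotient collapses $[0,1/2]$ to a non-isolated point of $K/\euE_\uu(R_\phi)\cong[1/2,1]$, so the image under $p$ of the open set $(1/8,1/4)$ is not open; equivalently, saturations of open sets under $\euE_\uu(R_\phi)$ need not be open, and no openness property of the projections $\pi_1,\pi_2$ is available here. Worse, if one insists literally on the openness requirement of \cref{definition:groupoidaction}, the lemma itself fails in this example: any open $p'$ that is constant on orbits is constant on $[0,1/2]$ (its fiber relation is a closed equivalence relation containing $R_\phi$, hence containing $\euE_\uu(R_\phi)$), so openness makes $p'([0,1/2])$ an isolated point of the connected space $p'(K)$, forcing every trivial factor to be a single point, whereas $K/\euE_\uu(R_\phi)$ is not a point and its quotient map is not open.

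So the obstruction you identified is real, but it is a defect of the paper's definitions rather than of your argument: the paper's own proof never addresses openness (nor does it construct $t$ and $\euH$), and both the later uses of the lemma (\cref{chartrans}, \cref{rem:topologicallyergodic}, the proposition on the Gelfand space of $\fix(T_\varphi)$) and the classical notion of maximal trivial factor in \cite{Kuester2019} and \cite{Edeko2020} require the reading in which factor maps onto trivial factors are merely continuous surjections. Under that reading your proof is complete and essentially identical to the paper's intended one; under the literal reading, neither your proof nor the paper's can be repaired.
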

\begin{proof}
  Two trivial factors $(M_1, t_1, \euH_1)$ and $(M_2, t_2, \euH_2)$ are isomorphic if and only if 
  the associated equivalence relations on $K$ agree. 
  By construction, $\euE_\uu(R_\phi)$ is the smallest closed equivalence relation on 
  $K$ that contains the equivalence relation $R_\phi$.
  Thus, $K/\euE_\uu(R_\phi)$ is a maximal trivial factor of $(K, q, \euG)$ and every other maximal 
  trivial factor is isomorphic to it.
\end{proof}

In view of \cref{orbitsellisgroupoid1}, we may from now on speak of \emph{the} maximal trivial factor 
$\fix(K, q, \euG)$ of a groupoid action $(K, q, \euG)$. Summarizing our obsevations, 
we obtain the following characterization.

\begin{theorem}\label{chartrans}
  Let $\euG$ be a topological groupoid with compact unit space. Then the 
  following assertions are equivalent.
  \begin{enumerate}[(i)]
    \item $\euG$ is topologically ergodic.
    \item The equivalence relation $\euE_\uu(R_\euG)$ equals $\euG^{(0)}\times\euG^{(0)}$.
    \item The enveloping groupoid $\euE_\uu(\euG^{(0)}, \id_{\euG^{(0)}}, \euG)$ is transitive.
    \item The enveloping groupoid $\euE_\uu(K, q, \euG)$ is transitive for every action 
    $(K, q, \euG)$ of $\euG$ such that $\euE_\uu(K, q, \euG)$ is compact.
  \end{enumerate}
\end{theorem}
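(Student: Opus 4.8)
The plan is to prove the four assertions equivalent by establishing the three equivalences (i)~$\iff$~(ii), (ii)~$\iff$~(iii), and (iii)~$\iff$~(iv) separately. Each of these follows almost directly from the lemmas and examples assembled above, so the proof is essentially bookkeeping: the substantive content already lives in \cref{orbitsellisgroupoid1}, \cref{ex:envelopingrelation}, and \cref{cor:transviaunitspace}.

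For (i)~$\iff$~(ii), I would start by unwinding the definition: $\euG$ is topologically ergodic means that the unit space action $(\euG^{(0)}, \id_{\euG^{(0)}}, \euG)$ is topologically ergodic, i.e.\ that every trivial factor of this action is a point. The first step is to identify the orbit relation $R_\phi$ of the unit space action. Since the action sends $s(\pzg)$ to $r(\pzg)$ (see \cref{ex:unitspaceaction}), one computes $\pzg\, s(\pzg)\, \pzg^{-1} = r(\pzg)$, so that $(\pzu,\pzv) \in R_\phi$ holds precisely when $\euG_\pzu^\pzv \neq \emptyset$; hence $R_\phi$ coincides with the orbit relation $R_\euG$ of \cref{ex:orbitrelationmorphism}. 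By \cref{orbitsellisgroupoid1}, the maximal trivial factor of the unit space action is $\euG^{(0)}/\euE_\uu(R_\euG)$, and by the universal property every trivial factor is a point if and only if this maximal one is. Finally, the quotient $\euG^{(0)}/\euE_\uu(R_\euG)$ is a single point exactly when $\euE_\uu(R_\euG) = \euG^{(0)}\times\euG^{(0)}$, which is assertion (ii).

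For (ii)~$\iff$~(iii), I would invoke \cref{ex:envelopingrelation}, which identifies $\euE_\uu(\euG^{(0)}, \id_{\euG^{(0)}}, \euG)$ with the equivalence relation $\euE_\uu(R_\euG)$, viewed as a (closed) subgroupoid of the pair groupoid $\euG^{(0)}\times\euG^{(0)}$. By \cref{ex:pairgroupoid}, a subgroupoid of a pair groupoid is transitive if and only if it equals the entire pair groupoid. Thus the transitivity of the enveloping groupoid in (iii) is equivalent to $\euE_\uu(R_\euG) = \euG^{(0)}\times\euG^{(0)}$, i.e.\ to (ii).

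For (iii)~$\iff$~(iv), the key preliminary observation is that $\euE_\uu(\euG^{(0)}, \id_{\euG^{(0)}}, \euG)$ is \emph{always} compact: by \cref{ex:envelopingrelation} it is a closed equivalence relation inside $\euG^{(0)}\times\euG^{(0)}$, which is compact since $\euG^{(0)}$ is compact. Granting this, the implication (iv)~$\implies$~(iii) is immediate by instantiating (iv) with the unit space action itself. Conversely, for (iii)~$\implies$~(iv), let $(K,q,\euG)$ be any action whose enveloping groupoid $\euE_\uu(K,q,\euG)$ is compact; then \cref{cor:transviaunitspace} asserts that $\euE_\uu(K,q,\euG)$ is transitive if and only if $\euE_\uu(\euG^{(0)}, \id_{\euG^{(0)}}, \euG)$ is, and the latter holds by (iii). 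The only step requiring genuine care is the identification of the orbit relation on $\euG^{(0)}$ with $R_\euG$ in (i)~$\iff$~(ii); all remaining steps are direct citations of the prepared results, so I anticipate no serious obstacle.
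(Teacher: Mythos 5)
Your proposal is correct and follows exactly the route the paper intends: the paper gives no written proof, presenting \cref{chartrans} as a summary of the preceding observations, namely the identification of the orbit relation of the unit space action with $R_\euG$, the identification of $\euE_\uu(\euG^{(0)}, \id_{\euG^{(0)}}, \euG)$ with $\euE_\uu(R_\euG)$ from \cref{ex:envelopingrelation}, the maximal trivial factor description in \cref{orbitsellisgroupoid1}, and the reduction to the unit space action in \cref{cor:transviaunitspace}. Your write-up carries out this bookkeeping correctly, including the one point needing care for (iii)~$\iff$~(iv): that $\euE_\uu(\euG^{(0)}, \id_{\euG^{(0)}}, \euG)$ is automatically compact as a closed subset of the compact pair groupoid $\euG^{(0)}\times\euG^{(0)}$, so the unit space action is a legitimate instance of (iv).
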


Since we are ultimately interested in groupoid actions that arise from extensions of topological 
dynamical systems as in \cref{ex:extensiongrpdaction}, the following corollary provides a 
simple criterion for topological ergodicity.

\begin{corollary}
  Let $(K, G)$ be a topological dynamical system. Then the action groupoid $G \ltimes K$ is 
  topologically ergodic if and only if the system $(K, G)$ is.
\end{corollary}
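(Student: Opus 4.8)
The plan is to reduce both notions of topological ergodicity to the single condition $\euE_\uu(R) = K \times K$ for the regular orbit relation $R$ on $K$, and then to observe that this relation is literally the same object whether we view $(K,G)$ as the group action $(K, q, G)$ of \cref{ex:grpaction} or as the action of $G \ltimes K$ on its unit space.

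First I would use the identification $(G \ltimes K)^{(0)} = K$. Applying the equivalence (i) $\Leftrightarrow$ (ii) of \cref{chartrans} to $\euG = G \ltimes K$, the groupoid $G \ltimes K$ is topologically ergodic if and only if $\euE_\uu(R_{G \ltimes K}) = K \times K$, where $R_{G \ltimes K}$ is the orbit relation of the conjugation action of $G\ltimes K$ on its unit space (see \cref{ex:unitspaceaction}). On the other side, interpreting the dynamical system $(K,G)$ as the groupoid action $(K, q, G)$ with $q \colon K \to \{1\}$, the definition of topological ergodicity together with \cref{orbitsellisgroupoid1}(ii), which identifies the maximal trivial factor with $K / \euE_\uu(R_\phi)$, shows that $(K,G)$ is topologically ergodic if and only if $\euE_\uu(R_\phi) = K \times K$, where $R_\phi = \{(x,y) \in K\times K \mid y \in Gx\}$ is the usual orbit relation.

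It therefore remains to identify the two orbit relations. Using the explicit source and range maps coming from \cref{ex:actiongrpd}, namely $s(g,x) = x$ and $r(g,x) = gx$, a short computation shows that the conjugation action of $(g,x)$ sends the unit $x$ to $gx$, so the orbit of $x$ under $G \ltimes K$ is exactly $Gx$. Hence $R_{G\ltimes K} = \{(gx, x) \mid g \in G, x \in K\} = R_\phi$ is the regular orbit relation already recorded in \cref{ex:transitivity}, and consequently $\euE_\uu(R_{G\ltimes K}) = \euE_\uu(R_\phi)$. Combining this with the two equivalences above yields that $G \ltimes K$ is topologically ergodic if and only if $(K,G)$ is.

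I expect no serious obstacle here; the one point requiring care is the bookkeeping of the conjugation action, i.e.\ confirming that the orbit of a unit $x$ under the groupoid $G \ltimes K$ really coincides with the $G$-orbit $Gx$ rather than some quotient or enlargement of it, which is exactly where the explicit formulas for $s$, $r$, and the product of the action groupoid are used.
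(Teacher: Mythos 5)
Your proof is correct and is essentially the paper's (implicit) argument: the paper states this corollary without proof as an immediate consequence of \cref{chartrans}, and your derivation---applying the equivalence of (i) and (ii) in \cref{chartrans} to $G \ltimes K$ and identifying the conjugation-orbit relation $R_{G\ltimes K}$ on the unit space with the regular orbit relation of $(K,G)$, as already recorded in \cref{ex:transitivity}---is exactly that intended route, with the system-side reduction to $\euE_\uu(R_\phi) = K\times K$ justified via \cref{orbitsellisgroupoid1}. The bookkeeping of $s$, $r$, and the conjugation action that you flag as the delicate point is handled correctly, so there is no gap.
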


We are now ready to state the main result of this section.

\begin{theorem}\label{thm:pseudoisochar}
  Let $(K,q,\euG)$ be a groupoid action by a topologically ergodic groupoid $\euG$. Then the 
  following assertions are equivalent.
  \begin{enumerate}
    \item\label{thm:pseudoisochar_pseudoiso} $(K, q, \euG)$ is pseudoisometric.
    \item\label{thm:pseudoisochar_groupoid} $\euE_\uu(K, q, \euG)$ is a compact groupoid.
  \end{enumerate}
\end{theorem}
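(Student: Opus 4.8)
The plan is to prove the two implications separately, since the substantive content has already been distributed across \cref{prop:pseudo_impl_groupoid}, \cref{prop:transcompacttopseudo}, and \cref{chartrans}; the argument will essentially be a synthesis of these three results, with topological ergodicity serving as the bridge that links compactness to transitivity.

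For the implication \ref{thm:pseudoisochar_pseudoiso} $\implies$ \ref{thm:pseudoisochar_groupoid}, I would simply invoke \cref{prop:pseudo_impl_groupoid}, which asserts precisely that the uniform enveloping semigroupoid of a pseudoisometric groupoid action is a compact groupoid. Note that this direction requires no hypothesis on $\euG$ whatsoever, so the assumption of topological ergodicity is not used here.

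For the converse \ref{thm:pseudoisochar_groupoid} $\implies$ \ref{thm:pseudoisochar_pseudoiso}, the key observation is that \cref{prop:transcompacttopseudo} does not conclude from compactness alone: it needs $\euE_\uu(K, q, \euG)$ to be a compact \emph{transitive} groupoid. This is exactly where topological ergodicity enters. By \cref{chartrans}, the topological ergodicity of $\euG$ is equivalent to the transitivity of $\euE_\uu(K, q, \euG)$ for every action of $\euG$ whose enveloping semigroupoid happens to be compact (this is assertion (iv) there, via its equivalence with (i)). Hence, assuming in \ref{thm:pseudoisochar_groupoid} that $\euE_\uu(K, q, \euG)$ is a compact groupoid, the topological ergodicity of $\euG$ upgrades it to a compact \emph{transitive} groupoid; applying \cref{prop:transcompacttopseudo} to this transitive enveloping semigroupoid then yields that $(K, q, \euG)$ is pseudoisometric, closing the cycle.

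I do not expect a genuine obstacle at this stage, precisely because the analytic heavy lifting has already taken place earlier: in the generalized Arzelà--Ascoli theorem \cref{arzelaascoli} (which underlies both \cref{prop:pseudo_impl_groupoid} and \cref{prop:transcompacttopseudo}) and in the orbit-relation analysis that culminates in \cref{chartrans}. The one point deserving explicit care is to verify that the compactness hypothesis in \ref{thm:pseudoisochar_groupoid} is exactly the condition under which assertion (iv) of \cref{chartrans} becomes applicable to this particular action $(K, q, \euG)$; once that match is confirmed, the two cited propositions combine to give both implications, and the theorem follows.
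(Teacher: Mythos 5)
Your proposal is correct and follows essentially the same route as the paper's own proof: the forward implication is exactly \cref{prop:pseudo_impl_groupoid}, and the converse combines \cref{chartrans} (topological ergodicity plus compactness gives transitivity of $\euE_\uu(K,q,\euG)$) with \cref{prop:transcompacttopseudo}. Your additional remarks---that topological ergodicity is not needed for the forward direction and that the compactness hypothesis is precisely what makes assertion (iv) of \cref{chartrans} applicable---are accurate refinements of the same argument.
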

\begin{proof}
  The implication \ref{thm:pseudoisochar_pseudoiso} $\implies$ \ref{thm:pseudoisochar_groupoid}
  was established more generally in \cref{prop:pseudo_impl_groupoid}. The converse implication
  follows from \cref{prop:transcompacttopseudo} since if $\euG$ is topologically ergodic,
  $\euE_\uu(K, q, \euG)$ is a compact transitive groupoid by \cref{chartrans}.
\end{proof}

\begin{remark}\label{remark:metrizable}
	Let $(K,q,\euG)$ be a groupoid action by a topologically ergodic groupoid $\euG$. If $K$ is metrizable, 
	then the proof above reveals that $(K,q,\euG)$ is isometric if and only if it is pseudoisometric.
\end{remark}

\subsection{Maximal trivial factors and Koopman representations.}

As noted in \cref{rem:topologicallyergodic}, topological ergodicity of a dynamical system 
$(L, \psi)$ can be characterized in terms of its Koopman operator, allowing for a very 
convenient criterion. We extend this characterization to groupoids.

\begin{definition}\label{def:koopmanrep}
  Let $(K,q,\euG,\varphi)$ be a groupoid action. The map
  \begin{align*}
    T_\varphi \colon \euG \to 
    \bigcup_{\pzu,\pzv \in \euG^{(0)}} \mathscr{L}(\mathrm{C}(K_\pzu),\mathrm{C}(K_\pzv)), \quad 
    \pzg \mapsto T_{\pzg}
  \end{align*}
  with $T_\pzg f \defeq f \circ \varphi_{\pzg^{-1}}$ for $f \in \uC(K_{\mathrm{s}(\pzg)})$ is called 
  the \emph{Koopman representation of $(K,q,\euG,\varphi)$}. Moreover, the set
  \begin{align*}
    \fix(T_\varphi)
    \defeq \left\{f \in \mathrm{C}(K) \mmid \forall \pzg \in \euG \colon T_\pzg \left(f|_{K_{s(\pzg)}}\right)
    = f|_{K_{r(\pzg)}}\right\}
  \end{align*}
  is called its \emph{fixed space}. If $\euG$ is a topological groupoid with compact unit space, we 
  write $T_\euG$ for the Koopman representation of $(\euG,\mathrm{id}_{\euG^{(0)}},\euG^{(0)})$ and 
  call this the \emph{Koopman representation of $\euG$}.
\end{definition}

\begin{remark}\label{rem:fix} 
  If $(K,q,\euG,\varphi)$ is a groupoid action, we can recover its fixed space from the action 
  groupoid $\euG \ltimes K$ (see \cref{def:actiongrpd2}). Concretely, we obtain, under the usual 
  identification of the unit space, the identity $\fix(T_\varphi) = \fix(T_{\euG \ltimes K})$.
\end{remark}

The fixed space of the Koopman representation is always a unital commutative 
C*-algebra and therefore isomorphic to $\mathrm{C}(X)$ where $X$ is its (compact) Gelfand space. Using this observation we obtain the following result characterizing the maximal trivial factor of a groupoid action. 

\begin{proposition}
	Let $(K,q,\euG,\varphi)$ be a groupoid actions. Then the Gelfand space of the fixed space $\fix(T_\varphi)$ defines a maximal trivial factor of $(K,q,\euG,\varphi)$.
\end{proposition}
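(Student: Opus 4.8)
The plan is to identify the Gelfand space of $\fix(T_\varphi)$ with the quotient $K/\euE_\uu(R_\phi)$ by the smallest closed equivalence relation containing the orbit relation, and then to invoke \cref{orbitsellisgroupoid1}, which already exhibits $K/\euE_\uu(R_\phi)$ as a maximal trivial factor of $(K,q,\euG,\varphi)$. First I would rewrite the defining condition of the fixed space in pointwise form: since $T_\pzg f = f \circ \varphi_{\pzg^{-1}}$, the requirement $T_\pzg(f|_{K_{s(\pzg)}}) = f|_{K_{r(\pzg)}}$ becomes, after the substitution $y = \pzg x$, the identity $f(\pzg x) = f(x)$ for all $\pzg \in \euG$ and $x \in K_{s(\pzg)}$. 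Hence $\fix(T_\varphi)$ is precisely the unital C*-subalgebra of $\uC(K)$ consisting of the continuous functions that are constant on $\euG$-orbits, i.e., constant on the classes of the orbit relation $R_\phi$.

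The key step, and the one I expect to be the crux, is to promote $R_\phi$-invariance to invariance under $\euE_\uu(R_\phi)$. Here I would use continuity together with the characterization from \cref{ex:envelopingrelation} of $\euE_\uu(R_\phi)$ as the smallest closed equivalence relation on $K$ containing $R_\phi$: given a continuous $R_\phi$-invariant $f$, the set $\{(x,y) \in K \times K \mid f(x) = f(y)\}$ is a closed equivalence relation containing $R_\phi$, hence contains $\euE_\uu(R_\phi)$, so that $f$ is $\euE_\uu(R_\phi)$-invariant. As the reverse inclusion $R_\phi \subseteq \euE_\uu(R_\phi)$ makes the converse trivial, this yields
\[
  \fix(T_\varphi) = \{ f \in \uC(K) \mid f \text{ is constant on the classes of } \euE_\uu(R_\phi) \}.
\]

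To finish, I would apply Gelfand duality. Since $\euE_\uu(R_\phi)$ is a closed equivalence relation on the compact space $K$, the quotient $K/\euE_\uu(R_\phi)$ is compact Hausdorff, and the Koopman operator $T_\pi$ of the quotient map $\pi \colon K \to K/\euE_\uu(R_\phi)$ is an isometric $*$-isomorphism from $\uC(K/\euE_\uu(R_\phi))$ onto the algebra of $\euE_\uu(R_\phi)$-invariant functions, which by the previous step equals $\fix(T_\varphi)$. By uniqueness of the Gelfand space, the Gelfand space of $\fix(T_\varphi)$ is therefore homeomorphic to $K/\euE_\uu(R_\phi)$. Transporting the trivial groupoid structure and the factor map along this homeomorphism, \cref{orbitsellisgroupoid1} then exhibits the Gelfand space of $\fix(T_\varphi)$ as a maximal trivial factor of $(K,q,\euG,\varphi)$. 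Apart from the invariance upgrade in the second step, every step is a routine application of Gelfand duality and the compact-Hausdorff quotient construction.
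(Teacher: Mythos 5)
Your proof is correct and is essentially the paper's own argument: both identify the Gelfand space of $\fix(T_\varphi)$ with the quotient $K/\euE_\uu(R_\phi)$ and then conclude by invoking \cref{orbitsellisgroupoid1}. The paper merely organizes this dually---comparing equivalence relations by proving $R_{\mathrm{fix}} = \euE_\uu(R_\phi)$, where $R_{\mathrm{fix}}$ is the relation induced by the fixed functions, rather than comparing function algebras as you do---but the two key ingredients (minimality of $\euE_\uu(R_\phi)$ among closed equivalence relations containing $R_\phi$, and point separation by continuous functions on the compact Hausdorff quotient, i.e.\ Gelfand duality) are identical in both versions.
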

\begin{proof}
	The Gelfand space of $\fix(T_\varphi)$ is homeomophic to the compact space $M = K/{R_{\mathrm{fix}}}$ with
	\begin{align*}
  R_{\mathrm{fix}}
  \defeq \left\{(x, y)\in K \times K \mmid \forall f\in \fix(T_\varphi)\colon f(x) = f(y)\right\}.
  \end{align*}
	Clearly, $R_{\mathrm{fix}}$ is a closed and invariant equivalence relation containg the orbit relation $R_\varphi$. We therefore immediately obtain that $\euE_\uu(R_\phi) \subseteq R_{\mathrm{fix}}$. On the other hand, if $R$ is any closed invariant equivalence relation and $\pi_R \colon K \rightarrow K/R$ the induced map, then $(x,y) \in R$ if and only if $f(\pi_R(x)) = f(\pi_R(y))$ for every $f \in \mathrm{C}(K/R)$. However, $T_{\pi_R}f = f \circ \pi_R \in \fix(T_\varphi)$ for every $f \in \mathrm{C}(K/R)$. This shows $R_{\mathrm{fix}} \subseteq \euE_\uu(R_\phi)$ and consequently $R_{\mathrm{fix}} = \euE_\uu(R_\phi)$. The claim now follows from \cref{orbitsellisgroupoid1}.
\end{proof}
\begin{corollary}
	A groupoid action $(K,q,\euG,\varphi)$ is topologically ergodic if and only if $\fix(T_\varphi)$ is one-dimensional.
\end{corollary}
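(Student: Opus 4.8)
The plan is to deduce the corollary directly from the preceding proposition together with Gelfand duality, so that almost all the work has already been done. First I would recall that $\fix(T_\varphi)$ is a unital commutative C*-algebra, hence by the Gelfand--Naimark theorem isometrically $*$-isomorphic to $\uC(X)$ for its (compact) Gelfand space $X$. Under this identification $\dim\fix(T_\varphi) = 1$ holds if and only if $\fix(T_\varphi) = \C\cdot 1$, which is in turn equivalent to $X$ being a single point, since $\uC(X)$ is one-dimensional exactly when $X$ is a singleton.

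Next I would invoke the preceding proposition, which identifies $X$ with the maximal trivial factor $M = K/R_{\mathrm{fix}}$ of $(K,q,\euG,\varphi)$. By \cref{orbitsellisgroupoid1} the maximal trivial factor is unique up to isomorphism, so it suffices to show that $(K,q,\euG)$ is topologically ergodic if and only if this maximal trivial factor is a point. The forward direction is immediate from the definition, as the maximal trivial factor is itself a trivial factor and must therefore be a point. For the converse I would use the universal property in the definition of the maximal trivial factor: every trivial factor receives a (unique) factor map from $M$ and is thus a factor of $M$; if $M$ is a point, any such factor is a factor of a point and hence a point as well. This yields the equivalence between topological ergodicity and the maximal trivial factor being a point.

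Assembling these equivalences gives the chain ``$\fix(T_\varphi)$ one-dimensional $\iff$ $X$ a point $\iff$ the maximal trivial factor is a point $\iff$ $(K,q,\euG)$ topologically ergodic'', which is exactly the claim. I do not expect any genuine obstacle here, since the substantive content already lives in the preceding proposition (the identification of the Gelfand space of $\fix(T_\varphi)$ with $K/\euE_\uu(R_\phi)$ as the maximal trivial factor). What remains is only the routine translation among one-dimensionality of a commutative C*-algebra, the triviality of its Gelfand space, and the defining property of topological ergodicity; the one point worth spelling out carefully is the use of the universal property to pass from ``maximal trivial factor is a point'' to ``every trivial factor is a point''.
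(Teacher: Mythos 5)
Your proposal is correct and follows essentially the same route as the paper, which states this corollary without proof as an immediate consequence of the preceding proposition: identify the Gelfand space of $\fix(T_\varphi)$ with the maximal trivial factor, and note that one-dimensionality of a unital commutative C*-algebra is equivalent to its Gelfand space being a singleton, while topological ergodicity is equivalent (via the universal property of the maximal trivial factor) to that factor being a point. Your explicit treatment of the universal-property step, and the observation that uniqueness of the maximal trivial factor is convenient but not strictly necessary, is exactly the routine unwinding the paper leaves to the reader.
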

\begin{corollary}
	A groupoid $\euG$ with compact unit space $\euG^{(0)}$ is topologically ergodic if and only if $\fix(T_\euG)$ is one-dimensional.
\end{corollary}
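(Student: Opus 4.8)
The plan is to deduce this statement directly from the preceding corollary by specializing it to the canonical action of $\euG$ on its unit space. Recall from \cref{ex:unitspaceaction} that $\euG$ acts on $\euG^{(0)}$ by conjugation, yielding the groupoid action $(\euG^{(0)}, \id_{\euG^{(0)}}, \euG, \varphi)$ with $\varphi_\pzg(\pzu) = \pzg\pzu\pzg^{-1}$. By the very definition of topological ergodicity for a groupoid, $\euG$ is topologically ergodic precisely when this conjugation action is topologically ergodic.

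The preceding corollary, applied to the action $(\euG^{(0)}, \id_{\euG^{(0)}}, \euG, \varphi)$, asserts that this action is topologically ergodic if and only if $\fix(T_\varphi)$ is one-dimensional. It therefore only remains to identify $\fix(T_\varphi)$ with $\fix(T_\euG)$. But by \cref{def:koopmanrep}, the Koopman representation $T_\euG$ of $\euG$ is \emph{defined} to be the Koopman representation of exactly this action on the unit space, so $T_\varphi = T_\euG$ and hence $\fix(T_\varphi) = \fix(T_\euG)$. Chaining the two equivalences then gives the claim.

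Since both directions of the asserted biconditional are purely definitional reformulations routed through the preceding corollary, there is no genuine obstacle here. The only point requiring (trivial) care is to confirm that the notion of topological ergodicity for the groupoid $\euG$ and its Koopman representation $T_\euG$ both unfold, by their respective definitions, into the corresponding objects for the conjugation action on $\euG^{(0)}$; once this bookkeeping is made explicit, the statement is immediate.
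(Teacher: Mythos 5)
Your proof is correct and coincides with the paper's (implicit) argument: the corollary is stated there without proof precisely because it follows, as you show, by specializing the preceding corollary to the conjugation action $(\euG^{(0)}, \id_{\euG^{(0)}}, \euG)$ and unfolding the definitions of topological ergodicity for $\euG$ and of $T_\euG$ from \cref{def:koopmanrep}. No gaps; the definitional bookkeeping you spell out is all that is needed.
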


\section{Representations of compact transitive groupoids}
\label{sec:rep}

In this section we study the representation theory of compact transitive groupoids and apply 
it to the uniform enveloping (semi)groupoids of pseudoisometric groupoid actions. We start by 
recalling the following consequence of the Peter--Weyl theorem for representations of compact 
groups (see \cite[Theorem 15.14]{EFHN2015}).

\begin{theorem}\label{thm:PW_appl}
  Let $T \colon G \to \mathscr{L}(E)$ be a strongly continuous representation of a compact 
  group $G$ on a Banach space $E$. Then the following assertions hold.
  \begin{enumerate}[(i)]
  	\item The union of all finite-dimensional invariant subspaces
  of $E$ is dense in $E$.
 	\item If $G$ is abelian, then the union of all one-dimensional invariant subspaces
  of $E$ is dense in $E$.
  \end{enumerate}
\end{theorem}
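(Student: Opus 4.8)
The plan is to approximate each vector by vectors sitting in a single finite-dimensional invariant subspace, using the convolution operators attached to the normalized Haar measure $\mathrm{d}g$ of $G$. For $f\in\uC(G)$ I would set
\[
  T_f \defeq \int_G f(g)\,T(g)\,\mathrm{d}g\in\mathscr{L}(E),
\]
the $E$-valued integral making sense because $g\mapsto f(g)T(g)x$ is continuous for every $x$ by strong continuity. Writing $M\defeq\sup_{g\in G}\|T(g)\|$, which is finite by the uniform boundedness principle (for each $x$ the orbit $\{T(g)x\mid g\in G\}$ is a continuous image of the compact space $G$, hence bounded), one has $\|T_f\|\le M\|f\|_\infty$ since $\mathrm{d}g$ is a probability measure.

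First I would fix $x\in E$ and $\epsilon>0$ and reduce to an approximate-identity estimate. By strong continuity there is a neighborhood $V$ of the unit $e\in G$ with $\|T(g)x-x\|<\epsilon/2$ for $g\in V$; taking (via Urysohn) a function $u\in\uC(G)$ with $u\ge 0$, $\operatorname{supp}u\subset V$, and $\int_G u\,\mathrm{d}g=1$ yields
\[
  \|T_u x-x\|=\Bigl\|\int_G u(g)\bigl(T(g)x-x\bigr)\,\mathrm{d}g\Bigr\|\le\int_G u(g)\,\|T(g)x-x\|\,\mathrm{d}g<\tfrac{\epsilon}{2}.
\]
The Peter--Weyl theorem then supplies the approximation of $u$ itself: the \emph{representative functions}, i.e.\ those $f\in\uC(G)$ whose left translates $L_hf\colon g\mapsto f(h^{-1}g)$ ($h\in G$) span a finite-dimensional subspace of $\uC(G)$, are dense in $\uC(G)$. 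Choosing a representative function $f$ with $\|f-u\|_\infty<\epsilon/(2M)$ gives $\|T_fx-T_ux\|\le M\|f-u\|_\infty<\epsilon/2$, whence $\|T_fx-x\|<\epsilon$.

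The remaining point is that $T_fx$ lies in a finite-dimensional invariant subspace. Here the key identity is $T(h)T_f=T_{L_hf}$, which follows from left-invariance of Haar measure via the substitution $g\mapsto hg$. Therefore $W\defeq\operatorname{span}\{T(h)T_fx\mid h\in G\}=\operatorname{span}\{T_{L_hf}x\mid h\in G\}$ is $T$-invariant and contains $T_fx$ (take $h=e$), and since $\{L_hf\mid h\in G\}$ lies in a finite-dimensional subspace of $\uC(G)$ with basis $f_1,\dots,f_m$, we get $W\subseteq\operatorname{span}\{T_{f_1}x,\dots,T_{f_m}x\}$, so $\dim W<\infty$. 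As a finite sum of finite-dimensional invariant subspaces is again one, this proves that the union of all finite-dimensional invariant subspaces is dense, i.e.\ assertion (i).

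For (ii) I would derive the abelian case from (i). Any finite-dimensional invariant subspace $W$ carries a finite-dimensional representation of $G$, which is unitarizable by averaging an inner product over $\mathrm{d}g$; when $G$ is abelian the resulting commuting family of unitaries on $W$ is simultaneously diagonalizable, so $W$ is a direct sum of one-dimensional invariant subspaces. Hence the one-dimensional invariant subspaces already span every such $W$, and by (i) their linear span is dense in $E$, which is (ii). The entire mathematical substance is carried by the Peter--Weyl theorem, invoked as cited; the only step that needs genuine care is the intertwining identity $T(h)T_f=T_{L_hf}$ together with the finite-dimensionality of $\operatorname{span}\{L_hf\}$ for representative $f$, as this is exactly what upgrades the global approximation $T_fx\approx x$ to membership in one finite-dimensional invariant subspace.
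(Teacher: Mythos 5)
Your proof is essentially correct, but there is nothing in the paper to compare it against: the paper does not prove this theorem at all, it imports it with the citation [EFHN2015, Theorem~15.14]. Your argument is the standard derivation underlying that citation --- convolution operators $T_f$, an approximate identity to get $T_u x \approx x$, Peter--Weyl density of the representative functions in $\uC(G)$, and the intertwining identity $T(h)T_f = T_{L_h f}$ placing $T_f x$ inside the finite-dimensional invariant subspace spanned by $T_{f_1}x,\dots,T_{f_m}x$; part (ii) then follows by unitarizing and simultaneously diagonalizing the commuting restrictions to each finite-dimensional invariant subspace. So as a self-contained substitute for the citation it does the job, and you correctly identify the intertwining relation as the step carrying the weight.

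Two small repairs. First, the estimate $\|T_f x - T_u x\| \le M\|f-u\|_\infty$ omits the factor $\|x\|$: your own bound $\|T_{f-u}\|\le M\|f-u\|_\infty$ gives $\|T_f x - T_u x\|\le M\|f-u\|_\infty\|x\|$, so choose $\|f-u\|_\infty < \epsilon/(2M(1+\|x\|))$ or normalize $x$. Second, assertion (ii) as literally printed --- the \emph{union} of all one-dimensional invariant subspaces is dense --- is false in general: for the regular representation of $\T$ on $\mathrm{L}^2(\T)$ this union is the family of lines $\C e_n$ spanned by the characters, and $e_0+e_1$ stays at distance at least $1$ from every one of them. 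Unlike in (i), where finite sums of finite-dimensional invariant subspaces are again such subspaces (so the union is itself a subspace), one-dimensional invariant subspaces are not closed under sums, and \enquote{union dense} and \enquote{span dense} genuinely differ. What your argument proves is totality, i.e.\ density of the linear span, which is the intended and correct reading: it is the formulation the paper itself uses in the introduction and in the groupoid generalization \cref{pwtransitive}~(ii), where it says \enquote{fiberwise total}.
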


We prove a generalization of this result to representations 
of compact transitive groupoids
in \cref{pwtransitive}. We then apply this generalization to prove 
\cref{mainthm}, the main result of this section 
that characterizes pseudoisometric groupoid actions. To perform this
generalization, we need to start by replacing Banach spaces 
by Banach bundles (see, e.g., \cite[Definition 1.1]{DuGi1983} or 
\cite[Section 1 and Theorem 3.2]{Gierz1982}).

\begin{definition}\label{def:banachbundle}
	Let $L$ be a compact space. A \emph{Banach bundle} over $L$ is a topological space $E$ together 
	with a continuous open surjection $p \colon E \to L$ with the following properties.
  \begin{enumerate}[(i)]
    \item Every fiber $E_l$ is a Banach space.
    \item The mappings
      \begin{alignat*}{3}
        + &\colon E \times_L E \to E, \quad &(e,f) &\mapsto e+f\\
        \cdot &\colon \C \times E \to E, \quad &(\lambda,e) &\mapsto \lambda e
      \end{alignat*}
      are continuous.
    \item The norm mapping
      \begin{align*}
        \|\cdot\|\colon E \to [0,\infty), \quad e \mapsto \|e\|
      \end{align*}
      is upper semicontinuous.
    \item For each $l \in L$ the sets
      \begin{align*}
        \{e \in E\mid p(e) \in U, \|e\| < \epsilon\}
      \end{align*}
      for neighborhoods $U \subset L$ of $l$ and $\epsilon > 0$ define a neighborhood base of 
      $0_l \in E_l$.
  \end{enumerate}
  
	A Banach bundle $E$ is
  \begin{itemize}
    \item \emph{continuous} if the norm mapping $\|\cdot\|$ of (iii) is continuous,
    \item \emph{of constant dimension $n$} for some $n \in \N_0$ if $\dim (E_l) = n$ for every $l \in L$. 
    \item \emph{of constant finite dimension} if it is of constant dimension $n$ for some $n \in \N_0$.
    \item \emph{locally trivial} if for each $l \in L$ there are a compact neighborhood 
    $W$ of $l$, $n\in \N_0$ and a homeomorphism $\Phi\colon p^{-1}(W) \to W \times \C^n$ with the following properties.
    \begin{enumerate}[-]
      \item The diagram 
        \[
          \xymatrix{p^{-1}(W) \ar[rr]^\Phi \ar[rd]_p & &W \times \C^n \ar[ld]^{\mathrm{pr}_1} \\
          & W &
          }
        \]
        commutes where $\mathrm{pr}_1\colon W \times \C^n \to W$ is the prorjection onto the first component.
      \item $\Phi|_{E_l} \colon E_l \to \{l\} \times \C^n$ is an isomorphism of vector spaces for every $l \in W$.
      \item There are constants $c_1, c_2 > 0$ such that 
        \begin{align*}
          c_1 \cdot \|e\| \leq \left\|\mathrm{pr}_2(\Phi(e))\right\| \leq c_2 \cdot \|e\|
        \end{align*}
        for every $e \in p^{-1}(W)$ where $\mathrm{pr}_2 \colon W \times \C^n \to \C^n$ is the projection onto the second component.
    \end{enumerate}
  \end{itemize}
	Moreover, we write
		\begin{align*}
			\Gamma(E) \defeq \{\sigma \in \uC(L,E)\mid p \circ \sigma = \id_L\} 
		\end{align*}
	for the \emph{space of continuous sections of $E$}.
\end{definition}

\begin{remark}\label{remarksBB}
	\begin{enumerate}[(i)]
		\item \label{remarksBB_module} If $E$ is a Banach bundle over a compact space $L$, then $\Gamma(E)$ is canonically a 
		module over $\uC(L)$ and a Banach space with the norm defined by 
		$\|\sigma\|\defeq \sup_{l \in L}\|\sigma(l)\|_{E_l}$ for $\sigma \in \Gamma(E)$. Moreover, 
		$\|f\sigma\| \leq \|f\|\cdot \|\sigma\|$ for all $f \in \uC(L)$ and $\sigma \in \Gamma(E)$, 
		i.e., $\Gamma(E)$ is a Banach module over $\uC(L)$ (cf.\ \cite[Chapter 2]{DuGi1983}).
		\item \label{remarksBB_hausdorff} If $E$ is a continuous Banach bundle, then its total space is Hausdorff 
		(see \cite[Proposition 16.4]{Gierz1982}).
		\item \label{remarksBB_lca} A Banach bundle with finite-dimensional fibers which is locally trivial as a vector bundle 
		(in the usual sense) is locally trivial as a Banach bundle since the required norm estimates 
		follow from compactness and upper semicontinuity of the norm (see \cite[Proposition 10.9]{Gierz1982}).
		\item \label{remarksBB_lcb} By \cite[Theorem 18.5]{Gierz1982}, a Banach bundle of constant finite dimension has a 
		Hausdorff total space if and only if it is locally trivial.
	\end{enumerate}
	
\end{remark}

We also recall the notion of subbundles.

\begin{definition}
  A \emph{subbundle} of a Banach bundle $E$ is a subset $F$ of $E$ together with the restricted 
  mapping $p|_F\colon F \to L$ such that the following conditions are satisfied.
  \begin{itemize}
    \item $F_l = F\cap E_l$ is a closed linear subspace of $E_l$ for every $l \in L$.
    \item The restricted mapping $p|_F$ is still open.
  \end{itemize}
\end{definition}

Under these conditions, $F$ together with $p|_F$ is again a 
Banach bundle (see \cite[Section 8]{Gierz1982}).  

There are plenty of examples of Banach bundles coming from differential geometry. Here we are 
interested in Banach bundles arising from surjections between compact spaces.

\begin{example}\label{Banachbundleexample}
  Let $q \colon K \to L$ be an open continuous surjection between compact spaces. Then a moment's 
  thought reveals that the compact-open topology on $\uC_q(K)$ agrees with the topology 
  generated by the base
	\begin{align*}
		V(F,U,\epsilon) \defeq \left\{f \in \uC_q(K)\mmid s(f) \in U, \left\|f - F|_{K_{s(f)}}\right\| < \epsilon \right\}
	\end{align*}
  for $F \in \uC(K)$, open $U \subset L$, and $\epsilon > 0$ (considered, e.g., in \cite[p.\ 30]{Knapp1967}). 
  Together with the canonical mapping $p \coloneqq s \colon \uC_q(K) \to L$, the space $\uC_q(K)$ becomes a 
  continuous Banach bundle over $L$. Moreover, the mapping
  \begin{align*}
    \uC(K) \to \Gamma\left(\uC_q(K)\right), \quad F \mapsto [l \mapsto F_l]
  \end{align*}
  is an isometric isomorphism of Banach modules over $\uC(L)$ by means of which 
  we identify the continuous sections of $\uC_q(K)$ with $\uC(K)$.
\end{example}

Next, we introduce the notion of continuous representations for topological groupoids 
(cf.\ Definition 3.1 of \cite{Bos2011}). Note here that if $E$ is a Banach bundle over a compact 
space $L$, then the space $\mathscr{G}(E)$ of all invertible bounded linear operators 
\begin{align*}
  T \colon E_l \to E_{\tilde{l}}
\end{align*}
for $l, \tilde{l} \in L$ is a subsemigroupoid of $\mathrm{C}_{p}^{p}(E,E)$ and itself a groupoid.

\begin{definition}\label{groupoidrepresentation}
	Let $\euG$ be a topological groupoid. A \emph{continuous representation} 
	of $\euG$ on a Banach bundle $E$ over $\euG^{(0)}$ is a homomorphism
		\begin{align*}
			T\colon \euG \to \mathscr{G}(E)
		\end{align*}
	of groupoids such that
	\begin{align*}
		\euG \times_{s,p} E \to E, \quad (\pzg,v) \mapsto T(\pzg)v
	\end{align*}
	is continuous.
	Moreover, we call a subset $F$ of $E$ \emph{$T$-invariant} if 
	$T(\pzg)(F\cap E_{s(\pzg)}) \subset F$ for every $\pzg \in \euG$.
\end{definition}

An important class of examples of continuous groupoid representations are 
the Koopman representations we already considered in \cref{def:koopmanrep}.

\begin{proposition}\label{representationextension}
	Let $(K, q, \euG, \varphi)$ be a groupoid action. Then the Koopman representation 
		\begin{align*}
			\euG \to \mathscr{G}\left(\mathrm{C}_q(K)\right), \quad \pzg \mapsto T_\pzg
		\end{align*}
	is a continuuous representation of $\euG$.
\end{proposition}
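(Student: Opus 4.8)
The plan is to verify the two defining requirements of \cref{groupoidrepresentation} in turn: first that $\pzg \mapsto T_\pzg$ is a groupoid homomorphism into $\mathscr{G}(\uC_q(K))$, and then that the associated action map is continuous. Throughout I would use that, by \cref{Banachbundleexample}, $\uC_q(K)$ is a continuous Banach bundle over $\euG^{(0)}$ whose fiber over $\pzu$ is $\uC(K_\pzu)$ and whose bundle projection is the source map $s$ of the bundle.

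The algebraic part is routine and I would dispatch it quickly. Each $T_\pzg$ maps $\uC(K_{s(\pzg)})$ linearly into $\uC(K_{r(\pzg)})$; since axioms (ii) and (iii) of \cref{definition:groupoidaction} give $\varphi_{\pzg^{-1}} \circ \varphi_\pzg = \mathrm{id}_{K_{s(\pzg)}}$ and $\varphi_\pzg \circ \varphi_{\pzg^{-1}} = \mathrm{id}_{K_{r(\pzg)}}$, the map $\varphi_{\pzg^{-1}} \colon K_{r(\pzg)} \to K_{s(\pzg)}$ is a bijection, so $T_\pzg$ is an isometric isomorphism with inverse $T_{\pzg^{-1}}$ and hence lies in $\mathscr{G}(\uC_q(K))$. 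For $(\pzg_1, \pzg_2) \in \euG^{(2)}$ the bundle-range $r(\pzg_2)$ of $T_{\pzg_2}$ equals the bundle-source $s(\pzg_1)$ of $T_{\pzg_1}$, so the pair is composable, and from $\varphi_{\pzg_2^{-1}\pzg_1^{-1}} = \varphi_{\pzg_2^{-1}} \circ \varphi_{\pzg_1^{-1}}$ one reads off $T_{\pzg_1 \pzg_2} = T_{\pzg_1} T_{\pzg_2}$. Thus $T$ is a homomorphism of groupoids.

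The substance of the proof is the continuity of $(\pzg, v) \mapsto T_\pzg v$ on $\euG \times_{s,p} \uC_q(K)$, and here I would argue entirely through the net characterization of \cref{charconv} (applicable since $K$ is compact, hence locally compact). Take a net $(\pzg_\alpha, v_\alpha) \to (\pzg, v)$ in the fiber product and set $\theta_\alpha \defeq T_{\pzg_\alpha} v_\alpha$, $\theta \defeq T_\pzg v$. For the first condition of \cref{charconv}, the bundle-source of $\theta_\alpha$ is $r(\pzg_\alpha)$, which tends to $r(\pzg)$ by continuity of the range map. For the second, fix a subnet $(\theta_\beta)$ and a net $(x_\beta)$ in $K$ with $x_\beta \to x$ and $q(x_\beta) = r(\pzg_\beta)$. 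Then $\theta_\beta(x_\beta) = v_\beta(\pzg_\beta^{-1} x_\beta)$, and since inversion in $\euG$ and the action $\varphi$ are continuous, $y_\beta \defeq \pzg_\beta^{-1} x_\beta$ converges to $y \defeq \pzg^{-1} x$ with $q(y_\beta) = s(\pzg_\beta) = s(v_\beta)$. Applying \cref{charconv} to the convergent net $v_\beta \to v$ then yields $v_\beta(y_\beta) \to v(y) = (T_\pzg v)(x) = \theta(x)$, which is exactly the second condition; hence $\theta_\alpha \to \theta$.

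The one delicate point — and the step I would write out most carefully — is the fiber bookkeeping that permits the reduction to the assumed convergence $v_\alpha \to v$: one must track that $x_\beta$ lies in $K_{r(\pzg_\beta)} = K_{s(\pzg_\beta^{-1})}$, so that $(\pzg_\beta^{-1}, x_\beta)$ is a legitimate point of $\euG \times_{s,q} K$ converging to $(\pzg^{-1}, x)$, and that the resulting points $y_\beta$ satisfy the constraint $q(y_\beta) = s(v_\beta)$ needed to invoke \cref{charconv} for the net $(v_\beta)$. Once this indexing is set up correctly, every convergence used follows directly from continuity of $\varphi$, continuity of inversion in the topological groupoid $\euG$, and the evaluation-convergence built into \cref{charconv}; in particular no estimate on the Banach-bundle norms is required.
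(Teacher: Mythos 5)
Your proposal is correct and follows essentially the same route as the paper: the paper also dismisses the algebraic assertions as obvious and proves continuity of $(\pzg,f)\mapsto T_\pzg f$ on $\euG\times_{s,s}\uC_q(K)$ by the net characterization of \cref{charconv}, passing to a subnet, a net $(x_\beta)$ with $q(x_\beta)=s(\pzg_\beta^{-1})$, and using continuity of inversion and of the action to get $\pzg_\beta^{-1}x_\beta\to\pzg^{-1}x$, then the convergence $f_\beta\to f$ to conclude $f_\beta(\pzg_\beta^{-1}x_\beta)\to f(\pzg^{-1}x)$. Your write-up is in fact slightly more complete, since you also record the source-convergence $r(\pzg_\alpha)\to r(\pzg)$ and the homomorphism verification that the paper leaves implicit.
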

\begin{proof}
  We only check that the mapping
    \begin{align*}
      \euG \times_{s,s} \uC_q(K) \to \uC_q(K), \quad (\pzg,f) \mapsto T_\pzg f
    \end{align*}
  is continuous since the remaining assertions are obvious. Pick a net 
  $((\pzg_\alpha,f_\alpha))_{\alpha \in A}$ in $\euG \times_{s,s} \uC_q(K)$ converging to 
  $(\pzg,f) \in \euG \times_{s,s} \uC_q(K)$. We have to show that $T_{\pzg_\alpha}f_\alpha$ 
  converges to $T_\pzg f$ with respect to the compact-open topology.
  
  Let $((\pzg_\beta,f_\beta))_{\beta \in B}$ be a subnet and $(x_\beta)_{\beta \in B}$ be a 
  convergent net in $K$ with limit $x\in K$ that satisfies $q(x_\beta) = s(\pzg_\beta^{-1})$ for every 
  $\beta \in B$. Then $\lim_{\beta} \pzg_\beta^{-1}(x_{\beta}) = \pzg^{-1}(x)$. Since 
  $\lim_\beta f_\beta = f$,
  \begin{align*}
    \lim_{\beta} f_\beta\left(\pzg_\beta^{-1}\left(x_{\beta}\right)\right) = f\left(\pzg^{-1}(x)\right).
  \end{align*}
  This shows that $T_\varphi$ is continuous.
\end{proof}

We now state our first main result: a Peter--Weyl-type theorem for compact transitive groupoids. 
Here, a subset $F$ of a Banach bundle $E$ over a compact space $L$ is called \emph{fiberwise dense} 
if $F \cap E_l$ is dense in $E_l$ for every $l \in L$. The notion of a \emph{fiberwise total set} is 
defined analogously. Moreover, a groupoid $\euG$ is \emph{abelian} if all its isotropy groups 
$\euG_\pzu^\pzu$ for $\pzu \in \euG^{(0)}$ are abelian.

\begin{theorem}\label{pwtransitive}
	For a continuous representation $T$ of a compact transitive groupoid $\euG$ on a Banach bundle 
	$E$ over the unit space $\euG^{(0)}$ the following assertions hold.
	\begin{enumerate}[(i)]
		\item\label{pwtransitive_a} The union of all invariant subbundles of constant finite dimension is 
    fiberwise dense in $E$.
		\item\label{pwtransitive_b} If $\euG$ is abelian, then the union of all invariant subbundles 
    of constant dimension one is fiberwise total in $E$.
    \item\label{pwtransitive_c} If $F_1, F_2 \subset E$ are two subbundles of constant finite dimension,
    then their sum $F_1 + F_2$ is again an invariant subbundle of constant finite dimension.
  \end{enumerate}
\end{theorem}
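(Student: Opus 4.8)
The plan is to reduce everything to the classical Peter--Weyl theorem \cref{thm:PW_appl} by exploiting that a transitive groupoid behaves like a group ``spread out'' over its unit space. First I would fix a base point $\pzu_0 \in \euG^{(0)}$ and set $G \defeq \euG_{\pzu_0}^{\pzu_0}$. Since $\euG$ is compact and the operations are continuous, $G$ is a compact topological group, and restricting $T$ gives a homomorphism $G \to \mathscr{L}(E_{\pzu_0})$ into the invertible operators on the Banach space $E_{\pzu_0}$. The continuity of $\euG \times_{s,p} E \to E$ restricts to joint continuity of $G \times E_{\pzu_0} \to E_{\pzu_0}$, so this is a strongly continuous representation of $G$ and \cref{thm:PW_appl} applies. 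By transitivity I may fix, for every $\pzu \in \euG^{(0)}$, a transition element $\pzh_\pzu \in \euG_{\pzu_0}^\pzu$ (with $\pzh_{\pzu_0}$ the unit), noting that any two choices differ by right multiplication with an element of $G$.

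The central construction spreads a $G$-invariant subspace $W \subseteq E_{\pzu_0}$ to a candidate subbundle by setting $F_\pzu \defeq T(\pzh_\pzu)W \subseteq E_\pzu$ and $F \defeq \bigcup_\pzu F_\pzu$. This is independent of the choice of $\pzh_\pzu$ because $T(g)W = W$ for $g \in G$; it is $T$-invariant because for $\pzg \in \euG_\pzu^\pzv$ one has $\pzg \pzh_\pzu = \pzh_\pzv g$ for some $g \in G$, whence $T(\pzg)F_\pzu = T(\pzh_\pzv)T(g)W = F_\pzv$; and since each $T(\pzh_\pzu)$ is a topological isomorphism $E_{\pzu_0} \to E_\pzu$, all fibers $F_\pzu$ have the same dimension (finite when $\dim W < \infty$) and are closed. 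The one genuinely topological point, which I expect to be the main obstacle, is that $F$ is a subbundle, i.e.\ that $p|_F$ is open. I would establish this by a net argument: given $w = T(\pzh_\pzu)w_0 \in F_\pzu$ with $w_0 \in W$ and a net $\pzu_\alpha \to \pzu$, the openness of $(s,r)$ from \cref{lem:transitive_open} together with the net-lifting characterization of open maps in \cref{rem:quotientmapopenchar} yields, after passing to a subnet, elements $\pzh_\beta \in \euG_{\pzu_0}^{\pzu_\beta}$ with $\pzh_\beta \to \pzh_\pzu$; then $T(\pzh_\beta)w_0 \in F_{\pzu_\beta}$ converges to $w$ by continuity of $T$, and this lifting property is precisely the openness of $p|_F$.

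Parts \ref{pwtransitive_a} and \ref{pwtransitive_b} then follow by transporting density from the base fiber. Applying the construction to the finite-dimensional $G$-invariant subspaces of $E_{\pzu_0}$ produces invariant subbundles of constant finite dimension whose fibers over $\pzu_0$ exhaust the directed union of these subspaces, which is dense by \cref{thm:PW_appl}\,(i); at an arbitrary $\pzu$ the corresponding union of fibers is the image of this dense set under the topological isomorphism $T(\pzh_\pzu)$ and is hence dense in $E_\pzu$, giving fiberwise density. For \ref{pwtransitive_b}, if $\euG$ is abelian then $G$ is an abelian compact group, so by \cref{thm:PW_appl}\,(ii) the one-dimensional $G$-invariant subspaces are total in $E_{\pzu_0}$; spreading each to a one-dimensional invariant subbundle and again pushing forward by $T(\pzh_\pzu)$, which preserves totality, yields fiberwise totality.

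For \ref{pwtransitive_c}, given invariant subbundles $F_1, F_2$ of constant finite dimension, invariance of $F_1 + F_2$ is immediate, and constancy of $\dim (F_1+F_2)_\pzu$ follows from transitivity: any $\pzg \in \euG_\pzu^\pzv$ induces via $T(\pzg)$ a linear isomorphism carrying $F_{i,\pzu}$ onto $F_{i,\pzv}$, hence $(F_1+F_2)_\pzu$ onto $(F_1+F_2)_\pzv$. It remains to check openness of $p|_{F_1+F_2}$, which I would once more obtain by the net argument: writing a vector in $(F_1+F_2)_\pzu$ as $w^1 + w^2$ with $w^i \in F_{i,\pzu}$ and using that $p|_{F_1}$ and $p|_{F_2}$ are open, I lift $w^1$ and $w^2$ along a common subnet of a given net $\pzu_\alpha \to \pzu$, and continuity of the bundle addition $+\colon E \times_L E \to E$ shows that the sum of the lifts converges to $w^1 + w^2$. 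As the fibers are finite-dimensional, hence closed, this makes $F_1 + F_2$ a subbundle.
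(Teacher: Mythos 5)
Your proposal is correct and follows essentially the same route as the paper: your ``spreading'' construction $F_\pzu = T(\pzh_\pzu)W$ is precisely the paper's \cref{lem:defineinvbundle} (there phrased as $F_{r(\pzg)} \defeq T(\pzg)V$ without choosing transition elements), with the openness of $p|_F$ obtained in both cases from \cref{lem:transitive_open} together with the net-lifting characterization of \cref{rem:quotientmapopenchar}, and the theorem then deduced by applying \cref{thm:PW_appl} to the isotropy group. The only cosmetic differences are that the paper invokes the Peter--Weyl consequence at each unit $\pzu$ separately rather than transporting density from a fixed base fiber by the topological isomorphism $T(\pzh_\pzu)$, and that for part \ref{pwtransitive_c} it reuses the lemma with $V = F_{1,\pzu} + F_{2,\pzu}$ instead of your direct argument via continuity of addition; both variants are sound.
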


\begin{remark}
	Notice that if $E$ has a Hausdorff total space (in particular, if $E$ has continuous norm), then the 
	subbundles in \cref{pwtransitive} are locally trivial (see \cref{remarksBB} (iv)).
\end{remark}

The proof of \cref{pwtransitive} uses the following lemma which reduces the 
problem to a single isotropy group.

\begin{lemma}\label{lem:defineinvbundle}
	Let $T$ be a continuous representation of a compact transitive groupoid on a Banach bundle $E$, 
	$\pzu \in \euG^{(0)}$, and $V \subset E_\pzu$ a closed $\euG_{\pzu}^\pzu$-invariant subspace. 
	Then setting $F_{r(\pzg)} \defeq T(\pzg)V$ for every $\pzg \in \euG_\pzu$ defines an invariant 
	subbundle $F \subset E$.
\end{lemma}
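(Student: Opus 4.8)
The plan is to verify the three defining requirements in turn---that $F_\pzv \defeq T(\pzg)V$ does not depend on the chosen $\pzg \in \euG_\pzu^\pzv$, that each $F_\pzv$ is a closed subspace of $E_\pzv$, and that $p|_F$ is open---after which the invariance of $F$ comes essentially for free.

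First I would settle well-definedness, which is exactly the point where the isotropy invariance of $V$ enters. By transitivity $\euG_\pzu^\pzv \neq \emptyset$ for every $\pzv \in \euG^{(0)}$, so $F_\pzv$ is at least defined. If $\pzg,\pzg' \in \euG_\pzu^\pzv$, then $\pzg^{-1}\pzg' \in \euG_\pzu^\pzu$, and since $V$ is $\euG_\pzu^\pzu$-invariant and $T$ sends this isotropy group to invertible operators on $E_\pzu$, invariance under both $\pzg^{-1}\pzg'$ and its inverse gives $T(\pzg^{-1}\pzg')V = V$. As $\pzg(\pzg^{-1}\pzg') = \pzg'$, the homomorphism property yields $T(\pzg')V = T(\pzg)\bigl(T(\pzg^{-1}\pzg')V\bigr) = T(\pzg)V$, so $F_\pzv$ is well defined and $F_\pzu = V$. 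Each $T(\pzg)\colon E_\pzu \to E_\pzv$ is an invertible bounded operator, hence a linear homeomorphism, so $F_\pzv = T(\pzg)V$ is a closed linear subspace of $E_\pzv$. Invariance is then immediate: for $\pzg \in \euG$ pick $\pzh \in \euG_\pzu^{s(\pzg)}$; then $\pzg\pzh \in \euG_\pzu^{r(\pzg)}$ and $T(\pzg)F_{s(\pzg)} = T(\pzg\pzh)V = F_{r(\pzg)}$.

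The hard part will be the openness of $p|_F$, and that is where I expect to spend the effort. I would establish a net-lifting property: for every $v \in F$ with $p(v) = \pzv$ and every net $(\pzv_\alpha)$ in $\euG^{(0)}$ converging to $\pzv$, some subnet lifts to a net in $F$ converging to $v$. Writing $v = T(\pzg)w$ with $\pzg \in \euG_\pzu^\pzv$ and $w \in V$, the net $\bigl((\pzu,\pzv_\alpha)\bigr)$ converges to $(s(\pzg),r(\pzg))$ in $\euG^{(0)}\times\euG^{(0)}$. Since $(s,r)$ is open by \cref{lem:transitive_open} and $\euG$ is compact, the net-lifting characterization of openness in \cref{rem:quotientmapopenchar} produces a subnet and a net $(\pzg_\beta)$ with $\pzg_\beta \to \pzg$ and $(s(\pzg_\beta),r(\pzg_\beta)) = (\pzu,\pzv_\beta)$. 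Setting $v_\beta \defeq T(\pzg_\beta)w$, we get $v_\beta \in T(\pzg_\beta)V = F_{\pzv_\beta} \subset F$ with $p(v_\beta) = \pzv_\beta$, and continuity of $\euG \times_{s,p} E \to E$, $(\pzh,e)\mapsto T(\pzh)e$, applied to $(\pzg_\beta,w) \to (\pzg,w)$ in the fiber product (legitimate since $s(\pzg_\beta) = \pzu = p(w)$), gives $v_\beta \to T(\pzg)w = v$.

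Finally I would deduce openness from this lifting property, observing that this implication needs no local compactness of $F$: if $U \subset F$ is open and $v \in U$ with $p(v) = \pzv$, then $p(U)$ is a neighborhood of $\pzv$, since otherwise a net $\pzv_\alpha \to \pzv$ avoiding $p(U)$ would, after lifting to $v_\beta \to v$ as above, eventually lie in $U$, forcing $\pzv_\beta = p(v_\beta) \in p(U)$, a contradiction. Hence $p(U)$ is open, both subbundle conditions hold, and $F$ is the desired invariant subbundle of $E$.
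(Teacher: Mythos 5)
Your proof is correct and takes essentially the same route as the paper: the identical well-definedness computation via $\pzg^{-1}\pzg' \in \euG_\pzu^\pzu$, and openness of $p|_F$ established by lifting a convergent net of units through the open map $(s,r)$ (\cref{lem:transitive_open} together with the net criterion of \cref{rem:quotientmapopenchar}) and then invoking continuity of the representation. The only cosmetic difference is that you lift the net at $\pzg \in \euG_\pzu^{\pzv}$ and act on the fixed vector $w \in V$, while the paper lifts at the unit $\pzv$ and applies $T(\pzg_\alpha)$ to $f$ itself using the invariance of $F$; both are equally valid.
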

	\begin{proof}
		Note first that $F$ is well-defined. In fact, if $\pzg,\pzh \in \euG_\pzu$ with $r(\pzg) = r(\pzh)$, then 
  \begin{align*}
    T(\pzh)V = T(\pzg)T\left(\pzg^{-1}\pzh\right)V = T(\pzg)V
  \end{align*}
  since $\pzg^{-1}\pzh \in \euG_\pzu^\pzu$ and $V$ is $\euG_\pzu^\pzu$-invariant. Clearly, $F$ is 
  $\euG$-invariant and fiberwise closed.
  
  To show that $F$ is a subbundle of $E$, it suffices to check that $p|_F \colon F \to \euG^{(0)}$ 
  is open. So let $f\in F$ and $(\pzv_\alpha)_{\alpha\in A}$ be a convergent net in $\euG^{(0)}$
  with limit $\pzv\defeq p(f)\in \euG^{(0)}$. Since $(s, r)\colon\euG \to \euG^{(0)}\times\euG^{(0)}$ is 
  open by \cref{lem:transitive_open}, we may assume, after passing to a subnet, that there
  is a net $(\pzg_\alpha)_{\alpha\in A}$ with limit $\pzv$ such that $s(\pzg_\alpha) = \pzv$ and $r(\pzg_\alpha) = \pzv_\alpha$ 
  for every $\alpha \in A$. The net $(T(\pzg_\alpha)f)_{\alpha\in A}$ then is a net over 
  $(\pzv_\alpha)_{\alpha\in A}$ that converges to $f$, showing that $p|_F$ is open.
\end{proof}
	
\begin{proof}[Proof of \cref{pwtransitive}]
	For fixed $\pzu \in \euG^{(0)}$, use \cref{thm:PW_appl} to see that the union of all 
	finite-dimensional $\euG_\pzu^\pzu$-invariant subspaces is dense in $E_\pzu$. But by 
	\cref{lem:defineinvbundle} each of these invariant subspaces defines a $\euG$-invariant subbundle 
	of constant finite dimension which implies \ref{pwtransitive_a}. Likewise, a combination of \cref{thm:PW_appl}
	and \cref{lem:defineinvbundle} proves \ref{pwtransitive_b} and \ref{pwtransitive_c}.
\end{proof}

We can now apply \cref{pwtransitive} to representations of uniform enveloping groupoids $\euE_\uu(K, q, \euG)$ 
of groupoid actions $(K, q, \euG)$ on the Banach bundle $\mathrm{C}_q(K)$. However, as in Knapp's article \cite{Knapp1967}, we are primarily interested in results formulated in terms of the Banach space $\mathrm{C}(K)$ (instead of the Banach bundle $\mathrm{C}_q(K)$). Since $\mathrm{C}(K)$ can be identified with the space $\Gamma(\mathrm{C}_q(K))$ of continuous sections of $\mathrm{C}_q(K)$, the following remark explains why this can be achieved.


\begin{remark}\label{rem:bundlemodule}
  Let $p\colon E \to L$ be a Banach bundle and $\Gamma(E)$ its space of continuous sections. As noted 
  in \cref{remarksBB}, $\Gamma(E)$ is a $\uC(L)$-module and if $F\subset E$ is a subbundle, then $\Gamma(F)$
  is a closed submodule of $\Gamma(E)$. Conversely, if $\Gamma \subset \Gamma(E)$ is a closed submodule,
  one can define a corresponding subbundle $F_\Gamma \subset E$ as follows: For $l\in L$, let 
  $\operatorname{ev}_l\colon \Gamma(E) \to E_l$ denote the point evaluation in $l$ and set
  \begin{align*}
    F_\Gamma \defeq \bigcup_{l \in L} \operatorname{ev}_l(\Gamma) \subset E.
  \end{align*}
  It is shown in \cite[Theorem 8.6 and Remark 8.7]{Gierz1982} that this does indeed define a subbundle 
  of $E$ and that the assignments $F \mapsto \Gamma(F)$ and $\Gamma \mapsto F_\Gamma$ are 
  mutually inverse. In particular, there is a one-to-one correspondence between subbundles of 
  $E$. Thus, it is natural to try to rephrase properties 
  for subbundles in terms of their associated submodules. 
\end{remark}
We now translate the notions \enquote{invariant subbundle} and \enquote{locally trivial} to the language of modules.

\begin{definition}
	 Let $T$ is a representation of a groupoid $\euG$ on a Banach bundle $E$. Then subset $M \subset \Gamma(E)$ is called \emph{$T$-invariant} if $T_{\pzg} M|_{K_{s(\pzg)}} \subset M|_{K_{r(\pzg)}}$ for every $\pzg \in \euG$.
\end{definition}
It is easy to see that a subbundle $F \subset E$ is invariant if and only if the submodule $\Gamma(F) \subset \Gamma(E)$ is invariant.

The following result, which is a Banach bundle version of the classical Serre-Swan duality (see \cite{Swan1962}), charaterizes local triviality. Recall here that a module $\Gamma$ over a commutative unital ring $R$ is \emph{projective} if there is an $R$-module $\tilde{\Gamma}$ such that the module $\Gamma \oplus \tilde{\Gamma}$ is free, i.e., has a basis.
 
\begin{proposition}\label{lem:loctriv}
	For a Banach bundle $E$ over a compact space $L$ the following assertions are equivalent.
	\begin{enumerate}[(a)]
		\item\label{lem:loctriv_bundle} $E$ is locally trivial.
		\item\label{lem:loctriv_module} $\Gamma(E)$ is a finitely generated and projective $\mathrm{C}(L)$-module.
  \end{enumerate}
\end{proposition}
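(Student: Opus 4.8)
The plan is to adapt the classical Serre--Swan argument (\cite{Swan1962}) to the Banach bundle setting, using throughout that a Banach bundle over a compact base has enough continuous sections and that the passage $E \mapsto \Gamma(E)$ interacts well with the fibers, as encoded in \cref{remarksBB} and \cref{rem:bundlemodule}.

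For the implication \ref{lem:loctriv_bundle} $\implies$ \ref{lem:loctriv_module}, I would first use local triviality together with the compactness of $L$ to extract a finite cover $W_1, \dots, W_m$ of $L$ by trivializing neighborhoods, with trivializations $\Phi_i \colon p^{-1}(W_i) \to W_i \times \C^{n_i}$ as in \cref{def:banachbundle}. Pulling back the standard basis sections yields local frames $e_{i,1}, \dots, e_{i,n_i}$ of $E$ over $W_i$, and multiplying by a partition of unity $(\chi_i)$ subordinate to the cover produces global sections $\chi_i e_{i,j} \in \Gamma(E)$. Since at every point $l$ some $\chi_i(l) \neq 0$ and the $e_{i,j}(l)$ form a basis of $E_l$, these finitely many sections span each fiber; a Tietze-extension argument then shows that the induced $\uC(L)$-module homomorphism $\uC(L)^N \to \Gamma(E)$, with $N = \sum_i n_i$, is surjective, so $\Gamma(E)$ is finitely generated. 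To obtain projectivity I would split this surjection: gluing the standard Hermitian structures on the charts with $(\chi_i)$ gives a continuous fiberwise inner product on $E$, and for each $l$ the surjection $T_l \colon \C^N \to E_l$ then admits the canonical metric right inverse $T_l^*(T_l T_l^*)^{-1}$, which depends continuously on $l$ and hence defines a module splitting. Thus $\Gamma(E)$ is a direct summand of $\uC(L)^N$ and therefore finitely generated projective.

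For the converse \ref{lem:loctriv_module} $\implies$ \ref{lem:loctriv_bundle}, I would start from a module isomorphism $\Gamma(E) \oplus Q \cong \uC(L)^N$, which produces an idempotent $e \in M_N(\uC(L))$ with $\Gamma(E) \cong e\,\uC(L)^N$. Evaluating at a point and using enough sections (so that $\operatorname{ev}_l \colon \Gamma(E) \to E_l$ is surjective with kernel the functions vanishing at $l$ times $\Gamma(E)$) identifies $E_l$ with $\operatorname{im}(e(l)) = e(l)\C^N$; in particular the fibers are finite-dimensional with $\dim E_l = \operatorname{rank} e(l)$. The decisive point is then that a norm-continuous idempotent-valued map has locally constant rank: fixing $l_0$ and setting $U(l) \defeq e(l)e(l_0) + (1 - e(l))(1 - e(l_0))$, the operator $U(l)$ is invertible and depends continuously on $l$ in a neighborhood of $l_0$, and conjugation by $U(l)$ carries $e(l_0)$ to $e(l)$ and hence $\operatorname{im}(e(l_0))$ isomorphically onto $\operatorname{im}(e(l))$. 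This trivializes the bundle $l \mapsto \operatorname{im} e(l)$ near $l_0$, and transporting it back via $E_l \cong \operatorname{im} e(l)$ yields the desired local trivialization.

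The main obstacle I expect is the bookkeeping in the converse direction: one must ensure that the purely algebraic identification $E_l \cong \operatorname{im} e(l)$ assembles into a \emph{homeomorphism} of total spaces compatible with the bundle topologies, rather than a mere fiberwise linear isomorphism. This is exactly where the Banach bundle machinery of \cite{Gierz1982} is needed---concretely, the existence of enough sections, the neighborhood-base axiom (iv) of \cref{def:banachbundle}, and the subbundle/submodule correspondence recalled in \cref{rem:bundlemodule}, which guarantees that a bundle over a compact base is recovered up to isomorphism from its $\uC(L)$-module of sections. Granting these, the conjugation argument above upgrades to a topological local trivialization and the proof closes.
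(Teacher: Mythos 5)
Your direction \ref{lem:loctriv_bundle} $\implies$ \ref{lem:loctriv_module} is correct: the partition-of-unity construction of finitely many generating sections and the Hermitian splitting $T_l^*(T_lT_l^*)^{-1}$ constitute a self-contained proof of what the paper obtains by simply citing the Serre--Swan theorem (note that extending $\chi_i e_{i,j}$ by zero is continuous precisely because of axiom (iv) of \cref{def:banachbundle}). The idempotent picture in the converse direction is also sound as far as it goes: a $\uC(L)$-module endomorphism of the free module $\uC(L)^N$ is a matrix over $\uC(L)$, so $e \in M_N(\uC(L))$ is automatically norm-continuous, and your conjugation operator $U(l)$ does trivialize the subbundle $\operatorname{im}(e) \subset L \times \C^N$ locally.

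The gap is in the final step, \enquote{transporting it back via $E_l \cong \operatorname{im} e(l)$}, and it is exactly the step on which the paper spends its entire proof of \ref{lem:loctriv_module} $\implies$ \ref{lem:loctriv_bundle}. First, the facts you invoke do not supply it: \cref{rem:bundlemodule} is a correspondence between subbundles of a \emph{fixed} ambient bundle and closed submodules of \emph{its} section module; it does not say that an abstract Banach bundle is determined up to isomorphism by the algebraic isomorphism type of $\Gamma(E)$, and here $E$ is not presented as a subbundle of anything. Likewise, the parenthetical claim $\ker(\operatorname{ev}_l) = m_l\Gamma(E)$, with $m_l \defeq \{f \in \uC(L) \mid f(l) = 0\}$, is not a consequence of \enquote{enough sections}: the norm on a Banach bundle is only upper semicontinuous, so $l' \mapsto \|\sigma(l')\|$ need not be continuous and one cannot factor a section vanishing at $l$ as $h\cdot(\sigma/h)$ with $h$ continuous; this identity needs a separate argument (e.g.\ a Urysohn cut-off argument giving $\ker(\operatorname{ev}_l) = \overline{m_l\Gamma(E)}$, together with closedness of $m_l\Gamma(E)$). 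Second, and more fundamentally, the isomorphism $\Gamma(E) \cong e\,\uC(L)^N$ coming from projectivity is a priori only \emph{algebraic}, and the heart of the matter is to upgrade it to a continuous bundle map with continuous inverse; your proposal simply \enquote{grants} this. The paper does it explicitly: it builds fiberwise maps $\Phi_l$ \emph{from} the locally trivial model bundle \emph{into} $E$ (well-definedness uses Swan's Lemma 4 only on the locally trivial side, where the kernel identity is available), proves continuity of $\Phi$ by expanding points in continuous local frames, and then gets continuity of the inverse from the bounded inverse theorem via \cite[Remark 10.19 (b)]{Gierz1982}. Your argument closes the same way: define $\Phi \colon \operatorname{im}(e) \to E$ by $\Phi(e(l)v) \defeq \bigl(S(e\cdot v)\bigr)(l)$ for $v \in \uC(L)^N$, where $S \colon e\,\uC(L)^N \to \Gamma(E)$ is the module isomorphism (well defined, since a section of $\operatorname{im}(e)$ vanishing at $l$ lies in $m_l \cdot e\,\uC(L)^N$ by coordinatewise factoring), verify continuity using your frames $l \mapsto U(l)e(l_0)\epsilon_j$, and then apply Gierz's isomorphism criterion; without this (or an equivalent automatic-continuity-plus-reconstruction argument), the proof is incomplete.
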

\begin{proof}
	The implication \enquote{\ref{lem:loctriv_bundle} $\Rightarrow$ \ref{lem:loctriv_module}} 
	follows directly from the Serre-Swan theorem. Conversely, assume that \ref{lem:loctriv_module} 
	holds. Using the Serre-Swan theorem a second time,
	we find a locally trivial vector bundle $F$ over $L$ and a (not necessarily continuous) 
	$\mathrm{C}(L)$-module isomorphism $T\colon \Gamma(F) \to \Gamma(E)$. We now 
	construct a bundle morphism $\Phi\colon F \to E$ from $T$ and show that it is continuous in order 
	to prove that $E$ and $F$ are isomorphic.
	
	Equip $F$ with any map $\|\cdot\|\colon F \to [0,\infty)$ turning $F$ into a Banach bundle 
	(these always exist, see \cite[Lemma 2]{Swan1962}). Then by \cref{remarksBB} \ref{remarksBB_lca}, $F$ is also 
	locally trivial as a Banach bundle. If $l \in L$ and $\sigma \in \Gamma(F)$ with $\sigma(l) = 0$, 
	then we find $k \in \N$, $h_j \in \mathrm{C}(L)$ with $h_j(l) = 0$ and $\tau_j \in \Gamma(F)$ for $j=1, \dots, k$ such that 
	$\sigma = \sum_{j=1}^k h_j \tau_j$ by \cite[Lemma 4]{Swan1962}. But then 
		\begin{align*}
			(T\sigma)(l) = \sum_{j=1}^k h_j(l) (T\tau_j)(l) = 0
		\end{align*} 
	for every $l \in L$. We therefore obtain a well-defined linear map $\Phi_l \colon F_l \to E_l$ by 
	setting $\Phi_l\sigma(l) \defeq (T\sigma)(l)$ for $\sigma \in \Gamma(F)$ and $l \in L$. Moreover, 
	since $F_l$ is finite-dimensional, $\Phi_l$ is bounded for every $l \in L$. We show as in the proof 
	of \cite[Theorem 1]{Swan1962} that 
	\begin{align*}
		\Phi \colon F \to E, \quad f \mapsto \Phi_{p(f)}f.
	\end{align*}
  is continuous. Pick $l \in L$. Since $F$ is locally trivial, we find a neighborhood $V \in  \mathcal{U}_L(l)$, sections 
  $\sigma_1,\dots,\sigma_n \in \Gamma(F)$ such that $\sigma_1(\tilde{l}),\dots,\sigma_n(\tilde{l})$ 
  define a base in $F_{\tilde{l}}$ for every $\tilde{l} \in V$, and continuous functions 
  $h_1,\dots,h_n\colon p^{-1}(V) \to \C$ such that 
  	\begin{align*}
  		f = \sum_{j=1}^n h_j(f) \sigma_j(p(f)) \textrm{ for every } f \in p^{-1}(V).
  	\end{align*}
  But then $\Phi(f) = \sum_{j=1}^n h_j(f) (T\sigma_j)(p(f))$ for every 
  $f \in p^{-1}(V)$. Since $T\sigma_j$ is continuous for every $j \in \{1,\dots,n\}$ we obtain that 
  $\Phi$ is continuous and hence a morphism of Banach bundles (see \cite[Definition 10.1, Proposition 10.2]{Gierz1982}). Moreover, $T\sigma = \Phi \circ \sigma$ for every $\sigma \in \Gamma(F)$, i.e., $T$ is the operator induced by the morphism $\Phi$ between the spaces of continuous sections (see \cite[Section 10]{Gierz1982}). However, by the bounded inverse theorem, a morphism of Banach bundles is an isomorphism if and only if the induced operator between the spaces of continuous sections is bijective (see \cite[Remark 10.19 (b)]{Gierz1982}). Thus, the bundles $E$ and $F$ are isomorphic and $E$ is locally trivial.
%
\end{proof}

With these translations we now formulate the main result of this section.

\begin{theorem}\label{mainthm}
	Let $(K,q,\euG, \phi)$ be a groupoid action by a topologically ergodic groupoid $\euG$.
  Then the following assertions are equivalent.
  \begin{enumerate}[(a)]
    \item\label{item:mainthm_pseudo} $(K,q,\euG, \phi)$ is pseudoisometric.
    \item\label{item:mainthm_bundle} The union of all locally trivial $T_\varphi$-invariant subbundles 
    is fiberwise dense in $\mathrm{C}_q(K)$. 
    \item\label{item:mainthm_module} The union of all finitely generated, projective, closed, 
    $T_\varphi$-invariant $\uC(\euG^{(0)})$-submodules of $\mathrm{C}(K)$ is dense in $\uC(K)$.
    \item\label{item:mainthm_module2} The finitely generated, projective, closed, 
    $T_\varphi$-invariant $\uC(\euG^{(0)})$-submodules of $\mathrm{C}(K)$ generate $\uC(K)$ as a $\mathrm{C}^*$-algebra.
  \end{enumerate}
  Moreover, if these assertions hold, then all locally trivial $T_\varphi$-invariant subbundles of 
  $\mathrm{C}_q(K)$ have constant finite dimension and the set of these subbundles is closed under
    finite sums.
\end{theorem}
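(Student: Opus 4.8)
The plan is to use the uniform enveloping groupoid $\euE \defeq \euE_\uu(K,q,\euG)$ as a hub and to route every implication either through its representation theory or through an explicit construction of invariant pseudometrics. Since $\euG$ is topologically ergodic, \cref{thm:pseudoisochar} and \cref{chartrans} tell us that \ref{item:mainthm_pseudo} holds if and only if $\euE$ is a \emph{compact transitive} groupoid; this is the form in which I will use \ref{item:mainthm_pseudo} throughout. The bundle on which $\euE$ acts is $\uC_q(K)$, which by \cref{Banachbundleexample} is a continuous Banach bundle over $\euG^{(0)}$ with $\uC(K)\cong\Gamma(\uC_q(K))$, and by \cref{representationextension} (applied to the action $(K,q,\euE)$) the Koopman representation of $\euE$ on $\uC_q(K)$ is continuous; its restriction to the transition maps $\euS(K,q,\euG)\subset\euE$ is exactly $T_\varphi$. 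For \ref{item:mainthm_pseudo} $\implies$ \ref{item:mainthm_bundle} I would apply the Peter--Weyl theorem \cref{pwtransitive} \ref{pwtransitive_a} to the compact transitive groupoid $\euE$: the union of its invariant subbundles of constant finite dimension is fiberwise dense, these are locally trivial because $\uC_q(K)$ has continuous norm and hence Hausdorff total space (\cref{remarksBB}), and $\euE$-invariance restricts to $T_\varphi$-invariance.

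The converse direction, and simultaneously \ref{item:mainthm_module} $\implies$ \ref{item:mainthm_pseudo} and \ref{item:mainthm_module2} $\implies$ \ref{item:mainthm_pseudo}, I would obtain from one construction. Given any locally trivial $T_\varphi$-invariant subbundle $F$ (whose fibers are then automatically finite-dimensional), set
\begin{align*}
  p_F(x,y) \defeq \max\left\{ |\sigma(x)-\sigma(y)| \mmid \sigma\in F_{q(x)},\ \|\sigma\|_\infty\le 1 \right\}
\end{align*}
for $(x,y)\in K\times_q K$. Each $p_F$ is a fiberwise pseudometric; it is continuous because local triviality provides continuous local frames and the constraint set varies continuously in a compact manner (a maximum-theorem argument); and it is $\euG$-invariant because $\varphi_\pzg\colon K_{s(\pzg)}\to K_{r(\pzg)}$ is a homeomorphism and $T_{\pzg^{-1}}$ maps the unit ball of $F_{r(\pzg)}$ isometrically onto that of $F_{s(\pzg)}$ by invariance. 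If, over a family $\mathcal F$ of such subbundles, the union $\bigcup_{F\in\mathcal F}F_\pzu$ separates the points of each $K_\pzu$, then the $p_F$ generate the fiber topologies, so $P\defeq\{p_F \mmid F\in\mathcal F\}$ witnesses pseudoisometry as in \cref{def:struct_ext} \ref{def:struct_ext_item:pseudoiso}. Fiberwise density in \ref{item:mainthm_bundle}, density in $\uC(K)$ in \ref{item:mainthm_module}, and $\mathrm{C}^*$-generation in \ref{item:mainthm_module2} each furnish such a separating family; for the module conditions one pushes the hypothesis down to the fibers using that each evaluation $\operatorname{ev}_l\colon\uC(K)\to\uC(K_l)$ is a surjective $*$-homomorphism.

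The equivalence of \ref{item:mainthm_bundle}, \ref{item:mainthm_module}, and \ref{item:mainthm_module2} I would read off the Serre--Swan dictionary: by \cref{lem:loctriv}, \cref{rem:bundlemodule}, and the invariance correspondence, locally trivial $T_\varphi$-invariant subbundles correspond bijectively to finitely generated, projective, closed, $T_\varphi$-invariant $\uC(\euG^{(0)})$-submodules of $\uC(K)$. Density in $\uC(K)$ pushes to fiberwise density through the evaluations $\operatorname{ev}_l$, giving \ref{item:mainthm_module} $\implies$ \ref{item:mainthm_bundle}; the reverse transfer requires the family to be closed under finite sums (so that the union is itself a submodule), which is available once \ref{item:mainthm_pseudo} is known to be equivalent to \ref{item:mainthm_bundle}. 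The step \ref{item:mainthm_module} $\implies$ \ref{item:mainthm_module2} is immediate, and its converse follows because, under these hypotheses, the relevant union is a $*$-subalgebra (the Koopman operators are multiplicative and commute with conjugation, and products and conjugates of constant-finite-dimensional invariant subbundles are again of this type).

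This leaves the \emph{moreover} statement, which I regard as the main obstacle and which also supplies the ``closed under finite sums'' fact used above. Assume the equivalent conditions hold, so $\euE$ is compact and transitive. The key lemma is that every \emph{closed} $T_\varphi$-invariant subbundle $F\subset\uC_q(K)$ is already $\euE$-invariant: the set of $\theta\in\euE$ whose Koopman operator preserves $F$ is a subsemigroupoid containing $\euS(K,q,\euG)$, and it is closed because the $\euE$-representation is continuous (\cref{representationextension}), $F$ is closed, and one can lift along the open map $p|_F$ using the convergence characterization \cref{charconv}; hence it equals $\euE$. Transitivity of $\euE$ then forces $\dim F_\pzu$ to be independent of $\pzu$, since an invertible $T_\theta$ carries $F_{s(\theta)}$ onto $F_{r(\theta)}$, and together with local triviality (which makes every fiber finite-dimensional) this yields constant finite dimension. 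Closure under finite sums is \cref{pwtransitive} \ref{pwtransitive_c} applied to the $\euE$-representation, the resulting sum being locally trivial by \cref{remarksBB}. The two points I expect to be delicate are exactly this automatic upgrade from $\euG$-invariance to $\euE$-invariance, where topological ergodicity (through transitivity of $\euE$) is essential, and the joint continuity and invariance of the pseudometrics $p_F$, which is precisely what separates genuine pseudoisometry from mere equicontinuity.
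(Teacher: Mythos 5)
Your proposal is correct, and it closes the equivalence along a genuinely different route than the paper for the converse directions. The paper proves \ref{item:mainthm_module2} $\implies$ \ref{item:mainthm_pseudo} operator-theoretically: by \cref{compactnessvsequicontinuity} it suffices that the orbit $\{f\circ\theta \mid \theta\in\euE_\uu(K,q,\euG)\}$ be precompact in $\uC_q(K)$ for every $f$ in a $\uC^*$-generating set; for $f$ in a finitely generated, projective, closed invariant submodule this orbit lies in the associated locally trivial subbundle by \cref{lem:loctrivinv} and is bounded, hence precompact by \cref{compactnesstrivial}; compactness of $\euE_\uu(K,q,\euG)$ then yields pseudoisometry via \cref{thm:pseudoisochar}, which is where topological ergodicity enters a second time (through \cref{chartrans} and \cref{prop:transcompacttopseudo}). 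You instead manufacture invariant pseudometrics $p_F$ directly from the subbundles, which is closer to the definition of pseudoisometry and, as your argument shows, does not use topological ergodicity at all in the implications \ref{item:mainthm_bundle}/\ref{item:mainthm_module}/\ref{item:mainthm_module2} $\implies$ \ref{item:mainthm_pseudo}; ergodicity is needed only for \ref{item:mainthm_pseudo} $\implies$ \ref{item:mainthm_bundle}, to make $\euE_\uu(K,q,\euG)$ transitive so that \cref{pwtransitive} applies. The price is that the continuity of $p_F$, which you only sketch, is a genuine piece of work: upper semicontinuity needs \cref{closedloc} and \cref{compactnesstrivial} (maximizers along a net subconverge inside $F$, since bounded subsets of locally trivial bundles are precompact and $F$ is closed in the Hausdorff bundle $\uC_q(K)$), lower semicontinuity needs the openness of $p|_F$ together with continuity of the norm to lift a maximizer to nearby fibers after normalization, and both halves use the continuity of evaluation from \cref{charconv}. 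Your invariance computation is sound, since $T_\pzg$ is a sup-norm isometry carrying $F_{s(\pzg)}$ onto $F_{r(\pzg)}$ (apply invariance to both $\pzg$ and $\pzg^{-1}$). Everything else—the Peter--Weyl direction, the Serre--Swan dictionary, and the \enquote{moreover} claims via the upgrade from $T_\varphi$-invariance to $\euE_\uu(K,q,\euG)$-invariance—coincides in substance with the paper's argument.

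Two points should be tightened. First, in \ref{item:mainthm_bundle} $\implies$ \ref{item:mainthm_module} it is not enough that the union of the modules is a submodule whose fibers are dense: passing from stalkwise density to norm density in $\uC(K)\cong\Gamma(\uC_q(K))$ is precisely the Stone--Weierstra\ss{} theorem for Banach bundles, cited in the paper as \cite[Corollary 4.3]{Gierz1982}, and you should invoke it explicitly. Second, your remark that \ref{item:mainthm_module2} $\implies$ \ref{item:mainthm_module} because products and conjugates of constant-finite-dimensional invariant subbundles are again of this type is only justified once \ref{item:mainthm_pseudo} is available: one needs $\euE_\uu(K,q,\euG)$-invariance and transitivity to see that the fiberwise span of products has constant dimension and open projection. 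In your logical scheme this is harmless, since \ref{item:mainthm_module2} $\implies$ \ref{item:mainthm_pseudo} is established independently by the pseudometric construction, but it should be presented as a consequence of the theorem rather than as an ingredient of its proof.
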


We first prove some short and useful lemmas about locally trivial Banach bundles.

\begin{lemma}\label{compactnesstrivial}
	Let $E$ be a Banach bundle over a compact space $L$ which is locally trivial. If $M \subset E$ is 
	a bounded subset, i.e., $\sup_{e \in M} \|e\|_{p(e)} < \infty$, then it is precompact.
\end{lemma}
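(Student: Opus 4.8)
The plan is to reduce the statement to the trivial model $W \times \C^{n}$ by covering $L$ with finitely many trivializing neighborhoods and then exploiting the norm estimates built into local triviality. Write $R \defeq \sup_{e \in M}\|e\|_{p(e)} < \infty$. For each $l \in L$, local triviality (\cref{def:banachbundle}) supplies a compact neighborhood $W_l$, an integer $n_l \in \N_0$, a homeomorphism $\Phi_l \colon p^{-1}(W_l) \to W_l \times \C^{n_l}$, and constants $c_1^l, c_2^l > 0$ with $c_1^l\|e\| \le \|\mathrm{pr}_2(\Phi_l(e))\| \le c_2^l\|e\|$ for all $e \in p^{-1}(W_l)$. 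The interiors of the $W_l$ cover $L$, so by compactness there are $l_1, \dots, l_k$ with $L = \bigcup_{i=1}^{k} W_{l_i}$. Abbreviating $W_i \defeq W_{l_i}$, $\Phi_i \defeq \Phi_{l_i}$, $n_i \defeq n_{l_i}$, and $c_2^i \defeq c_2^{l_i}$, it follows that $E = \bigcup_{i=1}^{k} p^{-1}(W_i)$ and hence $M = \bigcup_{i=1}^{k}\bigl(M \cap p^{-1}(W_i)\bigr)$.

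First I would treat a single chart. By the upper estimate, every $e \in M \cap p^{-1}(W_i)$ satisfies $\|\mathrm{pr}_2(\Phi_i(e))\| \le c_2^i R$, so $\Phi_i$ maps $M \cap p^{-1}(W_i)$ into $W_i \times \overline{B}_i$, where $\overline{B}_i \defeq \{z \in \C^{n_i} \mid \|z\| \le c_2^i R\}$. Since $W_i$ is compact and $\overline{B}_i$ is a closed ball in a finite-dimensional space, this product is compact; hence $C_i \defeq \Phi_i^{-1}(W_i \times \overline{B}_i)$ is a compact subset of $E$ containing $M \cap p^{-1}(W_i)$. Taking the finite union, $M \subset \bigcup_{i=1}^{k} C_i$, which is compact as a finite union of compacta.

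To conclude that $M$ is precompact, i.e.\ that $\overline{M}$ is compact, it then suffices to know that $E$ is Hausdorff, so that $\bigcup_{i} C_i$ is closed and $\overline{M} \subseteq \bigcup_{i} C_i$ is a closed subset of a compact set. Hausdorffness of a locally trivial bundle is immediate: two points with distinct images in $L$ are separated by preimages of disjoint open sets in the Hausdorff base $L$, while two points in a common fiber lie in a single trivializing chart $p^{-1}(\operatorname{int} W_l) \cong \operatorname{int}(W_l) \times \C^{n_l}$, which is Hausdorff. (In the applications $E = \mathrm{C}_q(K)$ is even a continuous Banach bundle, hence Hausdorff by \cref{remarksBB}.) The argument is essentially a routine compactness patching; the only point requiring care is the \emph{direction} of the norm estimate, since one needs the upper bound $\|\mathrm{pr}_2(\Phi_i(e))\| \le c_2^i\|e\|$ to transfer boundedness in the bundle norm into boundedness, and thus relative compactness, in the model fiber $\C^{n_i}$, together with the observation that precompactness is preserved under the homeomorphisms $\Phi_i$ and under finite unions.
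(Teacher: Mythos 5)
Your proof is correct, and its core is the same as the paper's: push the bounded set through local trivializations, where the estimate $\|\mathrm{pr}_2(\Phi_i(e))\| \leq c_2^i\|e\|$ converts boundedness in the bundle norm into containment in a compact set $W_i \times \overline{B}_i$ of the trivial model. The execution, however, differs. The paper argues by nets: it assumes $M$ closed, picks a net in $M$, passes to a subnet whose base points converge to some $l \in L$, and then works in a single chart around $l$ to extract a convergent subnet. You instead take a finite subcover of $L$ by charts, trap $M$ in the finite union $\bigcup_i C_i$ of compacta, and conclude via Hausdorffness of $E$ that $\overline{M} \subseteq \bigcup_i C_i$ is compact. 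The comparison is instructive because each route must fill a different gap that the paper's two-line proof leaves implicit. Your covering argument genuinely needs $E$ to be Hausdorff (otherwise the compact set $\bigcup_i C_i$ need not be closed, so it need not contain $\overline{M}$), and your direct separation argument for locally trivial bundles is correct and slightly more general than citing \cref{remarksBB} (iv), which is stated only for bundles of constant finite dimension. The paper's net argument, by contrast, does not need Hausdorffness (a closed set is compact as soon as every net in it has a subnet converging to a point of the set, in any topological space), but its reduction \enquote{we may assume $M$ to be closed} is not free: since the norm on a Banach bundle is only upper semicontinuous, boundedness of $\overline{M}$ does not follow from boundedness of $M$ in general and itself requires the chart estimates. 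Your proof sidesteps this issue entirely by never passing to the closure of $M$.
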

\begin{proof}
	We may assume $M$ to be closed. Now pick a net $(e_\alpha)_{\alpha \in A}$ in $M$. Passing to a 
	subnet, we may assume that $(p(e_\alpha))_{\alpha \in A}$ converges to some $l \in L$. By choosing 
	a local trivialization as in \cref{def:banachbundle}, the claim reduces to the case of a trivial 
	Banach bundle $L \times \C^n$ for which it is obvious.
\end{proof}

\begin{lemma}\label{closedloc}
	Let $E$ be a Hausdorff Banach bundle over a compact space $L$ and $F \subset E$ a locally 
	trivial subbundle. Then $F$ is closed.
\end{lemma}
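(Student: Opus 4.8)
The plan is to prove closedness directly by a net argument carried out inside a local trivialization. I would take a net $(f_\alpha)_{\alpha\in A}$ in $F$ converging in $E$ to some $e$ and show $e\in F$. Write $l\defeq p(e)$. Since $p$ is continuous, $p(f_\alpha)\to l$, so after passing to a tail I may assume that all $p(f_\alpha)$ lie in a fixed compact neighborhood $W$ of $l$ over which $F$ is locally trivial, with trivializing homeomorphism $\Phi\colon (p|_F)^{-1}(W)\to W\times\C^n$ and norm constants $c_1,c_2>0$ as in \cref{def:banachbundle}. Then $f_\alpha\in(p|_F)^{-1}(W)$ eventually and $\Phi(f_\alpha)=(p(f_\alpha),v_\alpha)$ with $v_\alpha\defeq\mathrm{pr}_2(\Phi(f_\alpha))\in\C^n$.

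The heart of the argument is to control the fiber coordinates $v_\alpha$. First I would invoke the upper semicontinuity of the norm on $E$: since $f_\alpha\to e$, it gives $\limsup_\alpha\|f_\alpha\|\le\|e\|$, so the net $(f_\alpha)$ is eventually bounded in norm. The norm estimate $\|v_\alpha\|\le c_2\|f_\alpha\|$ coming from local triviality then shows that $(v_\alpha)$ is a bounded net in the finite-dimensional space $\C^n$. Passing to a subnet, I may therefore assume $v_\alpha\to v$ for some $v\in\C^n$; since moreover $p(f_\alpha)\to l\in W$, the net $(p(f_\alpha),v_\alpha)$ converges to $(l,v)$ in $W\times\C^n$.

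It then remains to transport this convergence back through $\Phi$. Because $\Phi$ is a homeomorphism and $f_\alpha=\Phi^{-1}(p(f_\alpha),v_\alpha)$, continuity of $\Phi^{-1}$ yields $f_\alpha\to\Phi^{-1}(l,v)\in F$ along the chosen subnet. On the other hand, the subnet still converges to $e$, so since $E$ is Hausdorff the two limits coincide and $e=\Phi^{-1}(l,v)\in F$. This proves that $F$ is closed.

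I expect the only nonroutine step to be the boundedness of the coordinate net $(v_\alpha)$: everything hinges on combining the upper semicontinuity of the norm (to bound $\|f_\alpha\|$) with the two-sided norm estimate built into the definition of local triviality (to pass that bound to $\C^n$, where bounded sets are precompact). The Hausdorff hypothesis is used exactly once, but essentially, to identify the subnet limit $\Phi^{-1}(l,v)$ with the original limit $e$; without it the limit of $(f_\alpha)$ need not be unique and the conclusion could fail.
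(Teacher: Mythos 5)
Your proof is correct and follows essentially the same route as the paper: eventual boundedness of the net via upper semicontinuity of the norm, a convergent subnet in $F$ extracted via local triviality, and Hausdorffness of $E$ to identify the two limits. The only difference is presentational: the paper delegates the middle step to \cref{compactnesstrivial} (bounded subsets of locally trivial bundles are precompact), whereas you unfold that argument inline with an explicit trivialization around $p(e)$.
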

\begin{proof}
	Take a net $(e_\alpha)_{\alpha \in A}$ in $F$ converging to some $e \in E$. There is an
	$\alpha_0 \in A$ such that $\sup_{\alpha \geq \alpha_0} \|e_\alpha\| < \infty$ and by \cref{compactnesstrivial} 
	we find a subnet converging to an element of $F$. Since $E$ is a Hausdorff space, we obtain that 
	$e \in F$.
\end{proof}

\begin{lemma}\label{lem:loctrivinv}
	Let $(K,q,\euG,\varphi)$ be a groupoid action. If $F \subset \mathrm{C}_q(K)$ is a 
	$T_\varphi$-invariant locally trivial subbundle, then $F$ is also invariant under the induced representation of $\euE_\uu(K,q,\euG)$.
\end{lemma}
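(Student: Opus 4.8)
The plan is to run the same ``closed subsemigroupoid'' argument that already underlies \cref{compactimpliesgroupoid} and \cref{lem:factorgroupoid}. Write $p \colon \mathrm{C}_q(K) \to \euG^{(0)}$ for the bundle projection, put $F_\pzu \defeq F \cap \mathrm{C}_q(K)_\pzu$, and let $T_\theta$ denote the operator that the induced representation of $\euE_\uu(K,q,\euG)$ assigns to an element $\theta$. I would consider
\begin{align*}
  \euS' \defeq \left\{\theta \in \euE_\uu(K,q,\euG) \mmid T_\theta\!\left(F_{s(\theta)}\right) \subset F_{r(\theta)}\right\}.
\end{align*}
Since $\euE_\uu(K,q,\euG)$ is by definition the smallest closed subsemigroupoid of $\uC_q^q(K,K)$ containing the transition semigroupoid $\euS(K,q,\euG)$, it suffices to verify that $\euS'$ is a closed subsemigroupoid containing $\euS(K,q,\euG)$; this forces $\euS' = \euE_\uu(K,q,\euG)$, which is exactly the assertion.

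First I would dispatch the two algebraic properties. The inclusion $\euS(K,q,\euG) \subset \euS'$ is merely a restatement of the hypothesis that $F$ is $T_\varphi$-invariant, because the induced representation extends the Koopman representation, so $T_{\varphi_\pzg} = T_\pzg$. For the subsemigroupoid property, the induced representation is a homomorphism, whence $T_{\theta \cdot \eta} = T_\theta \circ T_\eta$ for composable $\theta, \eta$; if both lie in $\euS'$ and $f \in F_{s(\eta)} = F_{s(\theta\eta)}$, then $T_\eta f \in F_{r(\eta)} = F_{s(\theta)}$ and hence $T_\theta T_\eta f \in F_{r(\theta)} = F_{r(\theta\eta)}$, so $\theta\eta \in \euS'$.

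The hard part will be the closedness of $\euS'$, and this is precisely where local triviality of $F$ is used. I would take a net $(\theta_\alpha)_{\alpha\in A}$ in $\euS'$ converging to some $\theta \in \euE_\uu(K,q,\euG)$ and fix $f \in F_{s(\theta)}$, aiming to show $T_\theta f \in F_{r(\theta)}$. By \cref{charconv} one has $s(\theta_\alpha) \to s(\theta)$. Now $F$, being locally trivial with finite-dimensional fibers, is locally compact, and $p|_F$ is open since $F$ is a subbundle; applying the net characterization of openness from \cref{rem:quotientmapopenchar} to $p|_F$ yields, after passing to a subnet, a net $(f_\beta)_\beta$ in $F$ with $f_\beta \to f$ and $p(f_\beta) = s(\theta_\beta)$, i.e. $f_\beta \in F_{s(\theta_\beta)}$. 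As $\theta_\beta \in \euS'$ we get $T_{\theta_\beta} f_\beta \in F_{r(\theta_\beta)} \subset F$, while by continuity of the induced representation $T_{\theta_\beta} f_\beta \to T_\theta f$. Finally, $\mathrm{C}_q(K)$ is a continuous, hence Hausdorff, Banach bundle (\cref{remarksBB}), so its locally trivial subbundle $F$ is closed by \cref{closedloc}; therefore $T_\theta f \in F$, and since $T_\theta f$ lies over $r(\theta)$ we conclude $T_\theta f \in F_{r(\theta)}$, so $\theta \in \euS'$.

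The crux is thus entirely contained in the closedness step: everything rests on being able to lift the fixed vector $f$ to the nearby fibers $F_{s(\theta_\beta)}$ (openness of $p|_F$ together with the local compactness supplied by local triviality) and on $F$ being closed, so that the limit cannot escape $F$. Both inputs genuinely fail for arbitrary subbundles, which explains why local triviality is the decisive hypothesis.
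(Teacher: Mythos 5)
Your proposal has the same skeleton as the paper's proof: both consider the set of all $\theta \in \euE_\uu(K,q,\euG)$ that leave $F$ invariant, check that it is a subsemigroupoid containing $\euS(K,q,\euG)$, and reduce everything to closedness, which in both cases is finished off by \cref{closedloc}. The one step you do differently is how the approximating net inside $F$ is produced: the paper extends the fixed fiber element to a global section $f \in \uC(K)$ of $F$ (so that $f|_{K_{r(\theta_\alpha)}}\circ\theta_\alpha$ converges by \cref{charconv} without further ado), whereas you lift the fixed element to nearby fibers of $F$ using openness of $p|_F$ and local compactness via \cref{rem:quotientmapopenchar}. That variant is legitimate: openness of $p|_F$ is part of the definition of a subbundle, and local triviality with finite-dimensional fibers does make $F$ locally compact, so the lifting exists.

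There is, however, a genuine problem with your setup of the \enquote{induced representation}. You use covariant operators $T_\theta$ satisfying $T_{\theta\eta} = T_\theta\circ T_\eta$ and $T_\theta(F_{s(\theta)})\subset F_{r(\theta)}$, and you invoke \enquote{continuity of the induced representation} for $T_{\theta_\beta}f_\beta \to T_\theta f$. Such a covariant (Koopman-type, $T_\theta f = f\circ\theta^{-1}$) representation of $\euE_\uu(K,q,\euG)$ exists only if $\euE_\uu(K,q,\euG)$ is a topological groupoid with continuous inversion — but the lemma is stated for arbitrary groupoid actions, where $\euE_\uu(K,q,\euG)$ is in general only a semigroupoid whose elements need not be invertible (see \cref{ex:equinoniso}); indeed, \cref{mainthm} applies the lemma precisely at the stage where compactness, and hence the groupoid property via \cref{compactimpliesgroupoid}, is not yet known. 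The invariance the paper intends (and uses in \cref{mainthm}) is the contravariant one: $F_{r(\theta)}\circ\theta \subset F_{s(\theta)}$ for every $\theta$. Your argument does survive this re-reading with only mechanical changes: define $\euS'$ with $s$ and $r$ interchanged; the composition action $\theta \mapsto (f\mapsto f\circ\theta)$ is an anti-homomorphism, which still makes $\euS'$ a subsemigroupoid; lift the fixed element over the fibers $F_{r(\theta_\beta)}$, noting that $r(\theta_\alpha)\to r(\theta)$ follows from \cref{charconv} combined with openness of $q$ and compactness of the base; and replace the appeal to \enquote{continuity of the induced representation} by the continuity of the composition map $(f,\theta)\mapsto f\circ\theta$ on $\uC_q(K)\times_{s,r}\uC_q^q(K,K)$, which is proved from \cref{charconv} exactly as in \cref{representationextension}. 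With these corrections your proof is sound and essentially coincides with the paper's.
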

\begin{proof}
	Let $F$ be a locally trivial $T_\varphi$-invariant subbundle
	and consider the set
		 \begin{align*}
    \euS \defeq \left\{\theta \in \euE_{\uu}(K,q,\euG)\mmid F_{r(\theta)} \circ \theta \in F_{s(\theta)}\right\}.
  \end{align*}
  It is clear that $\euS$ is a subsemigroupoid of $\uC_q^q(K,K)$ that contains $\euS(K,q,\euG)$. 
  We show that it is closed which implies $\euS = \euE_{\uu}(K,q,\euG)$. So take a net 
  $(\theta_\alpha)_{\alpha \in A}$ in $\euS$ converging to $\theta \in \euE_{\uu}(K, q, \euG)$ and 
  $e \in F_{r(\theta)} = \uC(K_{r(\theta)})$. Since $F$ is a subbundle, we then 
  find a continuous extension 
  $f \in \mathrm{C}(K)$ with $f|_{K_{r(\theta)}} = e$ and $f|_{K_{\pzu}} \in F_{\pzu}$ for all 
  $\pzu \in \euG^{(0)}$. Then 
  \begin{align*}
  	e \circ \vartheta = \lim_\alpha f|_{K_{r(\vartheta_\alpha})} \circ \vartheta_\alpha \in F
  \end{align*}
  by \cref{closedloc}.
\end{proof}

\begin{proof}[Proof of \cref{mainthm}]
	We first show that \ref{item:mainthm_pseudo} implies \ref{item:mainthm_bundle} and \ref{item:mainthm_module} as well as the additional claims. So assume that \ref{item:mainthm_pseudo} holds and recall that $\euE_\uu(K,q,\euG)$ is then a compact groupoid by \cref{thm:pseudoisochar} 
	and transitive by \cref{chartrans} since $\euG$ is topologically ergodic. Applying 
	\cref{pwtransitive} to the Koopman 
	representation of $\euE_\uu(K,q,\euG)$ on $\uC_q(K)$ yields that  
	the union of all $\euE_{\uu}(K,q,\euG)$-invariant subbundles of constant finite dimension is 
	fiberwise dense in $\uC_q(K)$ and that the set of these subbundles is closed under finite sums. 
	Since the Banach bundle $\uC_q(K)$ is continuous,  
	its total space is Hausdorff (see \cref{remarksBB} \ref{remarksBB_hausdorff}), and therefore these 
	Banach bundles are locally trivial by \cref{remarksBB} \ref{remarksBB_lcb}. Conversely, since every $T_\varphi$-invariant locally trivial subbundle is invariant with respect to the representation of $\euE_{\uu}(K,q,\euG)$ by \cref{lem:loctrivinv}, all its fibers are isomorphic and it therefore has to be of constant finite dimension. 
	
	To show that \ref{item:mainthm_module} holds, observe that for each locally trivial invariant subbundle 
	$F \subset \mathrm{C}_q(K)$, the set
    \begin{align*}
    \tilde{\Gamma}(F) \defeq 
    \left\{\sigma \in \Gamma(\uC_{q}(K)) \mmid \forall l\in L\colon \sigma(l)\in F_l \right\} 
    \subset \Gamma(\uC_{q}(K)),
  \end{align*}
  is a $\uC(\euG^{(0)})$-submodule of $\Gamma(\uC_{q}(K))$ which is isometrically isomorphic to 
  $\Gamma(F)$ as a Banach module over $\mathrm{C}(\euG^{(0)})$. In particular, $\tilde{\Gamma}(F)$ 
  is finitely generated and projective as a $\mathrm{C}(L)$-module (see \cref{remarksBB} and 
  \cref{lem:loctriv}) and closed in $\Gamma(\uC_{q}(K))$.
  Let $M$ be the union of all modules $\tilde{\Gamma}(F)$ where $F$ is a locally trivial subbundle. 
  Then $M$ is a $\uC(L)$-submodule 
  since the sum $F = F_1 + F_2$ of two locally trivial invariant subbundles of $F_1$ and $F_2$
  is again a locally trivial invariant subbundle and 
  $\tilde{\Gamma}(F_1) + \tilde{\Gamma}(F_2) \subset \tilde{\Gamma}(F)$. Moreover, by \cref{mainthm} 
  $M$ is stalkwise dense in the 
  sense of \cite[Definition 4.1]{Gierz1982} and via a Stone--Weierstra\ss{} theorem for bundles 
  (see \cite[Corollary 4.3]{Gierz1982}), this implies that $M$ is dense in $\Gamma(\uC_{q}(K))$. 
  Using the canonical isomorphy $\Gamma(\uC_{q}(K)) \cong \uC(K)$ noted in \cref{Banachbundleexample},
  we conclude that the union of all closed finitely generated and projective $T_\phi$-invariant 
  $\uC(L)$-submodules is dense in $\uC(K)$. Hence, \ref{item:mainthm_pseudo} also implies \ref{item:mainthm_module}.
	
	Clearly,  \cref{item:mainthm_module} implies \cref{item:mainthm_module2}. But also \ref{item:mainthm_bundle}  implies \cref{item:mainthm_module2}: Assume that \ref{item:mainthm_bundle} holds. By the Stone-Weierstraß theorem it suffices to show that the elements of finitely generated, projective, closed, 
    $T_\varphi$-invariant $\uC(\euG^{(0)})$-submodules separate the points of $K$. Let $x,y \in K$. If $q(x) \neq q(y)$, then we find $f \in \mathrm{C}(\euG^{(0})$ with $f(q(x)) \neq f(q(y))$ and therefore the elements of the submodule $\mathrm{C}(\euG^{(0)}) \cdot \mathbbm{1}$ separate $x$ and $y$. On the other hand, if $\pzu \coloneqq q(x)  = q(y)$, we find a locally trivial invariant subbundle $F \subseteq \mathrm{C}_q(K)$ and a section $\sigma \in \Gamma(F)$ with $\sigma(\pzu) (x) \neq \sigma(\pzu)(y)$. Since $\Gamma(F)$ defines a finitely generated, projective, closed, 
    $T_\varphi$-invariant $\uC(\euG^{(0)})$-submodule of $\mathrm{C}(K)$, this shows the claim.
    
    To finish the proof, we assume that \ref{item:mainthm_module2} holds. We show that $\euE_{\uu}(K,q,\euG)$ is compact
	to conclude that $(K,q,\euG)$ is pseudoisometric. By \cref{arzelaascoli}
	it suffices to show that $\euE_{\uu}(K,q,\euG)$ is equicontinuous. Recall from \cref{compactnessvsequicontinuity} 
	that this is equivalent to 
	$\{f \circ \theta\mid \theta \in \euE_{\uu}(K,q,\euG)\}$ being equicontinuous or, equivalently, 
	precompact in $\mathrm{C}_q(K)$ for every $f$ in a set generating $\mathrm{C}(K)$ as a C*-algebra.
	In particular, by \ref{item:mainthm_module2} we only have to show precompactness of this set for 
	$f \in \mathrm{C}(K)$ contained in a finitely generated, projective, and closed $T_\varphi$-invariant 
	$\mathrm{C}(\euG^{(0)})$-submodule of $\mathrm{C}(K)$. Given such a submodule $\Gamma \subset \mathrm{C}(K)$, 
	the subspaces $F_\pzu \defeq \Gamma|_{K_\pzu}$
	are finite-dimensional for $\pzu \in \euG^{(0)}$. As noted in \cref{rem:bundlemodule}, 
	they define a subbundle of the Banach bundle $\uC_q(K)$ and $\Gamma(F)$ 
	is isomorphic to $\Gamma$ as a Banach module over $\mathrm{C}(\euG^{(0)})$. Therefore $F$ is locally trivial 
	by \cref{lem:loctriv}. Clearly, it is $T_\varphi$-invariant. Since 
	$\{f \circ \theta\mid \theta \in \euE_{\uu}(K,q,\euG)\}$	is contained in $F$ by \cref{lem:loctrivinv}, 
	its precompactness follows using \cref{compactnesstrivial} since it is a bounded subset 
	of $\uC_q(K)$. Therefore, $\euE_{\uu}(K,q,\euG)$ is compact, proving \ref{item:mainthm_pseudo}.
\end{proof}

\section{Haar systems and relatively invariant measures}
\label{sec:RIM}

Using the (uniform) enveloping semigroup, it can be shown that any equicontinuous, minimal system $(K, G)$
has a unique invariant probability measure which is the pushforward of the Haar measure on the 
compact group $\uE(K, G)$. A relative version of this result also holds in the sense that,
given an equicontinuous extension $q\colon (K, G) \to (L, G)$ of minimal systems, there exists a unique 
\emph{relatively invariant measure} for the extension (see \cite[Corllary 3.7]{Glas1975}). We recall the definition and remind the reader of the notation introduced in the introduction.

\begin{definition}
  Let $(K,q,\euG)$ be a groupoid action. A \emph{relatively invariant measure} 
  for $(K,q,\euG)$ is a continuous mapping
  \begin{align*}
    \mu \colon \euG^{(0)} \to \uP(K), \quad \pzu \mapsto \mu_\pzu
  \end{align*}
  such that
    \begin{itemize}
      \item $q_*\mu_\pzu = \delta_\pzu$ for all $\pzu \in \euG^{(0)}$,
      \item $\phi_\pzg^*\mu_{r(\pzg)} = \mu_{s(\pzg)}$ for all $\pzg \in \euG$.
    \end{itemize}
  We call $\mu$ \emph{fully supported} if $\supp \mu_\pzu = K_\pzu$ for every $\pzu \in \euG^{(0)}$.
\end{definition}

\begin{example}\label{ex:disc_rim}
  Let $q\colon (\D, \phi) \to ([0,1], \id_{[0,1]})$ 
  be the extension from \cref{ex:disc} where $\phi$ is 
  the rotation with varying velocity on $\D$. Then the groupoid
  action $(\D, q, \euS(q))$ has a unique relatively invariant measure
  $\mu$ which is fully supported.
\end{example}

Relatively 
invariant measures have been studied systematically by Glasner in \cite{Glas1975}: They allow to lift
measures along extensions; they serve as a topological version of the conditional expectations 
ergodic theory relies on considerably; and as we discuss in \cref{section:fourier}, they are also essential
for the Fourier analysis of pseudoisometric extensions. In this Section, we show that unique 
relatively invariant measures exist for pseudoisometric extensions of topologically ergodic systems.
This extends previous results to a much larger class of nonminimal systems, including in particular
all transitive systems. As above, it is essential to consider extensions as groupoid actions 
to carry out this generalization.

Since the unique invariant measure of a compact transitive group action is given by the push forward
of the Haar measure, we try to adapt this argument to the groupoid case. We therefore consider 
Haar systems, a generalization of Haar measures to group bundles and more generally groupoids, see 
\cite[Definition 2.2]{Rena1980}.

\begin{definition}
  Let $\euG$ be a compact group bundle and for $u \in \euG^{(0)}$ let $m_u$ be the Haar measure on 
  the fiber group $\euG_u$. Then $\euG$ \emph{has a continuous Haar system} if the mapping 
    \begin{align*}
      \euG^{(0)} \to \C, \quad u \mapsto \int f \dm_u
    \end{align*}
  is continuous for each $f \in \uC(\euG)$.
\end{definition}

It is known that a compact group bundle $\euG$ has a continuous Haar system if and only if the 
mapping $p \colon \euG \to \euG^{(0)}$ is open (see \cite[Lemma 1.3]{Rena1991}). In particular, by \cref{lem:transitive_open}
the isotropy bundle of every 
compact transitive groupoid has a continuous Haar system. With this
knowledge, we can prove the first result of this section. Recall from \cref{definition:groupoidaction} 
that a groupoid action is called transitive if every orbit is the entire space.

\begin{theorem}\label{meangroupoidaction}
  Let $(K,q,\euG)$ be an action by a compact transitive groupoid $\euG$. Then the following assertions hold.
  	\begin{enumerate}[(i)]
  		\item\label{item:meangroupoidaction1} $(K,q,\euG)$ admits a relatively invariant measure.
  		\item\label{item:meangroupoidaction2} $(K,q,\euG)$ admits a unique relatively invariant measure measure if and only if the action $(K, q, \euG)$ is transitive. In this case, the measure is fully supported.
  	\end{enumerate}
\end{theorem}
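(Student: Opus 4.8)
The plan is to reduce everything to a single fibre. First I would fix a base unit $\pzu_0 \in \euG^{(0)}$ and record that each $\phi_\pzg$ is a homeomorphism $K_{s(\pzg)} \to K_{r(\pzg)}$ with inverse $\phi_{\pzg^{-1}}$, so that the invariance requirement $\phi_\pzg^*\mu_{r(\pzg)} = \mu_{s(\pzg)}$ is the same as $(\phi_\pzg)_*\mu_{s(\pzg)} = \mu_{r(\pzg)}$. The structural heart of the argument is the observation that relatively invariant measures are in bijection with $\euG_{\pzu_0}^{\pzu_0}$-invariant probability measures on $K_{\pzu_0}$. Indeed, restricting any relatively invariant measure to $\pzu_0$ yields such a measure (take $\pzg \in \euG_{\pzu_0}^{\pzu_0}$ in the invariance condition); conversely, given a $\euG_{\pzu_0}^{\pzu_0}$-invariant $\nu \in \uP(K_{\pzu_0})$, one defines $\mu_\pzu \defeq (\phi_\pzg)_*\nu$ for any $\pzg \in \euG_{\pzu_0}^\pzu$, which exists since $\euG$ is transitive. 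This transport is well defined: if $\pzg, \pzh \in \euG_{\pzu_0}^\pzu$, then $\pzh^{-1}\pzg \in \euG_{\pzu_0}^{\pzu_0}$, so invariance of $\nu$ gives $(\phi_\pzg)_*\nu = (\phi_\pzh)_*(\phi_{\pzh^{-1}\pzg})_*\nu = (\phi_\pzh)_*\nu$. A short check shows the two constructions are mutually inverse, and that any transported family satisfies $q_*\mu_\pzu = \delta_\pzu$ together with the invariance condition.

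For existence (i), I would take $\nu$ to be any $\euG_{\pzu_0}^{\pzu_0}$-invariant probability measure on $K_{\pzu_0}$; one exists because $\euG_{\pzu_0}^{\pzu_0}$ is a compact group acting continuously on $K_{\pzu_0}$ (its Haar measure being part of the continuous Haar system discussed above), for instance the pushforward of Haar measure under an orbit map. The only nontrivial point, and the main obstacle, is continuity of $\pzu \mapsto \mu_\pzu$ into $\uP(K)$ with the weak$^*$ topology, since $q$ need not admit continuous local sections. I would prove it via the subnet characterisation of convergence: given $\pzu_\alpha \to \pzu$ and a fixed $\pzg \in \euG_{\pzu_0}^\pzu$, the openness of $(s,r)$ from \cref{lem:transitive_open} together with the net-lifting characterisation of open maps in \cref{rem:quotientmapopenchar} produces, after passing to a subnet, a net $\pzg_\beta \to \pzg$ with $\pzg_\beta \in \euG_{\pzu_0}^{\pzu_\beta}$. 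Joint continuity of the action then shows $x \mapsto f(\pzg_\beta x)$ converges uniformly on the compact fibre $K_{\pzu_0}$ to $x \mapsto f(\pzg x)$ for each $f \in \uC(K)$, whence $\int f\,\mathrm{d}\mu_{\pzu_\beta} = \int f(\pzg_\beta x)\,\mathrm{d}\nu(x) \to \int f(\pzg x)\,\mathrm{d}\nu(x) = \int f\,\mathrm{d}\mu_\pzu$. Since every subnet of $(\pzu_\alpha)$ admits such a further subnet, $\mu_{\pzu_\alpha} \to \mu_\pzu$, which establishes continuity.

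For (ii), I would invoke the bijection from the first paragraph, reducing uniqueness of the relatively invariant measure to uniqueness of the $\euG_{\pzu_0}^{\pzu_0}$-invariant probability measure on $K_{\pzu_0}$. A continuous action of a compact group on a compact space has a unique invariant probability measure exactly when it is transitive: if transitive, $K_{\pzu_0}$ is homogeneous and the invariant measure is unique, while if not, the orbits are closed and disjoint and carry at least two distinct invariant measures. By \cref{rem:transitive}, since $\euG$ is transitive the action $(K,q,\euG)$ is transitive if and only if it is fiberwise transitive, and conjugation by any $\pzg \in \euG_{\pzu_0}^\pzu$ shows that $\euG_{\pzu_0}^{\pzu_0}$ acts transitively on $K_{\pzu_0}$ if and only if $\euG_\pzu^\pzu$ acts transitively on $K_\pzu$ for every $\pzu$. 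This yields the stated equivalence. Finally, in the transitive case the unique invariant $\nu$ on the homogeneous space $K_{\pzu_0}$ has full support, and since $\supp \mu_\pzu = \phi_\pzg(\supp \nu) = \phi_\pzg(K_{\pzu_0}) = K_\pzu$, the relatively invariant measure is fully supported.
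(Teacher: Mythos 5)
Your proof is correct, but it is organized around a genuinely different reduction than the paper's. The paper constructs the measure fiberwise: for existence it first restricts to a (closed) orbit to reduce to a transitive action, then defines $\mu_\pzu$ as the pushforward of the Haar measure of the isotropy group $\euG_\pzu^\pzu$ under an orbit map $\euG_\pzu^\pzu \to K_\pzu$, $\pzg \mapsto \pzg x_\pzu$ (this needs fiberwise transitivity, hence the orbit restriction), and it proves weak*-continuity by lifting nets of \emph{points} through the open map $q$ and invoking \cref{integralconvscalar} for the continuous Haar system on $\Iso(\euG)$, whose continuity rests on \cref{lem:transitive_open} together with Renault's criterion. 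Your transport bijection between relatively invariant measures and $\euG_{\pzu_0}^{\pzu_0}$-invariant probability measures on a single fiber replaces all of this: existence needs neither the orbit restriction nor fiberwise transitivity, continuity is obtained by lifting nets of \emph{groupoid elements} through the open map $(s,r)$ (again \cref{lem:transitive_open}, via the net-lifting criterion of \cref{rem:quotientmapopenchar}) plus an elementary uniform-convergence compactness argument, and Haar systems disappear entirely---only the Haar measure of the single compact group $\euG_{\pzu_0}^{\pzu_0}$ is needed. Both arguments share the decisive measure-theoretic input, namely that a transitive action of a compact group is uniquely ergodic (and its converse), and your conjugation argument combined with \cref{rem:transitive} correctly reduces transitivity of $(K,q,\euG)$ to transitivity of one fiber group on its fiber. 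What your route buys is economy and a sharper structural statement: the bijection parametrizes \emph{all} relatively invariant measures by invariant measures on one fiber, which makes the uniqueness equivalence in (ii) transparent. What the paper's route buys is the explicit identification of the canonical measure as fiberwise Haar---which is what is actually exploited later, for instance in the Fourier analysis of \cref{thm:fouriermain}---and the continuity lemma \cref{integralconvscalar}, which is reused in \cref{riminv} and in the proof of \cref{thm:fouriermain}.
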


The proof requires the following continuity lemma.

\begin{lemma}\label{integralconvscalar}
  Let $q \colon K \to L$ be a continuous open surjection between compact spaces and
  \begin{align*}
    \mu \colon L \to \uP(K), \quad l \mapsto \mu_l
  \end{align*}
  a continuous map with $q_*\mu_l = \delta_l$ for every $l \in L$. Moreover, 
  let $(f_\alpha)_{\alpha \in A}$ be a convergent net in $\uC_q(K)$ with limit $f \in \uC_q(K)$. Then 
  \begin{align*}
      \lim_\alpha  \int_{K_{s(f_\alpha)}} f_\alpha \, \mathrm{d}\mu_{s(f_\alpha)} = \int_{K_{s(f)}} f \, \mathrm{d}\mu_{s(f)}.
  \end{align*}
\end{lemma}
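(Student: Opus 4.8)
The plan is to compare every integral against a single fixed reference function living on the whole total space $K$, thereby sidestepping the difficulty that the $f_\alpha$ are defined on different fibers $K_{s(f_\alpha)}$ and cannot be subtracted from one another directly. First I would use the Tietze extension theorem to fix a function $F \in \uC(K)$ with $F|_{K_{s(f)}} = f$; such an extension exists since $K_{s(f)}$ is closed in the compact space $K$. The role of $F$ is to serve as a global yardstick for all fibers near $s(f)$.

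The key observation is that the convergence $f_\alpha \to f$ in $\uC_q(K)$, which carries the compact-open topology, can be read off directly from the Banach-bundle description of this topology recorded in \cref{Banachbundleexample}: the basic neighborhoods of $f$ are the sets $V(F', U, \epsilon)$, and since $F$ restricts to $f$ on $K_{s(f)}$ we have $f \in V(F, L, \epsilon)$ for every $\epsilon > 0$. Hence $f_\alpha$ eventually lies in $V(F, L, \epsilon)$, which unwinds to
\begin{align*}
  \varepsilon_\alpha \defeq \sup_{x \in K_{s(f_\alpha)}} \bigl|f_\alpha(x) - F(x)\bigr| \longrightarrow 0.
\end{align*}
In other words, $f_\alpha$ becomes uniformly close to the restriction of the fixed reference $F$ over the shrinking fibers. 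Continuity of the source map, also part of \cref{charconv}, supplies $s(f_\alpha) \to s(f)$ along the way.

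With this in hand I would decompose the integral. Because $q_*\mu_\pzu = \delta_\pzu$, each measure $\mu_\pzu$ is supported on the fiber $K_\pzu$, so for any $g \in \uC(K)$ one has $\int_K g \,\mathrm{d}\mu_\pzu = \int_{K_\pzu} g|_{K_\pzu}\,\mathrm{d}\mu_\pzu$. Splitting $f_\alpha = F|_{K_{s(f_\alpha)}} + \bigl(f_\alpha - F|_{K_{s(f_\alpha)}}\bigr)$ and integrating against the probability measure $\mu_{s(f_\alpha)}$ yields
\begin{align*}
  \int_{K_{s(f_\alpha)}} f_\alpha \,\mathrm{d}\mu_{s(f_\alpha)}
  = \int_K F \,\mathrm{d}\mu_{s(f_\alpha)} + \int_{K_{s(f_\alpha)}} \bigl(f_\alpha - F|_{K_{s(f_\alpha)}}\bigr)\,\mathrm{d}\mu_{s(f_\alpha)}.
\end{align*}
The second summand is bounded in absolute value by $\varepsilon_\alpha \cdot \mu_{s(f_\alpha)}(K) = \varepsilon_\alpha \to 0$. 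For the first summand, continuity of $\mu \colon L \to \uP(K)$ together with $s(f_\alpha) \to s(f)$ gives the weak$^*$ convergence $\mu_{s(f_\alpha)} \to \mu_{s(f)}$, so $\int_K F\,\mathrm{d}\mu_{s(f_\alpha)} \to \int_K F\,\mathrm{d}\mu_{s(f)} = \int_{K_{s(f)}} f\,\mathrm{d}\mu_{s(f)}$. Adding the two contributions proves the claim.

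The only genuinely delicate point is the uniform estimate $\varepsilon_\alpha \to 0$; everything else is a routine split into a \enquote{moving measure, fixed function} term and a \enquote{small function} term. That estimate is exactly where the Banach-bundle reformulation of the compact-open topology from \cref{Banachbundleexample} does the work, and this is why it is essential to package the convergence of the fiber maps through a single global extension $F$ rather than attempting a naive pointwise or subnet comparison across distinct fibers.
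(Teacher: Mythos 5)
Your proposal is correct and follows the same skeleton as the paper's proof: both fix a global continuous extension $F \in \uC(K)$ of $f$, establish the uniform estimate $\sup_{x \in K_{s(f_\alpha)}}\left|f_\alpha(x) - F(x)\right| \to 0$, and then conclude by comparing $\int f_\alpha \,\mathrm{d}\mu_{s(f_\alpha)}$ with $\int F \,\mathrm{d}\mu_{s(f_\alpha)}$ and letting weak*-continuity of $\mu$ handle the fixed function $F$ (a step the paper leaves implicit in \enquote{which implies the claim}). The only genuine divergence is how the uniform estimate is obtained. The paper proves it from scratch: it picks points $x_\alpha \in K_{s(f_\alpha)}$ attaining the supremum, passes to convergent subnets using compactness of $K$, and applies the convergence characterization of \cref{charconv} to show that every subnet of the net of suprema admits a further subnet tending to $0$. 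You instead invoke the base $V(F,U,\epsilon)$ from \cref{Banachbundleexample}: since $f \in V(F,L,\epsilon)$ for every $\epsilon > 0$, the net $(f_\alpha)$ eventually enters these neighborhoods, which is literally the desired estimate. This is legitimate---the Example precedes the Lemma and does not depend on it---but be aware that the Example's claim is justified in the paper only by \enquote{a moment's thought}, and spelling that thought out amounts to essentially the subnet argument the paper's proof of the Lemma carries out. So your route is shorter and arguably more conceptual, treating compact-open convergence as convergence in the Banach bundle $\uC_q(K)$, at the cost of resting the analytic core of the argument on an assertion proved elsewhere only by fiat.
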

\begin{proof}
  Choose $F \in \uC(K)$ such that $F|_{K_{s(f)}} = f$. For each $\alpha \in A$ choose an $x_\alpha \in K_{s(f_\alpha)}$ such that
    \begin{align*}
      \left|f_\alpha(x_\alpha) - F(x_\alpha)\right| = \sup_{x \in K_{s(f_\alpha)}} \left|f_\alpha(x) - F(x)\right| .
    \end{align*}
  For each subnet of $(f_\alpha)_{\alpha \in A}$ we then find a subnet $(f_\beta)_{\beta \in B}$ such that $x = \lim_{\beta} x_\beta$ exists in $K$. But then
    \begin{align*}
      \lim_\beta \sup_{x \in K_{s(f_\beta)}} \left|f_\beta(x) - F(x)\right| = \lim_{\beta} \left|f_\beta(x_\beta) - F(x_\beta)\right| = 0.
    \end{align*}     
  As a consequence, 
    \begin{align*}
      \lim_{\alpha} \left|\int_{K_{s(f_\alpha)}} f_\alpha \, \mathrm{d}\mu_{s(f_\alpha)} -  \int_{K_{s(f_\alpha)}} F \, \mathrm{d}\mu_{s(f_\alpha)}\right|
      \leq  \lim_\alpha \sup_{x \in K_{s(f_\alpha)}} \left|f_\alpha(x) - F(x)\right| = 0,
    \end{align*}
  which implies the claim.
\end{proof}

\begin{proof}[Proof of \cref{meangroupoidaction}]
  As above, we denote the Haar measure on $\euG_\pzu^\pzu$ by $\um_\pzu$ for $\pzu \in \euG^{(0)}$. 
  In order to prove \ref{item:meangroupoidaction1}, it suffices to consider the case that $(K, q, \euG)$ is transitive
  (in which case $\euG$ is automatically transitive). To see this, note that for fixed $x\in K$,
  the orbit $\euG x$ is a closed, $\euG$-invariant subset and that $q$ restricted
  to $\euG x$ is again an open surjection (use \cref{lem:transitive_open}). 
  
  Now suppose $(K, q, \euG)$ is transitive. For $x\in K$, denote by 
    \begin{align*}
      \rho_x\colon \euG_{q(x)}^{q(x)}\to K_{q(x)}, \quad \pzg \mapsto \pzg x
    \end{align*}
  the induced map onto the orbit of $x$. Now pick a point $x_\pzu \in K_\pzu$ for each $\pzu\in \euG^{(0)}$ 
  and set
  \begin{align*}
    \mu_\pzu \defeq \left(\rho_{x_\pzu}\right)_*(\um_\pzu).
  \end{align*}
  It is clear from the transitivity of the group action of 
  $\euG_{\pzu}^{\pzu}$ on $K_\pzu$ that $\mu_\pzu$ does not depend on the choice of 
  $x_\pzu \in K_\pzu$ and that $\supp \mu_\pzu = K_\pzu$ for every $\pzu \in \euG^{(0)}$. 
  Moreover, $\phi_{\pzg}^*\mu_{s(\pzg)} = \mu_{r(\pzg)}$ for every $\pzg\in\euG$.
  
  Now take $f \in \uC(K)$. We show that $\lim_\alpha \mu_{\pzu_\alpha}(f) = \mu_\pzu(f)$ for every net 
  $(\pzu_\alpha)_{\alpha \in A}$ converging to some $\pzu \in L$. By passing to a subnet, we may assume that 
  there is a convergent net $(x_\alpha)_{\alpha \in A}$ in $K$ with limit $x\in K$ that satisfies
  $q(x_\alpha) = \pzu_\alpha$ for all $\alpha \in A$. Then $\rho_{x_\alpha} \to \rho_x$ with 
  respect to the compact-open topology and so $f\circ\rho_{x_\alpha} \to f\circ\rho_x$
  with respect to the compact-open topology. Therefore, \cref{integralconvscalar} 
  yields
  \begin{align*}
    \lim_{\alpha \in A} \left\langle f, \mu_{\pzu_\alpha}\right\rangle
    = \lim_{\alpha \in A} \left\langle f\circ \rho_{x_\alpha}, \um_{\pzu_\alpha}\right\rangle
    = \left\langle f \circ \rho_x, \um_\pzu\right\rangle
    = \left\langle f, \mu_\pzu\right\rangle.
  \end{align*}
  Hence, $\mu\colon \euG^{(0)}\to \uP(K)$ is continuous. This shows \ref{item:meangroupoidaction1} as well as the existence of a fully supported relatively invariant measure in case of a transitive action.
  
  It remains to show that there is a unique relatively invariant measure if and only if $(K, q, \euG)$
  is transitive. Since we have seen that any orbit of $K$ carries a 
  relatively invariant measure, the action must be transitive if there is only one relatively 
  invariant measure. Conversely, suppose $(K, q, \euG)$ is transitive and let $\mu$ the relatively invariant measure constructed above. Take any relatively invariant 
  measure $\nu \colon \euG^{(0)} \to \uP(K)$ for $(K, q, \euG)$ and let $u \in \euG^{(0)}$. Then 
  $\nu$ is invariant under the action of $\euG_\pzu^\pzu$. Since a transitive action of a compact group is equicontinuous 
  and minimal and therefore uniquely ergodic, $\nu_\pzu = \mu_\pzu$. Since $\pzu \in \euG^{(0)}$ 
  was arbitrary, $\mu$ is the unique relatively invariant measure for $(K, q, \euG)$.
\end{proof}

In order to apply \cref{meangroupoidaction} to a pseudoisometric groupoid action $(K,q,\euG)$ via the uniform 
enveloping groupoid, we have to understand when the induced action of $\euE_\uu(K, q, \euG)$ is transitive. As noted in \cref{rem:transitive}, the transitivity of the groupoid action $(K, q, \euE_\uu(K, q, \euG))$  
can be split into two transitivity properties:
\begin{itemize}
  \item \enquote{Transitivity in direction of $\euG^{(0)}$}: The induced action $(\euG^{(0)}, \id_{\euG^{(0)}}, 
  \euE_\uu(K, q, \euG)$ on the unit space $\euG^{(0)}$ is transitive.
  \item \enquote{Transitivity in direction of $q$}: The isotropy groups of $\euE_\uu(K, q, \euG)$ act 
  transitively on the fibers of $q$, i.e., $(K, q, \euE_\uu(K, q, \euG))$ is fiberwise transitive.
\end{itemize}
We have already shown that transitivity in direction of $\euG^{(0)}$ is equivalent to $\euG$ 
being topologically ergodic, so it remains to find a useful characterization for fiberwise 
transitivity. To this end, we introduce a notion of relative topological ergodicity 
for groupoid actions  and show that it yields the desired characterization.


\begin{definition}\label{def:relativelytoperg}
  A groupoid action $(K,q,\euG)$ is called \emph{relatively topologically ergodic} if 
  the canonical map $\alpha\colon \fix(K,q,\euG) \to \fix(\euG^{(0)}, \id, \euG)$ is an 
  isomorphism of maximal trivial factors.
\end{definition}

\begin{remark}
	A groupoid action $(K,q,\euG)$ is relatively topologically ergodic if and only if the restricted operator
		\begin{align*}
			T_q|_{\mathrm{fix}(T_\euG)} \colon \mathrm{fix}(T_\euG) \rightarrow \fix(T_\varphi), \quad f \mapsto f \circ q
		\end{align*}
	is bijective.
\end{remark}

Clearly, every topologically ergodic groupoid action is relatively topologically ergodic. However, there are groupoid actions which are only relatively topologically ergodic, but not topologically ergodic.
\begin{example}
	$\euG$ is any groupoid with compact unit space which is not topologically ergodic, then the action $(\euG^{(0)},\mathrm{id}_{\euG^{(0)}},\euG)$ is still topologically ergodic. More concretely, let $(K,G)$ be a topological dynamical system which is not topologically ergodic. Then the action of the action groupoid $G \ltimes K$ on $K$ is relatively topologically ergodic but not topologically ergodic.
\end{example}

%

The following result now relates ergodicity of a pseudoisometric groupoid action with fiberwise 
transitivity of the action of the induced uniform enveloping groupoid.

\begin{proposition}\label{prop:transitivefibers}
  For a pseudoisometric groupoid action $(K,q,\euG)$ the following assertions are equivalent.
  \begin{enumerate}[(a)]
    \item The action of $\euG$ on $K$ is relatively topologically ergodic.
    \item The action of 
    $\EuScript{E}_\uu(K,q,\euG)$ on $K$ is fiberwise transitive.
  \end{enumerate}   
\end{proposition}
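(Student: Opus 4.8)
The plan is to rephrase both (a) and (b) as statements about one and the same closed equivalence relation on $K$, namely the relation $R_{\mathrm{fix}} = \euE_\uu(R_\phi)$ underlying the maximal trivial factor $\fix(K,q,\euG) \cong K/R_{\mathrm{fix}}$. Write $\euE \defeq \euE_\uu(K,q,\euG)$. Since $(K,q,\euG)$ is pseudoisometric, $\euE$ is a compact groupoid by \cref{prop:pseudo_impl_groupoid}, and it acts on $K$ with $q(\theta x) = r(\theta)$ for $\theta \in \euE$ and $x \in K_{s(\theta)}$.

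The crucial first step is to identify the orbit relation of this $\euE$-action. Set $R_\euE \defeq \{(x,\theta x) \mmid \theta \in \euE,\ s(\theta) = q(x)\}$. As the image of the compact space $\euE \times_{s,q} K$ under the continuous map $(\theta,x) \mapsto (x,\theta x)$ --- continuity of evaluation being part of \cref{charconv} --- the set $R_\euE$ is compact, and it is an equivalence relation because $\euE$ is a groupoid containing the fiberwise identities. Hence $R_\euE$ is a \emph{closed} equivalence relation. I claim $R_\euE = R_{\mathrm{fix}}$. Since $\euS(K,q,\euG) \subseteq \euE$ gives $R_\phi \subseteq R_\euE$, minimality of the closure yields $R_{\mathrm{fix}} = \euE_\uu(R_\phi) \subseteq R_\euE$. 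For the reverse inclusion I would first verify that every $f \in \fix(T_\varphi)$ is not merely $\euG$- but in fact $\euE$-invariant: the set of $\theta \in \euE$ with $f|_{K_{r(\theta)}} \circ \theta = f|_{K_{s(\theta)}}$ is a subsemigroupoid containing $\euS(K,q,\euG)$, and it is closed by the convergence characterization \cref{charconv} together with the lifting property of the open surjection $q$ recorded in \cref{rem:quotientmapopenchar}; it therefore exhausts $\euE$. Consequently $f(\theta x) = f(x)$ for all $\theta \in \euE$, so $R_\euE \subseteq R_{\mathrm{fix}}$, and equality follows.

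With this in hand, both conditions become fiber conditions on $R_{\mathrm{fix}}$. Using the remark after \cref{def:relativelytoperg}, (a) is equivalent to bijectivity of $T_q|_{\fix(T_\euG)} \colon \fix(T_\euG) \to \fix(T_\varphi)$; this map is always injective since $q$ is surjective, and its image consists exactly of those $f \in \fix(T_\varphi)$ that are constant on the fibers of $q$ (a fiber-constant $f$ descends to a continuous $g$ on $\euG^{(0)}$ which is automatically $\euG$-invariant, hence lies in $\fix(T_\euG)$). As $\fix(T_\varphi) \cong \uC(K/R_{\mathrm{fix}})$ separates the $R_{\mathrm{fix}}$-classes, (a) holds if and only if $R_{\mathrm{fix}} \cap (K_\pzu \times K_\pzu) = K_\pzu \times K_\pzu$ for every $\pzu \in \euG^{(0)}$. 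On the other hand, by the identity $R_{\mathrm{fix}} = R_\euE$ a pair $(x,y) \in K_\pzu \times K_\pzu$ lies in $R_{\mathrm{fix}}$ precisely when $y = \theta x$ for some $\theta \in \euE$ with $s(\theta) = r(\theta) = \pzu$, i.e. some $\theta \in \euE_\pzu^\pzu$. Hence the fiber condition $R_{\mathrm{fix}} \cap (K_\pzu \times K_\pzu) = K_\pzu \times K_\pzu$ says exactly that $\euE_\pzu^\pzu$ acts transitively on $K_\pzu$, which for all $\pzu$ is condition (b). This closes the equivalence.

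The main obstacle is the identity $R_\euE = R_{\mathrm{fix}}$ of the second paragraph. Its value is that compactness of $\euE$ --- which is where pseudoisometry enters --- forces the $\euE$-orbit relation to be \emph{closed}, so that on each fiber it already agrees with the relation defining the maximal trivial factor. This collapses the gap between the orbit relation and its closure and, crucially, lets me avoid the delicate problem one would otherwise face in the implication (a) $\Rightarrow$ (b): extending an $\euE_\pzu^\pzu$-invariant function on a single fiber $K_\pzu$ to a globally $\euE$-invariant function on $K$, which is not available when $\euE$ fails to be transitive as a groupoid.
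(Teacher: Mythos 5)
Your proof is correct; it shares its backbone with the paper's argument but genuinely diverges in the second half, so a comparison is worthwhile.

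The common core is your identity $R_\euE = R_{\mathrm{fix}}$: parts (i) and (ii) of the paper's \cref{orbitsellisgroupoid2} state precisely that the orbits of $\euE_\uu(K,q,\euG)$ are the classes of $\euE_\uu(R_\phi)$, and the paper proves this by the same two-inclusion scheme (your variant, which routes the inclusion $R_\euE \subseteq R_{\mathrm{fix}}$ through $\euE$-invariance of every $f \in \fix(T_\varphi)$ rather than through the closed subsemigroupoid $\left\{\theta \mmid (x,\theta(x)) \in \euE_\uu(R_\phi) \text{ for all } x \in K_{s(\theta)}\right\}$, is an immaterial difference). Where you depart from the paper is in how relative topological ergodicity enters. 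The paper proves a saturation characterization, part (iii) of \cref{orbitsellisgroupoid2}: the action is relatively topologically ergodic iff every $\euE_\uu(K,q,\euG)$-invariant set is $q$-saturated; this requires \cref{lem:factorgroupoid} to push the enveloping groupoid down to $\euE_\uu(\euG^{(0)},\id_{\euG^{(0)}},\euG)$ and a diagram chase with maximal trivial factors, and both implications of the proposition are then deduced from it (the direction (b) $\Rightarrow$ (a) invoking \cref{lem:factorgroupoid} once more to connect distinct base points). You instead use the Koopman reformulation from the remark after \cref{def:relativelytoperg}, reduce (a) to the statement that every $f \in \fix(T_\varphi)$ is constant on $q$-fibers, i.e., $K_\pzu \times K_\pzu \subseteq R_{\mathrm{fix}}$ for every $\pzu \in \euG^{(0)}$, and then read off (b) from $R_{\mathrm{fix}} = R_\euE$ together with the observation that a pair in $R_\euE$ lying in a single fiber is automatically realized by an isotropy element $\theta \in \euE_\pzu^\pzu$, since $r(\theta) = q(\theta x) = \pzu$. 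This is more economical: it avoids \cref{lem:factorgroupoid}, the induced action on the base, and all saturation arguments, and it isolates exactly where pseudoisometry is used, namely that compactness of $\euE$ forces its orbit relation to be closed. What the paper's longer route buys is the finer orbit-level information produced along the way --- the formula $q\big(\euE_\uu(K,q,\euG)x\big) = \euE_\uu\big(\euG^{(0)},\id_{\euG^{(0)}},\euG\big)q(x)$ and the saturation criterion --- which is reused elsewhere (cf.\ \cref{topergchar} and its application to relatively invariant measures).
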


The proof follows from the following lemma which provides a more explicit characterization 
of topological ergodicity in terms of orbits of the uniform enveloping groupoid.

\begin{lemma}\label{orbitsellisgroupoid2}
  Let $(K, q, \euG, \phi)$ be a pseudoisometric groupoid action. Then the following assertions hold.
  \begin{enumerate}[(i)]
    \item\label{orbitsellisgroupoid2:part1} The map
      \begin{align*}
        \euE_{\uu}(K,q,\euG) \ltimes K \to \euE_{\uu}(R_\phi), 
        \quad (\theta,x) \mapsto (x, \theta(x))
      \end{align*}
      is a surjective morphism of compact groupoids.
      \item\label{orbitsellisgroupoid2:part2} The orbits of $\euE_\uu(K, q, \euG)$ on $K$ are precisely the equivalence classes
      of $\euE_\uu(R_\phi)$.
      \item\label{orbitsellisgroupoid2:part3} For each $x\in K$
      \begin{align*}
        q\big(\euE_\uu(K, q, \euG)x\big) = \euE_\uu\big(\euG^{(0)}, \id_{\euG^{(0)}}, \euG\big)q(x).
      \end{align*}
      Moreover, $(K, q, \euG)$ is relatively topologically ergodic if and only if for each 
      $x\in K$
      \begin{align*}
        \euE_\uu(K, q, \euG)x = q^{-1}\big(\euE_\uu\big(\euG^{(0)}, \id_{\euG^{(0)}}, \euG\big)q(x)\big),
      \end{align*}
      i.e., if every $\EuScript{E}_\uu(K,q,\euG)$-invariant subset $A \subset K$ is $q$-saturated.
  \end{enumerate}
\end{lemma}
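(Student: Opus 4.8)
Throughout write $\euE \defeq \euE_\uu(K,q,\euG)$, which is a compact groupoid by \cref{prop:pseudo_impl_groupoid} since the action is pseudoisometric; hence the action groupoid $\euE \ltimes K$ is compact, and being a closed equivalence relation, so is $\euE_\uu(R_\phi)$. For part~(i) I would first verify well-definedness, i.e.\ that $(x,\theta x)\in\euE_\uu(R_\phi)$ for every $(\theta,x)\in\euE\ltimes K$. The plan is to consider $\euS' \defeq \{\theta\in\euE \mid (x,\theta x)\in\euE_\uu(R_\phi)\text{ for all }x\in K_{s(\theta)}\}$ and show it is a closed subsemigroupoid of $\euE$ containing $\euS(K,q,\euG)$, so that $\euS'=\euE$. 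It contains the transition maps $\varphi_\pzg$ because $(x,\pzg x)\in R_\phi\subseteq\euE_\uu(R_\phi)$; it is closed under composition by transitivity of the equivalence relation $\euE_\uu(R_\phi)$; and it is closed by the usual net argument via \cref{charconv} and the openness of $q$: given $\theta_\alpha\to\theta$ in $\euS'$ and $x\in K_{s(\theta)}$, pass to a subnet and choose $x_\alpha\to x$ with $q(x_\alpha)=s(\theta_\alpha)$, so that $(x_\alpha,\theta_\alpha x_\alpha)\to(x,\theta x)$ lies in the closed set $\euE_\uu(R_\phi)$. Continuity of $\Psi\colon(\theta,x)\mapsto(x,\theta x)$ is immediate from continuity of the evaluation map in \cref{charconv}, and the morphism property is a direct computation from the product formulas of \cref{def:actiongrpd2} and \cref{ex:pairgroupoid}.

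For surjectivity I would note that the image of $\Psi$ is compact, hence closed in $K\times K$, and is a \emph{full} subgroupoid of the pair groupoid (it contains $\Delta_K$, since each unit $(\id_{K_{q(x)}},x)$ maps to $(x,x)$), i.e.\ a closed equivalence relation on $K$. As it contains $R_\phi$ (realized by the $\varphi_\pzg$), minimality of $\euE_\uu(R_\phi)$ forces the image to contain $\euE_\uu(R_\phi)$; combined with well-definedness this gives equality, proving part~(i). Part~(ii) is then a direct read-off: the orbit $\euE x=\{\theta x\mid \theta\in\euE,\ s(\theta)=q(x)\}$ is exactly the set of second coordinates of those $\Psi(\theta,x)=(x,\theta x)$ with first coordinate $x$, and by surjectivity these are precisely the $y$ with $(x,y)\in\euE_\uu(R_\phi)$, i.e.\ the $\euE_\uu(R_\phi)$-class of $x$.

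For part~(iii), the first identity I would obtain from \cref{lem:factorgroupoid} applied to the extension $q\colon(K,q,\euG)\to(\euG^{(0)},\id_{\euG^{(0)}},\euG)$: the induced factor map $\Phi_q\colon\euE\to\euE_\uu(\euG^{(0)},\id_{\euG^{(0)}},\euG)$ is surjective and preserves source and range, so $q(\euE x)=\{r(\theta)\mid \theta\in\euE,\ s(\theta)=q(x)\}=\{r(\eta)\mid \eta\in\euE_\uu(\euG^{(0)},\id_{\euG^{(0)}},\euG),\ s(\eta)=q(x)\}=\euE_\uu(\euG^{(0)},\id_{\euG^{(0)}},\euG)q(x)$, using part~(ii) for the unit-space action for the last step. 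For the ergodicity characterization I would use \cref{orbitsellisgroupoid1} together with part~(ii) to identify the maximal trivial factors as $\fix(K,q,\euG)=K/\euE_\uu(R_\phi)$ and $\fix(\euG^{(0)},\id_{\euG^{(0)}},\euG)=\euG^{(0)}/\euE_\uu(R_\euG)$, and observe that the canonical map $\alpha$ of \cref{def:relativelytoperg} is the continuous surjection $[x]\mapsto[q(x)]$ induced by $q$ (well defined by the first identity). Since a continuous bijection between compact Hausdorff spaces is a homeomorphism, $\alpha$ is an isomorphism iff it is injective; and injectivity says exactly that $[q(y)]=[q(x)]$ forces $y\in\euE x$, i.e.\ $q^{-1}\big(\euE_\uu(\euG^{(0)},\id_{\euG^{(0)}},\euG)q(x)\big)\subseteq\euE x$. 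As the reverse inclusion is automatic from the first identity, this is the asserted orbit equation. The reformulation via $q$-saturation I would get by noting each orbit $\euE x$ is compact (a continuous image of the compact fiber $s^{-1}(q(x))$), hence closed and $\euE$-invariant: the orbit equation makes every $\euE x=q^{-1}(q(\euE x))$ saturated and arbitrary unions stay saturated, while conversely $q$-saturating the invariant set $\euE x$ recovers the orbit equation.

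The hard part will be part~(i): both well-definedness and surjectivity rest on the net/openness argument through \cref{charconv} and on correctly recognizing the image as a \emph{closed, full} subgroupoid of the pair groupoid, which is what licenses the appeal to the minimality of $\euE_\uu(R_\phi)$. Once part~(i) is in place, parts~(ii) and~(iii) are essentially formal, the only genuine input in~(iii) being \cref{lem:factorgroupoid} for the first identity and the compact-Hausdorff bijection argument for the passage from injectivity of $\alpha$ to the orbit equation.
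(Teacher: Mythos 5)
Your proposal is correct and follows essentially the same route as the paper's proof: the closed-subsemigroupoid argument (your $\euS'$ is the paper's $\euS$) for well-definedness, compactness of the image plus minimality of $\euE_\uu(R_\phi)$ for surjectivity, \cref{lem:factorgroupoid} for the first identity in (iii), and the identification $\fix(K,q,\euG) = K/\euE_\uu(R_\phi)$ from \cref{orbitsellisgroupoid1} together with part (ii) for the ergodicity characterization. The only cosmetic difference is in (iii): you pass through injectivity of $\alpha$ and the orbit equation directly (using that a continuous bijection of compact Hausdorff spaces is a homeomorphism), whereas the paper runs the same argument in terms of sets saturated under $q_{\fix}^K$ and $q_{\fix}^{\euG^{(0)}}\circ q = \alpha\circ q_{\fix}^K$.
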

\begin{proof}
  For \ref{orbitsellisgroupoid2:part1}, notice that the set 
  \begin{align*}
    \euS \defeq \left\{\theta \in \euE_{\uu}(K, q, \euG) \mmid \forall x \in K_{s(\theta)}\colon (x,\theta(x)) \in \euE_{\uu}(R_\phi)\right\}
  \end{align*}
  is a closed subsemigroupoid of $\euE_{\uu}(K,q,\euG)$ that contains $\euS(K,q,\euG)$ and therefore $\euS = \euE_{\uu}(K,q,\euG)$. 
  Clearly, the mapping
  \begin{align*}
      \euE_{\uu}(K,q,\euG) \ltimes K  \to \euE_{\uu}(R_\phi), 
      \quad (\theta,x) \mapsto (x, \theta(x))
  \end{align*}
  is continuous and a morphism of groupoids. Since its image is a compact subsemigroupoid of $\euE_{\uu}(R_\phi)$ that contains 
  $R_\phi$, \ref{orbitsellisgroupoid2:part1} holds. Moreover, \ref{orbitsellisgroupoid2:part2} is a direct consequence of \ref{orbitsellisgroupoid2:part1}.
%
  
  For part \ref{orbitsellisgroupoid2:part3}, use \cref{lem:factorgroupoid} to see that the
  extension 
  \begin{align*}
    q\colon (K, q, \euG) \to (\euG^{(0)}, \id_{\euG^{(0)}}, \euG)
  \end{align*}
  extends to an extension 
  \begin{align*}
   q \colon (K, q, \euE_\uu(K, q, \euG)) \to (\euG^{(0)}, \id_{\euG^{(0)}}, \euE_\uu(\euG^{(0)}, \id_{\euG^{(0)}}, \euG)).
  \end{align*}
  Thus, $q$ maps orbits of $\euE_\uu(K, q, \euG)$ onto orbits of $\euE_\uu(\euG^{(0)}, \id_{\euG^{(0)}}, \euG)$.
  Now, consider the following commutative diagram:
   \begin{align*}
    \xymatrix{
      & (K, q, \euG) \ar[ld]_-{q} \ar[rd]^-{q_{\fix}^K} & \\
      (\euG^{(0)}, \id_{\euG^{(0)}}, \euG) \ar[rd]_-{q_{\fix}^{\euG^{(0)}}}& & \fix(K, q, \euG)\ar[ld]^-{\alpha}\\
      & \fix(\euG^{(0)}, \id_{\euG^{(0)}}, \euG)&
    }
  \end{align*}
  Suppose that $(K, q, \euG)$ is relatively topologically ergodic, i.e., that $\alpha$
  is an isomorphism. Then every $\EuScript{E}_\uu(K,q,\euG)$-invariant subset $A\subset K$ 
  is $q_{\fix}^K$-saturated by \ref{orbitsellisgroupoid2:part2} and since the above diagram
  commutes, it is also saturated with respect to $q_{\fix}^{\euG^{(0)}}\circ q$ and hence with respect to $q$.
  
  Conversely, suppose that every $\EuScript{E}_\uu(K,q,\euG)$-invariant subset $A\subset K$ 
  is $q$-saturated and take such a set $A$. Then 
  \begin{align*}
    q(A) = q(\euE_\uu(K, q, \euG) A) = \euE_\uu(\euG^{(0)}, \id_{\euG^{(0)}}, \euG)q(A).
  \end{align*}
  In other words, $q(A)$ is also saturated with respect to $q_{\fix}^{\euG^{(0)}}$.
  Hence, we conclude that $q_{\fix}^K$ and $q_{\fix}^{\euG^{(0)}}\circ q =\alpha \circ q_{\fix}^K$ have the same saturated
  sets, meaning that $\alpha$ has to be injective. Therefore, $\alpha$ is an isomorphism.
\end{proof}
\begin{proof}[Proof of \cref{prop:transitivefibers}]
  If (a) holds and $x \in K$, then $q^{-1}(q(x))  \subseteq \euE_\uu(K, q, \euG)x$	
  by \cref{orbitsellisgroupoid2} \ref{orbitsellisgroupoid2:part3} which yields 
  $q^{-1}(q(x)) = \euE_\uu(K, q, \euG)_{q(x)}^{q(x)}x$.
  
  Now assume that (b) holds. We take $x \in K$ and $y \in q^{-1}\big(\euE_\uu\big(\euG^{(0)}, \id_{\euG^{(0)}},\euG\big)x\big)$. 
  By \cref{lem:factorgroupoid} we find $\vartheta \in \euE_\uu(K, q, \euG)$ with $s(\vartheta) = q(x)$ and $r(\vartheta) = q(y)$. 
  But then we can apply (b) to find $\varrho \in \euE_\uu(K, q, \euG)$ with $s(\varrho) = r(\varrho) = q(y)$ 
  and $\vartheta(\varrho(x)) = y$. This shows $y = (\vartheta \circ \varrho)(x) \in \euE_\uu(K, q, \euG)x$ and 
  therefore (a) by \cref{orbitsellisgroupoid2} \ref{orbitsellisgroupoid2:part3}.
\end{proof}
%
%
%

\begin{corollary}\label{topergchar}
  For a pseudoisometric groupoid action $(K,q,\euG)$ the following assertions are equivalent.
    \begin{enumerate}[(a)]
      \item $(K, q, \euG)$ is topologically ergodic.
      \item $\euG$ is topologically ergodic and $(K,q,\euG)$ is relatively topologically ergodic.
      \item $(K,q,\euE_\uu(K,q,\euG))$ is transitive.
    \end{enumerate}
\end{corollary}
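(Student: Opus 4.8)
The plan is to establish the two equivalences (a) $\Leftrightarrow$ (b) and (b) $\Leftrightarrow$ (c) separately. For (a) $\Leftrightarrow$ (b) I would work entirely on the level of fixed spaces, invoking the characterizations obtained above: $(K,q,\euG)$ is topologically ergodic exactly when $\fix(T_\varphi)$ is one-dimensional, $\euG$ is topologically ergodic exactly when $\fix(T_\euG)$ is one-dimensional, and $(K,q,\euG)$ is relatively topologically ergodic exactly when the pullback $T_q|_{\fix(T_\euG)}\colon\fix(T_\euG)\to\fix(T_\varphi)$, $f\mapsto f\circ q$, is bijective. Since $q$ is surjective, $T_q$ is injective, so $T_q|_{\fix(T_\euG)}$ is always an injection between these two fixed spaces, both of which are at least one-dimensional as they contain the constants (which $T_q$ carries to constants). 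If (b) holds, then $\fix(T_\varphi)\cong\fix(T_\euG)$ is one-dimensional, giving (a). Conversely, if (a) holds, injectivity of $T_q|_{\fix(T_\euG)}$ into the one-dimensional space $\fix(T_\varphi)$ forces $\dim\fix(T_\euG)\le 1$, while the constants give $\dim\fix(T_\euG)\ge 1$; hence $\euG$ is topologically ergodic, and $T_q|_{\fix(T_\euG)}$ is then an injection between one-dimensional spaces, so it is bijective, yielding relative topological ergodicity and thus (b). I note that this equivalence does not require pseudoisometry at all.

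For (b) $\Leftrightarrow$ (c) I would use the decomposition of transitivity in \cref{rem:transitive}: the action $(K,q,\euE_\uu(K,q,\euG))$ is transitive if and only if the groupoid $\euE_\uu(K,q,\euG)$ is transitive and the action is fiberwise transitive. Because $(K,q,\euG)$ is pseudoisometric, $\euE_\uu(K,q,\euG)$ is a compact groupoid by \cref{prop:pseudo_impl_groupoid}, so \cref{cor:transviaunitspace} applies: $\euE_\uu(K,q,\euG)$ is transitive if and only if $\euE_\uu(\euG^{(0)},\id_{\euG^{(0)}},\euG)$ is, which by \cref{chartrans} is equivalent to $\euG$ being topologically ergodic. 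On the other hand, \cref{prop:transitivefibers} shows that $(K,q,\euE_\uu(K,q,\euG))$ is fiberwise transitive if and only if $(K,q,\euG)$ is relatively topologically ergodic. Combining these two statements, (c) is equivalent to the conjunction ``$\euG$ topologically ergodic'' and ``$(K,q,\euG)$ relatively topologically ergodic'', which is exactly (b).

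Most of the work is thus already carried by the earlier results, and the proof amounts to assembling them. The only routine point to check is that $(K,q,\euE_\uu(K,q,\euG))$ is a bona fide groupoid action with unit space identified with $\euG^{(0)}$, so that \cref{rem:transitive} is genuinely applicable; this is implicit in \cref{orbitsellisgroupoid2}. The step that I expect to need the most care is the direction (a) $\Rightarrow$ (b), since there one must extract both the one-dimensionality of $\fix(T_\euG)$ and the surjectivity of $T_q|_{\fix(T_\euG)}$ from the single hypothesis on $\fix(T_\varphi)$; the key is that $T_q$ restricts to an injection of fixed spaces sending constants to constants, which is where surjectivity of $q$ enters. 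With these observations the chain (a) $\Leftrightarrow$ (b) $\Leftrightarrow$ (c) closes.
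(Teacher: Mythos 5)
Your proof is correct, and its second half --- the equivalence (b) $\Leftrightarrow$ (c) --- is exactly the assembly the paper intends: compactness of $\euE_\uu(K,q,\euG)$ from \cref{prop:pseudo_impl_groupoid}, the splitting of transitivity of a groupoid action into groupoid transitivity plus fiberwise transitivity from \cref{rem:transitive}, and then \cref{chartrans} (via \cref{cor:transviaunitspace}) and \cref{prop:transitivefibers} to convert these two halves into the two conditions of (b). Where you genuinely diverge is in how (a) is tied in. The paper's lemmas point to the direct link (a) $\Leftrightarrow$ (c): by \cref{orbitsellisgroupoid1}, topological ergodicity of $(K,q,\euG)$ means that the maximal trivial factor $K/\euE_\uu(R_\phi)$ is a point, i.e.\ $\euE_\uu(R_\phi) = K \times K$, and by \cref{orbitsellisgroupoid2} the orbits of $\euE_\uu(K,q,\euG)$ on $K$ are precisely the classes of $\euE_\uu(R_\phi)$, so (a) holds if and only if there is a single orbit, i.e.\ if and only if (c) holds. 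You instead prove (a) $\Leftrightarrow$ (b) on the level of Koopman fixed spaces, using the characterizations of the three ergodicity notions via one-dimensionality of $\fix(T_\varphi)$ and $\fix(T_\euG)$ and bijectivity of $T_q|_{\fix(T_\euG)}$, together with injectivity of $T_q$ and the fact that both fixed spaces contain the constants. This is equally valid, and it buys something the orbit route leaves implicit: the equivalence (a) $\Leftrightarrow$ (b) holds for arbitrary groupoid actions, pseudoisometry being needed only to bring the enveloping groupoid, hence (c), into play. Conversely, the orbit route stays entirely within the dynamical picture and connects (a) to (c) without passing through (b). Your closing remark --- that one must check $(K,q,\euE_\uu(K,q,\euG))$ is a bona fide groupoid action with unit space identified with $\euG^{(0)}$ --- is the right point of care; this is indeed implicit in the paper's own use of that action in \cref{prop:transitivefibers} and \cref{RIM}.
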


For the groupoid actions satisfying the equivalent conditions of \cref{topergchar}, we now prove 
the existence and uniqueness of relatively invariant measures.

\begin{theorem}\label{RIM}
  Let $(K,q,\euG)$ be a pseudoisometric and topologically ergodic groupoid action. Then there is a 
  unique relatively invariant measure for $(K,q,\euG)$. Moreover, this relatively invariant measure 
  is fully supported.
\end{theorem}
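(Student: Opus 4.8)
The plan is to deduce the statement from \cref{meangroupoidaction} by passing to the uniform enveloping groupoid $\euE_\uu(K, q, \euG)$ and then identifying the relatively invariant measures of the two actions. First I would verify that $\euE_\uu(K, q, \euG)$ is a \emph{compact transitive} groupoid whose action on $K$ is \emph{transitive}. Since $\euG$ is topologically ergodic and $(K, q, \euG)$ is pseudoisometric, \cref{thm:pseudoisochar} shows that $\euE_\uu(K, q, \euG)$ is a compact groupoid, and \cref{chartrans} then shows that it is transitive. Moreover, \cref{topergchar} (applied to the pseudoisometric action $(K, q, \euG)$, which is topologically ergodic by hypothesis) shows that the induced action $(K, q, \euE_\uu(K, q, \euG))$ is transitive. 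Consequently \cref{meangroupoidaction}~\ref{item:meangroupoidaction2} provides a unique relatively invariant measure $\mu \colon \euG^{(0)} \to \uP(K)$ for $(K, q, \euE_\uu(K, q, \euG))$, and this measure is fully supported. Here I use that the unit space of $\euE_\uu(K, q, \euG)$ is canonically identified with $\euG^{(0)}$, its units being the identity maps $\id_{K_\pzu}$.

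It then remains to show that the relatively invariant measures of $(K, q, \euG)$ are exactly those of $(K, q, \euE_\uu(K, q, \euG))$. Since $\euS(K, q, \euG) \subset \euE_\uu(K, q, \euG)$ and both actions share $q$, hence the condition $q_*\mu_\pzu = \delta_\pzu$, every relatively invariant measure for the enveloping action is one for $(K, q, \euG)$. For the converse, given a relatively invariant measure $\mu$ for $(K, q, \euG)$, I would consider
\[
  \euS' \defeq \left\{\theta \in \euE_\uu(K, q, \euG) \mmid \theta^*\mu_{r(\theta)} = \mu_{s(\theta)}\right\}.
\]
A direct computation with the cocycle identity shows that $\euS'$ is a subsemigroupoid, and it contains $\euS(K, q, \euG)$ precisely because $\mu$ is relatively invariant for $\euG$. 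Thus, by the definition of the uniform enveloping semigroupoid, it suffices to prove that $\euS'$ is closed, whence $\euS' = \euE_\uu(K, q, \euG)$ and $\mu$ is invariant for the enveloping action.

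The closedness of $\euS'$ is the main obstacle, since it involves interchanging a limit with integration across varying fibers. For a net $(\theta_\alpha)_{\alpha \in A}$ in $\euS'$ converging to $\theta \in \euE_\uu(K, q, \euG)$ and any $f \in \uC(K)$, the invariance relation reads
\[
  \int_{K_{s(\theta_\alpha)}} f \circ \theta_\alpha \, \mathrm{d}\mu_{s(\theta_\alpha)}
  = \int_{K_{r(\theta_\alpha)}} f \, \mathrm{d}\mu_{r(\theta_\alpha)}
\]
for every $\alpha$. Passing to the limit, the right-hand side converges to $\int_{K_{r(\theta)}} f \, \mathrm{d}\mu_{r(\theta)}$ because $r$ is continuous on the compact groupoid $\euE_\uu(K, q, \euG)$ and $\mu$ is continuous into $\uP(K)$. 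For the left-hand side, composition with $f$ is continuous for the compact-open topology, so $f \circ \theta_\alpha \to f \circ \theta$ in $\uC_q(K)$, and \cref{integralconvscalar} (together with the convergence characterization \cref{charconv} underlying it) yields convergence to $\int_{K_{s(\theta)}} f \circ \theta \, \mathrm{d}\mu_{s(\theta)}$. Hence $\theta \in \euS'$, proving closedness. The existence, uniqueness, and full support of the relatively invariant measure for $(K, q, \euG)$ then follow directly from the corresponding properties of $\mu$ established above via \cref{meangroupoidaction}.
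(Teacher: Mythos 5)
Your proposal is correct and follows essentially the same route as the paper: existence via \cref{topergchar} and \cref{meangroupoidaction} applied to the compact transitive groupoid $\euE_\uu(K,q,\euG)$, and uniqueness by showing that every relatively invariant measure for $(K,q,\euG)$ is one for the enveloping action. Your closed-subsemigroupoid argument for the set $\euS'$, using \cref{integralconvscalar} to pass the limit through the fiber integrals, is precisely the paper's \cref{riminv}.
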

\begin{proof}
  The existence of a fully supported relatively invariant measure follows by combining 
  \cref{topergchar} and \cref{meangroupoidaction}. To establish uniqueness, we need to know 
  that any relatively invariant measure for $(K, q, \euG)$ also is a relatively invariant 
  measure for $(K, q, \euE_\uu(K, q, \euG))$ to apply \cref{meangroupoidaction} again. 
  This is done in \cref{riminv} below. 
\end{proof}

\begin{lemma}\label{riminv}
  Let $(K,q,\euG)$ be a groupoid action with relatively invariant measure $\mu$. 
  Then $\mu$ also is a relatively invariant measure for the groupoid action
  $(K, q, \euE_\uu(K, q, \euG))$.
\end{lemma}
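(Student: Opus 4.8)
The plan is to use the characterization of $\euE_\uu(K,q,\euG)$ as the smallest closed subsemigroupoid of $\uC_q^q(K,K)$ containing the transition semigroupoid $\euS(K,q,\euG)$: any property that holds on $\euS(K,q,\euG)$, is stable under composition, and survives compact-open limits must hold on all of $\euE_\uu(K,q,\euG)$. Concretely, I would introduce
\[
  \euS_0 \defeq \left\{\theta \in \euE_\uu(K,q,\euG) \mmid \theta_*\mu_{s(\theta)} = \mu_{r(\theta)}\right\},
\]
recalling that relative invariance of $\mu$ for $(K,q,\euG)$ says precisely that $(\varphi_\pzg)_*\mu_{s(\pzg)} = \mu_{r(\pzg)}$ for every $\pzg \in \euG$, so that $\euS(K,q,\euG) \subset \euS_0$. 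The condition $q_*\mu_\pzu = \delta_\pzu$ does not involve the groupoid and so continues to hold verbatim; hence it only remains to prove $\euS_0 = \euE_\uu(K,q,\euG)$. That $\euS_0$ is a subsemigroupoid is immediate: for composable $\theta,\eta \in \euS_0$ (so $s(\theta) = r(\eta)$) one computes $(\theta\eta)_*\mu_{s(\eta)} = \theta_*(\eta_*\mu_{s(\eta)}) = \theta_*\mu_{r(\eta)} = \theta_*\mu_{s(\theta)} = \mu_{r(\theta)} = \mu_{r(\theta\eta)}$.

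It therefore suffices to show that $\euS_0$ is closed. So I would take a net $(\theta_\alpha)_{\alpha\in A}$ in $\euS_0$ converging in the compact-open topology to some $\theta \in \euE_\uu(K,q,\euG)$, fix an arbitrary $F \in \uC(K)$, and aim to prove $\langle F, \mu_{r(\theta)}\rangle = \langle F \circ \theta, \mu_{s(\theta)}\rangle$; since $F$ is arbitrary and $\mu_{s(\theta)}$ is carried by $K_{s(\theta)}$, this identity forces $\theta_*\mu_{s(\theta)} = \mu_{r(\theta)}$, i.e.\ $\theta \in \euS_0$. For each $\alpha$ we have $\langle F, \mu_{r(\theta_\alpha)}\rangle = \langle F \circ \theta_\alpha, \mu_{s(\theta_\alpha)}\rangle$, so the whole matter reduces to passing to the limit on both sides.

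For the right-hand side, the functions $F\circ\theta_\alpha$, viewed as elements of the Banach bundle $\uC_q(K)$, converge to $F\circ\theta$ in the compact-open topology: one has $s(F\circ\theta_\alpha) = s(\theta_\alpha) \to s(\theta)$ by \cref{charconv}, and for any subnet together with any net $x_\beta \to x$ satisfying $q(x_\beta) = s(\theta_\beta)$ one gets $F(\theta_\beta(x_\beta)) \to F(\theta(x))$ from \cref{charconv} and the continuity of $F$. Hence \cref{integralconvscalar} gives $\langle F \circ \theta_\alpha, \mu_{s(\theta_\alpha)}\rangle \to \langle F \circ \theta, \mu_{s(\theta)}\rangle$. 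For the left-hand side I would use $r(\theta_\alpha) \to r(\theta)$ (proved below) together with the continuity of $\mu$ into $\uP(K)$ with its weak$^*$ topology to conclude $\langle F, \mu_{r(\theta_\alpha)}\rangle \to \langle F, \mu_{r(\theta)}\rangle$; combining the two limits yields the desired identity.

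The main obstacle is establishing $r(\theta_\alpha) \to r(\theta)$, because on a possibly non-compact enveloping semigroupoid the range map need not be continuous. I would argue this via subnets using the openness of $q$: given any subnet, \cref{charconv} yields $s(\theta_\gamma) \to s(\theta)$, and the open-map characterization recorded in \cref{rem:quotientmapopenchar} lets me pass to a further subnet carrying a net $x_\delta \to x \in K_{s(\theta)}$ with $q(x_\delta) = s(\theta_\delta)$; then $\theta_\delta(x_\delta) \to \theta(x)$ by \cref{charconv}, whence $r(\theta_\delta) = q(\theta_\delta(x_\delta)) \to q(\theta(x)) = r(\theta)$. Since every subnet of $(r(\theta_\alpha))_{\alpha\in A}$ thus admits a further subnet converging to $r(\theta)$ and $\euG^{(0)}$ is compact Hausdorff, the whole net satisfies $r(\theta_\alpha) \to r(\theta)$. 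This shows $\theta \in \euS_0$, so $\euS_0$ is closed, hence $\euS_0 = \euE_\uu(K,q,\euG)$, and $\mu$ is a relatively invariant measure for $(K,q,\euE_\uu(K,q,\euG))$.
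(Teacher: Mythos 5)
Your proof is correct and follows essentially the same route as the paper: both define the set of $\theta \in \euE_\uu(K,q,\euG)$ with $\theta_*\mu_{s(\theta)} = \mu_{r(\theta)}$, observe that it is a subsemigroupoid containing $\euS(K,q,\euG)$, and prove closedness by combining \cref{charconv} with \cref{integralconvscalar}. You even supply one detail the paper leaves implicit, namely the verification that $r(\theta_\alpha) \to r(\theta)$ (needed for $\langle F, \mu_{r(\theta_\alpha)}\rangle \to \langle F, \mu_{r(\theta)}\rangle$), which you settle correctly by the subnet argument based on the openness of $q$.
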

\begin{proof}
  We need to show that $\theta_*\mu_{s(\theta)}= \mu_{r(\theta)}$ for every $\theta \in \euE_{\uu}(K,q,\euG)$.
  The set
  \begin{align*}
    \euS \defeq \left\{\theta \in \euE_{\uu}(K,q,\euG) \mmid \theta_*\mu_{s(\theta)}= \mu_{r(\theta)} \right\}
  \end{align*}
  is a subsemigroupoid $\euE_{\uu}(K,q,\euG)$ that contains $\euS(K,q,\euG)$. We only have to check that it is 
  closed. If $(\theta_\alpha)_{\alpha \in A}$ is a net in $\euS$ converging to 
  $\theta \in \euE_{\uu}(K,q,\euG)$ and $f \in \uC(K)$, then $\lim_\alpha T_{\theta_\alpha} f = T_\theta f$ 
  in $\uC_q(K)$ and therefore 
  $\lim_\alpha \langle T_{\theta_\alpha}f, \mu_{s(\theta_\alpha)}\rangle = \langle T_{\theta}f, \mu_{s(\theta)}\rangle$ 
  by \cref{integralconvscalar}. Thus,
  \begin{align*}
    \left\langle f, \theta_*\mu_{s(\theta)} \right\rangle 
    &= \lim_{\alpha} \left\langle T_{\theta}f, \mu_{s(\theta)}\right\rangle 
    = \lim_{\alpha} \left\langle f, (\theta_\alpha)_* \mu_{s(\theta_\alpha)} \right\rangle 
    = \lim_{\alpha} \left\langle f, \mu_{r(\theta_\alpha)} \right\rangle 
    = \left\langle f, \mu_{r(\theta)}\right\rangle.
  \end{align*}
  This shows that $\theta \in \euS$ and so $\euE_{\uu}(K,q,\euG) = \euS$.
\end{proof}

\section{Fourier analysis}
\label{section:fourier}

The classical Peter--Weyl theorem allows, given a compact group $G$ with its Haar measure $m$,
to decompose the Hilbert space $\mathrm{L}^2(G,m)$ into a canonical orthogonal sum of finite-dimensional 
$G$-invariant subspaces. These subspaces and the projections onto them are defined by means of 
the unitary dual $\hat{G}$ which consists of equivalence classes $[\pi]$ of irreducible unitary 
representations $\pi$ of $G$.
As we recall in \cref{transitivegrpaction} below, this Fourier-analytic result can 
easily be extended to a transitive action $(K, G)$ of a 
compact group $G$, allowing to similarly decompose the space $\uL^2(K, \mu)$ where 
$\mu$ denotes the unique invariant probability measure on $K$ obtained as the pushforward of the 
Haar measure $m$. The goal of this section is to generalize this to a Fourier
analysis result for actions of compact transitive groupoids which is of interest
on its own but will also be applied to uniform enveloping semigroupoids in \cref{sec:applications}.

To understand the situation for a transitive action $(K, G)$ of a compact group,
let $\mu$ denote the above-mentioned unique $G$-invariant probability measure on $K$. In order
to obtain prospective projection operators on $\uL^2(K, \mu)$, define for 
$f \in \mathrm{L}^2(K,\mu)$,  $[\pi] \in \hat{G}$, and $\mu$-a.e.\ $x \in K$ 
\begin{align*}
  (P_{[\pi]} f) (x) \defeq  \mathrm{dim}([\pi]) \int_G  \tr([\pi])(g) f(g^{-1}x) \,\mathrm{d}m(g).
\end{align*}
Here, $\dim([\pi])$ and $\tr([\pi])$ denote
the dimension and trace of $[\pi]$, respectively. With these definitions, one 
obtains the following easy consequence of the Peter--Weyl theorem.

\begin{theorem}\label{transitivegrpaction}
	Let $(K, G)$ be a transitive action of a compact group $G$ and $\mu$ its 
	unique invariant probability measure. Then the following assertions hold. 
	\begin{enumerate}[(i)]
			\item $P_{[\pi]} \in \mathscr{L}(\mathrm{L}^2(K,\mu))$ is an orthogonal projection for every 
			$[\pi] \in \hat{G}$.
			\item The ranges $\rg(P_{[\pi]})$ for $[\pi] \in \hat{G}$ are finite-dimensional, 
			invariant subspaces of $\mathrm{C}(K)$ and are pairwise orthogonal in $\mathrm{L}^2(K,\mu)$.  
			\item\label{item:transitivegrpaction_iii} For every $f \in \mathrm{L}^2(K,\mu)$ and $[\pi] \in \hat{G}$
				\begin{align*}
					\left\| P_{[\pi]} f \right\|_{\mathrm{C}(K)} \leq \|f\|_{\mathrm{L}^2(K,\mu)}.
				\end{align*}
			\item\label{item:transitivegrpaction_iv} Each $f \in \mathrm{C}(K)$ is contained in
				\begin{align*}
					\overline{\lin \left\{P_{[\pi]}f \mmid [\pi] \in \hat{G}\right\}}^{\|\cdot\|_{\uC(K)}} \subset \mathrm{C}(K).
				\end{align*}
			\item For each $f \in \mathrm{L}^2(K,\mu)$ we have
				\begin{align*}
					(f \mid f) = \sum_{[\pi] \in \hat{G}} \left( P_{[\pi]} f \mmid P_{[\pi]} f \right).
				\end{align*}
			\item Each $f \in  \mathrm{L}^2(K,\mu)$ can be decomposed into a series
					\begin{align*}
						f = \sum_{[\pi] \in \hat{G}} P_{[\pi]} f
					\end{align*}
				converging in $\mathrm{L}^2(K,\mu)$.
		\end{enumerate}
\end{theorem}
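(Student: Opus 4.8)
The plan is to fix a base point $x_0\in K$ with stabiliser $H$, to realise $(K,G)$ as the homogeneous space $G/H$ with $\mu=(\rho_{x_0})_*m$ the pushforward of Haar measure under $\rho_{x_0}\colon G\to K$, $g\mapsto gx_0$, and to recognise the $P_{[\pi]}$ as the isotypic (Peter--Weyl) projections of the \emph{Koopman representation}
\begin{align*}
U\colon G\to\mathscr{L}(\mathrm{L}^2(K,\mu)),\quad (U_gf)(x)\defeq f(g^{-1}x).
\end{align*}
First I would check that $U$ is a strongly continuous unitary representation: unitarity is the $G$-invariance of $\mu$, and strong continuity follows from joint continuity of the action together with density of $\uC(K)$ in $\mathrm{L}^2(K,\mu)$ and equicontinuity of the compact-group action on the compact space $K$. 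Writing $\chi_{[\pi]}\defeq\tr([\pi])$, the defining formula reads $P_{[\pi]}=\dim([\pi])\int_G\chi_{[\pi]}(g)\,U_g\,\mathrm{d}m(g)$, so $P_{[\pi]}$ is, up to passing to the contragredient representation, exactly the standard Peter--Weyl projection $\dim([\pi])\int_G\overline{\chi_{[\pi]}(g)}U_g\,\mathrm{d}m(g)$. Assertion (i) and the invariance and pairwise orthogonality in (ii) are then the classical facts that these operators are self-adjoint idempotents commuting with every $U_h$ (via $U_g^*=U_{g^{-1}}$, the identity $\chi_{[\pi]}(g^{-1})=\overline{\chi_{[\pi]}(g)}$, centrality of $\chi_{[\pi]}$, and the convolution relation $\chi_{[\pi]}*\chi_{[\sigma]}=\delta_{[\pi][\sigma]}\dim([\pi])^{-1}\chi_{[\pi]}$), which I would quote from the Peter--Weyl theorem.

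The genuinely new content is the passage from $\mathrm{L}^2$ to $\uC(K)$ in (ii) and (iii). Substituting $x=sx_0$ and $u=g^{-1}s$ turns the defining integral into
\begin{align*}
(P_{[\pi]}f)(sx_0)=\dim([\pi])\int_G\chi_{[\pi]}(su^{-1})\,f(ux_0)\,\mathrm{d}m(u),
\end{align*}
i.e.\ $P_{[\pi]}f$ is integration against a continuous kernel on $K\times K$ (obtained by averaging $\chi_{[\pi]}$ over the stabiliser $H$, which makes it independent of coset representatives). Continuity of this kernel gives $\rg(P_{[\pi]})\subset\uC(K)$; since a continuous kernel is Hilbert--Schmidt, $P_{[\pi]}$ is compact, and a compact orthogonal projection has finite rank, yielding the finite-dimensionality in (ii). For the sup-norm bound (iii) I would apply Cauchy--Schwarz to the displayed integral, using $\|f(\cdot\,x_0)\|_{\mathrm{L}^2(G,m)}=\|f\|_{\mathrm{L}^2(K,\mu)}$ and controlling the $\mathrm{L}^2(G)$-norm of $u\mapsto\chi_{[\pi]}(su^{-1})$ by the character orthogonality relation $\int_G|\chi_{[\pi]}|^2\,\mathrm{d}m=1$; this is the step where the orthogonality relations do the work and produce the stated estimate.

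For the uniform approximation (iv) the key device is \emph{central mollifiers}. For $f\in\uC(K)$, equicontinuity of the action gives $\|U_gf-f\|_{\uC(K)}\to0$ as $g\to e$, so choosing a nonnegative central $\varphi\in\uC(G)$ with $\int_G\varphi\,\mathrm{d}m=1$ supported in an arbitrarily small conjugation-invariant neighbourhood of $e$ (such neighbourhoods exist since conjugation fixes $e$ and $G$ is compact), the average $U_\varphi f\defeq\int_G\varphi(g)U_gf\,\mathrm{d}m(g)$ approximates $f$ uniformly. As $\varphi$ is a class function, Peter--Weyl expands it in $\mathrm{L}^2(G)$ as $\varphi=\sum_{[\pi]}a_{[\pi]}\chi_{[\pi]}$, and because $\psi\mapsto U_\psi f$ is bounded from $\mathrm{L}^2(G,m)$ into $\uC(K)$ (the same Cauchy--Schwarz estimate), I obtain $U_\varphi f=\sum_{[\pi]}a_{[\pi]}\dim([\pi])^{-1}P_{[\pi]}f$ with convergence in $\uC(K)$. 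Hence each $U_\varphi f$, and therefore $f$ itself, lies in the $\uC(K)$-closure of $\lin\{P_{[\pi]}f\mid[\pi]\in\hat G\}$, which is (iv). Finally (iv) gives completeness of $\{\rg(P_{[\pi]})\}$: if $P_{[\pi]}f=0$ for all $[\pi]$ and $f\in\uC(K)$ then $f\in\overline{\{0\}}=\{0\}$, and $\uC(K)$ is dense in $\mathrm{L}^2(K,\mu)$; together with (i) and the orthogonality from (ii) this is the decomposition $\mathrm{L}^2(K,\mu)=\bigoplus_{[\pi]}\rg(P_{[\pi]})$, whence Parseval (v) and the $\mathrm{L}^2$-convergent expansion (vi) follow at once. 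The main obstacle is exactly this $\mathrm{L}^2$-to-$\uC(K)$ transfer: proving the sup-norm estimate (iii) and, above all, the \emph{uniform} (not merely $\mathrm{L}^2$) convergence in (iv), for which the central-mollifier/Fej\'er argument is essential, since naive partial sums of the Fourier expansion need not converge uniformly.
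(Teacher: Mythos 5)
Your route is genuinely different from the paper's, and most of it is sound. The paper proves boundedness of $P_{[\pi]}$ and the commutation $P_{[\pi]}T_g = T_gP_{[\pi]}$ directly (via centrality of $\tr([\pi])$), and then disposes of everything else by reduction to the left-translation action of $G$ on itself: the orbit map $\varrho_x \colon G \to K$ induces an isometric embedding $T_{\varrho_x}\colon \mathrm{L}^2(K,\mu) \to \mathrm{L}^2(G,m)$ intertwining $P_{[\pi]}$ with the corresponding projections $Q_{[\pi]}$ on $\mathrm{L}^2(G,m)$, and the assertions for $K=G$ are quoted from classical Fourier analysis of compact groups. You instead work directly on $K \cong G/H$: you get the containment $\rg(P_{[\pi]}) \subset \mathrm{C}(K)$ and finite-dimensionality from a continuous (hence Hilbert--Schmidt, hence compact) kernel, and---most substantially---you actually \emph{prove} the uniform approximation (iv) by a central-mollifier (Fej\'er-type) argument, where the paper only cites the literature. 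That argument is correct: conjugation-invariant neighbourhoods of $e$ form a neighbourhood base by compactness, the conjugation-average of a bump function is a central approximate identity, class functions lie in the $\mathrm{L}^2(G,m)$-closed span of the characters, and the bound $\|U_\psi f\|_{\mathrm{C}(K)} \leq \|\psi\|_{\mathrm{L}^2(G,m)}\|f\|_{\mathrm{L}^2(K,\mu)}$ legitimately upgrades $\mathrm{L}^2$-convergence of $\sum_{[\pi]} a_{[\pi]}\chi_{[\pi]}$ to uniform convergence of $\sum_{[\pi]} a_{[\pi]}\dim([\pi])^{-1}P_{[\pi]}f$. The deduction of (v) and (vi) from (iv), orthogonality, and density of $\mathrm{C}(K)$ in $\mathrm{L}^2(K,\mu)$ is standard Hilbert space theory. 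So your proposal is a more self-contained proof of (i), (ii), (iv), (v), (vi) than the paper's.

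There is, however, a genuine gap at (iii). Cauchy--Schwarz together with $\int_G |\chi_{[\pi]}|^2\,\mathrm{d}m = 1$ does \emph{not} ``produce the stated estimate'': because the defining formula carries the prefactor $\dim([\pi])$, your computation yields only
\begin{align*}
  \left\|P_{[\pi]}f\right\|_{\mathrm{C}(K)} \leq \dim([\pi])\,\|f\|_{\mathrm{L}^2(K,\mu)},
\end{align*}
and this dimension factor cannot be removed by any argument, since the constant-one bound is false for nonabelian $G$. Indeed, take $K = G = \mathrm{SU}(2)$ with the left-translation action (so $\mu = m$) and $f = \tr([\pi])$ for the two-dimensional irreducible representation $\pi$: the Schur orthogonality relation $\chi_{[\pi]} \ast \chi_{[\pi]} = \dim([\pi])^{-1}\chi_{[\pi]}$ gives $P_{[\pi]}f = f$, while $\|f\|_{\mathrm{C}(K)} = \dim([\pi]) = 2$ and $\|f\|_{\mathrm{L}^2(K,\mu)} = 1$. (The defect is thus in the statement itself rather than in something you could repair: the paper's own reduction to the group case cannot deliver the constant $1$ either, and the dimension-weighted bound---which is sharp---is what is true and what the later applications in the paper actually use, the relevant dimension being constant there.) You should therefore either state (iii) with the factor $\dim([\pi])$ or restrict it to abelian $G$; note that your proof of (iv) is unaffected, since the estimate it relies on, $\|U_\psi f\|_{\mathrm{C}(K)} \leq \|\psi\|_{\mathrm{L}^2(G,m)}\|f\|_{\mathrm{L}^2(K,\mu)}$, contains no dimension prefactor.
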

\begin{proof}
  A simple application of the Cauchy-Schwarz inequality and Fubini's theorem shows that 
  $P_{[\pi]} \in \mathscr{L}(\mathrm{L}^2(K,\mu))$ for every $[\pi] \in \hat{G}$. 
  Moreover, if $g \in G$ and $T_g \in \mathscr{L}(\mathrm{L}^2(K,\mu))$ is the Koopman operator 
  defined by $T_gf(x) \defeq f(g^{-1}x)$ for $x \in K$ and $f \in \mathrm{L}^2(K,\mu)$, 
  then $P_{[\pi]}T_g = T_g P_{[\pi]}$ since the trace of a representation
  is constant on conjugacy classes. In particular, $\rg(P_{[\pi]})$ is 
  $T_g$-invariant for every $g \in G$.

	Now, note that by Fourier analysis of compact groups 
	(see, e.g., \cite[Sections 5.2 and 5.3]{Foll2016}), the remaining assertions are 
	clear if $(K, G)$ is given by multiplication from the left on $K = G$. 
	In this case, we denote the projections by $Q_{[\pi]}$ for $[\pi] \in \hat{G}$.
  If $(K,G)$ is a general transitive action of $G$, we fix $x \in K$.  The orbit map
  \begin{align*}
    \varrho_x \colon G \to K, \quad g \mapsto gx
  \end{align*}
	then is a continuous surjection. It induces an isometry 
	$T_{\varrho_x}\in \mathscr{L}(\mathrm{C}(K), \mathrm{C}(G))$ which then extends to an isometric embedding 
	$T_{\varrho_x} \in \mathscr{L}(\mathrm{L}^2(K,\mu),\mathrm{L}^2(G,m))$. Since 
	$T_{\varrho_x}P_{[\pi]} = Q_{[\pi]}T_{\varrho_x}$ for every $[\pi] \in \hat{G}$, 
	the statements now readily extend to the general situation.
\end{proof}

In order to prove a version of \cref{transitivegrpaction} for 
transitive actions of compact groupoids, it is necessary to replace the 
unique invariant probability measure with the unique relatively invariant measure of 
\cref{meangroupoidaction}, and the induced Hilbert space with the space of 
continuous sections of a (continuous) Hilbert bundle.

\begin{definition}\label{hilbertbundle1}
	Let $q\colon K \to L$ be an open continuous surjection between compact spaces and 
	$\mu \colon L \to \mathrm{P}(K),\, l \mapsto \mu_l$ a weak*-continuous mapping with 
	$q_*\mu_l = \delta_l$ and $\supp \mu_l = K_l$ for every $l \in L$. We consider the 
	Banach bundle defined by 
		\begin{align*}
			\mathrm{L}^2_q(K,\mu) \defeq \bigcup_{l \in L} \mathrm{L}^2 \left(K_l,\mu_l\right)
		\end{align*}
	with the canonical mapping $p \colon \mathrm{L}^2_q(K,\mu) \to L$ and the topology defined by the sets
		\begin{align*}
			V(F,U,\epsilon) \defeq \left\{f \in \mathrm{L}^2_q(K,\mu) \mmid p(f) \in U, \|f-F|_{K_{p(f)}}\|_{\mathrm{L}^2(K_l,\mu_l)} < \epsilon\right\}
		\end{align*}
	for $F \in \mathrm{C}(K)$, $U \subset L$ open, and $\epsilon > 0$.
\end{definition}

\begin{remark}\label{sectionsHilbertbundle}
	In the situation of \cref{hilbertbundle1}, it is standard to check that $\mathrm{L}^2_q(K,\mu)$ 
	endowed with the natural norm mapping
	is a continuous Banach bundle. In fact, it is even a \emph{Hilbert bundle}, i.e., the map
		\begin{align*}
			(\cdot \mid \cdot) \colon \mathrm{L}^2_q(K,\mu) \times_{L} \mathrm{L}^2_q(K,\mu) \to \C, \quad 
			(f_1,f_2) \mapsto (f_1 \mid f_2)_{\mu_{p(f_1)}} \defeq \int f_1\overline{f_2} \dmu_{p(f_1)}
		\end{align*}
	is continuous. Its space of sections $\Gamma(\mathrm{L}_q^2(K,\mu))$ equipped with the 
	\enquote{vector-valued inner product}
		\begin{align*}
			(\cdot \mid \cdot)_{\mu} \colon \Gamma(\mathrm{L}^2(K,\mu)) \to \mathrm{C}(L), \quad 
			\sigma \mapsto [l \mapsto (\sigma(l)\mid \sigma(l))_{\mu_l}]
		\end{align*}
	is then a Hilbert C*-module over $\mathrm{C}(L)$ (see, e.g.,  \cite{DuGi1983} or \cite{Lanc1995} 
	for this concept). We indentify $\mathrm{C}(K)$ with a submodule of $\Gamma(\mathrm{L}^2_q(K,\mu))$ 
	via the injective $\mathrm{C}(L)$-module homomorphism
		\begin{align*}
			\mathrm{C}(K) \to \Gamma(\mathrm{L}^2_q(K,\mu)), \quad f \mapsto [l \to f|_{K_{l}}].
		\end{align*}
	By the Stone--Weierstra\ss{} theorem for bundles (see \cite[Corollary 4.3]{Gierz1982}), $\mathrm{C}(K)$ 
	is dense in $\Gamma(\mathrm{L}^2_q(K,\mu))$. 
\end{remark}

Given a transitive action $(K,q,\euG)$ of a compact groupoid $\euG$, we now obtain a 
Hilbert bundle in a canonical way by applying the conctruction of \cref{hilbertbundle1} 
to the unique relatively invariant measure of \cref{meangroupoidaction}. The space 
$\Gamma(\mathrm{L}^2_q(K,\mu))$ of continuous sections of this bundle will then take
the role the Hilbert space $\mathrm{L}^2(K,\mu)$ played in the case of a group action.  

Finally, in order to formulate a version of \cref{transitivegrpaction} for groupoid 
actions $(K, q, \euG)$, the last required ingredient is a generalization of the occurring projection 
operators $P_{[\pi]}$. Below, we first define them on each fiber of the 
Hilbert bundle $\uL^2(K, \mu)$ using the irreducible representations of the isotropy 
bundle $\Iso(\euG)$ of $\euG$. In order for the fiber operators to fit together to a
well-defined projection operator on $\Gamma(\uL^2(K, \mu))$ and to ensure its 
$\euG$-invariance, we need to enforce a compatibility condition on the irreducible
representations of $\Iso(\euG)$ that are employed.

\begin{definition}
  Let $\euG$ be a compact groupoid.
  \begin{enumerate}[(i)]
    \item If $\pi$ is an irreducible unitary representation of $\euG_\pzu^\pzu$ and 
    $\pzg \in \euG_\pzu$, we define $\pi^\pzg(\pzh) \defeq \pi(\pzg^{-1}\pzh\pzg)$ for 
    $\pzh \in \euG_{r(\pzg)}^{r(\pzg)}$. Moreover, set 
      \begin{align*}
        [\pi]^\pzg \defeq [\pi^\pzg] \in \hat{\euG_{r(\pzg)}^{r(\pzg)}}.
      \end{align*} 
    \item We call a map
  \begin{align*}
    \gamma \colon \euG^{(0)} \to \bigcup_{\pzu \in \euG^{(0)}} \hat{\euG_{\pzu}^\pzu}
  \end{align*}
  an \emph{invariant section} if
  \begin{itemize}
    \item $\gamma(\pzu) \in \hat{\euG_{\pzu}^\pzu}$ for every $\pzu \in \euG^{(0)}$.
    \item $\gamma(\pzu)^\pzg= \gamma(\pzg\pzu \pzg^{-1})$ for all $\pzu \in \euG^{(0)}$ and $\pzg \in\euG_\pzu$.
  \end{itemize}	
  Moreover, $\Gamma_\euG$ denotes the set of all such invariant sections.
  \end{enumerate}
\end{definition}

\begin{remark}\label{remark:invsect}
  If $\euG$ is a transitive compact groupoid and $\pzu \in \euG^{(0)}$ is fixed, 
  then every $[\pi] \in \hat{\euG_\pzu^\pzu}$ defines an invariant section via 
  $\gamma_{[\pi]}(\pzg^{-1}\pzu \pzg) \defeq [\pi^\pzg]$ for all 
  $\pzg \in \euG^\pzu$, and every invariant section is of this form. In this 
  case, we therefore obtain a bijection
    \begin{align*}
      \hat{\euG_u^u} \to \Gamma_{\euG}, \quad [\pi] \mapsto \gamma_{[\pi]},
    \end{align*}
  i.e., up to choosing a base point $\pzu \in \euG^{(0)}$, 
  the set $\Gamma_\euG$ can simply be seen as one of the unitary duals 
  of the isotropy groups of $\euG$.
\end{remark}

We can now define the projection operators defined by such invariant sections.

\begin{definition}
	Let $(K,q,\euG)$ be a transitive groupoid action of a compact groupoid $\euG$ with 
	unique relatively invariant measure $\mu$. For $\gamma \in \Gamma_\euG$ 
	the \emph{associated projection} $P_\gamma$ is defined by 
	$(P_{\gamma}\sigma)(\pzu) \defeq P_{\gamma(\pzu)}\sigma(\pzu)$ for 
	$\pzu \in \euG^{(0)}$ and $\sigma \in \Gamma(\mathrm{L}_q^2(K,\mu))$.
\end{definition}

We now obtain our Fourier analytic result for transitive actions of compact 
groupoids extending results of Knapp (cf.\ \cite[Theorem 1.2]{Knapp1967}). Here, 
two subsets $M_1,M_2 \subset \Gamma(\mathrm{L}^2(K,\mu))$ are called \emph{orthogonal} 
if $(\sigma_1 \mid \sigma_2)_\mu = 0$ for all $\sigma_1 \in M_1$, $\sigma_2 \in M_2$. 
Moreover, $\lin_{\mathrm{C}(\euG^{(0)})}$ denotes the linear hull with respect to the 
$\mathrm{C}(\euG^{(0)})$-module structure on $\mathrm{C}(K)$. Recall also
from \cref{sectionsHilbertbundle} that we 
identify $\mathrm{C}(K)$ with a dense submodule of $\Gamma(\mathrm{L}^2_q(K,\mu))$.

\begin{theorem}\label{thm:fouriermain}
  For a transitive action $(K, q, \euG, \varphi)$ of a compact groupoid $\euG$ with unique relatively 
  invariant measure $\mu$ the following assertions hold.
  \begin{enumerate}[(i)]
    \item\label{item:fouriermain_i} For every $\gamma \in \Gamma_{\euG}$, 
    $P_\gamma \in \mathscr{L}(\Gamma(\mathrm{L}^2_q(K,\mu)))$ 
    is a projection and a $\mathrm{C}(\euG^{(0)})$-module homomorphism.
    \item\label{item:fouriermain_ii} For every $\sigma \in \Gamma(\mathrm{L}^2_q(K,\mu))$ and 
    $\gamma \in \Gamma_{\euG}$
      \begin{align*}
        \|P_{\gamma}\sigma\|_{\mathrm{C}(K)} \leq 
        \|\sigma\|_{\Gamma(\mathrm{L}^2_q(K,\mu))}.
      \end{align*}
    \item\label{item:fouriermain_iii} The ranges $\rg(P_\gamma)$ for $\gamma \in \Gamma_{\euG}$ are closed, finitely 
    generated, projective, $T_\varphi$-invariant $\mathrm{C}(\euG^{(0)})$-submodules of 
    $\mathrm{C}(K)$ and are pairwise orthogonal in $\Gamma(\mathrm{L}_q^2(K,\mu))$.
    \item\label{item:fouriermain_iv}  Each $f \in \mathrm{C}(K)$ is contained in
      \begin{align*}
        \overline{\lin_{\mathrm{C}(\euG^{(0)})}\left\{P_\gamma f \mid \gamma \in \Gamma_{\euG}\right\}} \subset \mathrm{C}(K).
      \end{align*}
    \item\label{item:fouriermain_v} For each $\sigma \in \Gamma(\mathrm{L}_q^2(K,\mu))$
      \begin{align*}
        (\sigma \mid \sigma)_\mu = \sum_{\gamma \in \Gamma_{\euG}} \left( P_\gamma \sigma \mid P_\gamma \sigma \right)_\mu
      \end{align*}
      with convergence in $\mathrm{C}(\euG^{(0)})$.
    \item\label{item:fouriermain_vi} Each $\sigma \in \Gamma(\mathrm{L}_q^2(K,\mu))$ can be decomposed into a series
        \begin{align*}
          \sigma = \sum_{\gamma \in \Gamma_{\euG}} P_\gamma \sigma
        \end{align*}
      converging in $\Gamma(\mathrm{L}_q^2(K,\mu))$.
  \end{enumerate}
\end{theorem}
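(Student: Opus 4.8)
The plan is to reduce everything to the group case \cref{transitivegrpaction} applied fiberwise and then to patch the resulting fiberwise data together inside the Hilbert bundle $\mathrm{L}^2_q(K,\mu)$. First I would observe that for each $\pzu \in \euG^{(0)}$ the compact isotropy group $\euG_\pzu^\pzu$ acts transitively on the fiber $K_\pzu$ (by \cref{rem:transitive}, transitivity of $(K,q,\euG)$ yields fiberwise transitivity), and that the fiber $\mu_\pzu$ of the relatively invariant measure is exactly the pushforward of the Haar measure $\um_\pzu$ under an orbit map, hence the unique $\euG_\pzu^\pzu$-invariant probability measure on $K_\pzu$ (this is precisely the measure built in the proof of \cref{meangroupoidaction}). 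Applying \cref{transitivegrpaction} to $(K_\pzu,\euG_\pzu^\pzu)$ and the isotype $\gamma(\pzu)$ then gives, on each fiber, that $P_{\gamma(\pzu)}$ is an orthogonal projection, that $\rg(P_{\gamma(\pzu)})$ is a finite-dimensional $\euG_\pzu^\pzu$-invariant subspace of $\uC(K_\pzu)$, that distinct such ranges are orthogonal, the bound $\|P_{\gamma(\pzu)}f\|_{\uC(K_\pzu)}\le\|f\|_{\mathrm{L}^2(K_\pzu,\mu_\pzu)}$, Parseval, and the fiberwise decomposition. This disposes of the \emph{fiberwise} content of all six assertions; what remains is to show that these objects assemble continuously.

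For \ref{item:fouriermain_i} and \ref{item:fouriermain_ii} I would realize $P_\gamma$ through the explicit averaging formula
\[
  (P_\gamma\sigma)(\pzu)(x) = \dim(\gamma(\pzu))\int_{\euG_\pzu^\pzu}\tr(\gamma(\pzu))(\pzh)\,\sigma(\pzu)\!\left(\pzh^{-1}x\right)\,\mathrm{d}\um_\pzu(\pzh)
\]
for $x\in K_\pzu$, from which the $\uC(\euG^{(0)})$-module-homomorphism property is immediate (the module action is fiberwise scalar multiplication and each $P_{\gamma(\pzu)}$ is linear) and idempotency holds fiberwise. The crucial point is continuity, i.e.\ that $P_\gamma$ maps $\uC(K)$, regarded as a dense submodule of $\Gamma(\mathrm{L}^2_q(K,\mu))$ by \cref{sectionsHilbertbundle}, into $\uC(K)$. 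I would obtain this by combining the continuous Haar system on $\Iso(\euG)$---which exists because $p\colon\Iso(\euG)\to\euG^{(0)}$ is open by \cref{lem:transitive_open}, so \cite[Lemma 1.3]{Rena1991} applies---with the continuity of the Koopman representation from \cref{representationextension} and the continuity of the character field $\pzu\mapsto\tr(\gamma(\pzu))$ viewed as a function on $\Iso(\euG)$. The norm bound \ref{item:fouriermain_ii} then holds fiberwise and, by density of $\uC(K)$, extends $P_\gamma$ to a bounded operator on $\Gamma(\mathrm{L}^2_q(K,\mu))$ whose range lies in $\uC(K)$.

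I expect the verification that $\pzu\mapsto\tr(\gamma(\pzu))$ is a \emph{continuous} function on $\Iso(\euG)$ to be the main obstacle. The invariant-section condition $\gamma(\pzu)^\pzg=\gamma(\pzg\pzu\pzg^{-1})$ guarantees that the fiberwise characters patch to a well-defined class function, and via \cref{remark:invsect} this field equals $\pzh\mapsto\chi_\pi(\pzg^{-1}\pzh\pzg)$ for a fixed base isotype $[\pi]$; its continuity has to be extracted from transitivity of $\euG$ together with openness of $(s,r)$ (\cref{lem:transitive_open}), which permit local continuous choices of the conjugating element $\pzg$. The same invariant-section condition delivers the intertwining $T_\pzg P_{\gamma(s(\pzg))}=P_{\gamma(r(\pzg))}T_\pzg$, the groupoid replacement for the class-function argument of the group case, which yields the $T_\varphi$-invariance of $\rg(P_\gamma)$ required in \ref{item:fouriermain_iii}.

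The remaining structural and completeness statements I would then settle as follows. In \ref{item:fouriermain_iii} the fibers $\rg(P_{\gamma(\pzu)})$ have constant finite dimension (all isotropy groups are conjugate and the isotypes are conjugate under the invariant section), so they form a $T_\varphi$-invariant finite-dimensional subbundle of the continuous---hence Hausdorff (\cref{remarksBB})---bundle $\uC_q(K)$; by \cref{remarksBB} it is locally trivial, whence by \cref{lem:loctriv} its section module is a finitely generated projective closed $\uC(\euG^{(0)})$-submodule of $\uC(K)$, and pairwise orthogonality is inherited from the fibers. For \ref{item:fouriermain_iv} I would invoke the Stone--Weierstra\ss{} theorem for bundles \cite[Corollary 4.3]{Gierz1982}: the $\uC(\euG^{(0)})$-linear span of all $\rg(P_\gamma)$ is a submodule that is fiberwise dense by the fiberwise completeness in \cref{transitivegrpaction}, hence dense in $\uC(K)$. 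Finally, in \ref{item:fouriermain_v} the partial sums $\sum_{\gamma\in F}(P_\gamma\sigma\mid P_\gamma\sigma)_\mu$ are continuous, increase with $F$, and converge pointwise on the compact space $\euG^{(0)}$ to the continuous function $(\sigma\mid\sigma)_\mu$, so convergence is uniform by Dini's theorem; and \ref{item:fouriermain_vi} follows from \ref{item:fouriermain_v} and the orthogonality of \ref{item:fouriermain_iii} via the Hilbert $\uC(\euG^{(0)})$-module identity $\|\sigma-\sum_{\gamma\in F}P_\gamma\sigma\|^2=\|(\sigma\mid\sigma)_\mu-\sum_{\gamma\in F}(P_\gamma\sigma\mid P_\gamma\sigma)_\mu\|_{\uC(\euG^{(0)})}$.
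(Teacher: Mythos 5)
Your overall strategy---reduce to \cref{transitivegrpaction} fiberwise (using fiberwise transitivity and the identification of $\mu_\pzu$ with the pushforward of Haar measure), then patch the fiberwise data by continuity, Stone--Weierstra\ss{} for bundles, and Dini's theorem---is exactly the paper's, and your treatment of the intertwining relation, of (iii), (v), and (vi) matches the paper's proof. However, two steps contain genuine gaps.

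First, the continuity of $P_\gamma$, i.e.\ that $P_\gamma f \in \uC(K)$ for $f \in \uC(K)$, which you correctly single out as the main obstacle, is not actually carried out, and the mechanism you propose for it fails: openness of $(s,r)$ does \emph{not} ``permit local continuous choices of the conjugating element''. Open surjections between compact spaces need not admit local continuous sections; what openness gives is precisely the subnet-lifting property recorded in \cref{rem:quotientmapopenchar}, nothing more. The paper's proof needs only this weaker property: to show that $x \mapsto F_x$ with $F_x(\pzg) = \tr(\gamma(q(x))(\pzg))\,f(\pzg^{-1}x)$ is continuous as a map from $K$ into the fiber-map space over $\Iso(\euG)$ (so that \cref{integralconvscalar}, applied to the continuous Haar system, yields continuity of $P_\gamma f$), one takes nets $x_\beta \to x$ and $\pzg_\beta \to \pzg$ with $p(\pzg_\beta)=q(x_\beta)$, uses transitivity and compactness to produce, along a subnet, conjugators $h_\beta \in \euG_{q(x)}^{q(x_\beta)}$ converging to the \emph{unit} at $q(x)$, and then the invariant-section identity $\tr(\gamma(q(x_\beta))(\pzg_\beta)) = \tr\big(\gamma(q(x))(h_\beta^{-1}\pzg_\beta h_\beta)\big)$ reduces everything to the continuity of the single character $\tr(\gamma(q(x)))$ on the fixed compact group $\euG_{q(x)}^{q(x)}$. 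No continuous field of characters on $\Iso(\euG)$ and no local sections are ever needed; as written, your argument rests on an unproved (and in general false) selection statement.

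Second, your argument for (iv) proves a weaker statement than the one asserted. You show that $\lin_{\mathrm{C}(\euG^{(0)})}\bigcup_\gamma \rg(P_\gamma)$ is stalkwise dense and hence dense in $\uC(K)$. Assertion (iv) says more: each individual $f$ lies in the \emph{sup-norm} closure of $\lin_{\mathrm{C}(\euG^{(0)})}\{P_\gamma f \mid \gamma \in \Gamma_\euG\}$, i.e.\ of module combinations of \emph{its own} projections. From your density statement together with orthogonality and the bound (ii) one can only recover the $\Gamma(\mathrm{L}^2_q(K,\mu))$-norm version: applying the fiberwise orthogonal projection $\sum_{\gamma \in F} P_\gamma$ to an approximant contracts the Hilbert-module norm but not the sup norm, so this route essentially yields (vi) again, not (iv). The repair is the paper's argument: apply \cref{transitivegrpaction}~(iv) fiberwise to $f$ itself, conclude that $\lin_{\mathrm{C}(\euG^{(0)})}\{P_\gamma f \mid \gamma \in \Gamma_\euG\}$ is stalkwise dense in the subbundle associated (via \cref{rem:bundlemodule}) with the closed submodule generated by $f$ and its projections, and then invoke the Stone--Weierstra\ss{} theorem for bundles \emph{inside that subbundle} to get sup-norm approximation of $f$.
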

\begin{proof}
	We start with the proof of assertions \ref{item:fouriermain_i}, 
	\ref{item:fouriermain_ii}, and \ref{item:fouriermain_iii},
	so fix $\gamma \in \Gamma_{\euG}$. 
	We first show that $P_\gamma f \in \mathrm{C}(K)$ for every 
	$f \in \mathrm{C}(K)$. So let $f \in \mathrm{C}(K)$ and define 
	for every $x\in K$ the continuous function
  \begin{align*}
    F_x\colon \euG_{q(x)}^{q(x)} \to \C, \quad 
    \pzg \mapsto \tr(\gamma(q(x))(\pzg) f(\pzg^{-1}x).
  \end{align*}
	We claim that the map
	\begin{align*}
    F\colon K \to \uC_p^p(\Iso(\euG)), \quad x \mapsto F_x  
  \end{align*}
  is continuous which would imply the continuity of $P_\gamma f$ 
  via the integral continuity criterion from \cref{integralconvscalar}.
  To see that $F$ is indeed continuous, we use the continuity 
  characterization from \cref{charconv}. So let $(x_\alpha)_{\alpha \in A}$
  be a net in $K$ with limit $x\in K$, $(x_\beta)_{\beta\in B}$ be any subnet,
  and let $(\pzg_\beta)_{\beta \in B}$ be a net in $\Iso(\euG)$ such that 
  $p(\pzg_\beta) = q(x_\beta)$ for every $\beta \in B$. Since $\euG$ is 
  transitive, there is an $h_\beta \in \euG_{p(\pzg)}^{p(\pzg_\beta)}$ for every
  $\beta \in B$ and by the usual subnet arguments we may assume 
  that $(h_\beta)_{\beta\in B}$ converges to the unit 
  $p(\pzg) = q(x)$ of $\euG_{p(\pzg)}^{p(\pzg)}$.
  Since $\gamma$ is an invariant section, we obtain for every $\beta \in B$
  \begin{align*}
    \tr(\gamma(q(x_\beta))(\pzg_\beta)) 
    &= \tr(\gamma(q(h_\beta^{-1}x_\beta h_\beta))^{h_\beta}(\pzg_\beta))
    = \tr(\gamma(q(x))^{h_\beta}(\pzg_\beta))\\
    &= \tr(\gamma(q(x))(h_\beta^{-1}\pzg_\beta h_\beta).
  \end{align*}
  Therefore,
  \begin{align*}
    \lim_\beta F_{x_\beta}(\pzg_\beta)
    &= \lim_\beta \tr(\gamma(q(x_\beta))(\pzg_\beta)) f(\pzg_\beta^{-1} x_\beta) \\
    &= \lim_\beta \tr(\gamma(q(x))(h_\beta^{-1}\pzg_\beta h_\beta)) f(\pzg_\beta^{-1} x_\beta) \\
    &= \tr(\gamma(q(x))(\pzg)) f(\pzg^{-1}x) \\
    &= F_x(\pzg).
  \end{align*}
  Hence, $F$ is continuous and so $P_\gamma f \in \uC(K)$. Moreover,
  for $f\in\uC(K)$ it follows from 
  \cref{transitivegrpaction} \ref{item:transitivegrpaction_iii}
  that 
  \begin{align*}
   \|P_\gamma f\|_{\uC(K)} \leq \|f\|_{\Gamma(\uL^2(K, \mu))}.
  \end{align*}

  For $\sigma \in \Gamma(\mathrm{L}^2_q(K,\mu))$ we already know from \cref{transitivegrpaction} (ii) that $(P_\gamma \sigma)(\pzu) \in \mathrm{C}(K_{\pzu})$ for every $\pzu \in \euG^{(0)}$. Moreover, for $\epsilon >0$ there is $f_\epsilon \in \mathrm{C}(K)$ with 
   \begin{align*}
     \|f_\epsilon|_{K_\pzu} - \sigma(\pzu)\|_{\mathrm{L}^2(K_\pzu,\mu_\pzu)}\leq \epsilon
   \end{align*}
   for all $\pzu \in \euG^{(0)}$ since $\mathrm{C}(K)$ is dense in $\Gamma(\mathrm{L}^2_q(K,\mu))$. But then
     \begin{align*}
       \|(P_\gamma f_\epsilon)|_{K_\pzu} - (P_\gamma \sigma)(\pzu)\|_{\mathrm{C}(K_\pzu)} \leq \epsilon
     \end{align*}
   for every $\pzu \in \euG^{(0}$ and $\epsilon > 0$ by \cref{transitivegrpaction} (iii). Using this observation and the fact that---by the above---$P_\gamma f_\epsilon \in \mathrm{C}(K)$ for every $\epsilon >0$ it is easy to check that $P_\gamma \sigma \in \mathrm{C}(K)$.  Moreover, since $\uC(K)$ is dense in $\Gamma(\uL^2(K, \mu))$, we obtain
   \begin{align*}
    \|P_{\gamma}\sigma\|_{\mathrm{C}(K)} \leq 
        \|\sigma\|_{\Gamma(\mathrm{L}^2_q(K,\mu))}.
   \end{align*}
  for every $\sigma \in\Gamma(\mathrm{L}^2_q(K,\mu))$. This proves \ref{item:fouriermain_ii}. Due to the fiberwise
  definition of $P_\gamma$, one readily verifies that it is a 
  projection and a $\uC(\euG^{(0)})$-module homomorphism, proving
  \ref{item:fouriermain_i}.

	 
	We now prove that $\rg(P_\gamma)$ is $\euG$-invariant, i.e., 
	for all $\pzh \in \euG$
	that 
	\begin{align*}
    T_\pzh \left(\rg(P_\gamma)|_{K_{s(\pzh)}}\right) \subset \rg(P_\gamma)|_{K_{r(\pzh)}}.  
  \end{align*}
  To that end, let $\pzh\in\euG$ and note that 
  $\rg(P_\gamma)|_{K_{s(\pzh)}}$ is finite-dimensional, so since 
  $P_\gamma(\uC(K))$ is dense in $\rg(P_\gamma)$, 
  \begin{align*}
   \rg(P_\gamma)|_{K_{s(\pzh)}} = P_\gamma(\uC(K))|_{K_{s(\pzh)}}.
  \end{align*}
  Therefore, an element of $\rg(P_\gamma)|_{K_{s(\pzh)}}$ can 
  be written as $(P_\gamma f)|_{K_{s(\pzh)}}$ for some $f\in\uC(K)$.
  If an element of $\rg(P_\gamma)|_{K_{s(\pzh)}}$ is presented in this way,
  then for every $x\in K_{r(\pzh)}$
  \begin{align*}
    \left(T_\pzh (P_\gamma f)|_{K_{s(\pzh)}}\right)(x) 
    &= \dim(\gamma(s(\pzh))) \int_{\euG_{s(\pzh)}^{s(\pzh)}} \tr(\gamma(s(\pzh)))(\pzg) f(\pzg^{-1} \pzh^{-1} x) \dm_{s(\pzh)}(\pzg) \\
    &= \dim\left(\gamma(s(\pzh))^{\pzh^{-1}}\right) \int_{\euG_{r(\pzh)}^{r(\pzh)}} \tr(\gamma(s(\pzh)))(\pzh \pzg\pzh^{-1}) f(\pzh \pzg^{-1}  x)\dm_{r(\pzh)}(\pzg) \\
    &= \dim(\gamma(r(\pzh))) \int_{\euG_{r(\pzh)}^{r(\pzh)}} \tr(\gamma(r(\pzh)))(\pzg) (T_{\pzh^{-1}}f)(\pzg^{-1}  x) \dm_{r(\pzh)}(\pzg)\\
    &= \left(P_\gamma T_\pzh (f|_{K_{s(\pzh)}})\right)(x).
  \end{align*}
  Hence, $\rg(P_\gamma)$ is a $\euG$-invariant submodule of $\uC(K)$ 
  and it is also closed since it is the range of a projection. Now, to prove 
  the remaining claims in \ref{item:fouriermain_iii}, consider the 
	disjoint union $F$ of the vector spaces 
	$F_{\pzu} \defeq \rg(P_\gamma)|_{K_{\pzu}}$ for $\pzu \in \euG^{(0)}$. 
	Using the correspondence between submodules and subbundles noted in 
	\cref{rem:bundlemodule}, it follows that $F$ is a subbundle of the 
	Banach bundle $\mathrm{C}_q(K)$ and that
  \begin{align*}
    \rg(P_\gamma) \to \Gamma(F), \quad f \mapsto [\pzu \mapsto f|_{K_{\pzu}}]
  \end{align*}
  is an isometric isomorphism of Banach modules over $\mathrm{C}(\euG^{(0)})$. Since the fibers of 
  $F$ are finite-dimensional by \cref{transitivegrpaction} and all have the same dimension by 
  invariance of $F$ and transitivity of $\euG$, we conclude that $F$ is a locally trivial Banach 
  bundle (see \cref{remarksBB} (iv)). By \cref{lem:loctriv} $\Gamma(F)$ is 
  finitely generated and projective. Thus $\rg(P_{\gamma}) \cong \Gamma(F)$ is a 
  closed, finitely generated, projective, $T_\varphi$-invariant $\mathrm{C}(\euG^{(0)})$-submodule of $\mathrm{C}(K)$. Finally, 
  it is a consequence of \cref{transitivegrpaction} that the ranges of $P_{\gamma_1}$ and 
  $P_{\gamma_2}$ are orthogonal for distinct $\gamma_1,\gamma_2 \in \Gamma_{\euG}$. 
  This proves \ref{item:fouriermain_iii}.
	
  The approximation property \ref{item:fouriermain_iv} is clear 
  on each fiber of $\uC_q(K)$ by \cref{transitivegrpaction} \ref{item:transitivegrpaction_iv}, so we use the 
  Stone--Weierstrass theorem for bundles to achieve uniform approximation:
  Take $f \in \mathrm{C}(K)$ and consider the closed submodule 
  $\Gamma$ of $\mathrm{C}(K)$ generated by $f$ and every $P_\gamma f$ for 
  $\gamma \in \Gamma_{\euG}$. By \cref{Banachbundleexample} 
  and the correspondence from \cref{rem:bundlemodule} we obtain a 
  subbundle $F$ of $\mathrm{C}_q(K)$ by 
  $F_\pzu \defeq \Gamma|_{K_\pzu}$ for $\pzu \in \euG^{(0)}$ and 
 	\begin{align*}
 		\Gamma \to \Gamma(F), \quad f \mapsto \left[\pzu \mapsto f|_{K_\pzu}\right]
 	\end{align*}
 	is an isometric isomorphism of Banach modules over 
 	$\mathrm{C}(\euG^{(0)})$. By \cref{transitivegrpaction} (iv)  
 		\begin{align*}
 			\lin_{\mathrm{C}(\euG^{(0)})}\left\{P_\gamma f \mmid \gamma \in \Gamma_{\euG}\right\}
 		\end{align*}
 	defines a stalkwise dense subset of $\Gamma(F)$ in the 
	sense of \cite[Definition 4.1]{Gierz1982}. By the Stone--Weierstra\ss{} theorem \cite[Corollary 4.3]{Gierz1982}, 
	this set is dense in $\Gamma(F)$ and therefore 
 		\begin{align*}
 			\overline{\lin_{\mathrm{C}(\euG^{(0)})}\left\{P_\gamma f \mmid \gamma \in \Gamma_{\euG}\right\}} = \Gamma.
 		\end{align*}
	In particular,
 	\begin{align*}
 		f \in \overline{\lin_{\mathrm{C}(\euG^{(0)})}\left\{P_\gamma f \mid \gamma \in \Gamma_{\euG}\right\}}.
 	\end{align*}
   
  Part \ref{item:fouriermain_v} and \ref{item:fouriermain_vi} follow directly from 
  the corresponding parts of \cref{transitivegrpaction} and Dini's theorem. 
\end{proof}

\section{Applications to extensions of topological dynamical systems}
\label{sec:applications}

In this final section we translate our results on groupoid actions to extensions of topological dynamical systems. 
Recall from \cref{ex:extensiongrpdaction} that every open extension $q\colon (K,G) \to (L,G)$ 
can be equivalently described as a groupoid action $(K,q, G \ltimes L)$ and that the unit space of $G \ltimes L$ can be 
identified with $L$. In particular, we obtain a uniform enveloping semigroupoid $\euE_\uu(K,q,G \ltimes L)$ 
which we denote by $\euE_\uu(q)$ in the following.

We also remind the reader that the usual notions of structuredness of the extension 
(i.e., being stable, equicontinuous, pseudoisometric, or isometric) are equivalent to the 
corresponding concepts for the groupoid action. Finally, recall from the introduction that a 
topological dynamical system $(K, G)$ is \emph{topologically ergodic} if $\fix(T_\varphi)$ contains only the 
constant functions. This is the case if and only if the action groupoid $G \ltimes K$ is 
topologically ergodic and there are many examples for such systems (e.g., transitive systems) which are not minimal.

With these correspondences in mind, we restate our main results in the case of extensions of 
topological dynamical systems, starting with the characterization of equicontinuity via 
compactness from \cref{compactnessvsequicontinuity}. 

\begin{theorem}\label{ext1}
	For an open extension $q \colon (K,G,\varphi) \to (L,G,\psi)$ of topological 
	dynamical systems the following assertions are equivalent.
		\begin{enumerate}[(a)]
			\item $q$ is equicontinuous.
			\item $\{\varphi_g|_{K_l}\mid g \in G, l \in L\} \subset \mathrm{C}_q^q(K,K)$ is precompact.
			\item $\{(T_{\varphi_g}f)|_{K_l}\mid g \in G, l \in L\} \subset \mathrm{C}_q(K)$ is 
			equicontinuous for every $f\in \uC(K)$.
			\item $\{(T_{\varphi_g}f)|_{K_l}\mid g \in G, l \in L\} \subset \mathrm{C}_q(K)$ 
			is precompact for every $f\in \uC(K)$.
		\end{enumerate}
\end{theorem}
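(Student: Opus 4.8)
The plan is to deduce this theorem directly from the general groupoid statement \cref{compactnessvsequicontinuity}, applied to the groupoid action attached to the extension via the dictionary of \cref{ex:extensiongrpdaction}. First I would regard the open extension $q\colon (K,G,\varphi)\to(L,G,\psi)$ as the groupoid action $(K,q,G\ltimes L)$, whose unit space is identified with $L$; here openness of $q$ is exactly what makes this a legitimate groupoid action in the sense of \cref{definition:groupoidaction}. Under this identification the transition semigroupoid is
\begin{align*}
  \euS(K,q,G\ltimes L) = \left\{\varphi_g|_{K_l} \mmid g\in G,\, l\in L\right\} \subset \uC_q^q(K,K),
\end{align*}
so assertion (b) is verbatim \ref{item:comp2} of \cref{compactnessvsequicontinuity}, while assertion (a) matches \ref{item:comp1} because, as recorded in the remark following \cref{def:struct_ext}, the extension is equicontinuous precisely when the associated groupoid action is.

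The only point requiring a short computation is the translation of assertions (c) and (d). For $\pzg=(g,l)\in G\ltimes L$ one has $s(\pzg)=l$, $r(\pzg)=\psi_g(l)$, and $\varphi_\pzg=\varphi_g|_{K_l}\colon K_l\to K_{\psi_g(l)}$; hence for $f\in\uC(K)$ and $x\in K_l$ I would compute
\begin{align*}
  \bigl(f|_{K_{r(\pzg)}}\circ\varphi_\pzg\bigr)(x) = f\bigl(\varphi_g(x)\bigr) = (T_{\varphi_g}f)(x),
\end{align*}
using the defining relation $T_\theta f = f\circ\theta$ of the Koopman operator, so that $f|_{K_{r(\pzg)}}\circ\varphi_\pzg = (T_{\varphi_g}f)|_{K_l}$. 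Letting $\pzg$ range over all of $G\ltimes L$ then yields
\begin{align*}
  \left\{f|_{K_{r(\pzg)}}\circ\varphi_\pzg \mmid \pzg\in G\ltimes L\right\} = \left\{(T_{\varphi_g}f)|_{K_l} \mmid g\in G,\, l\in L\right\}.
\end{align*}
Choosing the generating set $M\defeq\uC(K)$, which trivially generates $\uC(K)$ as a $\uC^*$-algebra, assertions (c) and (d) become exactly the "for every $f\in\uC(K)$" instances of \ref{item:comp3} and \ref{item:comp4} of \cref{compactnessvsequicontinuity}, after noting that "relatively compact" and "precompact" mean the same thing in this setting.

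Having matched all four conditions, I would conclude the equivalence immediately from \cref{compactnessvsequicontinuity}. I do not expect a genuine obstacle here: all the substance lives in \cref{compactnessvsequicontinuity} (which itself rests on the Arzel\`a--Ascoli theorem \cref{arzelaascoli}) together with the equivalence of structuredness notions between extension and groupoid action. The only thing to watch is the indexing bookkeeping in the computation above—in particular that the fiber parameter $l$ is the source $s(\pzg)$ and that $M=\uC(K)$ is an admissible generating set—after which the translation is purely formal.
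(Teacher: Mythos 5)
Your proposal is correct and follows exactly the paper's own route: \cref{ext1} is stated in \cref{sec:applications} precisely as the translation of \cref{compactnessvsequicontinuity} to the groupoid action $(K,q,G\ltimes L)$ of \cref{ex:extensiongrpdaction}, with (a) matched via the remark after \cref{def:struct_ext}, (b) matched to the transition semigroupoid, and (c), (d) obtained from the identity $f|_{K_{r(\pzg)}}\circ\varphi_\pzg=(T_{\varphi_g}f)|_{K_l}$ with the generating set $M=\uC(K)$. The bookkeeping you spell out (source/range of $(g,l)$ and the admissibility of $M=\uC(K)$) is exactly what the paper leaves implicit.
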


We have noted that---in contrast to extensions of minimal systems---there is a 
significant difference between equicontinuous and pseudoisometric extensions: 
There are examples of extensions of nonminimal systems which are equicontinuous
but not pseudoisometric, see \cref{ex:equinoniso}.
The following result, combining \cref{thm:pseudoisochar}, \cref{remark:metrizable} and \cref{mainthm}, indicates that 
pseudoisometric extensions are the most \enquote{natural} generalizations of 
almost periodic systems. 

\begin{theorem}\label{ext2}
	For an open extension $q \colon (K,G,\varphi) \to (L,G,\psi)$ of topological dynamical 
	systems such that $(L,G,\psi)$ is topologically ergodic, the following assertions are equivalent.
		\begin{enumerate}[(a)]
			\item $q$ is pseudoisometric.
			\item The uniform enveloping semigroupoid $\euE_\uu(q)$ is a compact groupoid.
			\item The union of all locally trivial $T_\varphi$-invariant subbundles is fiberwise 
			dense in $\mathrm{C}_q(K)$.
			\item The union of all finitely generated, projective, closed $T_\varphi$-invariant 
			$\uC(L)$-submodules of $\mathrm{C}(K)$ is dense in $\uC(K)$.
		\end{enumerate}
	If these assertions hold, then the locally trivial invariant subbundles in (c) are of constant 
	finite dimension. Moreover, if $K$ is metrizable, then (a) can be replaced by
		\begin{enumerate}[(a')]
			\item $q$ is isometric. 
		\end{enumerate}
\end{theorem}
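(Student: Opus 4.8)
The plan is to reduce everything to the groupoid statements already proved, via the dictionary of \cref{ex:extensiongrpdaction}. First I would recall that the open extension $q\colon (K,G,\varphi)\to(L,G,\psi)$ corresponds to the groupoid action $(K,q,G\ltimes L)$ of the action groupoid $G\ltimes L$, whose unit space is identified with $L$. Under this identification the uniform enveloping semigroupoid $\euE_\uu(q)$ is by definition $\euE_\uu(K,q,G\ltimes L)$, the Koopman representation $T_\varphi$ of the extension agrees with the Koopman representation of the groupoid action, and the $\uC(L)$-module $\uC(K)$ is exactly the $\uC(\euG^{(0)})$-module appearing in the groupoid picture for $\euG=G\ltimes L$. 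The hypothesis that $q$ be an \emph{open} extension guarantees that $q$ is an open continuous surjection, so $(K,q,G\ltimes L)$ is a bona fide groupoid action.

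Next I would line up the hypotheses and the content of assertion (a). By the first item of the remark following \cref{def:struct_ext}, the extension $q$ is pseudoisometric (respectively isometric) if and only if the associated groupoid action $(K,q,G\ltimes L)$ is, so (a) and (a') translate verbatim into the corresponding groupoid notions. By the corollary following \cref{chartrans}, the system $(L,G,\psi)$ is topologically ergodic if and only if the acting groupoid $G\ltimes L$ is. Hence the standing hypothesis of the theorem is precisely the assumption ``$\euG$ topologically ergodic'' under which \cref{thm:pseudoisochar} and \cref{mainthm} are established.

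The equivalences then follow by invoking those two results with $\euG=G\ltimes L$ and $\euG^{(0)}\cong L$. The equivalence of (a) and (b) is \cref{thm:pseudoisochar}, using $\euE_\uu(K,q,G\ltimes L)=\euE_\uu(q)$. The equivalences of (a), (c), and (d) are the equivalences of conditions \ref{item:mainthm_pseudo}, \ref{item:mainthm_bundle}, and \ref{item:mainthm_module} in \cref{mainthm} after the substitution $\euG^{(0)}=L$; the remaining condition \ref{item:mainthm_module2} of \cref{mainthm} (generation as a $\mathrm{C}^*$-algebra) is simply not listed here. The ``moreover'' asserting that the locally trivial invariant subbundles in (c) have constant finite dimension is the concluding statement of \cref{mainthm}, and the metrizable addendum replacing (a) by (a') is \cref{remark:metrizable} applied to $(K,q,G\ltimes L)$.

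I expect no genuine mathematical obstacle: the entire substance has been front-loaded into the groupoid theorems, and the only thing to get right is bookkeeping—verifying that each object occurring in the extension formulation ($\euE_\uu(q)$, the representation $T_\varphi$, the $\uC(L)$-module structure on $\uC(K)$, and topological ergodicity of $(L,G)$) is literally the corresponding object in the groupoid formulation under $\euG^{(0)}\cong L$. The only mild care needed is to match the precise wording of conditions (c) and (d) with those of \cref{mainthm} and to confirm openness of $q$ so that the groupoid action is well defined.
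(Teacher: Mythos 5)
Your proposal is correct and is exactly how the paper proves this theorem: the paper states \cref{ext2} as a direct combination of \cref{thm:pseudoisochar}, \cref{mainthm}, and \cref{remark:metrizable}, using the dictionary of \cref{ex:extensiongrpdaction} together with the equivalences between the extension-level and groupoid-level notions of pseudoisometry and topological ergodicity. The bookkeeping you describe (identifying $\euE_\uu(q)$ with $\euE_\uu(K,q,G\ltimes L)$, $\euG^{(0)}$ with $L$, and the Koopman/module structures on both sides) is precisely the content of the paper's argument.
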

\cref{ext2} shows that the known characterizations of almost periodic systems via the enveloping semigroup or the Koopman operator extend in a canonical way to extensions of dynamical systems. In particular, it provides a clear picture of (pseudo)isometric extensions from an operator theoretic point of view.

If we require both systems to be topologically ergodic, we even obtain the 
existence of relatively invariant measures, a result previously only known in
the minimal case (see, e.g. \cite[Section 3]{Glas1975} and Corollary 3.7 therein, or
\cite[Proposition 5.5]{Knapp1967}). 
Given an extension $q \colon (K,G) \to (L,G)$ of dynamical systems, a map 
$\mu \colon L \to \mathrm{P}(K)$ is called a \emph{relatively 
invariant measure for $q$}, if $\mu$ is weak*-continuous, $\supp(\mu_l) \subset K_l$,
and $g_*\mu_l = \mu_{gl}$ for every $g\in G$ and $l\in L$. A relatively
invariant measure is called \emph{fully supported} if $\supp(\mu_l) = K_l$
for each $l\in L$. It is immediate that a map $\mu\colon L\to\uP(K)$ 
is a relatively invariant measure for the extension $q\colon (K, G) \to (L, G)$
if and only if it is a relatively invariant measure for the groupoid action 
$(K, q, G\ltimes L)$. As a direct consequence of \cref{RIM}, we obtain the 
following existence result for relatively invariant measures.

\begin{theorem}\label{ext3}
	Every open pseudoisometric extension of topologically ergodic topological 
	dynamical systems has a unique and fully supported relatively invariant measure.
\end{theorem}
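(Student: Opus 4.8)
The plan is to deduce the statement directly from \cref{RIM} by passing through the dictionary between extensions and groupoid actions. First I would invoke \cref{ex:extensiongrpdaction} to recast the open extension $q\colon(K,G,\varphi)\to(L,G,\psi)$ as the groupoid action $(K,q,G\ltimes L)$ of the action groupoid $G\ltimes L$, whose unit space is identified with $L$. By the remark following \cref{def:struct_ext} the extension is pseudoisometric precisely when this groupoid action is, so the pseudoisometry hypothesis transfers verbatim. Likewise, as observed in the paragraph preceding the statement, a map $\mu\colon L\to\uP(K)$ is a relatively invariant measure for $q$ if and only if it is one for $(K,q,G\ltimes L)$, and it is fully supported in one picture exactly when it is in the other. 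Hence it suffices to produce a unique, fully supported relatively invariant measure for the groupoid action $(K,q,G\ltimes L)$.

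The one substantive point is to verify that $(K,q,G\ltimes L)$ is a \emph{topologically ergodic} groupoid action, which is the second hypothesis needed to apply \cref{RIM}. Here I would use the operator-theoretic criterion that a groupoid action is topologically ergodic exactly when the fixed space of its Koopman representation (\cref{def:koopmanrep}) is one-dimensional. The key is to unwind that Koopman representation: a function $f\in\uC(K)$ satisfies $T_{(g,l)}(f|_{K_l})=f|_{K_{gl}}$ for every $(g,l)\in G\ltimes L$ if and only if $f(\varphi_{g^{-1}}x)=f(x)$ for all $x$ and all $g\in G$, i.e. if and only if $f$ is $G$-invariant. Thus the fixed space of the groupoid action coincides with the fixed space of the ambient group system $(K,G,\varphi)$, and the assumption that this system is topologically ergodic says exactly that this space contains only the constants and is therefore one-dimensional. (The topological ergodicity of the base $(L,G,\psi)$ is then automatic, since $T_q$ embeds $\fix(T_\psi)$ into $\fix(T_\varphi)$; one could alternatively route this through \cref{topergchar}.)

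Having checked both hypotheses, I would apply \cref{RIM} to the groupoid action $(K,q,G\ltimes L)$ to obtain a unique relatively invariant measure which is fully supported. Reading this back through the equivalence of the first paragraph then yields the existence, uniqueness, and full support of a relatively invariant measure for the extension $q$, completing the proof.

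I expect the only genuine subtlety to be the fixed-space identification in the second step: one must take care that topological ergodicity of the \emph{groupoid action}—a condition defined over the base $L$—matches topological ergodicity of the ambient group system rather than of the factor $(L,G,\psi)$. Beyond this, the argument is pure bookkeeping on top of \cref{ex:extensiongrpdaction} and \cref{RIM}; the real analytic content (building the fiber measures from a Haar system on the isotropy bundle and forcing uniqueness via the unique ergodicity of transitive compact-group actions) is already carried out in \cref{meangroupoidaction} and \cref{RIM}, so no new estimates are required here.
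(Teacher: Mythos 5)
Your proposal is correct and takes essentially the same route as the paper: the paper also recasts the open extension as the groupoid action $(K,q,G\ltimes L)$ via \cref{ex:extensiongrpdaction}, notes that relatively invariant measures for $q$ and for this groupoid action coincide, and then obtains \cref{ext3} as a direct consequence of \cref{RIM}. Your explicit check that topological ergodicity of the groupoid action means one-dimensionality of the fixed space of its Koopman representation, which coincides with $\fix(T_\varphi)$ of the total system $(K,G,\varphi)$ (and automatically forces topological ergodicity of the base), is precisely the bookkeeping the paper leaves implicit, and you carry it out correctly.
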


Finally, applying \cref{thm:fouriermain} to the uniform enveloping 
groupoid of an open, pseudoisometric extension yields Fourier analytic results 
for such extensions (cf.\ \cite[Theorem 1.2]{Knapp1967}).

\begin{theorem}\label{ext4}
	Let $q \colon (K,G,\varphi) \to (L,G,\psi)$ be an open pseudoisometric extension of ergodic 
	topological dynamical systems, $\mu$ its unique relatively invariant measure and 
	$\Gamma = \Gamma_{\euE_{\uu}(q)}$ the space of invariant sections into 
	the unitary dual of the isotropy bundle of $\euE_\uu(q)$.
	\begin{enumerate}[(i)]		
			\item For every $\gamma \in \Gamma$, $P_\gamma \in \mathscr{L}(\Gamma(\mathrm{L}^2_q(K,\mu)))$ is a projection and a 
			$\mathrm{C}(L)$-module homomorphism.
			\item The ranges $\rg(P_\gamma)$ for $\gamma \in \Gamma$ are closed, finitely generated, 
			projective, invariant $\mathrm{C}(L)$-submodules of $\mathrm{C}(K)$ and are pairwise orthogonal 
			in $\Gamma(\mathrm{L}_q^2(K,\mu))$.
			\item The inequality 
				\begin{align*}
					\left\|P_{\gamma}\sigma\right\|_{\mathrm{C}(K)} \leq \|\sigma\|_{\Gamma\left(\mathrm{L}^2_q(K,\mu)\right)}
				\end{align*}
				holds for every $\sigma \in \Gamma(\mathrm{L}^2_q(K,\mu))$ and $\gamma \in \Gamma$.
			\item Each $f \in \mathrm{C}(K)$ is contained in
				\begin{align*}
					\overline{\lin}_{\mathrm{C}(L)}\left\{P_\gamma f \mmid \gamma \in \Gamma \right\}.
				\end{align*}
			\item For each $\sigma \in \Gamma(\mathrm{L}_q^2(K,\mu))$ 
				\begin{align*}
					(\sigma \mid \sigma)_\mu = \sum_{\gamma \in \Gamma} (P_\gamma \sigma \mid P_\gamma \sigma)_\mu
				\end{align*}
				with convergence in $\mathrm{C}(L)$.
			\item Each $\sigma \in \Gamma(\mathrm{L}_q^2(K,\mu))$ can be decomposed into a series
					\begin{align*}
						\sigma = \sum_{\gamma \in \Gamma} P_\gamma \sigma
					\end{align*}
				that converges in $\Gamma(\mathrm{L}_q^2(K,\mu))$.
		\end{enumerate}
\end{theorem}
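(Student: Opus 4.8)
The plan is to obtain \cref{ext4} as a direct specialization of the abstract Fourier-analytic result \cref{thm:fouriermain} to the uniform enveloping groupoid of the extension. First I would fix the identifications from \cref{ex:extensiongrpdaction}: the open extension $q\colon(K,G)\to(L,G)$ is the same datum as the groupoid action $(K,q,G\ltimes L)$, whose unit space is identified with $L$. Consequently $\euE_\uu(q)=\euE_\uu(K,q,G\ltimes L)$ is a compact subsemigroupoid of $\uC_q^q(K,K)$ with unit space $L$, so that throughout $\mathrm{C}(\euG^{(0)})$ becomes $\mathrm{C}(L)$ and $\Gamma_{\euG}$ becomes $\Gamma=\Gamma_{\euE_\uu(q)}$, the space of invariant sections into the unitary dual of the isotropy bundle of $\euE_\uu(q)$.

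Next I would check that $(K,q,\euE_\uu(q))$ is a transitive action of the compact groupoid $\euE_\uu(q)$ with a unique relatively invariant measure $\mu$, so that all hypotheses of \cref{thm:fouriermain} are met. Since $q$ is pseudoisometric, \cref{prop:pseudo_impl_groupoid} (equivalently \cref{thm:pseudoisochar}) shows $\euE_\uu(q)$ is a compact groupoid. Because $(K,G)$ is topologically ergodic, so is its factor $(L,G)$, and the fixed space of the Koopman representation of $(K,q,G\ltimes L)$ consists precisely of the $G$-invariant functions in $\uC(K)$; by the fixed-space characterization of topological ergodicity the groupoid action $(K,q,G\ltimes L)$ is therefore topologically ergodic, so \cref{topergchar} yields that $(K,q,\euE_\uu(q))$ is transitive and in particular that $\euE_\uu(q)$ is a transitive compact groupoid. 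By \cref{RIM} there is a unique, fully supported relatively invariant measure $\mu$ for $(K,q,G\ltimes L)$; by \cref{riminv} this same $\mu$ is relatively invariant for $(K,q,\euE_\uu(q))$, and by the uniqueness clause of \cref{meangroupoidaction} it is \emph{the} unique relatively invariant measure for this transitive action. This is precisely the measure with respect to which the Hilbert bundle $\mathrm{L}^2_q(K,\mu)$ and the pairing $(\cdot\mid\cdot)_\mu$ are formed in \cref{thm:fouriermain}.

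With these hypotheses in place, applying \cref{thm:fouriermain} with $\euG=\euE_\uu(q)$ gives assertions (i)--(vi) of \cref{ext4} verbatim, after substituting $L$ for $\euG^{(0)}$ and $\Gamma$ for $\Gamma_\euG$; the projections $P_\gamma$ and their orthogonality, boundedness, module and approximation properties all transfer unchanged. The only point needing a separate remark is the meaning of \enquote{invariant} in assertion (ii): \cref{thm:fouriermain}\ref{item:fouriermain_iii} gives that each $\rg(P_\gamma)$ is invariant under the Koopman representation of $\euE_\uu(q)$, and since $\euS(q)=\{\varphi_g|_{K_l}\}\subset\euE_\uu(q)$ this implies invariance under the transition semigroupoid, which is exactly $G$-invariance of the extension. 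The converse inclusion, that $G$-invariance of a closed submodule extends to $\euE_\uu(q)$, holds because invariance is a closed condition and $\euE_\uu(q)$ is the smallest closed subsemigroupoid containing $\euS(q)$, exactly as in \cref{lem:loctrivinv}; so the two notions coincide and the statement is unambiguous.

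Since the genuine analytic content is entirely contained in \cref{thm:fouriermain}, the argument is essentially bookkeeping. The one step that I expect to require real care --- and hence the main (if minor) obstacle --- is the measure-theoretic compatibility noted above: verifying that the relatively invariant measure produced for the extension is literally the unique one demanded by \cref{thm:fouriermain} for the $\euE_\uu(q)$-action, so that the two Hilbert-bundle constructions agree and parts (i)--(vi) may be read off without modification. This is resolved by combining \cref{riminv} with the uniqueness in \cref{meangroupoidaction}.
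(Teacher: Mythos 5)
Your proposal is correct and follows essentially the same route as the paper, which obtains \cref{ext4} precisely by applying \cref{thm:fouriermain} to the compact transitive groupoid $\euE_\uu(q)$, with the hypotheses verified exactly as you do: compactness via \cref{thm:pseudoisochar}, transitivity of the $\euE_\uu(q)$-action via \cref{topergchar} using the fixed-space characterization of topological ergodicity, and the identification of $\mu$ with the unique relatively invariant measure of the $\euE_\uu(q)$-action via \cref{RIM}, \cref{riminv}, and \cref{meangroupoidaction}. Your closing remark on reconciling $G$-invariance with $\euE_\uu(q)$-invariance (via \cref{lem:loctrivinv}-type closedness) is also the right way to read assertion (ii).
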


Thus, if both systems are ergodic, then a pseuoisometric extension can be decomposed functional analytically into \enquote{simple} parts.
This yields a precise understanding of the extension in terms of its Koopman representation.

\parindent 0pt
\parskip 0.5\baselineskip
\setlength{\footskip}{4ex}
\bibliographystyle{alpha}
\bibliography{./bib/bibliography} 
\footnotesize

\end{document}